\documentclass[aap]{imsart}

%% Packages
\RequirePackage{amsthm,amsmath,amsfonts,amssymb}
\RequirePackage[numbers]{natbib}
\RequirePackage[colorlinks,citecolor=blue,urlcolor=blue]{hyperref}
\RequirePackage{graphicx}

\startlocaldefs
%%%%%%%%%%%%%%%%%%%%%%%%%%%%%%%%%%%%%%%%%%%%%%
%%                                          %%
%% Uncomment next line to change            %%
%% the type of equation numbering           %%
%%                                          %%
%%%%%%%%%%%%%%%%%%%%%%%%%%%%%%%%%%%%%%%%%%%%%%
%\numberwithin{equation}{section}
%%%%%%%%%%%%%%%%%%%%%%%%%%%%%%%%%%%%%%%%%%%%%%
%%                                          %%
%% For Axiom, Claim, Corollary, Hypothezis, %%
%% Lemma, Theorem, Proposition              %%
%% use \theoremstyle{plain}                 %%
%%                                          %%
%%%%%%%%%%%%%%%%%%%%%%%%%%%%%%%%%%%%%%%%%%%%%%
%\theoremstyle{plain}
\newtheorem{thm}{Theorem}[section]
\newtheorem{lemma}[thm]{Lemma}
\newtheorem{prop}[thm]{Proposition}
\newtheorem{cor}[thm]{Corollary}
\newtheorem{conj}[thm]{Conjecture}

\newtheorem*{thm*}{Theorem}
\newtheorem*{lemma*}{Lemma}
\newtheorem*{prop*}{Proposition}
\newtheorem*{cor*}{Corollary}
\newtheorem*{conj*}{Conjecture}
%%%%%%%%%%%%%%%%%%%%%%%%%%%%%%%%%%%%%%%%%%%%%%
%%                                          %%
%% For Assumption, Definition, Example,     %%
%% Notation, Property, Remark, Fact         %%
%% use \theoremstyle{remark}                %%
%%                                          %%
%%%%%%%%%%%%%%%%%%%%%%%%%%%%%%%%%%%%%%%%%%%%%%
\theoremstyle{remark}
\newtheorem{defn}[thm]{Definition}
\newtheorem*{defn*}{Definition}

\newtheorem{ques}[thm]{Question}
\newtheorem{rmk}[thm]{Remark}

%%%%%%%%%%%%%%%%%%%%%%%%%%%%%%%%%%%%%%%%%%%%%%
%% Please put your definitions here:        %%
%%%%%%%%%%%%%%%%%%%%%%%%%%%%%%%%%%%%%%%%%%%%%%

\DeclareMathOperator{\mlt}{mlt}
\DeclareMathOperator{\wmlt}{wmlt}
\DeclareMathOperator{\gcr}{gcr}
\DeclareMathOperator{\grn}{grn}
\DeclareMathOperator{\rignum}{lrn}
\DeclareMathOperator{\rank}{rank}

\DeclareMathOperator{\tr}{Trace}
\DeclareMathOperator{\intr}{int}

\newcommand{\PP}{\mathbb{P}}
\def\prob(#1){\Phelper#1|\relax\Pchoice(#1)}
\def\Phelper#1|#2\relax{\ifx\relax#2\relax\def\Pchoice{\Pone}\else\def\Pchoice{\Ptwo}\fi}
\def\Pone(#1){\PP\left( #1 \right)}
\def\Ptwo(#1|#2){\PP\left( #1 \,\middle|\, #2 \right)}

\newcommand*{\NN}{\mathbb{N}}
\newcommand*{\RR}{\mathbb{R}}
\newcommand*{\QQ}{\mathbb{Q}}
\renewcommand{\SS}{\mathbb{S}}
\newcommand*{\MM}{\mathbb{M}}

\newcommand{\iprod}[2]{\left\langle {#1}, {#2}\right\rangle}

%packages
\usepackage{optidef}
\usepackage[shortlabels]{enumitem}

%Tikz stuff
\usepackage{tikz}
\tikzstyle{vertex}=[circle, draw, inner sep=0pt,minimum size=6pt, fill=black]
\newcommand{\vertex}{\node[vertex]}
\usetikzlibrary{calc}
\tikzstyle{vertex}=[circle, draw, fill=black, inner sep=0pt, minimum size=4pt]
\tikzstyle{redvertex}=[circle, draw, red, fill=red, inner sep=0pt, minimum size=4pt]
\tikzstyle{bluevertex}=[circle, draw, cyan, fill=cyan, inner sep=0pt, minimum size=4pt]
\tikzstyle{lnode}=[circle,white,draw, inner sep=1pt, font=\scriptsize]
\tikzstyle{edge}=[line width=1pt]
\endlocaldefs

\begin{document}

\begin{frontmatter}
\title{Maximum likelihood thresholds via graph rigidity}
\runtitle{MLT via graph rigidity}

\begin{aug}
%%%%%%%%%%%%%%%%%%%%%%%%%%%%%%%%%%%%%%%%%%%%%%
%%Only one address is permitted per author. %%
%%Only division, organization and e-mail is %%
%%included in the address.                  %%
%%Additional information can be included in %%
%%the Acknowledgments section if necessary. %%
%%%%%%%%%%%%%%%%%%%%%%%%%%%%%%%%%%%%%%%%%%%%%%
\author[daniel]{\fnms{Daniel Irving} \snm{Bernstein}},
\author[sean]{\fnms{Sean} \snm{Dewar}},
\author[shlomo]{\fnms{Steven J.} \snm{Gortler}},
\author[tony]{\fnms{Anthony} \snm{Nixon}},
\author[meera]{\fnms{Meera} \snm{Sitharam}}
\and
\author[louis]{\fnms{Louis} \snm{Theran}}
%%%%%%%%%%%%%%%%%%%%%%%%%%%%%%%%%%%%%%%%%%%%%%
%% Addresses                                %%
%%%%%%%%%%%%%%%%%%%%%%%%%%%%%%%%%%%%%%%%%%%%%%
\address[daniel]{Department of Mathematics,
Tulane University}
\address[sean]{School of Mathematics, University of Bristol}
\address[shlomo]{School of Engineering 
and Applied Sciences,
Harvard University}
\address[tony]{Department of Mathematics and Statistics,
Lancaster University}
\address[meera]{Department of Computer Science,
University of Florida}
\address[louis]{School of Mathematics and Statistics,
University of St Andrews}
\end{aug}

\begin{abstract}
The maximum likelihood threshold (MLT) of a graph $G$ is the minimum number of
samples to almost surely guarantee existence of the maximum likelihood estimate
in the corresponding Gaussian graphical model. We give a new characterization of
the MLT in terms of rigidity-theoretic properties of $G$ and use this
characterization to give new combinatorial lower bounds on the MLT of any graph.

We use the new lower bounds to give high-probability guarantees
on the maximum likelihood thresholds of sparse Erd{\"o}s-R\'enyi random graphs
in terms of their average density.  These examples show that the new lower 
bounds are within a polylog factor of tight, where, on the same graph families,
all known lower bounds are trivial.

Based on computational experiments made possible by our methods,
we conjecture that the MLT of an Erd{\"o}s-R\'enyi random graph is equal to 
its generic completion rank with high probability.  Using structural 
results on rigid graphs in low dimension, we can prove the conjecture
for graphs with MLT at most $4$ and describe the threshold probability 
for the MLT to switch from $3$ to $4$.

We also give a  geometric characterization of the MLT of a 
graph in terms of a new ``lifting'' problem for frameworks that 
is interesting in its own right.  The lifting perspective 
yields a new connection between the weak MLT (where the maximum 
likelihood estimate exists only with positive probability) 
and the classical Hadwiger-Nelson problem.
\end{abstract}

\begin{keyword}[class=MSC2020]
\kwd[Primary ]{62H12}
\kwd{52C25}
\kwd[; secondary ]{90C25}
\end{keyword}

\begin{keyword}
\kwd{Gaussian graphical models}
\kwd{Number of observations}
\kwd{Maximum likelihood threshold}
\kwd{Combinatorial rigidity}
\kwd{Algebraic statistics}
\end{keyword}

\end{frontmatter}
%%%%%%%%%%%%%%%%%%%%%%%%%%%%%%%%%%%%%%%%%%%%%%
%% Please use \tableofcontents for articles %%
%% with 50 pages and more                   %%
%%%%%%%%%%%%%%%%%%%%%%%%%%%%%%%%%%%%%%%%%%%%%%
%\tableofcontents

\section{Introduction}

Modern statistical applications often require researchers to make inferences
about a large number of variables from few observations (see e.g.~\cite[Chapter
18]{hastie}). For example, certain biological network modeling problems,
including those related to gene
regulation~\cite{dobra2004sparse,schafer2005empirical,wu2003interactive} and
metabolic pathways~\cite{krumsiek2011gaussian}, can be approached by fitting a
Gaussian graphical model to a dataset that has fewer datapoints than variables.
This invites one to ask the motivating question of this paper, which was
previously explored by Uhler \cite{uhler2012geometry}, who attributes recent
interest in it to Lauritzen: \emph{given a fixed Gaussian graphical model, what
is the minimum number of datapoints so that the maximum likelihood estimate exists}? 
We now define some terms and state the question more precisely.

Let $G$ be a graph with $n$ vertices.  The Gaussian graphical model associated
with $G$ is the set of $n$-variate normal distributions $\mathcal{N}(0,\Sigma)$
so that if $ij$ is \emph{not} an edge of $G$, then $(\Sigma^{-1})_{ij} = 0$,
i.e.~the corresponding random variables are conditionally independent given all
of the other random variables. Suppose now that we have iid samples $X_1,
\ldots, X_d$ from a Gaussian graphical model.  
The MLE of the covariance is, then, the inverse of the matrix $K$ that solves the following 
optimization problem (see, e.g., \cite[p. 632]{hastie})
\begin{mini}|l|
   {K}{\tr(SK) - \log \det K}{}{\label{eq: mlt optimization}}
  \addConstraint{K \in \mathcal{S}^n_{++} \ {\rm and} \ K_{ij} = 0 \ {\rm if} \ ij \notin E(G)}
\end{mini}
where $S$ is the sample covariance\footnote{The sample covariance is $S =
\frac{1}{d}XX^T$.} and $\mathcal{S}^n_{++}$ is the set of positive definite
$n\times n$ matrices.  This is a convex problem that can be solved efficiently
in practice \cite{vandenberghe1998determinant}. Computing the MLE is a common
way to fit a Gaussian graphical model to data. If $d\ge n$ and $G$ is complete
then the MLE of the covariance is $K = S^{-1}$.  Indeed, almost surely $S^{-1}$
exists and then
\[
    \frac{d}{dK}\left(\tr(SK) - \log \det K\right) = S - K^{-1}
\]
vanishes at $S^{-1}$.  As a warmup for some of the 
ideas in Section \ref{sec: lifting}, now consider the case
when $d < n$ and $G$ is complete. Since $S$ has rank at most $d$,
 we can find a non-zero vector $v$ in the kernel of $S$.  For all 
$t\ge 0$, $I + tvv^T$ is positive definite and
\[
    \tr(S(I + tvv^T)) - \log \det (I + tvv^T)
    = \tr(S) - \log \det (I + tvv^T)\to -\infty
\]
as $t\to \infty$.  Hence 
the MLE of the covariance does not exist.  What goes wrong is that 
the sampled datapoints lie in a proper linear subspace of 
$\RR^n$, so we can ``overfit'' the sample by Gaussian densities that 
have, as their level sets, increasingly flat ellipsoids.

If $G$ is not complete, however, the MLE might exist almost surely
even when $d < n$.  This prompts the following definition.

\begin{defn}\label{def: mlt} 
The \emph{maximum likelihood threshold} (MLT) of a
graph $G$, denoted $\mlt(G)$, is the smallest number of samples\footnote{Here,
we are assuming that the samples are i.i.d.~from a distribution whose
probability measure is mutually absolutely continuous with respect to Lebesgue
measure.} required for the MLE of the Gaussian graphical model associated with
$G$ to exist almost surely.
\end{defn}

\begin{rmk}
    The maximum likelihood threshold can be similarly defined for any class of Gaussian models.
    See e.g.~\cite{amendola2021invariant,drton2019,makam2021symmetries}
\end{rmk}

\subsection{Existing bounds on the MLT}
The discussion above implies that $\mlt(K_n) = n$.
For any $G$, 
$1\le \mlt(G)\le n$, since if $H$ is a subgraph of 
$G$, then $\mlt(H) \le \mlt(G)$.
Heuristically, if $G$ is 
very sparse, we could hope that $\mlt(G)$ is much less than $n$.
However, counting edges is not enough to get good bounds, since, 
as we will see, small or sparse subgraphs can push the MLT up.

Ideally, one would like an efficient algorithm to 
compute $\mlt(G)$, but this seems difficult and the complexity 
of computing $\mlt(G)$ remains open.\footnote{It follows from Dempster's work \cite{dempster1972covariance} that one can compute $\mlt(G)$ using, e.g., cylindrical decomposition of a semi-algebraic set, but the algorithms for this task are not fast enough to be of practical interest.}
Instead, the literature, which we now review, focuses on 
finding combinatorial properties that bound the MLT, a 
problem first raised by Dempster \cite{dempster1972covariance}
and, more recently, popularized by 
Lauritzen (see \cite{uhler2012geometry,bendavid}).
The first nontrivial bounds on the MLT are due to 
Buhl \cite{buhl1993existence}.

\begin{thm}[\cite{buhl1993existence}]\label{thm: buhl}
Let $G$ be a graph with clique number $\omega(G)$ and treewidth $\tau(G)$.
Then 
\[
    \omega(G) \le \mlt(G) \le \tau(G) + 1.
\]
\end{thm}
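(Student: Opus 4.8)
The plan is to prove the two inequalities separately; the lower bound is essentially immediate, and the upper bound reduces to the case of chordal graphs, which is then handled by a positive-definite matrix-completion argument.

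\emph{Lower bound.} Since $K_{\omega(G)}$ is a subgraph of $G$ and $\mlt$ is monotone under passing to subgraphs, $\mlt(G) \ge \mlt(K_{\omega(G)})$; and the warm-up computation in the introduction gives $\mlt(K_m) = m$ for every $m$. Hence $\mlt(G) \ge \omega(G)$.

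\emph{Upper bound: reduction to the chordal case.} Recall the standard characterization of treewidth, $\tau(G) = \min\{\omega(H) - 1\}$ over all chordal graphs $H$ with $V(H) = V(G)$ and $E(H) \supseteq E(G)$. Fix such an $H$ attaining the minimum. As $G$ is a subgraph of $H$, monotonicity gives $\mlt(G) \le \mlt(H)$, so it is enough to show $\mlt(H) \le \omega(H)$ for every chordal $H$.

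\emph{Upper bound: the chordal case.} The first step is a duality description of when the MLE exists. For a fixed sample covariance $S$, a standard convex-duality argument applied to \eqref{eq: mlt optimization} shows that the MLE exists if and only if the partial symmetric matrix agreeing with $S$ on the diagonal and on the entries indexed by $E(H)$ admits a completion in $\mathcal{S}^n_{++}$; when it does, the maximal-determinant such completion $\Sigma$ is unique, and $K = \Sigma^{-1}$ is the optimal solution of \eqref{eq: mlt optimization} (it has zeros off $E(H)$ by the model constraint, and $\Sigma$ matches $S$ on the diagonal and on $E(H)$ by stationarity). Next, because $H$ is chordal, the Grone--Johnson--S\'{a}--Wolkowicz completion theorem guarantees that such a positive-definite completion exists as soon as every fully specified principal submatrix indexed by a clique of $H$ is positive definite. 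Finally, writing $S = \tfrac1d X X^\top$ with $X = [X_1 \mid \dots \mid X_d]$, the principal submatrix of the partial matrix indexed by a clique $I$ is exactly $\tfrac1d X_I X_I^\top$; by the absolute-continuity hypothesis on the samples this is positive definite almost surely whenever $d \ge |I|$ (its determinant is then a nonzero polynomial in the sample coordinates, so the exceptional set is Lebesgue-null). Since $H$ has finitely many maximal cliques, all of size at most $\omega(H)$, taking $d = \omega(H)$ makes all clique principal submatrices positive definite almost surely, so the MLE exists almost surely and $\mlt(H) \le \omega(H) = \tau(G)+1$.

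\emph{Main obstacle.} The one nontrivial ingredient is the chordal completion theorem. It can be avoided at the cost of an induction on $|V(H)|$: delete a simplicial vertex $v$ of $H$, apply the inductive hypothesis to the chordal graph $H - v$ (which has $\omega(H-v) \le \omega(H)$), and extend the resulting completion to the coordinate $v$ --- the one-step extension works precisely because the principal submatrix of $S$ indexed by $N_H(v) \cup \{v\}$ is positive definite, which again only needs $d \ge |N_H(v)| + 1$ and genericity of the samples. I expect this one-step extension lemma --- equivalently, the completion theorem it specializes to --- to be the step demanding the most care; the remainder is bookkeeping, chiefly checking that the clique-indexed blocks of the partial matrix are honest principal submatrices of $S$, so that only genericity of the sample is used, and that the maximum-determinant completion, when it exists, is the MLE.
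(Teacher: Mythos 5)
The paper does not prove this theorem; it is quoted from Buhl \cite{buhl1993existence} as background, so there is no in-paper argument to compare against. Your proof is correct and is essentially the standard one: the lower bound via monotonicity of $\mlt$ under subgraphs together with $\mlt(K_m)=m$, and the upper bound via a minimal chordal cover $H$ with $\omega(H)=\tau(G)+1$, Dempster's duality reducing existence of the MLE to positive-definite completability of the partial matrix, and the Grone--Johnson--S\'a--Wolkowicz theorem reducing that, for chordal patterns, to positive definiteness of the clique-indexed principal submatrices of $S$, which holds almost surely once $d\ge\omega(H)$. All the ingredients you invoke are genuine theorems used exactly within their hypotheses (the specification pattern is $H$ itself, hence chordal), and the simplicial-vertex induction you sketch as a fallback is precisely the usual proof of the completion theorem, so nothing is circular.
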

We will see presently that both of these estimates are unsatisfactory:
computing clique number and treewidth are NP-hard problems and 
both inequalities are extremely weak.  As a running example to 
compare inequalties, we will use the complete bipartite 
graph $K_{m,m}$.  Theorem \ref{thm: buhl} implies 
that
\[
    2 = \omega(K_{m,m}) \le \mlt(K_{m,m}) \le \tau(K_{m,m}) + 1 = m+1.
\]

In a landmark paper 
%that introduced maximum-likelihood geometry, 
that studied the (semi-) algebraic geometry of Gaussian 
maximum likelihood estimation,
Uhler \cite{uhler2012geometry} used tools from algebraic 
geometry to bound the MLT.
\begin{defn}\label{def: gcr}
Let $G$ be a graph with $n$ vertices and $m$ edges.  Let 
$\mathcal{S}^{d+1}$ be the set of symmetric matrices of rank 
$d+1$.  The \emph{generic completion rank of $G$}%
\footnote{The term ``generic completion rank'' is due to 
Blekherman and Sinn \cite{blekherman2019maximum}.}%
, denoted $\gcr(G)$, 
is the smallest 
$d+1$ so that the orthogonal projection of $\mathcal{S}^{d+1}$
onto the diagonal entries and the entries corresponding 
to the edges of $G$ is $(m + n)$-dimensional.
\end{defn}
\begin{thm}[\cite{uhler2012geometry}]\label{thm: uhler gcr mlt}
Let $G$ be a graph.  Then $\mlt(G)\le \gcr(G)$.
\end{thm}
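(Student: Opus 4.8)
The plan is to pair the classical characterization of when the Gaussian MLE exists with the semialgebraic geometry underlying $\gcr(G)$. Write $\pi_G\colon\mathcal S^n\to\RR^{m+n}$ for the coordinate projection onto the diagonal entries and the entries indexed by $E(G)$, so that $\gcr(G)$ is the least $d$ with $\pi_G(\mathcal S^{d})$ full-dimensional in $\RR^{m+n}$, and let $V_d=\{A\in\mathcal S^n:\rank A\le d\}$ be the corresponding determinantal variety. The starting point is the completion criterion of Dempster~\cite{dempster1972covariance}: for a sample covariance $S$, the MLE of the Gaussian graphical model of $G$ exists if and only if the partial matrix $\pi_G(S)$ admits a positive definite completion; equivalently $\pi_G(S)\in\mathcal P_G:=\pi_G(\mathcal S^n_{++})$. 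I would record the standard facts about $\mathcal P_G$ that will be used: it is semialgebraic (a linear image of the positive definite cone), it is open in $\RR^{m+n}$ (the map $K\mapsto\pi_G(K^{-1})$ on positive definite $K$ supported on $G$ is a local diffeomorphism onto $\mathcal P_G$), and its closure contains $\pi_G$ of the entire positive semidefinite cone (positive semidefinite matrices are limits of positive definite ones and $\pi_G$ is continuous).

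Next I would reduce to the generic geometry of low-rank samples. With $d$ i.i.d.\ samples the sample covariance is $S=\tfrac1d XX^{T}$ for $X\in\RR^{n\times d}$; under the hypotheses on the sampling distribution, almost surely $\rank S=\min(d,n)$ and the induced law on $\{S\succeq 0:\rank S=d\}$ is mutually absolutely continuous with the natural semialgebraic measure there. Put $d:=\gcr(G)$. Since $\gcr(G)\le n$ and $\mlt(G)\le n$ always, we may assume $d<n$, so that $S$ is almost surely a \emph{generic} positive semidefinite matrix of rank exactly $d$, and it suffices to prove that for such $S$ one has $\pi_G(S)\in\mathcal P_G$ almost surely.

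The core step is to transfer dominance to the positive semidefinite locus and then to measure. By definition of $\gcr(G)=d$, the restriction of $\pi_G$ to the irreducible variety $V_d$ is dominant onto $\RR^{m+n}$, so (generic smoothness, characteristic $0$) the differential of $\pi_G|_{V_d}$ is surjective on a Zariski-open dense subset of the smooth locus of $V_d$. Because the positive semidefinite matrices of rank $d$ form a full-dimensional semialgebraic subset of $V_d(\RR)$, these two conditions hold simultaneously on a nonempty Euclidean-open subset of $\{A\succeq0:\rank A=d\}$; hence $\pi_G$ maps that locus onto a full-dimensional semialgebraic set $W\subseteq\RR^{m+n}$, and, off a proper subvariety, $\pi_G|_{\{A\succeq0:\rank A=d\}}$ is a submersion onto $\RR^{m+n}$. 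Combining with the last fact of the first paragraph gives $\overline W\subseteq\overline{\mathcal P_G}$, so, $\mathcal P_G$ being open, $W\setminus\mathcal P_G$ is semialgebraic with empty interior, hence of dimension $<m+n$ and Lebesgue-null. Pulling this null set back through the submersion $\pi_G|_{\{A\succeq0:\rank A=d\}}$ — the complementary non-submersion locus being lower-dimensional, hence null — shows that the set of rank-$d$ positive semidefinite $S$ with $\pi_G(S)\notin\mathcal P_G$ is null; by the absolute-continuity remark of the second paragraph this event has probability zero. Therefore the MLE exists almost surely from $d=\gcr(G)$ samples, i.e.\ $\mlt(G)\le\gcr(G)$.

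The main obstacle is precisely the third paragraph: moving from the birational statement defining $\gcr(G)$ to an almost-sure statement about real, positive semidefinite samples. One must verify that intersecting with the positive semidefinite cone does not destroy dominance — which is where full-dimensionality of the rank-$d$ positive semidefinite matrices inside $V_d$, together with density of the generic-submersion locus, is needed — and that the boundary set $\overline{\mathcal P_G}\setminus\mathcal P_G$ genuinely pulls back to a measure-zero set, for which the submersion property and control of the singular strata of $V_d$ are essential. The remaining ingredients — openness of $\mathcal P_G$, Dempster's completion criterion, and the fact that a semialgebraic set with empty interior is Lebesgue-null — are standard.
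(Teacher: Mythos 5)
Your proposal is correct, and it takes a genuinely different route from the paper, which in fact does not prove Theorem~\ref{thm: uhler gcr mlt} at all but cites it from \cite{uhler2012geometry}; internally, the paper's own machinery would rederive it by combining Theorem~\ref{thm: gross sullivant} with Theorem~\ref{thm: main mlt stress} (if $\gcr(G)=d+1$ then $G$ is $d$-independent, so no generic $d$-dimensional framework carries \emph{any} nonzero equilibrium stress, in particular no PSD one, whence $\mlt(G)\le d+1$). What you have written is essentially a reconstruction of Uhler's original argument: Dempster's criterion \cite{dempster1972covariance} identifies existence of the MLE with membership of the sufficient statistic $\pi_G(S)$ in the open cone $\mathcal P_G=\pi_G(\mathcal S^n_{++})$, and dominance of $\pi_G|_{V_d}$ (the definition of $\gcr$) is converted, via generic smoothness in characteristic zero, into an almost-sure statement about rank-$d$ PSD samples. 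The steps you flag as delicate do go through: the rank-exactly-$d$ PSD matrices form a Euclidean-open, full-dimensional subset of the smooth real locus of $V_d$ (by continuity of eigenvalues, a matrix in $V_d$ near a rank-$d$ PSD matrix is again rank-$d$ PSD), so this locus meets the dense Zariski-open submersion set and its complement there is lower-dimensional hence null; the boundary $\overline{\mathcal P_G}\setminus\mathcal P_G$ is a semialgebraic set with empty interior, hence Lebesgue-null; and preimages of null sets under submersions are null. Compared with the stress-theoretic route of Section~\ref{sec: lifting}, your argument is self-contained and closer to the statistics, but it only yields the upper bound, whereas Lemma~\ref{lem: unbounded iff stress} characterizes the obstruction exactly and is what powers the paper's new lower bounds. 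One minor quibble: openness of $\mathcal P_G$ needs no exponential-family duality --- $\pi_G$ is a surjective linear map, hence open, and $\mathcal S^n_{++}$ is open.
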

Uhler formulated the generic completion rank 
in terms of a certain 
elimination ideal being empty, but one can compute
$\gcr(G)$ with a randomized algorithm 
and linear algebra (see \cite{gross2018maximum}).
The upper bound from Theorem~\ref{thm: uhler gcr mlt} is 
very much tighter than the one from Theorem~\ref{thm: buhl}.
It can also be used to extract other combinatorial 
bounds on the MLT, for example the presence of a $k$-core (the 
maximum induced subgraph of minimum degree $k$).
Via~\cite[Corollary~4.5]{BBS-typical}, 
Uhler's bound implies that if $k$ is the minimum integer 
such that the $k$-core of $G$ is empty, then $\mlt(G) \le k-1$.

In our running example, we have 
\[
    \mlt(K_{m,m}) \le \gcr(K_{m,m}) = m - 2
\]
(see Theorem \ref{thm: blekherman Kmn} below
for the GCR of $K_{m,m}$).  Thus, on our running 
example, Uhler's bound is better than Buhl's
\emph{and} it is much easier to compute.

For some time, it was open whether, in fact, 
$\mlt(G) = \gcr(G)$ for every graph $G$.
Blekherman and Sinn \cite{blekherman2019maximum} provided
a negative answer as part of a detailed 
study of complete bipartite graphs.  We will give a more detailed 
account of \cite{blekherman2019maximum}, but here is one summary result.
\begin{thm}[\cite{blekherman2019maximum}]\label{thm: blekherman Kmn}
Let $m, D\in \NN$ so that $m > 2$ and $D$ is largest number 
satisfying $2m > \binom{D+1}{2}$. Then
\[
    \gcr(K_{m,m}) = m\qquad \text{and}
    \qquad
    \mlt(K_{m,m}) = D.
\]
\end{thm}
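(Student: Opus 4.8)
\emph{Proof sketch.}

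\textbf{Step 1 (the generic completion rank).} Write a symmetric $2m\times 2m$ matrix in block form $M=\begin{pmatrix}M_{AA}&M_{AB}\\M_{AB}^{\top}&M_{BB}\end{pmatrix}$ along the bipartition of $K_{m,m}$; the $m^{2}+2m$ coordinates indexed by diagonal‑or‑edge positions are precisely the entries of $M_{AB}$ together with the diagonals of $M_{AA}$ and $M_{BB}$. If $\rank M\le d$ then $\rank M_{AB}\le d$, so for $d<m$ the projection of $\mathcal S^{d}$ onto these coordinates, followed by the projection onto the $M_{AB}$‑block, has image inside the determinantal variety $\{\rank\le d\}\subsetneq\RR^{m\times m}$; hence it is not dominant and $\gcr(K_{m,m})\ge m$. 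Conversely, given generic edge/diagonal data $(C,p,q)$ with $C\in\RR^{m\times m}$ invertible, complete to rank exactly $m$ by taking the $AA$‑block to be a symmetric $P$ with $\diag P=p$ and the $BB$‑block to be $C^{\top}P^{-1}C$ (so the Schur complement of $P$ vanishes); one is left to solve $\diag(C^{\top}P^{-1}C)=q$ for the $\binom m2$ free off‑diagonal entries of $P$. The differential of $P\mapsto\diag(C^{\top}P^{-1}C)$ sends $\dot P$ to $-\diag(W^{\top}\dot P W)$ with $W=P^{-1}C$, and for $m\ge 3$ and generic $W$ this is onto $\RR^{m}$ (its $m\times\binom m2$ matrix has rows $((w_j)_k(w_j)_l)_{k<l}$, which are dependent only if some nonzero combination $\sum_j c_j w_j w_j^{\top}$ is diagonal — impossible generically since $\binom m2\ge m$). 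Thus a generic completion of rank $m$ exists, $\gcr(K_{m,m})\le m$, and $\gcr(K_{m,m})=m$.

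\textbf{Step 2 (reduce the MLT to a feasibility problem).} Use the classical characterization (essentially Dempster's) of when the optimum in \eqref{eq: mlt optimization} fails to be attained for a sample covariance $S=\tfrac1d XX^{\top}$: exactly when there is a nonzero $K\succeq 0$, supported on the diagonal and edges of $G$, with $\tr(SK)=0$, i.e.\ $\operatorname{range}K\subseteq\ker S$. For generic data with columns $a_1,\dots,a_d$ this yields
\[
\mlt(G)>d\iff\exists K\succeq 0,\ K\ne 0,\ K\text{ supported on }G,\ Ka_i=0\ \forall i.
\]
(Rank‑one $K=ww^{\top}$ supported on a clique recovers Buhl's $\mlt(G)\ge\omega(G)$.) Specialize to $K_{m,m}$: a supported $K$ is $\begin{pmatrix}D_A&C\\C^{\top}&D_B\end{pmatrix}$ with $D_A,D_B$ diagonal. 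Write $K=LL^{\top}$; then $Ka_i=0$ forces the columns of $L$ into $U:=(\operatorname{span}a_i)^{\perp}$, a generic subspace of dimension $2m-d$, so $L=\left(\begin{smallmatrix}A\\ B\end{smallmatrix}\right)G$ where $\left(\begin{smallmatrix}A\\ B\end{smallmatrix}\right)\in\RR^{2m\times(2m-d)}$ spans $U$ (so $A,B\in\RR^{m\times(2m-d)}$ are generic up to a common change of basis) and $G$ is arbitrary. Setting $Z:=GG^{\top}\succeq 0$, the requirement that $K$ be supported on $K_{m,m}$ becomes exactly that $AZA^{\top}$ and $BZB^{\top}$ be diagonal, and $K\ne 0\iff Z\ne 0$. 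Hence
\[
\mlt(K_{m,m})>d\iff\text{for generic }A,B\in\RR^{m\times(2m-d)}\ \exists Z\succeq 0,\ Z\ne 0,\ AZA^{\top}\text{ and }BZB^{\top}\text{ diagonal}
\]
(equivalently, there is an inner product making the rows of $A$ and the rows of $B$ separately pairwise orthogonal). Dually — with some care about boundary behaviour, which the genericity provides — $\mlt(K_{m,m})\le d$ iff there are zero‑diagonal symmetric $E_A,E_B$ with $A^{\top}E_AA+B^{\top}E_BB\succ 0$.

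\textbf{Step 3 (the count).} It remains to show the feasibility problem of Step 2 has a solution for generic $A,B\in\RR^{m\times N}$, $N:=2m-d$, exactly when $\binom{d+2}{2}<2m$. The two diagonality constraints are $2\binom m2$ linear equations on $Z\in\mathrm{Sym}_N$, independent for generic $A,B$ in the relevant range, so the solution space $\mathcal Z$ is linear of dimension $\binom{N+1}{2}-(m^2-m)$; one checks $\mathcal Z$ meets the PSD cone nontrivially iff its orthogonal complement $\mathcal L=\operatorname{span}\{\alpha_i\alpha_k^{\top}+\alpha_k\alpha_i^{\top},\ \beta_j\beta_l^{\top}+\beta_l\beta_j^{\top}:i\ne k,\ j\ne l\}$ (where $\alpha_i,\beta_j$ are the rows of $A,B$) contains no positive definite matrix — which is precisely the dual feasibility problem above. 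The crossover of this alternative occurs exactly at $\binom{d+2}{2}=2m$: just below the threshold one constructs an explicit definite element of $\mathcal L$ (equivalently the certificate $E_A,E_B$), just above it an explicit obstructing $Z$. Tracking the equivalence $\binom{d+2}{2}<2m\iff d<D$ then finishes: $\mlt(K_{m,m})=D$.

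\textbf{Main obstacle.} Steps 1 and 2 are bookkeeping; the content is in Step 3, and precisely in the claim that $\binom{d+2}{2}=2m$ is the \emph{exact} crossover. Because the matrices involved must be positive (semi)definite, a plain count of equations versus unknowns does not decide whether $\mathcal Z$ (or $\mathcal L$) meets the interior of the PSD cone — indeed that naive count gives a far weaker threshold. One has to exploit the special structure of $\mathcal L$ as the span of the rank‑two ``cross terms'' of the generic configuration $\alpha_1,\dots,\alpha_m,\beta_1,\dots,\beta_m$, and the heart of the proof is producing the definite elements just below the threshold and the obstructing certificates just above it.
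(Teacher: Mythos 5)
Your Steps 1 and 2 are essentially correct. Step 1 is a reasonable direct argument for $\gcr(K_{m,m})=m$ (the paper instead reads this off from Bolker--Roth's dimension formula for the stress space of bipartite frameworks, which shows $K_{m,m}$ is $(m-1)$- but not $(m-2)$-independent), and Step 2's feasibility problem --- a nonzero $Z\succeq 0$ with $AZA^{\top}$ and $BZB^{\top}$ diagonal for generic $A,B\in\RR^{m\times(2m-d)}$ --- is exactly the paper's characterization of $\mlt$ via nonzero PSD (linear) equilibrium stresses of a generic configuration (Theorem~\ref{thm: main mlt stress} via Lemma~\ref{lem: unbounded iff stress}), written in the kernel parametrization. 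So by the end of Step 2 you have faithfully reformulated the theorem; you have not yet proved anything about $K_{m,m}$ beyond the GCR.

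The genuine gap is Step 3, and you have (commendably) flagged it yourself: the claim that the alternative flips exactly at $\binom{d+2}{2}=2m$ \emph{is} the theorem, and ``one constructs an explicit definite element of $\mathcal L$ \dots\ just above it an explicit obstructing $Z$'' is an assertion, not a construction. Neither direction is routine, and the paper supplies each with a substantive input. For the ``no PSD $Z$'' direction (i.e.\ $\mlt(K_{m,m})\le d+1$ when $2m\le\binom{d+2}{2}$), it uses Bolker--Roth's structure theory: the diagonal entries of any equilibrium stress matrix of a generic $d$-dimensional framework on $K_{m,m}$ encode an affine dependence among the Veronese images $p(i)p(i)^{\top}$, and $\binom{d+2}{2}$ generic rank-one symmetric matrices form a basis of $\mathrm{Sym}_{d+1}$, so every generic stress matrix has zero diagonal and is therefore indefinite if nonzero. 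For the ``existence of PSD $Z$'' direction (i.e.\ $\mlt(K_{m,m})\ge d+2$ when $2m\ge\binom{d+2}{2}+1$), it invokes the generic global $d$-rigidity of $K_{m,m}$ in this range \cite{CGTunpublished} together with the theorem of \cite{cgt1} that a generically globally $d$-rigid graph admits a generic framework with a PSD equilibrium stress of rank $n-d-1$. Unless you can replace these two inputs with arguments of comparable substance (or cite them), your proposal reduces the theorem to an equivalent statement rather than proving it.
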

Comparing with Theorem \ref{thm: buhl}, 
we see that $K_{m,m}$ has clique number $2$ and MLT $\Theta(\sqrt{m})$.  Comparing 
with Theorem \ref{thm: uhler gcr mlt}, 
we see that the upper
bound from generic completion rank is also off by an $O(\sqrt{m})$
factor, making $\gcr(G)$ far from tight as 
an upper bound.

\subsection{MLT and rigidity}
In this paper, we give new lower bounds on the MLT, 
which are more general and sharper than those mentioned above.
Our methods are based on a connection to graph rigidity theory, 
which we briefly introduce.
Figure~\ref{fig:rigidity} illustrates the following definitions for $d=2$.

\begin{defn}\label{def: framework}
Let $d\in \NN$ be a dimension.  A \emph{framework in $\mathbb{R}^d$} is 
a pair $(G,p)$ where $G$ is a graph with $n$ vertices $\{1, \ldots, n\}$ and 
$p = (p(1), \ldots, p(n))$ is a configuration of $n$ points in $\RR^d$.
Two frameworks $(G,p)$ and $(G,q)$ are \emph{equivalent} if 
\[
    \|p(j) - p(i)\| = \|q(j) - q(i)\|\qquad 
    \text{ for all edges $ij$ of $G$}
\]
and \emph{congruent} if $p$ and $q$ are related by a Euclidean isometry,
i.e.~if there exists a Euclidean isometry $T:\mathbb{R}^d\rightarrow\mathbb{R}^d$
such that $q(i)=T(p(i))$ for $i = 1,\dots,n$.
If two frameworks are congruent, then they are also equivalent but the converse need not hold.
Frameworks for which the converse \emph{does} hold are called \emph{globally  rigid} in dimension $d$,
i.e.~$(G,p)$ is globally rigid if all equivalent $d$-dimensional 
frameworks are congruent.
If this happens only for some neighborhood $U$ around $p$,
i.e.~if $(G,p)$ and $(G,q)$ are congruent whenever $q \in U$ and $(G,q)$ and $(G,p)$ are equivalent,
then $(G,p)$ is said to be \emph{rigid} in dimension $d$.
\end{defn}

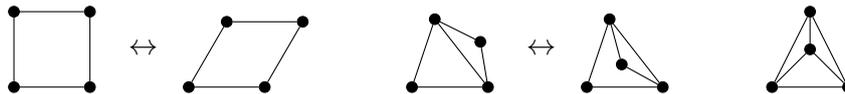
\begin{figure}[t]
    \centering
        \begin{tikzpicture}
        \vertex (1) at (0,0){};
        \vertex (2) at (1,0){};
        \vertex (3) at (1,1){};
        \vertex (4) at (0,1){};
        \path
            (1) edge (2) edge (4)
            (2) edge (3)
            (3) edge (4)
        ;
        \node at (1.7,1/2){$\leftrightarrow$};
        \vertex (1a) at (0+2.3,0){};
        \vertex (2a) at (1+2.3,0){};
        \vertex (3a) at ($ (3/2+2.3,{sqrt(3)/2}) $){};
        \vertex (4a) at ($ (1/2+2.3,{sqrt(3)/2}) $){};
        \path
            (1a) edge (2a) edge (4a)
            (2a) edge (3a)
            (3a) edge (4a)
        ;      
    \end{tikzpicture}
    \qquad\quad
    \begin{tikzpicture}
        \vertex (1c) at (0,0){};
        \vertex (2c) at (1,0){};
        \vertex (3c) at (.3,.9){};
        \vertex (4c) at (.9,.6){};
        \path
            (1c) edge (2c) edge (3c)
            (2c) edge (3c) edge (4c)
            (3c) edge (4c)
        ;
        \node at (1.7,1/2){$\leftrightarrow$};
        \vertex (1c) at (0+2.3,0){};
        \vertex (2c) at (1+2.3,0){};
        \vertex (3c) at (.3+2.3,.9){};
        \vertex (4c) at (.46+2.3,.3){};
        \path
            (1c) edge (2c) edge (3c)
            (2c) edge (3c)
            (4c) edge (3c) edge (2c)
        ;
    \end{tikzpicture}
    \qquad\quad
    \begin{tikzpicture}
        \vertex (a) at (0,0){};
        \vertex (b) at (1,0){};
        \vertex (c) at (0.5,1){};
        \vertex (d) at (0.5,0.5){};
        \path
            (a) edge (b) edge (c) edge (d)
            (b) edge (c) edge (d)
            (c) edge (d)
        ;
    \end{tikzpicture}
    \caption{Above are some frameworks in $\mathbb{R}^2$. The framework on the left fails to be rigid because there exist arbitrarily close frameworks that are equivalent but not congruent - one can deform it an arbitrarily small amount as indicated. The frameworks in the middle fail to be globally rigid since they are equivalent but not congruent. They are, however, rigid. Indeed, neither can be perturbed an infinitesimally small amount without changing edge lengths. Finally, the framework on the right is globally rigid and therefore also rigid.}
    \label{fig:rigidity}
\end{figure}

On an intuitive level, rigidity of a $d$-dimensional framework $(G,p)$ means that
if one were to physically build $G$ in $\mathbb{R}^d$ using rigid bars for the edges
and universal joints for the vertices, placed according to $p$, then the resulting structure
could not deform.
Rigidity of a specific framework is difficult to check \cite{abbot}, but 
for each dimension $d$, every graph has a generic behavior.
Following \cite{uhler2012geometry,gross2018maximum}, 
we use the following notion of 
generic, which comes from algebraic geometry.
\begin{defn}\label{def: generic}
Let $p$ be a configuration of $n$ points in $\RR^d$.  We 
say that $p$ is \emph{generic} if the coordinates of $p$ do not 
satisfy any polynomial with rational coefficients.
\end{defn}
The following theorem is fundamental in combinatorial or graph rigidity theory.
It tells us that by invoking a genericity assumption, we can treat rigidity and global rigidity
as properties of a graph rather than as properties of a framework.
\begin{thm}[\cite{asimow1978rigidity,gortler2010characterizing}]
Let $d$ be a fixed dimension and $G$ a graph.  Then either 
every generic $d$-dimensional 
framework $(G,p)$ is (globally) rigid or every generic $d$-dimensional framework
$(G,p)$ is not (globally) rigid.
\end{thm}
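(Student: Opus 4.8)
The plan is to reduce (global) rigidity of a \emph{generic} framework to an algebraic condition on $(G,p)$ that is defined over $\QQ$ and whose truth value is determined by the rank of a $\QQ$-polynomial matrix, and then to invoke the following uniformizing principle: by Definition~\ref{def: generic}, a generic configuration $p$ lies on no proper algebraic subset of $\RR^{dn}$ defined over $\QQ$, so any quantity of the form ``the rank of a matrix whose entries are polynomials in $p$ with rational coefficients'' takes the same (maximal) value for every generic $p$, because the locus where it is non-maximal is the common zero set of finitely many maximal minors and is a proper $\QQ$-algebraic subset. Everything is organized around the edge measurement map $f_G \colon \RR^{dn}\to\RR^{m}$, $p\mapsto (\|p(j)-p(i)\|^2)_{ij\in E(G)}$, which is polynomial with integer coefficients.

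For the rigidity statement, the Jacobian of $f_G$ at $p$ is (twice) the rigidity matrix $R(p)$, whose entries are affine in $p$ with integer coefficients; by the uniformizing principle $\rank R(p)$ equals a constant $r$ for every generic $p$. The theorem of Asimow and Roth asserts that a generic framework $(G,p)$ is rigid in $\RR^d$ if and only if it is infinitesimally rigid, i.e.\ if and only if $r$ equals $dn-\binom{d+1}{2}$ when $n \ge d+1$ (and the analogous value $dn - \binom{n}{2}$ in the finitely many small cases, where a generic $p$ spans only an affine flat). The nontrivial input is that a generic $p$ is a regular point of $f_G$, so near $p$ the fiber $f_G^{-1}(f_G(p))$ is a smooth manifold of dimension $dn-\rank R(p)$ that contains the congruence orbit of $p$; rigidity says precisely that these two dimensions agree, which forces $\rank R(p)$ to be maximal. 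Since $r$ depends only on $G$ and $d$, so does rigidity of generic frameworks.

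For the global rigidity statement we follow Gortler, Healy, and Thurston. The goal is again to exhibit generic global rigidity as a $\QQ$-algebraic maximal-rank condition: namely, that the maximum, over all equilibrium stresses $\omega$ of $(G,p)$, of $\rank \Omega(p,\omega)$ equals $n-d-1$, where $\Omega(p,\omega)$ is the associated stress matrix (again taking $n \ge d+1$). One direction, that the existence of such a stress forces global rigidity of \emph{every} generic framework, is Connelly's sufficient condition. The reverse direction, that a globally rigid generic framework must carry a stress matrix of this maximal rank, is the main theorem of Gortler--Healy--Thurston; its proof studies the fibers of $f_G$ as a real algebraic set and rules out the possibility that a ``complex ghost'' solution of the edge equations becomes real as $p$ moves within the generic locus. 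Granting it, the maximum stress-matrix rank is the rank of a matrix polynomial in $p$ over $\QQ$ (after fixing a $\QQ$-rational parametrization of the equilibrium stress space), hence is constant on generic configurations by the uniformizing principle, and the theorem follows.

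I expect the only genuine obstacle to be the reverse direction of this stress-matrix characterization: that is exactly the content of the Gortler--Healy--Thurston theorem and is where all the difficulty lies. By comparison, the rigidity half (Asimow--Roth together with rank constancy), the verification that generic configurations are regular points, and the bookkeeping needed for the low-dimensional degenerate cases are routine, and the uniformizing principle itself is immediate from the definition of genericity.
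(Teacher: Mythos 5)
The paper offers no proof of this statement---it is quoted as background with citations to Asimow--Roth and Gortler--Healy--Thurston---so there is nothing internal to compare against, and your outline faithfully reproduces the strategy of those references: rigidity reduces to a maximal-rank condition on the rigidity matrix (Asimow--Roth), global rigidity to the maximal-rank stress-matrix condition (Connelly for sufficiency, Gortler--Healy--Thurston for necessity), and both ranks are constant across generic configurations because the degeneracy loci are proper algebraic subsets defined over $\QQ$. You also correctly identify that the necessity direction of the stress-matrix characterization is where all the genuine difficulty lies; the rest is standard.
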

\begin{defn}\label{def: generic rigid / independent}
Let $d$ be a fixed dimension and $G$ be a graph with $m$ edges.  
We call $G$ \emph{(globally) $d$-rigid} if its generic $d$-dimensional
frameworks are (globally) rigid.  We call $G$ \emph{$d$-independent}
if there is an $m$-dimensional space of differential changes 
to the edge lengths of a (or any) generic framework $(G,p)$.
\end{defn}

In Figure~\ref{fig:rigidity}, the graphs underlying the frameworks in 
the middle and on the left are $2$-independent, whereas the graph of the 
framework on the right is not. To see this, note that in frameworks in 
the middle and left, it is possible to increase or decrease the length 
of any edge a small amount without changing any other edge lengths.  
This is not the case for the framework on the right.

An important fact in rigidity theory is that the  
$d$-independent graphs form the independent sets of a matroid.  Gross and 
Sullivant \cite{gross2018maximum} reformulated Theorem \ref{thm: uhler gcr mlt}
in the language of algebraic matroids (see \cite{MONTHLY} for an
introduction) and proved the following.

\begin{thm}[\cite{gross2018maximum}]\label{thm: gross sullivant}
Let $G$ be a graph.  Then the generic completion rank of 
$G$ is $d+1$ if and only if $d$ is the smallest 
dimension in which $G$ is $d$-independent.
\end{thm}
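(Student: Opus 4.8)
The plan is to establish the following pointwise reformulation and then simply read off the theorem: \emph{for every dimension $e$, the graph $G$ is $e$-independent if and only if the orthogonal projection of $\mathcal{S}^{e+1}$ onto the diagonal entries and the entries indexed by $E(G)$ has dimension $m+n$.} Granting this, the theorem is immediate from Definition~\ref{def: gcr}: to say $\gcr(G)=d+1$ is precisely to say that $d+1$ is the least value $e+1$ for which that projection has dimension $m+n$, which by the reformulation is the same as saying that $d$ is the least dimension $e$ in which $G$ is $e$-independent. No monotonicity of $\gcr$ or of $e$-independence in the dimension is needed for this last step, since the correspondence is pointwise in $e$.

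To prove the reformulation, first parametrise: a dense subset of the rank-$\le(e+1)$ symmetric determinantal variety is cut out by Gram matrices $X_{ij}=\langle p_i,p_j\rangle$ with $p=(p_1,\dots,p_n)\in(\RR^{e+1})^n$, and, as is standard in this setting, the dimension of the projection in question equals the generic rank of the Jacobian of
\[
\Phi\colon p\longmapsto\bigl((\langle p_i,p_i\rangle)_{i},\ (\langle p_i,p_j\rangle)_{ij\in E(G)}\bigr)\in\RR^{n+m}.
\]
Now apply the invertible linear change of coordinates on $\RR^{n+m}$ that fixes the diagonal coordinates and replaces each edge coordinate $\langle p_i,p_j\rangle$ by $\langle p_i,p_i\rangle+\langle p_j,p_j\rangle-2\langle p_i,p_j\rangle=\|p_i-p_j\|^2$; this does not change the generic rank, so we may instead work with $\Psi\colon p\mapsto\bigl((\|p_i\|^2)_i,\ (\|p_i-p_j\|^2)_{ij\in E(G)}\bigr)$.

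The crux is to recognise $\tfrac12\,d\Psi(p)$ as a rigidity matrix. Let $G^{+}$ be obtained from $G$ by adjoining an apex vertex $0$ adjacent to every vertex of $G$, so $G^{+}$ has $n+1$ vertices and $n+m$ edges, and place it at $\hat p=(0,p_1,\dots,p_n)$. In the rigidity matrix of $(G^{+},\hat p)$, the row of an apex edge $0i$ is $p_0-p_i$ in the $p_i$-block (and its negative in the $p_0$-block), and the row of an edge $ij$ of $G$ is the usual $p_i-p_j$ in blocks $i$ and $j$. Deleting the $p_0$-block of columns — pinning vertex $0$ — produces, up to signs of rows and using $p_0=0$, exactly $\tfrac12\,d\Psi(p)$; moreover this pinned matrix depends only on the differences $p_i-p_0$, which for generic $p$ (and $p_0=0$) are generic in $\RR^{e+1}$, so it realises the generic rank of the rigidity matrix of $G^{+}$ in $\RR^{e+1}$. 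Two standard facts then close the loop: (i) pinning a single vertex does not change the rank of a rigidity matrix, because the translations lie in its kernel and project onto the pinned block, so discarding that block of columns removes exactly that many dimensions of kernel; and (ii) Whiteley's cone theorem, that $G$ is $e$-independent if and only if the cone $G^{+}$ is $(e+1)$-independent. Chaining these, $\dim(\text{projection})=\rank d\Psi$ at a generic point $=\rank$ of the rigidity matrix of $G^{+}$ in $\RR^{e+1}$ at a generic configuration, which is $m+n$ exactly when $G^{+}$ is $(e+1)$-independent, i.e.\ exactly when $G$ is $e$-independent.

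The step I expect to be the main obstacle is the bookkeeping in the previous paragraph: matching $d\Psi$ with the pinned coned rigidity matrix with the correct indexing and scalars, and, more delicately, the genericity transfer, since the coned configuration coming from the parametrisation is not a generic configuration of $G^{+}$ (one vertex is at the origin) and one must use that the pinned matrix sees only the differences $p_i-p_0$ to conclude that it still attains the generic rank. A minor point in the reduction step is that $\mathcal{S}^{e+1}$ consists of rank-$(e+1)$ symmetric matrices of \emph{all} signatures, so one should note — or pass to $\mathbb{C}$ to see — that restricting to positive semidefinite Gram matrices loses no dimension. As an alternative to invoking the cone theorem, one can instead identify the algebraic matroid of the rank-$\le(e+1)$ symmetric determinantal variety, restricted to the diagonal and $E(G)$ coordinates, directly with the generic $e$-dimensional rigidity matroid through the Cayley--Menger variety; this carries essentially the same content and the same difficulty.
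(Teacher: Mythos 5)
Your argument is correct. Note that the paper does not prove this statement at all --- it is quoted from \cite{gross2018maximum} --- so the comparison is really with the original source, and your route is essentially the standard one there: parametrise the determinantal variety by Gram matrices, apply the invertible linear change of coordinates turning inner products into squared distances, identify the Jacobian with the apex-pinned rigidity matrix of the cone $v_0*G$, and invoke Whiteley's cone theorem (which this paper itself uses in the proof of Corollary~\ref{cor:gr+cir}). The points you flag as delicate are handled correctly: pinning preserves rank because the translations in the kernel project onto the apex block, and the pinned matrix depends only on the differences $p_i-p_0$, which are generic even though the coned configuration with apex at the origin is not; the passage from the PSD Gram branch to all of $\mathcal{S}^{e+1}$ loses no dimension since both have the same Zariski closure. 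The pointwise equivalence in $e$ does make the final ``smallest dimension'' step immediate, as you say. The one thing I would make explicit if you write this up is the standard fact that the dimension of (the closure of) the image of a polynomial map equals the generic rank of its Jacobian, which is the bridge between Definition~\ref{def: gcr} and the rank computation.
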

This result does not improve Uhler's upper bound on the
MLT, but it does open up the possibility 
of employing graph rigidity-theoretic ideas to understand it better.  
An interesting example is:
\begin{thm}[\cite{gross2018maximum}]\label{thm: gross sullivant planar}
If $G$ is a planar graph, then $\mlt(G) \le 4$.
\end{thm}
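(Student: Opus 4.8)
The plan is to reduce the claim to a classical rigidity fact about triangulations of the $2$-sphere. By Theorem~\ref{thm: uhler gcr mlt} it suffices to show $\gcr(G) \le 4$, and by Theorem~\ref{thm: gross sullivant} this is equivalent to showing that every planar graph is $3$-independent, i.e.\ independent in the generic $3$-dimensional rigidity matroid. Because the $3$-independent graphs are exactly the independent sets of a matroid, any subgraph of a $3$-independent graph is again $3$-independent; hence it is enough to show that every planar graph $G$ on $n$ vertices occurs as a spanning subgraph of some $3$-independent graph on $n$ vertices.

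First I would dispose of the small cases $n \le 3$, where $G$ is a subgraph of $K_3$, and $K_3$ is $3$-independent (the three edge lengths of a generic triangle in $\RR^3$ are algebraically independent). For $n \ge 4$, fix a planar embedding of $G$ and add chords inside faces until every face is bounded by a triangle. This produces a maximal planar graph $\widetilde G$ on the same vertex set, containing $G$, with exactly $3n-6$ edges. One small point to check is that such a triangulation exists even when $G$ is disconnected or has low-degree vertices; this is a standard property of planar embeddings, which I would state and use without further ado.

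The key input is Gluck's theorem (building on Cauchy, Dehn, and Alexandrov): every triangulation of the $2$-sphere is generically $3$-rigid. Since $\widetilde G$ is a maximal planar graph on $n \ge 4$ vertices it is $3$-connected, so by Steinitz's theorem it is the edge graph of a simplicial convex $3$-polytope, and Gluck's theorem applies: $\widetilde G$ is generically $3$-rigid. As $\widetilde G$ has exactly $3n-6 = 3n - \binom{4}{2}$ edges, the Maxwell count forces $\widetilde G$ to be minimally generically $3$-rigid, hence $3$-independent. Therefore $G \subseteq \widetilde G$ is $3$-independent as well, giving $\gcr(G) \le 4$ and so $\mlt(G) \le 4$ by Theorem~\ref{thm: uhler gcr mlt}.

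The only genuinely nontrivial ingredient is Gluck's theorem; everything else is matroid bookkeeping plus an elementary fact about planar triangulations. I would emphasize that the weaker observation that every planar graph is $(3,6)$-sparse (by Euler's formula applied to each subgraph) does \emph{not} by itself imply $3$-independence — the double-banana graph is $(3,6)$-sparse but generically $3$-dependent — so the topological restriction of planarity is essential. The step I would state most carefully is the passage to subgraphs: $3$-independence is inherited by subgraphs but not by minors, so $G$ must sit inside a triangulation on the \emph{same} vertex set, which is exactly why the cases $n \le 3$ are separated out.
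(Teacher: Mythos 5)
Your proof is correct and follows essentially the same route the paper indicates: use the Cauchy--Dehn--Alexandrov/Gluck theorem to show every planar graph is $3$-independent (by completing it to a minimally $3$-rigid triangulation), then conclude $\mlt(G) \le \gcr(G) \le 4$ via Theorems~\ref{thm: gross sullivant} and~\ref{thm: uhler gcr mlt}. The details you supply (triangulating on the same vertex set, the Maxwell count forcing independence, the small-$n$ cases) are exactly the bookkeeping the paper leaves implicit.
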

The proof uses the  Cauchy--Dehn--Alexandrov theorem (see \cite{gluck})
which implies that any planar graph is $3$-independent.
One can immediately deduce the same bound for the  wider 
class of $K_5$-minor free graphs using  a result of Nevo \cite{nevo}. 

Graph rigidity theory also makes it easier to compare 
treewidth to the generic completion rank.  
The following shows just how far away from tight 
Buhl's upper bound can be.  In the sequel, we will 
make statements about sequences of events $E_n$ involving 
random graph families indexed by the number of vertices $n$ that 
hold \emph{with high 
probability} (whp).  This means that $\prob(E_n) \to 1$ 
as $n\to \infty$.

It is well-known that, for $d\ge 2$, a random $(d+1)$-regular 
graph $G$ with $n$ vertices has, whp, treewidth $\tau(G) > \beta n$,
for some $\beta > 0$ (see, e.g, \cite{kloks-treewidth}).  
Since, for $d\ge 2$, the only $(d+1)$-regular 
graph that is not $d$-independent is 
$K_{d+2}$ \cite{JJ},
Theorem~\ref{thm: gross sullivant} implies that
\[
    \gcr(G) \le d + 1  < \beta n < \tau(G) + 1
\]
whp for a random $(d+1)$-regular graph $G$.

\subsection{Results and guide to reading}
In this paper, we will reformulate the MLT of a graph in terms of equilibrium
stresses, a graph rigidity-theoretic concept that plays an important role in
global rigidity. Given vertices $i$ and $j$ of a graph $G$, we write $i\sim j$
to indicate that $G$ has an edge between $i$ and $j$.
\begin{defn}\label{def: eq stress} 
    Let $G$ be a graph with $n$ vertices and let
$(G,p)$ be a framework.  An \emph{equilibrium stress} $\omega$ of $(G,p)$ is an
assignment of  weights $\omega_{ij}$ to the edges of $G$ so that, for all
vertices $i$
\[
    \sum_{j\sim i} \omega_{ij}(p(j) - p(i)) = 0 \qquad \text{(sum over neighbors $j$ of $i$).}
\]
The \emph{equilibrium stress matrix} associated to an equilibrium stress $\omega$ is the 
matrix $\Omega$ obtained by setting $\Omega_{ji} = \Omega_{ij} = -\omega_{ij}$ for all edges $ij$ of $G$,
$\Omega_{ii} = \sum_{j} \omega_{ij}$ and all other entries zero.
The \emph{rank} and \emph{signature} of $\omega$ are defined to be the rank 
and signature of $\Omega$, and $\omega$ is said to be \emph{PSD} if $\Omega$ is positive semi-definite.
\end{defn}
A fact going back to Maxwell \cite{maxwell} is that a framework $(G,p)$ in
dimension $d$ is independent if and only of it has no non-zero equilibrium
stress.  Similarly, a graph is $d$-independent   
if and only if no generic framework $(G,p)$ has a non-zero equilibrium stress.

To see the relation with Uhler's bound (Theorem \ref{thm: uhler gcr mlt}), 
we can use Theorem \ref{thm: gross sullivant} and the discussion above
to get the following formulation.
\begin{thm}[\cite{uhler2012geometry,gross2018maximum}]
Let $G$ be a graph with $n$ vertices.  Suppose that no generic framework 
in dimension $d$ supports a non-zero equilibrium stress.  Then the 
MLT of $G$ is at most $d+1$.
\end{thm}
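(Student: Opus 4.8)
The plan is to assemble the statement from three ingredients that are already on the table: the Maxwell-type characterization of $d$-independence via equilibrium stresses recorded just above, the Gross--Sullivant description of generic completion rank (Theorem~\ref{thm: gross sullivant}), and Uhler's inequality $\mlt(G) \le \gcr(G)$ (Theorem~\ref{thm: uhler gcr mlt}). There is no new rigidity or algebraic geometry to carry out; the whole point is to notice that the hypothesis about equilibrium stresses is just a restatement of a combinatorial property of $G$, after which the conclusion follows by chaining known bounds.

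Concretely, I would first invoke the fact stated in the preceding paragraph of the paper — a graph is $d$-independent if and only if no generic $d$-dimensional framework $(G,p)$ supports a non-zero equilibrium stress — to conclude that the hypothesis says exactly that $G$ is $d$-independent. Hence the set $\mathcal{D} := \{\, d' \in \NN : G \text{ is } d'\text{-independent}\,\}$ contains $d$, so $\min\mathcal{D} \le d$. By Theorem~\ref{thm: gross sullivant}, $\gcr(G) = \min\mathcal{D} + 1$, and therefore $\gcr(G) \le d+1$. Finally, Theorem~\ref{thm: uhler gcr mlt} gives $\mlt(G) \le \gcr(G) \le d+1$, as claimed.

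The only point that requires attention — and it is the closest this proof comes to having a hard step — is that all three ingredients must refer to the same notion of genericity, namely the algebraic one of Definition~\ref{def: generic}; this is precisely why the paper fixes that convention, so nothing beyond citing it is needed. Note in particular that we never use any monotonicity of $d$-independence in the dimension $d$: the argument only uses that the specific value $d$ lies in $\mathcal{D}$, which is immediate from the hypothesis.
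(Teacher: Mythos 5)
Your proof is correct and takes essentially the same route as the paper, which presents this theorem as an immediate reformulation obtained by combining the Maxwell-type fact that the hypothesis is equivalent to $d$-independence of $G$ with Theorem~\ref{thm: gross sullivant} (so $\gcr(G)\le d+1$) and Theorem~\ref{thm: uhler gcr mlt} (so $\mlt(G)\le\gcr(G)$). Nothing further is needed.
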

To obtain a lower bound on the MLT, 
we will need to consider the signature of 
the equilibrium stress. Our central new tool will be the following
theorem, proved in Section \ref{sec: lifting}.
\begin{thm}\label{thm: main mlt stress}
Let $G$ be a graph with $n$ vertices. Then the MLT of $G$ is
$d+1$ if and only if $d$ is the smallest dimension in which 
no generic $d$-dimensional framework supports a non-zero 
PSD equilibrium stress.
\end{thm}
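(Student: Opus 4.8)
The plan is to route through the classical matrix-completion reformulation of MLE existence, dualize it, and then recognize the resulting dual certificate as an equilibrium stress matrix. The starting point is the reformulation (due to Dempster, Buhl and Uhler; see \cite{uhler2012geometry,gross2018maximum}) that for $d$ generic samples the MLE exists almost surely if and only if, for a generic positive semidefinite $n\times n$ matrix $S$ of rank $d$, the partial matrix recording the entries of $S$ on the diagonal and on the edges of $G$ admits a positive definite completion. One direction of this is Lagrangian duality for the optimization problem defining the MLE; the reduction from ``generic samples'' to ``generic rank-$d$ PSD matrix $S$'' uses that having a positive definite completion is a semialgebraic condition on $S$, so it holds either for all generic $S$ of rank $d$ or for none. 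Thus $\mlt(G)>d$ exactly when this partial restriction of $S$ — which already has the PSD completion $S$ itself — admits no positive definite completion.

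Next I would dualize. Let $\mathcal C_G$ be the cone of partial matrices, on the diagonal-plus-edges pattern of $G$, that extend to a PSD matrix. Under the trace pairing restricted to the pattern, the dual cone of $\mathcal C_G$ is the cone of PSD matrices $Q$ supported on the pattern, i.e.\ with $Q_{ij}=0$ whenever $i\neq j$ and $ij\notin E(G)$. Since $\mathcal C_G$ is full-dimensional, the partial restriction of $S$ lies in its interior — equivalently, has a positive definite completion — unless some nonzero such $Q$ satisfies $\tr(QS)=0$; and because $Q,S\succeq 0$ this forces $QS=0$. Hence $\mlt(G)>d$ if and only if there is a nonzero PSD matrix $Q$ supported on $G$ with $QS=0$.

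Now I would translate this into rigidity language. Write $S=\hat P^{\top}\hat P$, where $\hat P$ is a $d\times n$ matrix whose first row is the all-ones vector and whose remaining $d-1$ rows form a generic configuration $p$ of $n$ points in $\RR^{d-1}$. Then $QS=0\iff\hat P Q=0$, which unwinds to the pair of conditions $Q\mathbf 1=0$ and $\sum_{j\sim i}Q_{ij}\bigl(p(j)-p(i)\bigr)=0$ for every vertex $i$ — that is, $Q$ is precisely an equilibrium stress matrix of the framework $(G,p)$ in dimension $d-1$. Combining with the previous paragraph, $\mlt(G)>d$ if and only if the generic framework $(G,p)$ in $\RR^{d-1}$ supports a nonzero PSD equilibrium stress. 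Taking $d^{*}$ minimal so that no generic framework in dimension $d^{*}$ supports a nonzero PSD equilibrium stress, this gives $\mlt(G)>d^{*}$ and $\mlt(G)\le d^{*}+1$, hence $\mlt(G)=d^{*}+1$, which is the claim (a small check handles the edge cases $d^{*}=0$ and $G$ edgeless).

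I expect the main obstacle to be justifying the special form $S=\hat P^{\top}\hat P$ used above: a generic rank-$d$ PSD matrix is $\hat P^{\top}\hat P$ with $\hat P$ fully generic, so one must argue that forcing the row space of $\hat P$ to contain the all-ones vector does not change whether the partial restriction of $S$ has a positive definite completion. Equivalently, the existence of a nonzero PSD matrix supported on $G$ whose column space lies inside a subspace $L$ should depend on $L$ only through its position type relative to the pattern of $G$, and a generic subspace of the fixed hyperplane $\mathbf 1^{\perp}$ of the appropriate dimension should be in sufficiently generic position — in effect, $\mathbf 1$ is not a ``special direction'' for the pattern of $G$. I would prove this by a dimension count: the condition is the vanishing of certain minors of a matrix built linearly from a basis of $L$, so the locus where a generic subspace of $\mathbf 1^{\perp}$ behaves differently from a fully generic subspace is a proper subvariety of the relevant Grassmannian, and one checks it avoids the generic subspace of $\mathbf 1^{\perp}$. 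The same style of argument also supplies the fact, implicit in the statement, that ``supports a nonzero PSD equilibrium stress'' is genericity-robust, so that the condition is genuinely a property of the graph. Everything past this point is convex-duality bookkeeping together with the definition of the equilibrium stress matrix.
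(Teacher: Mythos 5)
Your route --- Dempster's matrix-completion reformulation, then convex duality for the projected PSD cone, then reading the dual certificate $Q$ with $QS=0$ as an equilibrium stress matrix --- is genuinely different from the paper's, which analyses unboundedness of the log-determinant objective directly (Lemma~\ref{lem: unbounded iff stress}) and never passes through the completion problem. The duality computation itself is sound (it is essentially the supporting-hyperplane argument of Appendix~\ref{sec:sep}, applied to the Gram pattern instead of the edge-length map), and the unwinding of $\hat P Q=0$ into the stress equations is correct. But there are two genuine gaps.

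First, the claim that a semialgebraic condition on $S$ ``holds either for all generic $S$ of rank $d$ or for none,'' and the downstream claim that ``supports a nonzero PSD equilibrium stress'' is genericity-robust, are both false. Only Zariski-closed conditions defined over $\QQ$ enjoy that dichotomy; a full-dimensional semialgebraic set and its complement can each contain generic points (Lemma~\ref{lem: semi-alg generic}(c)). Theorem~\ref{thm: cycle stress} gives a counterexample in exactly the situation at hand: a generic framework of a cycle in $\RR^1$ has a nonzero PSD stress if and only if it is stretched, and both stretched and unstretched generic placements exist --- which is precisely why $\wmlt$ and $\mlt$ differ for cycles. Your argument survives, but only after replacing the dichotomy with the correct asymmetric statements (positive probability $\Leftrightarrow$ \emph{some} generic point lies in the set; almost surely $\Leftrightarrow$ \emph{every} generic point does) and tracking which quantifier is needed where; the theorem's ``no generic framework supports\dots'' in dimension $d$ versus its negation ``some generic framework supports\dots'' in dimension $d-1$ is exactly this asymmetry, and is why Lemma~\ref{lem: main stress forward} has an existential part (a) and a universal part (b).

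Second, the step you flag as the main obstacle is not closed by the argument you sketch. You must compare a generic $d$-dimensional column space $V=\operatorname{col}(S)$ with one constrained to contain $\mathbf 1$. The proposed dimension count --- the bad locus is ``a proper subvariety of the relevant Grassmannian'' --- proves nothing, because the set of subspaces containing $\mathbf 1$ is itself a proper subvariety of that Grassmannian and could lie entirely inside the bad locus; moreover the condition in question (a fixed linear space of pattern-supported matrices meeting the PSD cone nontrivially) is semialgebraic rather than the vanishing of minors, so the subvariety framing is not even available. What actually makes the comparison work is a structural covariance, not general position: replacing $S$ by $\Lambda S\Lambda$ for nonsingular diagonal $\Lambda$ conjugates the annihilator space $\{Q: QS=0\}$ by $\Lambda^{-1}$, preserving pattern support, positive semidefiniteness and signature, and every generic rank-$d$ PSD matrix is diagonally congruent to one with $\mathbf 1$ in its column space. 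This is the paper's Lemmas~\ref{lem: linear stress scale} and~\ref{lem: flattening stress}, together with Lemma~\ref{lem: gen slicing} to keep the rescaled configuration generic; some such argument must replace your general-position appeal before the proof is complete.
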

This technical theorem along with some known 
graph rigidity-theoretic results and arguments will allow us to 
significantly expand our understanding of the MLT (as well as 
directly reproduce most of what is already understood).

To return to our running example, Theorem \ref{thm: blekherman Kmn}
implies that $K_{m,m}$ generically supports equilibrium stresses
in dimension $m-2$, but that they are all indefinite.  In 
Section \ref{sec: complete bipartite}, we will re-derive 
Theorem \ref{thm: blekherman Kmn} by first understanding 
the equilibrium stresses of complete bipartite graphs.

There is a geometric counterpart to Theorem \ref{thm: main mlt stress},
originally conjectured by Gross and Sullivant \cite{gross2018maximum}. A
$d$-dimensional framework $(G,p)$ \emph{has full affine span} if $p $ affinely
spans $\mathbb{R}^d$.
\begin{thm}\label{thm: main mlt lift}
Let $G$ be a graph with $n$ vertices. Then the MLT of $G$ is 
$d+1$ if and only if $d$ is the smallest dimension in which 
every generic $d$-dimensional framework $(G,p)$
is equivalent to an $(n-1)$-dimensional framework 
$(G,\tilde{p})$ with full affine span.
\end{thm}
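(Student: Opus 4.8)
The plan is to deduce Theorem~\ref{thm: main mlt lift} from Theorem~\ref{thm: main mlt stress} by establishing the following framework-by-framework statement, valid for every dimension $d$ and every generic $d$-dimensional framework $(G,p)$: the framework $(G,p)$ supports no non-zero PSD equilibrium stress if and only if it is equivalent to an $(n-1)$-dimensional framework with full affine span. Once this is proved, the set of dimensions $d$ in which no generic $d$-dimensional framework supports a non-zero PSD equilibrium stress is literally the same as the set of dimensions in which every generic $d$-dimensional framework admits such a lift, so the two ``smallest dimension'' descriptions coincide and Theorem~\ref{thm: main mlt stress} yields the claim. (Genericity of $p$ plays no role in the equivalence itself; it enters only through Theorem~\ref{thm: main mlt stress}, in passing from a single generic framework to all of them.)

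For the setup I would use the classical dictionary between frameworks and Gram matrices. Write $P$ for the $n\times d$ matrix of $p$, put $J = I - \tfrac1n\mathbf 1\mathbf 1^T$, and let $B_p = (JP)(JP)^T$ be the Gram matrix of the centred configuration; it is positive semidefinite with $B_p\mathbf 1 = 0$. Let $W$ be the space of symmetric $n\times n$ matrices annihilating $\mathbf 1$. Inside $W$, the positive semidefinite matrices of rank $r$ are exactly the Gram matrices of centred $n$-point configurations affinely spanning $\RR^r$; the rank-$(n-1)$ ones are precisely the relative interior of the positive semidefinite cone of $W$. Define the linear map $\rho\colon W\to\RR^{E(G)}$ by $\rho(M) = (M_{ii}+M_{jj}-2M_{ij})_{ij\in E(G)}$, which reads off the squared edge lengths of the recovered configuration. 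Then the property ``$(G,p)$ is equivalent to an $(n-1)$-dimensional framework with full affine span'' is equivalent to ``the affine set $\mathcal A := \{M\in W : \rho(M) = \rho(B_p)\}$ meets the relative interior of the positive semidefinite cone of $W$''; note $B_p\in\mathcal A$, so $\mathcal A$ already meets the (closed) cone. The ``lift precludes a PSD stress'' direction is then the short energy computation standard in rigidity: if $\omega$ is an equilibrium stress of $(G,p)$ with matrix $\Omega$, then $\Omega P = 0$, so $\sum_{ij\in E(G)}\omega_{ij}\|p(i)-p(j)\|^2 = \tr(P^T\Omega P) = 0$; if $(G,q)$ is equivalent to $(G,p)$ the same sum at $q$ is $0$, which for $\Omega\succeq 0$ forces $\Omega Q = 0$, so the affine span of $q$ has dimension at most $n-1-\rank\Omega < n-1$ once $\Omega\neq 0$.

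The converse is the heart of the argument, and I would prove it by convex separation. Assume $\mathcal A$ misses the relative interior $K$ of the positive semidefinite cone of $W$. Since $K$ is a non-empty relatively open convex set disjoint from the affine set $\mathcal A$, there is a non-zero separating functional, which I write as $M\mapsto\langle\Omega,M\rangle$ with $\Omega\in W$; exploiting that $K$ is a cone and that $B_p\in\mathcal A\cap\overline K$, one can arrange $\langle\Omega,M\rangle\ge 0$ for all positive semidefinite $M\in W$ and $\langle\Omega,M\rangle = 0$ for all $M\in\mathcal A$. The inequality, together with $\Omega\mathbf 1 = 0$, says exactly that $\Omega$ is positive semidefinite. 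The vanishing on $\mathcal A = B_p + \ker\rho$ splits as $\Omega\perp\ker\rho$ and $\langle\Omega,B_p\rangle = 0$: the first puts $\Omega$ into the span of the matrices $(e_i-e_j)(e_i-e_j)^T$ over edges $ij$, i.e.\ $\Omega$ is the stress matrix of some weight assignment $\omega$, and the second, via $\langle\Omega,B_p\rangle = \tr(P^T\Omega P)$ and $\Omega\succeq 0$, forces $\Omega P = 0$, so $\omega$ is an equilibrium stress. Thus $(G,p)$ supports a non-zero PSD equilibrium stress, a contradiction, and therefore $\mathcal A$ meets $K$, producing the desired $(n-1)$-dimensional lift with full affine span. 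I expect the main obstacle to be exactly this separation step: arranging $W$, $\mathcal A$ and $\rho$ so that $(\ker\rho)^{\perp}\cap W$ is identified with the space of stress matrices, and — the crux — recognising that a positive semidefinite matrix with vanishing stress-energy against $p$ is automatically an equilibrium stress of $(G,p)$, which is what upgrades the separating functional into an honest non-zero PSD equilibrium stress.
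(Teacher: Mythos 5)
Your proposal is correct and takes essentially the same route as the paper: Theorem~\ref{thm: main mlt lift} is deduced from Theorem~\ref{thm: main mlt stress} together with the framework-level equivalence ``liftable iff no non-zero PSD equilibrium stress'' (the paper's Lemma~\ref{lem: psd lift}, due to Alfakih), which the paper likewise proves in Appendix~\ref{sec:sep} by convex separation on the PSD cone via the squared-edge-length map. Your variant separates the affine fiber $\mathcal A$ from the relative interior of the centred PSD cone rather than exhibiting a supporting hyperplane of the projected cone at $\pi(p)$, but these are two views of the same separation (cf.\ Lemma~\ref{lem:convProj}).
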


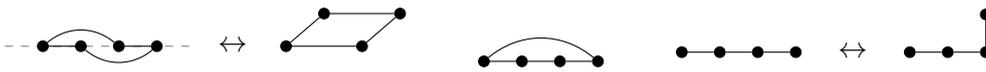
\begin{figure}[t]
    \centering
    \begin{tikzpicture}
        \node at (0,0.5){};
        \draw[gray,dashed] (-0.5,0)--(2,0);
        \vertex (1b) at (0,0){};
        \vertex (2b) at (0.5,0){};
        \vertex (3b) at (1,0){};
        \vertex (4b) at (1.5,0){};
        \path
            (1b) edge[bend left = 40] (3b)
            (2b) edge[bend right = 40] (4b)
            (3b) edge (4b)
            (1b) edge (2b)
        ;
        \node at (2.5,0){$\leftrightarrow$};
        \vertex (1a) at (0+3.2,0){};
        \vertex (2a) at (1+3.2,0){};
        \vertex (3a) at ($ (3/2+3.2,{0.5*sqrt(3)/2}) $){};
        \vertex (4a) at ($ (1/2+3.2,{0.5*sqrt(3)/2}) $){};
        \path
            (1a) edge (2a) edge (4a)
            (2a) edge (3a)
            (3a) edge (4a)
        ;
    \end{tikzpicture}
    \qquad
    \begin{tikzpicture}
        \node at (0,0.5){};
        \vertex (1a) at (0,0){};
        \vertex (2a) at (0.5,0){};
        \vertex (3a) at (1,0){};
        \vertex (4a) at (1.5,0){};
        \path
            (1a) edge (2a)
            (2a) edge (3a)
            (3a) edge (4a)
            (1a) edge[bend left=40] (4a)
        ;
    \end{tikzpicture}
    \qquad
    \begin{tikzpicture}
        \node at (0,0.5){};
        \vertex (1a) at (0,0){};
        \vertex (2a) at (0.5,0){};
        \vertex (3a) at (1,0){};
        \vertex (4a) at (1.5,0){};
        \path
            (1a) edge (2a)
            (2a) edge (3a)
            (3a) edge (4a)
        ;
        \node at (2.25,0){$\leftrightarrow$};
        \vertex (1a) at (3+0,0){};
        \vertex (2a) at (3+0.5,0){};
        \vertex (3a) at (3+1,0){};
        \vertex (4a) at (3+1,0.5){};
        \path
            (1a) edge (2a)
            (2a) edge (3a)
            (3a) edge (4a)
        ;
    \end{tikzpicture}
    \caption{The framework in $\mathbb{R}^1$ on the left is equivalent to a framework in 
    $\mathbb{R}^3$ with full-dimensional affine span. To see this, first note that it 
    is equivalent to the framework in $\mathbb{R}^2$ to the right of it. Then note 
    that this two-dimensional framework is equivalent to a framework in $\mathbb{R}^3$ 
    with full-dimensional affine span since we can lift one of the vertices into the 
    third dimension without changing edge-lengths. However, the maximum likelihood 
    threshold of the underlying graph, the four-cycle, is \emph{not} two since every 
    framework equivalent to the framework in the middle has a one-dimensional affine span.
    On the other hand, the path with four vertices has an MLT of $2$ because 
    any generic one-dimensional framework on it can be folded out to three dimensions. 
    On the right, we see such a one-dimensional framework folding out into two 
    dimensions. We can further fold it into three by bringing the vertex on the 
    left out of the affine plane spanned by the other vertices.
    }
    \label{fig: lifting}
\end{figure}

See Figure~\ref{fig: lifting} for an illustration of Theorem~\ref{thm: main mlt lift}.

To give combinatorial bounds, we use a connection between 
globally rigid graphs and PSD equilibrium stresses from \cite{cgt1}.  
Our bounds are in terms of a new graph parameter based on globally 
rigid subgraphs of $G$.
\begin{defn}\label{def: grn}
    The \emph{global rigidity number} of $G$, denoted $\grn(G)$, is the maximum $d$ such 
    that $G$ is globally $d$-rigid and has at least
    $d+2$ vertices.
    The \emph{globally rigid subgraph number} of $G$, denoted $\grn^*(G)$, is the maximum 
    $d$ so that $G$ contains a subgraph $H$ on at least $d+2$ vertices that 
    is   globally rigid.
\end{defn}
We obtain the following new lower bound on the MLT of a 
graph.
\begin{thm}\label{thm: grn mlt}
Let $G$ be a graph.  Then $\grn(G) + 2\le\grn^*(G) + 2\le \mlt(G)$.
\end{thm}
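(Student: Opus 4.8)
The inequality $\grn(G) + 2 \le \grn^*(G) + 2$ is immediate from the definitions, since $G$ is a subgraph of itself: if $G$ is globally $d$-rigid with at least $d+2$ vertices, then $G$ contains such a subgraph $H$ (namely $G$), so $\grn(G) \le \grn^*(G)$. The substance is the bound $\grn^*(G) + 2 \le \mlt(G)$. My plan is to use Theorem \ref{thm: main mlt stress}: it suffices to exhibit, for $d = \grn^*(G)$, a generic $d$-dimensional framework $(G,p)$ supporting a non-zero PSD equilibrium stress; this forces $\mlt(G) \ge d+2$. First I would fix a subgraph $H$ on $k \ge d+2$ vertices that is globally $d$-rigid, and recall the key input from \cite{cgt1}: a generic globally $d$-rigid graph on at least $d+2$ vertices has a generic framework with an equilibrium stress whose stress matrix $\Omega_H$ is PSD of maximal rank $k - d - 1 > 0$ (this is the stress matrix side of the Connelly/Gortler–Healy–Thurston characterization of generic global rigidity, packaged in \cite{cgt1}). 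In particular $\Omega_H \ne 0$ and $\Omega_H \succeq 0$.

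Next I would transfer this stress from $H$ to $G$. Take a generic configuration $p$ of the vertices of $G$ in $\RR^d$; its restriction $p|_H$ to $V(H)$ is a generic configuration of $k$ points in $\RR^d$, hence supports a PSD equilibrium stress matrix $\Omega_H$ as above — here one must be slightly careful that global rigidity (hence the PSD maximal-rank stress) holds at \emph{this} generic point, which is fine because the property is generic. Now extend $\omega_H$ to an edge weighting $\omega$ on all of $G$ by putting weight $0$ on every edge of $G$ not in $H$. The equilibrium condition $\sum_{j \sim i} \omega_{ij}(p(j)-p(i)) = 0$ holds at vertices of $V(H)$ because it held in $H$ and the new edges carry zero weight, and holds trivially at vertices outside $V(H)$ since all their incident edges carry weight $0$. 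The resulting stress matrix $\Omega$ is the block-diagonal matrix with block $\Omega_H$ on the $V(H)$ coordinates and zeros elsewhere, so it is PSD and non-zero. Thus $(G,p)$ supports a non-zero PSD equilibrium stress for $d = \grn^*(G)$, and by Theorem \ref{thm: main mlt stress} (the ``if $d$ is the smallest dimension in which no generic framework supports a non-zero PSD stress'' direction, used contrapositively) we get $\mlt(G) \ge d + 2 = \grn^*(G) + 2$.

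The main obstacle I anticipate is the genericity bookkeeping in the transfer step: Theorem \ref{thm: main mlt stress} requires a \emph{generic} framework $(G,p)$ in $\RR^d$, and I need the stress $\omega_H$ to be an equilibrium stress of the \emph{induced} configuration $p|_{V(H)}$ rather than of some other, specially chosen, generic framework on $H$. This is where I would lean on the fact that supporting a PSD stress matrix of rank $k-d-1$ is a property that holds at every generic configuration once it holds at one (global rigidity being a generic property, via the Gortler–Healy–Thurston theorem), so in particular it holds at $p|_{V(H)}$; a subconfiguration of a generic configuration of $G$ is itself generic. A secondary point to check carefully is that $k - d - 1 \ge 1$, i.e. that the PSD stress guaranteed is genuinely non-zero — this is exactly why Definition \ref{def: grn} insists on ``at least $d+2$ vertices,'' and it is what makes the $+2$ (rather than $+1$) appear in the bound. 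Everything else — the equilibrium condition at the padded vertices, the block structure of $\Omega$ — is routine.
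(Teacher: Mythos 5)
Your overall strategy matches the paper's in its key ingredients: Theorem \ref{thm: gen ur} supplies a nonzero PSD equilibrium stress on a generic framework of the globally $d$-rigid subgraph $H$ (nonzero precisely because $H$ has at least $d+2$ vertices), and Theorem \ref{thm: main mlt stress} converts such a stress into the bound $\mlt \ge d+2$. The inequality $\grn(G)\le\grn^*(G)$ is, as you say, immediate.

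However, your transfer step has a genuine gap. You start from an arbitrary generic configuration $p$ of $G$ in $\RR^d$, restrict to $V(H)$, and assert that $(H,p|_{V(H)})$ supports a nonzero PSD equilibrium stress because ``this is a generic property.'' It is not. Theorem \ref{thm: gen ur} only guarantees that \emph{some} generic framework of $H$ carries a PSD stress of rank $k-d-1$; the Gortler--Healy--Thurston theorem makes the existence of a \emph{max-rank} stress a generic property, but says nothing about its signature, and the set of frameworks admitting a nonzero \emph{PSD} stress is merely semialgebraic, so by Lemma \ref{lem: semi-alg generic}(c) it can contain some generic points and miss others. The paper itself supplies a counterexample to your claim: $C_4$ is globally $1$-rigid on $4\ge 1+2$ vertices, yet by Proposition \ref{prop:stretchedCycleStress} a generic framework of $C_4$ in $\RR^1$ has a nonzero PSD stress only when it is stretched, and a generic restriction $p|_{V(H)}$ need not be stretched. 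The repair is easy, and there are two natural ones: (i) reverse the direction of the transfer --- take the special generic framework $(H,p_H)$ from Theorem \ref{thm: gen ur} and extend it to a configuration of all of $G$ by choosing the remaining coordinates algebraically independent over $\QQ(p_H)$; the resulting configuration of $G$ is generic, and your zero-padding argument then goes through verbatim; or (ii) do what the paper does and avoid padding altogether: conclude $\mlt(H)\ge d+2$ directly from Theorems \ref{thm: gen ur} and \ref{thm: main mlt stress} applied to $H$ itself, and then invoke the monotonicity $\mlt(H)\le\mlt(G)$ of Lemma \ref{lem: mlt monotone}.
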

Since complete graphs are globally $d$-rigid for all $d$, Theorem \ref{thm: grn mlt} 
generalizes the lower 
bound of Theorem \ref{thm: buhl}.  
To our knowledge, this is the 
first unconditional improvement of Buhl's lower bound  
from 1993 (Theorem \ref{thm: buhl}).

In our running example of $K_{m,m}$, the 
lower bound from Theorem \ref{thm: grn mlt} 
gives the right answer.  The complete bipartite 
graph $K_{m,m}$ has global rigidity number $m-3$
\cite{CGTunpublished} and, by Theorem \ref{thm: blekherman Kmn}, 
MLT $m-1$.  Hence $\mlt(K_{m,m}) = \grn(K_{m,m}) + 2$.

In Section \ref{sec: mlt 3}, we combine Theorem \ref{thm: grn mlt}
with results on graph rigidity in dimension $2$ to completely
solve the MLT problem for small values of $\mlt(G)$ and $\gcr(G)$.
The main result (which is best possible -- see Remark~\ref{rmk: best possible}) is as follows.
\begin{thm}\label{thm: equality of gcr and mlt}
If $G$ is a graph and $\mlt(G) \le 3$ or $\gcr(G) \le 4$, then $\mlt(G) = \gcr(G)$.
\end{thm}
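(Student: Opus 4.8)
Since $\mlt(G) \le \gcr(G)$ holds unconditionally (Theorem \ref{thm: uhler gcr mlt}), it suffices in every case to prove $\mlt(G) \ge \gcr(G)$. I would first clear the cases $\gcr(G) \le 3$ directly. By Theorem \ref{thm: gross sullivant}, $\gcr(G) \le 2$ exactly when $G$ is a forest; there $\omega(G) = \gcr(G)$, so Buhl's lower bound $\omega(G) \le \mlt(G)$ (Theorem \ref{thm: buhl}) gives equality. And $\gcr(G) = 3$ exactly when $G$ is $2$-independent but contains a cycle $C_\ell$; since $C_\ell$ is globally $1$-rigid on $\ell \ge 3$ vertices, $\grn^*(G) \ge 1$ and Theorem \ref{thm: grn mlt} yields $\mlt(G) \ge 3 = \gcr(G)$. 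With these in hand, both hypotheses of the theorem reduce, through Theorem \ref{thm: gross sullivant}, to the single statement $(\star)$: \emph{if $G$ is not $2$-independent, then $\mlt(G) \ge 4$}. Indeed, when $\gcr(G) = 4$ the graph $G$ is $2$-dependent and $(\star)$ forces $\mlt(G) = 4 = \gcr(G)$; and when $\mlt(G) \le 3$, the contrapositive of $(\star)$ makes $G$ $2$-independent, so $\gcr(G) \le 3$ and we are back in a case already treated.

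By Theorem \ref{thm: main mlt stress}, $(\star)$ asserts that some generic planar framework of $G$ carries a non-zero PSD equilibrium stress. As $G$ is $2$-dependent it contains a circuit $C$ of the generic $2$-rigidity matroid---a minimal $2$-dependent subgraph, necessarily on at least $4$ vertices, whose space of equilibrium stresses at a generic realization is $1$-dimensional and is supported on all of $E(C)$. If $C$ has a generic planar realization with a non-zero PSD stress, then extending that stress by zero over a generic placement of the remaining vertices of $G$ produces the framework required for $G$. So the heart of the matter is the claim, which I would prove by induction on $|V(C)|$: \emph{every circuit $C$ of the generic $2$-rigidity matroid has a generic planar realization whose $1$-dimensional stress space is spanned by a semidefinite matrix, and for any prescribed edge $e$ of $C$ the realization can be chosen so that the stress on $e$ has either prescribed sign.}

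The induction follows the structure theory of $2$-rigidity circuits (built from $K_4$ by $1$-extensions and $2$-sums, the $3$-connected ones being generically globally rigid in the plane). For $C = K_4$, the stress matrix of a generic planar realization has rank $1$, equal to $\pm\,\lambda\lambda^{T}$ for the affine dependence $\lambda$ of the four points, hence semidefinite; realizing the endpoints of $e$ as consecutive hull vertices versus as a diagonal pair of the quadrilateral flips the sign of $\lambda_u\lambda_v$, hence of the stress on $e$. If $C$ is $3$-connected it is generically globally $2$-rigid, so the connection between global rigidity and PSD equilibrium stresses underlying Theorem \ref{thm: grn mlt} furnishes a generic realization with a non-zero PSD stress; the sign on $e$ is then adjustable by a projective transformation of the plane, which acts on a stress matrix by a congruence $\Omega \mapsto D\Omega D$ (so definiteness is preserved) while rescaling the stress on each edge $uv$ by $\lambda_u\lambda_v$. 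If $C$ is not $3$-connected it is a $2$-sum $C = C_1 \oplus_2 C_2$ along a non-edge pair $uv$ that is an edge of each $C_i$; one checks that the unique stress of a realization $p_1 \cup p_2$ of $C$ has matrix $\alpha_1 \Omega^{(1)} + \alpha_2 \Omega^{(2)}$ with $\alpha_1 \omega^{(1)}_{uv} + \alpha_2 \omega^{(2)}_{uv} = 0$ (the contributions of the virtual edge $uv$ cancel), so picking, by the inductive hypothesis, PSD stresses on $C_1$ and $C_2$ with $\omega^{(1)}_{uv}$ and $\omega^{(2)}_{uv}$ of opposite sign (possible because these values are nonzero) makes $\alpha_1,\alpha_2$ agree in sign; then $\alpha_1 \Omega^{(1)} + \alpha_2 \Omega^{(2)}$ is, up to an overall sign, a sum of PSD matrices, hence semidefinite, and the sign of the stress on a further edge $e \in E(C_i)$ is inherited from the inductive choice for $C_i$.

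The main obstacle is the $2$-sum step, and within it the sign bookkeeping: one must realize $C_1$ and $C_2$ so that the stresses on the glued pair $uv$ have \emph{opposite} signs while \emph{also} prescribing the sign on the edge of interest, which is precisely why the inductive statement is phrased in its strengthened sign-prescribing form and why one must check that the $K_4$ and projective moves supply enough independent control of signs. Verifying that the structural inputs---the $2$-sum decomposition of circuits, generic global rigidity of $3$-connected circuits, and the PSD-stress consequence of generic global rigidity---apply exactly as used is the other delicate point. It is worth recording that $(\star)$ for $\gcr(G)=4$ is strictly stronger than what Theorem \ref{thm: grn mlt} alone gives: for instance $K_4 \oplus_2 K_4$ is $3$-independent and $2$-dependent with $\grn^*(K_4 \oplus_2 K_4) = 1$, so its value $\mlt = \gcr = 4$ is invisible to globally rigid subgraphs and genuinely needs the PSD-stress construction.
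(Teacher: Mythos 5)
Your proposal is correct and follows essentially the same route as the paper: reduce everything to showing that every circuit of the generic $2$-rigidity matroid admits a generic planar framework with a non-zero PSD equilibrium stress, and prove this by induction using Berg--Jord\'an global rigidity for the $3$-connected case and a sign-cancelling $2$-sum gluing (with a projective transformation to control signs) otherwise. The only cosmetic differences are that the paper flips the sign on the glued edge on the fly via a projective map rather than carrying a sign-prescribing inductive hypothesis, and passes from the circuit to $G$ by MLT monotonicity rather than extending the stress by zero.
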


Theorem \ref{thm: grn mlt} is also strong enough to give a 
quick proof of the results in \cite{blekherman2019maximum}.
Section \ref{sec: complete bipartite} explores the connection.

\subsection{Improvement in MLT bounds}
The following table summarizes the MLT bounds for various families of graphs given in 
this paper and compares with the best known previous bounds.
Let $d>1$.
\begin{center}
\begin{tabular}{ |c|c|c| } 
 \hline
Type of Graph & Best Previous MLT interval & This Paper \\ 
\hline
 Minimally $d$-rigid & $[2,d+1]$ Thm \ref{thm: buhl} & $d+1$ Cor \ref{cor:min rigid implies gcr=mlt}\\ 
\hline
 Globally $d$-rigid $d$-circuit& $[2,d+2]$ Thm \ref{thm: buhl} & $d+2$ Cor \ref{cor:gr+cir} \\ 
\hline
$G(n, c/n)$ $0<c<c’_2$ & $[3,4]$ Thm \ref{thm: uhler gcr mlt} \cite{BBS-typical} & 3 whp Thm \ref{thm: random mlt 3 4}\\ 
\hline 
$G(n, c/n)$ $c’_2<c<c_3$& $[3,4]$ Thm \ref{thm: uhler gcr mlt} \cite{BBS-typical} & 4 whp Thm \ref{thm: random mlt 3 4}\\
\hline
$G(n,M_d\log n/n)$ & $\exists k [3, (\log n)^k]$ whp Thm \ref{thm: buhl},\ref{thm: uhler gcr mlt}  & 
$\exists k [d, (\log n)^k]$ whp Cor \ref{cor: rand mlt}\\
\hline
\end{tabular}
\end{center}

\section{Examples and conjectures}
The families of graphs for which the MLT has 
been computed exactly are quite limited in the 
literature.  
As mentioned in the introduction, Buhl \cite{buhl1993existence}
computes $\mlt(K_{d+2}) = d + 1$ and Bleckhermann and Sinn 
\cite{blekherman2019maximum} compute $\mlt(K_{m,n})$ (see 
Section \ref{sec: complete bipartite}).  Uhler \cite{uhler2012geometry}
provides, in addition, that the MLT of a cycle is $3$.

This sections contains examples illustrating improvements 
that can be obtained from our methods and some remaining conjectures.

\subsection{4-regular graphs} As discussed above, 
any connected $4$-regular graph $G$ on $n > 5$ vertices 
is $3$-independent and, hence, 
by Theorem \ref{thm: gross sullivant}, $\gcr(G) \le 4$.
Since $G$ has $2n > 2n - 3$ 
edges, the Laman--Pollaczek-Geiringer Theorem (Theorem 
\ref{thm: LPG}, below) implies that $G$ is $2$-dependent so  
$\gcr(G) > 3$,
again via Theorem \ref{thm: gross sullivant}.  On the other hand, it is 
well-known that, whp, a random $4$-regular graph has clique number 
at most $3$\footnote{One way to see this is that a $4$-regular graph containing 
a subgraph isomorphic to $K_4$ is not essentially $6$-connected.  Random 
$4$-regular graphs are essentially $6$-connected with high 
probability \cite{wormald}.}.
We now have that, whp, for a random $4$-regular graph $G$:
\[
    \omega(G) \le 3 < 4 = \mlt(G). 
\]
This gap, while not terribly impressive, was easy to get.

\subsection{Examples from the applied literature}
We now consider several examples of Gaussian graphical models that appear 
in the applied literature on gene networks,
a setting where the models are typically high-dimensional.
Our rigidity theoretic 
methods make it easy to compute the MLT for these instances, to the 
extent that the work is simple enough to do by hand in many cases.

The approach is to show, first, that a graph $G$ is $d$-independent by iteratively 
removing vertices of degree at most $d$ until there are no vertices left.  Such a 
graph cannot have a general position framework with a non-zero equilibrium stress
in dimension $d$, so it must be $d$-independent.
For the matching 
lower bound, we find a globally $(d-1)$-rigid subgraph with at least 
$d+1$ vertices.  In cases where the globally rigid subgraph is complete, 
one could get the lower bound from the existing Theorem \ref{thm: buhl},
but our approach gives a unified treatment of both bounds.  We now 
turn to specific examples.

% Even though the question of how to exactly determine the MLT of an arbitrary graph remains open,
% rigidity-theoretic methods are able to exactly compute the MLT for many of the particular graphical models that 
% appear in the literature on gene networks,
% a setting where the models are typically high-dimensional. Here we 
% present three such examples where the MLT can be computed by 
% iteratively removing low degree vertices.  
% This work is easy enough to do by hand in many cases, 
% including the ones here.

% In particular we present three 
% such examples where the MLT can now be computed using a basic core-finding algorithm (see, for example, \cite{BBS-typical} 
% for an application of this algorithm for generic completion rank). This process simply involved iteratively
% removing low degree vertices as described next.
% This work was easy enough to do by hand.

\begin{figure}
    \centering
    \includegraphics[width = 0.9\textwidth]{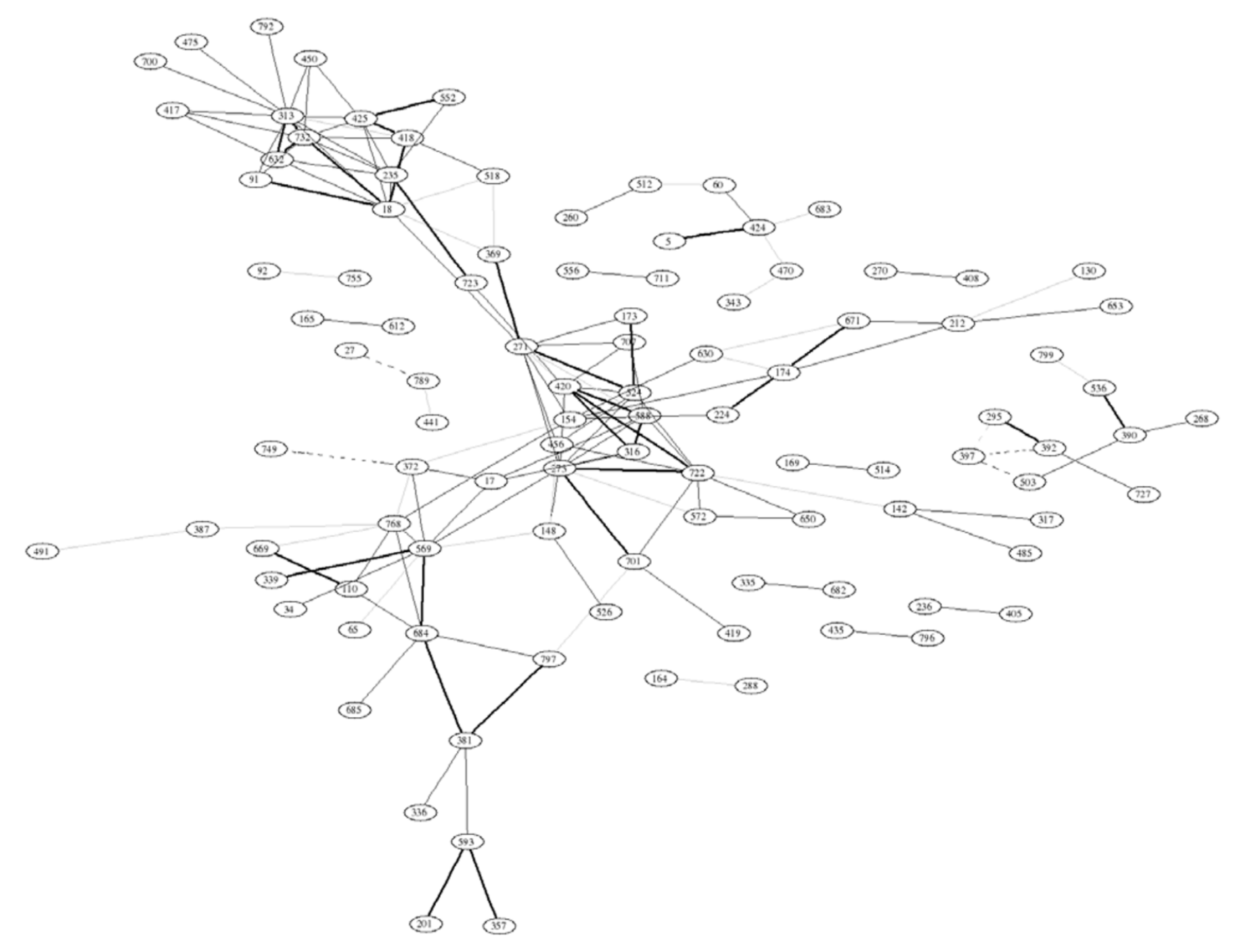}
    \caption{This graph is reproduced from \cite[Figure 1]{opgen2007correlation}.  Although it has vertices of 
    degree greater than four, the graph can be reduced to a graph with no vertices by iteratively finding a vertex of degree 
    at most four and removing it and any incident edges.  For example, the degree of vertex $174$ is five, but once its 
    neighbor $671$, which has degree three, is removed, the degree of $174$ decreases, allowing for its removal.  
    Continuing in this manner eventually removes every vertex.}
    \label{fig: simmer 07 graph}
\end{figure}
In \cite[Figure~1]{opgen2007correlation} there is a graph $G$ on 150 edges inferred from Arabidopsis thaliana data.
By iteratively deleting vertices of degree at most 4 every vertex of the graph is deleted, 
implying that $G$ is $4$-independent.
Theorems~\ref{thm: gross sullivant} and~\ref{thm: uhler gcr mlt} therefore imply $\mlt(G) \le 5$.
Since $G$ contains a $K_5$ subgraph on the vertices labeled 313,425,732,235,18,  Theorem~\ref{thm: buhl}
implies moreover that $\mlt(G) \ge 5$, and so we have equality.

Similarly, \cite[Figure~1]{schafer2005empirical} presents a simulated gene association network $G$ with 100 vertices.
By iteratively deleting vertices of degree at most 2 every vertex of the graph is deleted, so $G$ is 
$2$-independent.
Since $G$ is not cycle-free, $G$ is not 1-independent so Theorem~\ref{thm: gross sullivant}
implies that $\gcr(G) = 3$.  Theorem~\ref{thm: equality of gcr and mlt} therefore implies $\mlt(G) = 3$.

Finally, \cite[Figure~5(a)]{schafer2005shrinkage}
gives another gene network $G$ inferred from E. coli data,
this time by a so-called shrinkage Gaussian graphical model approach.
Iteratively deleting vertices of degree until there are no vertices left 
certifies that that $G$ is 3-independent.
Again, Theorems~\ref{thm: gross sullivant} and~\ref{thm: uhler gcr mlt} imply $\mlt(G) \le 4$,
and since $G$ has a $K_4$ (on b1583,lacA,yaeM,lacZ), Theorem~\ref{thm: buhl} implies $\mlt(G) \ge 4$, giving equality.

While these illustrative examples are handled by our bounds and  
a simple heuristic algorithm, we stress that many graphs which are $d$-independent
do not have empty $(d+1)$-core and that there is no a priori reason to expect a globally 
$(d-1)$-rigid subgraph of a large sparse graph to be small or easy to spot.  We investigate 
random graph families that have this behavior next.

\subsection{Random graphs near the 2-dimensional rigidity transition}
A very sparse Erd\"os-R\'enyi random graph $G(n,c/n)$ has each of the 
$\binom{n}{2}$ possible edges independently with probability $c/n$.
Hence the vertex degrees have (dependent) binomial distributions 
with parameters $n-1$ and $c/n$.  As $n\to \infty$ the 
vertex degrees approach  Poisson random variables with 
parameter $c$.  Hence, when discussing such graphs we will refer to 
$c$ as the ``expected average degree''.

A central result in the theory of random graphs describes the emergence 
and growth of the $k$-core of an Erd\"os-Rényi random graph $G(n,p)$.
We state a simplified version (using the letter $d$ instead of $k$, 
because it makes more sense for our application).
\begin{thm}[\cite{PSW}]\label{thm: kcore}
For each $d\ge 2$ there are constants $c_d < c'_d$, so that 
\begin{itemize}
    \item If $c < c_d$, then, whp, $G(n,c/n)$ has an empty $(d+1)$-core.  If 
         $c > c_d$, then, whp, $G(n,c/n)$ has a non-empty $(d+1)$-core spanning 
        $\Omega(n)$ vertices.
    \item If $c_d < c < c'_d$, then, whp, the $(d+1)$-core of $G(n,c/n)$ has average 
        degree lower than $2d$.  If $c > c'_d$, then, whp, the $(d+1)$-core 
        has average degree at least $2d$.
\end{itemize}
Moreover, $c'_d < c_{d+1}$.
\end{thm}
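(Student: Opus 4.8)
The plan is to realise the $(d+1)$-core as the terminal state of the \emph{core-stripping process}: starting from $G = G(n,c/n)$, repeatedly delete a vertex of current degree at most $d$ (for concreteness, delete all such vertices at once in parallel rounds), stopping when none remains; what survives is exactly the $(d+1)$-core. The first and central task is to show this process concentrates. Running it in parallel rounds, one tracks the number of surviving vertices together with a statistic for the survivor degree distribution, using that deleting a roughly uniformly chosen set of vertices together with their incident edges from a sparse Erdős--Rényi graph again leaves something Erdős--Rényi--like, so the survivor degrees stay well-approximated at each stage by an explicit Poisson-type law whose parameter one tracks. This can be carried out by Wormald's differential-equation method applied to the round-to-round dynamics, or by analysing the Poisson--Galton--Watson tree that is the local weak limit of $G(n,c/n)$ (equivalently, the Poisson cloning model), following the ``survives from below'' messages along edges. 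Either route also delivers concentration of the core size around its mean.

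Granting concentration, the limiting behaviour is governed by a one-dimensional fixed point. In the tree picture a vertex lies in the core iff at least $d+1$ of its neighbours do, while a neighbour reached along an edge lies in the core iff at least $d$ of its \emph{other} children do; writing $x$ for the probability of the latter event and using that the offspring law is $\mathrm{Po}(c)$, one is forced to $x = h_c(x)$ with $h_c(x) := \Pr[\mathrm{Po}(cx) \ge d]$, and a vertex then lies in the core with limiting probability $\Pr[\mathrm{Po}(cx^*) \ge d+1]$, where $x^* = x^*(c)$ is the largest solution of $x = h_c(x)$ in $(0,1]$ (with $x^* = 0$ if the only solution is $0$). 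Since $d \ge 2$ we have $h_c(x) = (cx)^d/d! + O(x^{d+1})$, so $h_c$ is flat to second order at the origin; after the substitution $\lambda = cx$ one sees that a positive solution first appears when $c$ reaches
\[
  c_d \;:=\; \inf_{\lambda > 0}\ \frac{\lambda}{\Pr[\mathrm{Po}(\lambda) \ge d]},
\]
that $x^*$ jumps discontinuously from $0$ to a positive value at $c = c_d$, and that $x^*$ increases continuously in $c$ thereafter --- this is the ``sudden emergence'' of \cite{PSW}. Combined with the concentration above, this gives the first bullet: whp an empty $(d+1)$-core for $c < c_d$, and a $(d+1)$-core on $\sim n\,\Pr[\mathrm{Po}(cx^*)\ge d+1] = \Omega(n)$ vertices for $c > c_d$.

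For the second bullet and the ``moreover'' clause I would read the edge density off the same fixed point. Standard bookkeeping in the configuration/cloning model gives that the $(d+1)$-core has $\sim \tfrac12\, n\, c\, (x^*)^2$ edges, so writing $\mu = \mu(c) := c\, x^*(c)$ --- an increasing function of $c$, with $x^* = \Pr[\mathrm{Po}(\mu)\ge d]$ --- the average degree of the $(d+1)$-core converges to
\[
  g(\mu) \;:=\; \frac{\mu\,\Pr[\mathrm{Po}(\mu)\ge d]}{\Pr[\mathrm{Po}(\mu)\ge d+1]}.
\]
One checks that $g$ is continuous and increasing, with $g(\mu) \to d+1$ as $\mu \to 0^+$ and $g(\mu) \to \infty$ as $\mu \to \infty$; since $\mu(c)$ is increasing and jumps to a positive value at $c_d$, the map $c \mapsto g(\mu(c))$ is increasing on $(c_d, \infty)$, and the intermediate value theorem produces a unique $c'_d$ with $g(\mu(c'_d)) = 2d$. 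The inequality $c_d < c'_d$ is exactly the statement $g(\mu(c_d^+)) < 2d$, i.e.\ that the core emerges with average degree strictly below the Maxwell value $2d$; this is an elementary but necessary estimate at the critical $\lambda$, to be checked for every $d \ge 2$ from the Poisson tail formulas (for $d = 2$ the critical $\lambda$ solves $e^{\lambda} = 1 + \lambda + \lambda^2$, giving limiting average degree $\approx 3.58 < 4$). Finally $c'_d < c_{d+1}$ amounts to $g(\mu(c_{d+1})) > 2d$; since the $(d+2)$-core is contained in the $(d+1)$-core, $c_{d+1}$ is read off the analogous level-$(d+1)$ fixed point, and comparing the two fixed-point characterisations (or evaluating $g(\mu(c_{d+1}))$ from the explicit formulas --- $\approx 5.4 > 4$ when $d = 2$) gives the inequality. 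Concentration then upgrades each limiting statement to the stated whp claim.

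The hard part is the first paragraph: showing that the stripping process concentrates and that the survivor degree distribution genuinely keeps the tracked Poisson-type form all the way down into the critical window. The difficulty is precisely the ``suddenness'' at $c_d$ --- near threshold the process runs for super-constantly many rounds, the residual graph passes through a near-critical regime where a naive union bound over rounds loses control, and one must separate a ``bulk'' phase handled by concentration of the round dynamics from a ``near-critical'' phase treated by a more delicate argument; this is the technical heart of \cite{PSW}, and while the cloning-model and local-weak-convergence treatments are cleaner, they still need this care. By contrast, once concentration is in hand the remaining ingredients --- the fixed-point analysis of $h_c$, the monotonicity and boundary values of $g$, and the inequalities $c_d < c'_d < c_{d+1}$ --- are routine calculus with Poisson tail probabilities.
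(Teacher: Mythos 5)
This theorem is not proved in the paper at all: it is imported verbatim (in simplified form) from Pittel--Spencer--Wormald \cite{PSW}, so there is no in-paper argument to compare against. Your sketch is a faithful reconstruction of the standard modern route to that result (closer in spirit to the Poisson-cloning / local-weak-limit treatments than to the original PSW death-process analysis, but the same mathematics), and the fixed-point bookkeeping is correct: with $k=d+1$ the message equation $x=\Pr[\mathrm{Po}(cx)\ge d]$, the threshold $c_d=\inf_{\lambda>0}\lambda/\Pr[\mathrm{Po}(\lambda)\ge d]$, and the average-degree functional $g(\mu)=\mu\Pr[\mathrm{Po}(\mu)\ge d]/\Pr[\mathrm{Po}(\mu)\ge d+1]$ all check out, and your $d=2$ numerics ($e^{\lambda}=1+\lambda+\lambda^2$, $c_2\approx 3.35$, emergence degree $\approx 3.58$, so $c_2'\approx 3.59$) reproduce exactly the constants quoted in the paper.

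As a standalone proof, however, it has the gap you yourself flag: everything downstream of concentration is routine, but the concentration of the stripping process through the near-critical window is the entire content of \cite{PSW} and is only described, not established. Two smaller points are also asserted rather than proved: (i) monotonicity of $c\mapsto g(\mu(c))$ on $(c_d,\infty)$, which you need for $c_d'$ to be a well-defined single threshold (the core gains vertices as well as edges as $c$ grows, so this is not a one-line coupling argument); and (ii) the inequalities $g(\lambda_d)<2d$ at emergence and $g(\mu(c_{d+1}))>2d$, which you verify only for $d=2$ but which the theorem needs for every $d\ge 2$ --- a uniform-in-$d$ estimate on Poisson tails at the critical $\lambda_d$ is required, not just two numerical spot checks. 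Since the paper itself treats the whole statement as a black box, your sketch is an acceptable account of \emph{why} it is true, but you should cite \cite{PSW} (or Janson--{\L}uczak) for the concentration step and for the general-$d$ inequalities rather than presenting this as a complete argument.
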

For reference, the value of $c_2\approx 3.35$ and $c'_2\approx 3.59$.  As 
$d$ becomes large, $c'_d$ approaches $2d$.

What is most important for is that $c'_2/n$ is the threshold function for 
an Erd\"os-Rényi random graph to be $2$-independent.
\begin{thm}[\cite{KMT}]
In the notation of Theorem \ref{thm: kcore}, if $c < c'_2$, then, whp, 
$G(n,c/n)$ is $2$-independent\footnote{And, in fact, has no rigid component 
spanning more than $3$ vertices.}.  If $c > c'_2$, then, whp, $G(n,c/n)$
contains a globally $2$-rigid\footnote{That we have global 
rigidity was first noted by Bill Jackson.}
subgraph spanning a $(1-o(1))$-fraction of the $3$-core (hence spanning 
$\Omega(n)$ vertices).
\end{thm}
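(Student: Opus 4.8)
I would prove the two halves separately, routing both through the $3$-core of $G(n,c/n)$ and the $2$-dimensional generic rigidity matroid.

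For the sparse regime $c<c'_2$, the plan is to reduce $2$-independence of $G(n,c/n)$ to that of its $3$-core and then run a first-moment argument. By the Laman--Pollaczek-Geiringer theorem, $2$-independence of a graph is equivalent to $|E(H)|\le 2|V(H)|-3$ over all subgraphs $H$ with $|V(H)|\ge 2$, and a graph fails this exactly when it contains a rigidity circuit: a minimal subgraph $H$ with $|E(H)|=2|V(H)|-2$. Deleting a vertex of degree $\le 2$ from such an $H$ would leave a proper subgraph with still $\ge 2(|V(H)|-1)-2$ edges, contradicting minimality, so every rigidity circuit has minimum degree $\ge 3$; running the peeling process that defines the $3$-core, an easy induction shows no vertex of a rigidity circuit is ever deleted (each retains its $\ge 3$ circuit-edges throughout), so every rigidity circuit of $G$ lies inside the $3$-core $C$. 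It thus suffices to prove $C$ is $2$-independent whp. If $c<c_2$ then $C$ is empty and there is nothing to prove; if $c_2<c<c'_2$, the quantitative form of Theorem~\ref{thm: kcore} gives whp $N:=|V(C)|=\Omega(n)$ and $|E(C)|\le 2N-3$, and conditioned on $N$ and the degree sequence of $C$, the core $C$ is contiguous to a uniformly random graph with that degree sequence. I would then union-bound the probability that some vertex set $S$ has $|E(C[S])|\ge 2|S|-2$: for $|S|=k\le\varepsilon N$ the crude bound $\binom{N}{k}\binom{\binom{k}{2}}{2k-2}(\text{edge density})^{2k-2}$ decays geometrically in $k$; for $\varepsilon N<k\le(1-\delta)N$ the expected number of internal edges of a random $k$-set is below $2k-2$ because the average degree of $C$ is below $4$, and a Chernoff-type bound beats the $\binom{N}{k}$ factor; and for $k$ within $\delta N$ of $N$ one uses the global edge count of $C$. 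The small-$k$ estimate also gives the footnoted strengthening that no rigid component spans more than three vertices.

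For the dense regime $c>c'_2$, I want first a giant rigid subgraph of the $3$-core, then a globally rigid subgraph inside it. By Theorem~\ref{thm: kcore}, $C$ has $\Omega(n)$ vertices and average degree at least $4$ -- for fixed $c$ bounded away from $c'_2$, a linear surplus over $2|V(C)|-3$ edges. \emph{Step one:} run the $2$-dimensional rigidity (pebble-game) percolation on $C$, adding its edges in random order and tracking rigid components in the generic $2$-rigidity matroid, using contiguity of $C$ to a random graph with its degree sequence; in parallel with the birth of the giant connected component one shows that whp the rigid components of $C$ have a unique giant member $R$ spanning $(1-o(1))|V(C)|$ vertices -- this is precisely where the rigidity threshold ``average degree $2d=4$'' enters, and is essentially the content of \cite{KMT}. \emph{Step two:} by a theorem of Jackson and Jordán, a graph on $\ge 4$ vertices is globally $2$-rigid iff it is $3$-connected and redundantly rigid, so it suffices to extract a large $3$-connected, redundantly rigid subgraph of $R$. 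Three-connectivity of the bulk of $C$ follows from standard vertex-connectivity estimates for random graphs of minimum degree $3$; redundant rigidity -- every edge lying in a rigidity circuit -- follows because the linear surplus of edges in $R$ is spread out enough that, after deleting an $o(|V(C)|)$-size exceptional set of vertices, every remaining edge closes a circuit. Deleting that exceptional set leaves the required globally $2$-rigid subgraph.

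I expect the sparse regime to be routine apart from making the union bound uniform over linear-sized subsets near the threshold (which is why one wants the core's contiguity to a random graph with its \emph{degree sequence}, not merely its edge count). The real work is in the dense regime: locating a \emph{linear-sized} rigid component right at the $2d=4$ threshold (the step that invokes \cite{KMT}), and then the upgrade to global rigidity -- simultaneously certifying $3$-connectivity and, the delicate part, redundant rigidity of almost all of the giant rigid component, which is tightest exactly when $c$ approaches $c'_2$. I expect that redundant-rigidity certification to be the main obstacle.
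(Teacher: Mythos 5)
The paper offers no proof of this statement: it is imported wholesale from \cite{KMT}, with the strengthening from ``rigid'' to ``globally rigid'' attributed, in a footnote, to an observation of Bill Jackson. So the only fair comparison is between your sketch and the actual argument in \cite{KMT}. Your sparse-regime reduction is correct and is essentially the standard one: a $2$-dimensional rigidity circuit has minimum degree at least $3$ (your degree-counting argument is right), hence survives the peeling that produces the $3$-core, so $2$-independence of $G(n,c/n)$ reduces to $2$-independence of its $3$-core, which for $c_2<c<c'_2$ has average degree below $4$. The remaining union bound over subsets of the core is where the real technical content of \cite{KMT} lies (contiguity of the core to a random graph with its degree sequence, and uniformity of the bound over linear-sized sets as $c\to c'_2$), and you have correctly located but not discharged that work.

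The genuine gaps are in the dense regime. First, your ``Step one'' --- the emergence of a rigid component spanning a $(1-o(1))$-fraction of the $3$-core once its average degree exceeds $4$ --- is justified only by the remark that it ``is essentially the content of \cite{KMT}.'' Since that emergence \emph{is} the theorem being proved, this is circular; a blind proof would need to actually run the argument (in \cite{KMT} this is done by analyzing the rank of the generic rigidity matroid on the core via matroid union / pebble-game counts together with the subgraph-density estimates from the sparse case, not by a naive edge-by-edge percolation coupling). Second, the upgrade to global rigidity via Jackson--Jord\'an ($3$-connected plus redundantly rigid) is the right target, but you supply no mechanism for certifying redundant rigidity of almost all of the giant rigid component, and you acknowledge this is the hardest step. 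As it stands the proposal is a faithful roadmap of where the difficulties are, with correct reductions framing them, but the two central claims (giant rigid component at the $2d=4$ threshold, and redundant rigidity plus $3$-connectivity of its bulk) are asserted rather than proved.
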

Using Theorem \ref{thm: equality of gcr and mlt}, we get 
immediately:
\begin{thm}\label{thm: random mlt 3 4}
In the notation of Theorem \ref{thm: kcore}, 
\begin{itemize}
    \item If $c_2 < c < c'_2$, then, whp, $\mlt(G(n,c/n)) = 3$.
    \item If $c'_2 < c < c_3$, then, whp $\mlt(G(n,c/n)) = 4$.
\end{itemize}
In particular, in this range, $\mlt(G(n,c/n)) = \gcr(G(n,c/n))$, whp.
\end{thm}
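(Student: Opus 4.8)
The plan is to deduce Theorem~\ref{thm: random mlt 3 4} by combining the two sharp threshold results already quoted with the characterization in Theorem~\ref{thm: equality of gcr and mlt}. Since Theorem~\ref{thm: equality of gcr and mlt} tells us that $\mlt(G) = \gcr(G)$ whenever $\gcr(G) \le 4$, it suffices in both cases to pin down $\gcr(G(n,c/n))$ whp, and by Theorem~\ref{thm: gross sullivant} this amounts to identifying the smallest dimension $d$ in which the random graph is $d$-independent.

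\textbf{Case $c_2 < c < c'_2$.} First I would establish the lower bound $\mlt(G(n,c/n)) \ge 3$. By Theorem~\ref{thm: kcore}, for $c > c_2$ the graph $G(n,c/n)$ whp has a non-empty $3$-core spanning $\Omega(n)$ vertices; in particular it contains a cycle, and Uhler's computation that the MLT of a cycle is $3$ (together with monotonicity of the MLT under subgraphs, noted in the introduction) gives $\mlt(G(n,c/n)) \ge 3$ whp. Alternatively, one can argue that a non-empty $3$-core forces $2$-dependence via the Laman--Pollaczek-Geiringer count, so $\gcr > 3$ by Theorem~\ref{thm: gross sullivant}. For the upper bound, since $c < c'_2$, the theorem of \cite{KMT} says $G(n,c/n)$ is whp $2$-independent, so by Theorem~\ref{thm: gross sullivant} $\gcr(G(n,c/n)) \le 3$ whp, hence $= 3$; then Theorem~\ref{thm: equality of gcr and mlt} applies (as $\gcr \le 4$) and yields $\mlt(G(n,c/n)) = \gcr(G(n,c/n)) = 3$ whp.

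\textbf{Case $c'_2 < c < c_3$.} Here I would first show $\gcr(G(n,c/n)) \le 4$ whp. Since $c < c_3$, Theorem~\ref{thm: kcore} gives that the $4$-core is whp empty; the $k$-core bound attributed to \cite{BBS-typical} via Uhler's theorem (stated in the introduction) then gives $\mlt \le 3$... which would be too strong, so instead I use that an empty $4$-core means every subgraph has a vertex of degree $\le 3$, and by a standard inductive argument such graphs are $3$-independent (each vertex of degree $\le 3$ can be removed and re-added without creating a stress in dimension $3$), hence $\gcr(G(n,c/n)) \le 4$ whp by Theorem~\ref{thm: gross sullivant}. For the matching lower bound $\gcr(G(n,c/n)) \ge 4$, i.e. $2$-dependence: since $c > c'_2$, the theorem of \cite{KMT} gives a globally $2$-rigid subgraph spanning $\Omega(n)$ vertices, which is in particular $2$-dependent (it is a subgraph containing a $2$-rigid subgraph on $\ge 4$ vertices, hence supports a non-zero equilibrium stress in dimension $2$), so $\gcr(G(n,c/n)) > 3$ whp. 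Combining, $\gcr(G(n,c/n)) = 4$ whp, and since $\gcr \le 4$ we may invoke Theorem~\ref{thm: equality of gcr and mlt} to conclude $\mlt(G(n,c/n)) = 4$ whp. The final sentence, that $\mlt = \gcr$ whp throughout this range, is then immediate from the two cases. I expect the only genuinely delicate point to be making sure the quoted $k$-core/rigidity thresholds are invoked in the forms stated — in particular that "average degree $< 2d$ below $c'_d$" really does give $d$-independence via \cite{KMT}, and that the global $2$-rigid subgraph above $c'_2$ is large enough that no appeal to additional structure is needed; everything else is bookkeeping with Theorems~\ref{thm: gross sullivant} and~\ref{thm: equality of gcr and mlt}.
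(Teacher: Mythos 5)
Your proof follows essentially the same route as the paper, which obtains the theorem ``immediately'' by pinning down $\gcr(G(n,c/n))$ whp in each range (via Theorem~\ref{thm: kcore}, the rigidity-transition theorem of \cite{KMT}, and Theorem~\ref{thm: gross sullivant}), observing $\gcr\le 4$, and invoking Theorem~\ref{thm: equality of gcr and mlt}. The main line of your argument is correct, but two of your side justifications are wrong, though neither is load-bearing. First, in the range $c_2<c<c'_2$ your ``alternative'' claim that a non-empty $3$-core forces $2$-dependence (hence $\gcr>3$) is false: a $3$-core has average degree only about $3$, which does not violate the count $m'\le 2n'-3$, and indeed in this range the graph is $2$-independent whp, so $\gcr=3$ rather than $\gcr>3$. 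The correct lower bound is the one you give first: the graph contains a cycle, hence is $1$-dependent, hence $\gcr\ge 3$ (equivalently $\mlt\ge 3$ by monotonicity). Second, in the range $c'_2<c<c_3$ your parenthetical reason that a ``$2$-rigid subgraph on $\ge 4$ vertices supports a non-zero equilibrium stress'' is not right as stated: minimally $2$-rigid (Laman) graphs are $2$-rigid and $2$-independent. What you need is that a \emph{globally} $2$-rigid graph on $\ge 4$ vertices is $2$-dependent, which follows from the redundant-rigidity necessary condition of \cite{hendrickson1992} (a redundantly rigid graph is not minimally rigid, hence dependent); alternatively, Theorem~\ref{thm: kcore} already gives that for $c>c'_2$ the $3$-core has average degree at least $4$, i.e.\ $m'\ge 2n'>2n'-3$, which violates $(2,3)$-sparsity directly, or one can use Theorem~\ref{thm: grn mlt} to get $\mlt\ge\grn^*+2\ge 4$ from the globally $2$-rigid subgraph. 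With those two repairs the argument is exactly the paper's.
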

As a comparison, since Erd\"os-Rényi graphs with 
$p = c/n$, for any $c > 0$, 
are well-known to have clique number $3$, whp, 
Theorem \ref{thm: buhl} would give a lower bound of $3$
across this entire range.  Combining Theorem \ref{thm: uhler gcr mlt}
and \cite[Corollary~4.5]{BBS-typical},  we would get an 
upper bound of $4$ on the MLT in the range $c_2 < c <c'_2$.
Using our methods, we get an exact result in the whole range.

\subsection{Conjectures and a question}
The following conjecture generalizes Theorem 
\ref{thm: random mlt 3 4}.
\begin{conj}\label{conj: random rigid}
Let $c > 0$ be fixed.  Then, $\mlt(G(n,c/n)) = \gcr(n,c/n)$, whp.
\end{conj}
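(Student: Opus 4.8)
The plan is to sandwich $\mlt$ between Uhler's universal upper bound and the new lower bound of Theorem~\ref{thm: grn mlt}. Since $\mlt(G)\le\gcr(G)$ always (Theorem~\ref{thm: uhler gcr mlt}), the conjecture reduces to proving $\mlt(G(n,c/n))\ge\gcr(G(n,c/n))$ whp, and by Theorem~\ref{thm: grn mlt} it is enough to show that, whp, $G(n,c/n)$ contains a globally $(k-2)$-rigid subgraph on at least $k$ vertices, where $k=\gcr(G(n,c/n))$. (Equivalently, via Theorems~\ref{thm: main mlt stress} and~\ref{thm: gross sullivant}, one wants a generic PSD equilibrium stress in the top dimension $k-2$ in which $G(n,c/n)$ is dependent; such a stress is obtained by padding with zeros the maximal-rank PSD stress of such a subgraph, using \cite{cgt1}.) This is precisely how the $d=2$ case, Theorem~\ref{thm: random mlt 3 4}, is obtained: there $k\in\{3,4\}$, and when $k=4$ the certifying globally $2$-rigid subgraph is the one supplied by \cite{KMT}.

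The argument then splits into two parts. \emph{Part~(A): determine $k=\gcr(G(n,c/n))$ whp}, i.e.\ (by Theorem~\ref{thm: gross sullivant}) the smallest dimension $d$ in which $G(n,c/n)$ is generically $d$-independent. Extrapolating from the $d=2$ case and the core thresholds $c_d<c'_d$ of Theorem~\ref{thm: kcore}, one expects generic $d$-independence of $G(n,c/n)$ to persist whp exactly up to $c=c'_d$, where the average degree of the $(d+1)$-core first reaches $2d$ and some subgraph fails to be $(d,\binom{d+1}{2})$-sparse; this would pin $k$ down on each interval of $c$. \emph{Part~(B): inside the dense core, assemble a globally $(k-2)$-rigid subgraph spanning $\Omega(n)$ vertices}. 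For $k-2=2$ this is \cite{KMT}. For larger $k$ one wants the analogue, built from generic $(k-2)$-rigidity of a large subgraph of the core (which the same random-graph analysis as in Part~(A) should provide) together with enough edge-redundancy and vertex-connectivity; one would assemble it via $1$-extension constructions and promote rigidity to global rigidity using Connelly's PSD-stress criterion or Tanigawa's vertex-redundant-rigidity criterion, finishing with a probabilistic gluing step that merges the local pieces into one large one.

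The main obstacle is Part~(A): the combinatorial characterization of generic $d$-rigidity is open for $d\ge3$, and $(d,\binom{d+1}{2})$-sparsity is genuinely \emph{not} sufficient for generic $d$-independence --- witness the double banana --- so even identifying $\gcr(G(n,c/n))$ is not currently within reach unconditionally. The realistic route is to isolate, and prove, the random-graph analogue: whp $G(n,c/n)$ is generically $d$-independent if and only if every subgraph is $(d,\binom{d+1}{2})$-sparse. This is plausible because every \emph{fixed} sparse-but-dependent obstruction has more edges than vertices, hence expected count tending to $0$ in $G(n,c/n)$; the real work is to rule out large such obstructions inside the cores, a random-graph estimate in the spirit of \cite{KMT}. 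Part~(B) is not automatic either, since Hendrickson's conditions are insufficient for global rigidity when $d\ge3$, forcing one to argue via stress-based or vertex-redundancy-based criteria instead. A plausible first unconditional increment is the interval $c\in(c_3,c'_3)$: there the lower bound $\mlt(G(n,c/n))\ge4$ already follows from Theorem~\ref{thm: grn mlt} together with \cite{KMT}, so the conjecture comes down to certifying that the (sparse) $4$-core of $G(n,c/n)$ is generically $3$-independent whp. Throughout, the Blekherman--Sinn phenomenon $\mlt<\gcr$ (Theorem~\ref{thm: blekherman Kmn}) should be excluded automatically, since the delicately balanced bipartite circuits forcing all stresses to be indefinite are too dense to occur in $G(n,c/n)$ whp.
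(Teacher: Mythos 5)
This statement is Conjecture~\ref{conj: random rigid}; the paper does not prove it and offers only partial results (Theorem~\ref{thm: random mlt 3 4} for the range $c<c_3$, Theorem~\ref{thm: equality of gcr and mlt} for small MLT/GCR) together with the polylog-gap bounds of Theorem~\ref{thm: rand rigid 2} and Corollary~\ref{cor: rand mlt} and numerical experiments as evidence. So there is no ``paper proof'' to match your attempt against, and your proposal --- candidly --- is a research program rather than a proof. The overall frame is the right one and is exactly the one the paper uses where it can: $\mlt\le\gcr$ always, and the lower bound $\mlt\ge\gcr$ is to be certified by a globally $(k-2)$-rigid subgraph via Theorem~\ref{thm: grn mlt} (this is also the content of the paper's stronger Conjecture~\ref{conj: random stressable subgraph}). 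But both of your Parts (A) and (B) are genuinely open, and naming them does not close them.

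Concretely: Part (A) requires identifying $\gcr(G(n,c/n))$ whp for all $c$, i.e.\ determining the least $d$ for which the random graph is generically $d$-independent. For $d\ge 3$ there is no combinatorial characterization of generic $d$-independence, and your proposed surrogate --- that whp $d$-independence coincides with hereditary $\bigl(d,\binom{d+1}{2}\bigr)$-sparsity --- is itself an unproven conjecture; the first-moment argument you sketch kills fixed-size obstructions but says nothing about obstructions of size $\Omega(n)$ sitting inside the $(d+1)$-core, which is precisely where the difficulty lives. Even the ``first increment'' $c\in(c_3,c'_3)$ you single out would require proving $3$-independence of a sparse random graph with a nonempty $4$-core, which is not known. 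Part (B) has the same character: for $d\ge 3$ there is no combinatorial route to generic global $d$-rigidity (Hendrickson's conditions are insufficient, as you note), and no analogue of \cite{KMT} producing a spanning globally $d$-rigid subgraph of the core is available; invoking ``$1$-extensions, stress-based criteria, and a probabilistic gluing step'' is a wish list, not an argument. Finally, your closing claim that the Blekherman--Sinn indefinite-stress phenomenon is ``excluded automatically'' because the relevant bipartite circuits are too dense is also an assertion without proof; a priori one must rule out \emph{any} mechanism by which all generic stresses in the critical dimension are indefinite, not just the known bipartite one. In short, the proposal correctly locates the conjecture within the paper's framework but does not supply the missing mathematics.
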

In other words, we conjecture that for \emph{all} very sparse 
Erd\"os-Rényi random graphs, the MLT and GCR coincide whp.

We will go further and make a structural conjecture that goes 
beyond Theorem \ref{thm: random mlt 3 4} even for GCR $4$.  We need 
some terminology.  A edge $ij$ of a graph $G$ is \emph{$d$-redundant} 
if there is some generic $d$-dimensional framework $(G,p)$ 
that has an equilibrium stress with a non-zero coefficient on the 
edge $ij$.  The \emph{$d$-redundant subgraph of $G$} is the 
subgraph comprising all the $d$-redundant edges.
\begin{conj}\label{conj: random stressable subgraph}
Let $c > 0$ be fixed.  Then, whp, if $G = G(n,c/n)$ has 
GCR $d+1 \ge 4$, the $(d-1)$-redundant subgraph of 
$G$ is generically globally $(d-1)$-rigid.
\end{conj}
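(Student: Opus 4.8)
\emph{Proof strategy.} The plan is to combine a structural description of the $(d-1)$-redundant subgraph $R \subseteq G$, valid with high probability, with a criterion certifying generic global rigidity of $R$. First record the soft facts. Since $\gcr(G) = d+1$, Theorem~\ref{thm: gross sullivant} gives that $G$ is $d$-independent and $(d-1)$-dependent, so $R$ is non-empty; being a subgraph of $G$ it is again $d$-independent; and since equilibrium stresses vanish on matroid coloops, $R$ carries every equilibrium stress of every generic $(d-1)$-dimensional realisation of $G$, so $R$ equals its own $(d-1)$-redundant subgraph. Standard rigidity-matroid structure then shows $R$ is a union of its maximal $(d-1)$-redundantly rigid subgraphs, any two of which meet in at most $d-2$ vertices, and that every vertex of $R$ has degree at least $d$ inside $R$ (so $R$ has average degree at least $2(d-1)$ and lies in the $d$-core of $G$). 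By Hendrickson's necessary conditions, "generically globally $(d-1)$-rigid" forces $R$ to be $d$-connected and $(d-1)$-redundantly rigid, so the content of the conjecture splits into: (i) for the graphs arising here $R$ is a single $(d-1)$-redundantly rigid, $d$-connected graph rather than a nontrivial low-connectivity gluing of smaller pieces; and (ii) in this regime those necessary conditions are also sufficient for generic global rigidity.

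For the structural part (i), I would argue that whp the redundant edges form a single $d$-connected chunk spanning $\Omega(n)$ vertices. A $(d-1)$-dimensional rigidity circuit on $v$ vertices has $(d-1)v - \binom{d}{2} + 1$ edges, which for $d \ge 3$ exceeds $v$ by a linear amount; a first-moment estimate then rules out $(d-1)$-circuits on a bounded number of vertices in $G(n,c/n)$, so $R$ has no small component. Combining this with the overlap bound above and with known expansion and connectivity of the relevant core of $G(n,c/n)$ near the $(d-1)$-rigidity transition — in the spirit of Theorem~\ref{thm: kcore} and the analysis behind the two-dimensional results of \cite{KMT} — one gets that all redundant edges live in, and span, a single well-connected core, which is then $(d-1)$-redundantly rigid since it has strictly more than $(d-1)v - \binom{d}{2}$ edges. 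This step is technical but should follow by adapting the random-graph rigidity arguments already in the literature to dimension $d-1$.

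The main obstacle is part (ii): in dimension $d-1 \ge 3$ there is no combinatorial characterisation of generic global rigidity, so "$d$-connected and $(d-1)$-redundantly rigid" is not known to suffice. For $d = 3$ — the $\gcr = 4$ case the paper highlights — one invokes the Jackson--Jord\'an theorem, which makes the implication an equivalence in dimension $2$, and part (i) then completes the argument; this is also consistent with Theorem~\ref{thm: equality of gcr and mlt}, which already gives $\mlt(G) = 4$ and hence $\grn^*(R) \le 2$, so globally $2$-rigid is the tightest possible conclusion. For $d \ge 4$ I see two non-routine routes. One is to exhibit, whp, a Henneberg-type construction of $R$ from a copy of $K_d$ using $0$- and $1$-extensions together with gluings along copies of $K_d$, each of which preserves generic global rigidity; the difficulty is that a random graph need not admit such a construction "from the outside in", so one would first have to locate a rigid core that does and then add the remaining edges as pure redundancy. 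The other is to produce, whp and explicitly, a generic configuration of $R$ whose equilibrium-stress matrix attains the maximal rank $|V(R)| - d$, so that Connelly's sufficient condition for global rigidity — and the PSD-stress machinery from \cite{cgt1} underlying Theorem~\ref{thm: grn mlt} — applies; this reduces the problem to controlling the rank of a random stress matrix in dimension at least $3$, which is exactly the kind of question that is currently open. I would therefore expect a complete proof only for $d = 3$, with the general case reducing cleanly to the self-contained statement that a random redundantly $(d-1)$-rigid, $d$-connected graph of the type occurring here is generically globally $(d-1)$-rigid.
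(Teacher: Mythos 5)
The statement you are proving is Conjecture~\ref{conj: random stressable subgraph}: the paper offers no proof of it, only partial theoretical evidence (Theorems~\ref{thm: rand rigid 2} and~\ref{thm: rand mlt lower}, which concern rigidity at a polylogarithmic edge density, not the constant-average-degree regime of the conjecture) and computational experiments. So there is no paper proof to compare against, and the only question is whether your argument actually closes the conjecture. It does not, and you say so yourself: both of the steps you label (i) and (ii) are left open. Step (i) --- that whp \emph{every} $(d-1)$-redundant edge of $G(n,c/n)$ lies in a single $d$-connected, $(d-1)$-redundantly rigid subgraph --- is precisely the random-graph structural statement whose absence makes this a conjecture; it is not a routine adaptation of \cite{KMT}, which for $d=3$ produces one large globally $2$-rigid piece inside the $3$-core but says nothing about whether all $1$-dependent, er, all $2$-redundant edges are absorbed into it (indeed the paper stresses that the conjecture ``goes beyond Theorem~\ref{thm: random mlt 3 4} even for GCR $4$''). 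Your first-moment calculation rules out circuits on a \emph{bounded} number of vertices, but you still need to exclude medium-sized redundantly rigid pieces and to prove the $d$-connectivity and coalescence of the large piece, none of which is carried out. Step (ii) for $d\ge 4$ is blocked by the lack of any combinatorial sufficient condition for generic global rigidity in dimension $\ge 3$, as you acknowledge.

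There is also a concrete flaw in the scaffolding you call ``standard rigidity-matroid structure.'' The decomposition of the redundant subgraph into maximal redundantly rigid pieces that pairwise overlap in few vertices is a dimension-$2$ phenomenon: it rests on the fact that circuits of the $2$-dimensional rigidity matroid are rigid. In dimension $d-1\ge 3$ this fails --- the double banana (two copies of $K_5$ glued along two vertices) is a circuit of the $3$-dimensional rigidity matroid that is not rigid --- so for $d\ge 4$ the redundant edges need not be covered by redundantly rigid subgraphs at all, and the cut-vertex/overlap bookkeeping you rely on in part (i) has no foundation. In short: for $d=3$ your outline correctly identifies Berg--Jord\'an/Jackson--Jord\'an type results as the right tool for step (ii) and reduces the conjecture to a genuinely open random-graph statement; for $d\ge 4$ even the deterministic structure theory you assume is wrong. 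The proposal should be regarded as a discussion of obstacles, not a proof.
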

This conjecture implies that the MLT and GCR are equal, whp, 
using Theorem \ref{thm: grn mlt}, but with the added 
precision of identifying the globally rigid subgraph that 
certifies the lower bound.  
The conjecture does not 
include GCR $3$ and lower, where we do not expect it 
to be true.  The reason is that, for $c < c'_2$, 
the $1$-redundant subgraph is not $2$-connected 
whp, and $2$-connectivity is necessary for 
generic global rigidity \cite{hendrickson1992}.
We provide theoretical and experimental evidence for the 
conjecture in the next section.  

We conclude with a question, which does not 
seem empirically resolved by our experiments.
\begin{ques}\label{ques: random stressable core}
Let $c > 0$ be fixed.  Is it true that, whp, if 
$G = G(n,c/n)$ has MLT $d + 1 \ge 4$ that the $(d-1)$-redundant 
subgraph of $G$ is exactly the $d$-core?
\end{ques}
A positive answer to the question would imply that the MLT (and also 
global rigidity) of sparse Erd\"os-Rényi random graphs has the 
same evolution as the $d$-cores.

\subsection{Theoretical evidence for the conjectures}
Aside from it being true for $d=2$, a weaker 
result for $d\ge 3$ provides evidence for the 
conjecture.  In Appendix \ref{sec:app:rand} we will show the 
following result\footnote{While this paper was in preparation, 
Lew, Nevo, Peled and Raz \cite{lew2022} found the sharp threshold 
for rigidity.  However, their improved result does not lead to a qualitatively 
better bound on the MLT than we state here.}.
\begin{thm}\label{thm: rand rigid 2}
Let $d\ge 1$.  There is a $C_d > 0$ and a $k\in \NN$ such that, whp,\\
$G(n,C_d(\log n)^k/n)$ is $d$-rigid.
\end{thm}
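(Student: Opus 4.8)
The plan is to bootstrap $d$-rigidity from a well-chosen spanning subgraph built by a "coupon-collector plus gluing" strategy. First I would fix a partition of the $n$ vertices into $\Theta(n/d)$ blocks of size roughly $d+1$ (up to a constant), and on each block I would look for a copy of $K_{d+2}$, or more economically a minimally $d$-rigid graph on $d+2$ vertices. Since $G(n, p)$ with $p = C_d(\log n)^k/n$ restricted to a fixed set of $d+2$ vertices contains all $\binom{d+2}{2}$ edges with probability roughly $p^{\binom{d+2}{2}}$, and there are $\Theta(n)$ disjoint blocks, a first-moment/second-moment argument shows that whp almost every block spans a $d$-rigid subgraph once $p \gtrsim (\log n/n)^{1/\binom{d+2}{2}}$; this is much weaker than the hypothesis, so any polylog power suffices. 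The rigid blocks are the "rigid atoms" we will glue together.

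Next I would glue the rigid atoms using the standard gluing lemma from combinatorial rigidity: if two $d$-rigid subgraphs share at least $d$ vertices, their union is $d$-rigid. So it suffices to build, on top of the atoms, a connectivity structure certifying that the atoms can be successively merged along $d$-fold overlaps. The cleanest route is to pass to an auxiliary "block graph": contract each rigid atom and ask that the atoms, together with the ambient random edges, admit a spanning "$d$-tree-like" assembly. Concretely, I would argue that whp every vertex outside the atoms has $\ge d$ neighbors inside a single already-assembled rigid component (vertex $d$-connectivity of $G(n,p)$ holds comfortably for $p$ polylog$/n$, in fact for $p \gg \log n/n$), and that whp any two atoms are joined by $\ge d$ vertex-disjoint short paths through low-degree vertices that can themselves be "coned" into rigidity. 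Iterating the gluing lemma across all atoms and then mopping up the remaining $o(n)$ stray vertices via $d$ independent neighbors yields a $d$-rigid spanning subgraph, hence $G$ is $d$-rigid.

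I expect the main obstacle to be the gluing/assembly step rather than the existence of atoms. The issue is that the gluing lemma needs a genuine $d$-vertex overlap of already-rigid pieces, and stray degree-$\le d$ vertices (which necessarily exist when $p$ is only polylogarithmic) do not automatically sit inside any rigid subgraph; one must show each such vertex has exactly $\ge d$ neighbors that are spread across a single rigid component (not fragmented among several), and iterate this in the right order so the component grows monotonically. Handling the dependency between "which blocks became rigid atoms" and "where the stray vertices attach" requires either a careful exposure martingale / switching argument or a union bound over assembly orders, and getting the polylog exponent $k$ right (it will come from needing both $K_{d+2}$-density in blocks and enough disjoint connecting paths) is where the bookkeeping concentrates. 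An alternative that may streamline this: invoke a known sufficient condition for generic $d$-rigidity in terms of high minimum degree plus expansion — e.g. that $G(n,p)$ is $d$-rigid whp once it is, say, $(2d)$-connected with a linear-size "core", which is implied for $p$ a large-enough polylog over $n$ — and cite it in place of the hand-built assembly; the footnote's reference to \cite{lew2022} suggests such machinery is available and would make the polylog statement essentially immediate, with the appendix proof giving a self-contained weaker version.
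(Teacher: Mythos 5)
Your plan fails at the very first step, and the failure is quantitative rather than a matter of bookkeeping. At $p = C_d(\log n)^k/n$ the expected number of copies of $K_{d+2}$ in the \emph{entire} graph is $\binom{n}{d+2}p^{\binom{d+2}{2}} = n^{(d+2)(1-d)/2+o(1)}$, which tends to $0$ for every $d\ge 2$; so whp there are no complete atoms anywhere, let alone one per block. You cannot rescue this with smaller minimally $d$-rigid atoms either: a first-moment bound shows that whp every subgraph of $G(n,\mathrm{polylog}(n)/n)$ on $v=O(1)$ vertices spans at most $v+O(1)$ edges, while a $d$-rigid graph on $v\ge d+2$ vertices needs $dv-\binom{d+1}{2}>v$ edges once $d\ge 2$. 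Sparse random graphs at polylogarithmic average degree are locally tree-like, and trees are maximally flexible, so there are simply no constant-size rigid pieces to glue; the inequality you quote goes the wrong way (the hypothesis $p=\mathrm{polylog}(n)/n$ is far \emph{below} the threshold $(\log n/n)^{1/\binom{d+2}{2}}$ you would need, not far above it). Your proposed fallback --- that some fixed connectivity such as $2d$-connectivity implies generic $d$-rigidity --- is a theorem only for $d\le 2$ (Jackson--Jord\'an's ``$6$-connected implies $2$-rigid''); for $d\ge 3$ no such statement is known, and whether any connectivity bound suffices is a well-known open problem. So neither route closes the argument.

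The paper's proof is global rather than local, and this is forced by the obstruction above. It treats the edge set of $G(n,p)$ as a random pattern of observed entries of a generic symmetric rank-$(d+1)$ matrix, invokes the Cand\`es--Tao nuclear-norm recovery theorem (together with the existence of generic strongly incoherent matrices) to conclude that whp the pattern is \emph{finitely completable} in rank $d+1$, and then uses the Gortler--Thurston/Saliola--Whiteley equivalence between finite completability of the pattern (plus the diagonal) and generic local $d$-rigidity in a pseudo-Euclidean, hence Euclidean, space. The polylog factor $(\log n)^k$ comes from the incoherence parameter in Cand\`es--Tao, not from any density of small dense subgraphs. If you want a combinatorial proof at this edge density you would need the machinery of Lew--Nevo--Peled--Raz, whose argument is likewise not an atoms-and-gluing construction.
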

We now get, by combining Theorem \ref{thm: rand rigid 2} and 
Theorem \ref{thm: rnr mlt} below:
\begin{thm}\label{thm: rand mlt lower}
Let $d\ge 1$.  There is a $M_d > 0$ and $k\in \NN$, such that, whp,\\
$\mlt(G(n,M_d(\log n)^k/n)) \ge d$.
\end{thm}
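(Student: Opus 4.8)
The plan is a short reduction: plug the random-graph rigidity estimate of Theorem~\ref{thm: rand rigid 2} into a general lower bound for the MLT in terms of rigidity (presumably the content of Theorem~\ref{thm: rnr mlt}). The bound I want to establish — call it the \emph{rigidity bound} — is: \emph{if $G$ is $d$-rigid and has at least $d+2$ vertices, then $\mlt(G)\ge d$}. Granting it, the statement is immediate: by Theorem~\ref{thm: rand rigid 2} there are $C_d>0$ and $k\in\NN$ with $G(n,C_d(\log n)^k/n)$ $d$-rigid whp; this random graph has $n$ vertices, which exceeds $d+2$ for large $n$, so the rigidity bound gives $\mlt(G(n,C_d(\log n)^k/n))\ge d$ whp, and we take $M_d=C_d$. (If the rigidity bound carries an index offset, one applies Theorem~\ref{thm: rand rigid 2} with the matching index and renames the constant; only the rigidity bound has content.)

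For the rigidity bound I would use the stress reformulation, Theorem~\ref{thm: main mlt stress}: for $d\ge 2$ (the case $d=1$ being trivial, as $\mlt\ge 1$ always), it suffices to exhibit \emph{some} generic $(d-2)$-dimensional framework $(G,p)$ carrying a nonzero \emph{positive semidefinite} equilibrium stress. Stresses are plentiful for free: a generic $(d-2)$-dimensional framework on $n$ vertices has rigidity-matrix rank at most $(d-2)n-\binom{d-1}{2}$, whereas $d$-rigidity forces $|E(G)|\ge dn-\binom{d+1}{2}$, so the stress space of $(G,p)$ has dimension at least $2n-(2d-1)$, which is positive once $n\ge d+2$. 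The task is to locate a PSD element of this space, and here one must use the rigidity of $G$ structurally, not merely its edge count. My first attempt would be to realize $p$ as an orthogonal projection of a generic configuration lying on a sphere (or paraboloid) in a higher-dimensional space: projections preserve equilibrium stresses verbatim (the stress matrix is unchanged) and preserve genericity, so a ``convexity'' stress — the canonical PSD stress attached to the $1$-skeleton of a convex polytope, or to a spherical framework — could be pulled back to $(G,p)$, with the rigidity of $G$ being what guarantees such a stress is available and non-trivial.

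An alternative, possibly cleaner, route is through the combinatorial bound $\mlt(G)\ge\grn^*(G)+2$ of Theorem~\ref{thm: grn mlt}: it would be enough to show that a $d$-rigid graph on at least $d+2$ vertices contains a globally $(d-2)$-rigid subgraph on at least $d$ vertices, which one could hope to extract using that a $d$-rigid graph has minimum degree at least $d$ together with known structure theorems for globally rigid graphs. Either way, the main obstacle is exactly this last step: extracting sign information — a PSD stress, or a globally rigid subgraph of the right dimension — from plain rigidity. The abundance of stresses handed to us by the edge count at polylog density (indeed, with dimension growing linearly in $n$) says nothing about definiteness; turning the extra rigidity of $G$ into that definiteness is the crux, and everything else (choosing $M_d$ and $k$, and the whp bookkeeping) is routine once the rigidity bound is in hand.
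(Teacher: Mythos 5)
Your first paragraph is exactly the paper's proof: Theorem~\ref{thm: rand mlt lower} is obtained by combining Theorem~\ref{thm: rand rigid 2} with Theorem~\ref{thm: rnr mlt}, and the latter is precisely the ``rigidity bound'' you want --- it even gives the stronger conclusion $\mlt(G)\ge \rignum(G)+1\ge d+1$ whenever $G$ is $d$-rigid on at least $d+1$ vertices, so taking $M_d=C_d$ finishes the argument. Since Theorem~\ref{thm: rnr mlt} is a standalone result of the paper, you may simply cite it and stop after your first paragraph.

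The remaining two paragraphs, where you try to re-derive the rigidity bound, do contain a genuine gap, which you yourself flag: the sphere/paraboloid-projection idea is not developed into an argument, and there is no reason a projected ``convexity'' stress should restrict to a stress supported on the edges of $G$, nor that plain $d$-rigidity makes one available. The way the paper closes this gap is your \emph{alternative} route, made precise: Jord\'an's theorem (Lemma~\ref{lem: inf rigid d to gr d -1}) says that a $(d+1)$-rigid graph is already globally $d$-rigid --- no subgraph extraction or degree argument is needed, the whole graph works --- and then Theorem~\ref{thm: gen ur} (from \cite{cgt1}) produces a generic framework with a nonzero PSD equilibrium stress, which feeds into Theorem~\ref{thm: main mlt stress}. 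Your dimension count for the stress space is correct but, as you note, irrelevant to definiteness; the sign information comes entirely from global rigidity via \cite{cgt1}, not from the abundance of stresses.
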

Since, whp, the graphs in Theorem \ref{thm: rand mlt lower} have
clique number $3$, the lower bound from Theorem \ref{thm: buhl}
becomes increasingly ineffective.  On the other side, we do not 
have a good upper bound on $\gcr(G(n,M_d(\log n)^k/n))$.
A simple bound on the GCR is based on maximum degree.
If a graph $G$ has maximum degree $\Delta$
then it is $\Delta$-independent, so we can use the fact that, 
 whp, the maximum degree of $G(n,M_d(\log n)^k/n)$ is 
 $O(\operatorname{polylog}(n))$ to deduce:
\begin{cor}\label{cor: rand mlt}
In the notation of Theorem \ref{thm: rand mlt lower}, then, 
there are  $k,\ell \in \NN$ such that, whp, for every $d\ge 1$,
\[
    d\le \mlt(G(n,M_d(\log n)^k/n)) \le (\log n)^\ell
\]
where $M_d$ and $k$ are from Theorem \ref{thm: rand mlt lower}.
\end{cor}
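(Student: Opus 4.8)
The plan is to obtain the lower bound directly from Theorem~\ref{thm: rand mlt lower} and the upper bound from a maximum-degree argument feeding into Uhler's bound. Fix $d\ge 1$ and set $p = p(n) = M_d(\log n)^k/n$, with $M_d$ and $k$ as in Theorem~\ref{thm: rand mlt lower}. Then ``$\mlt(G(n,p)) \ge d$ whp'' is precisely the content of that theorem, so only the upper bound requires work.

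For the upper bound I would first record the elementary fact that a graph $H$ with maximum degree $\Delta$ is generically $\Delta$-independent. Indeed, delete any vertex $v$; in the $\Delta$-dimensional rigidity matrix, the (at most $\Delta$) rows indexed by edges at $v$, restricted to the $\Delta$ columns of $v$, are the vectors $p(v)-p(u)$ over neighbours $u$ of $v$, which are linearly independent for generic $p$ since there are at most $\Delta$ of them. Hence in any linear dependence among the rows, the coefficients on the edges at $v$ vanish, so the dependence is supported on rows of $H-v$ (which touch only columns of $V(H)\setminus\{v\}$) and is trivial by induction on $|V(H)|$, the restricted configuration being generic and $H-v$ again having maximum degree $\le\Delta$. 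By Theorem~\ref{thm: gross sullivant} this gives $\gcr(H)\le \Delta+1$, and Uhler's bound (Theorem~\ref{thm: uhler gcr mlt}) then gives $\mlt(H)\le \Delta+1$.

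It remains to bound $\Delta(G(n,p))$. The degree of a fixed vertex is $\mathrm{Bin}(n-1,p)$ with mean asymptotic to $M_d(\log n)^k$, so a Chernoff bound gives $\PP[\deg(v)\ge t\,M_d(\log n)^k] \le \exp\!\big(-c(t)\,M_d(\log n)^k/2\big)$ for large $n$, where $c(t)=t\ln t - t + 1\to\infty$ as $t\to\infty$. Taking the multiplicative constant $t=t(d)$ large enough makes the union bound over the $n$ vertices tend to $0$ (for $k\ge 2$ any $t>1$ already works, and for $k=1$ one picks $t$ with $c(t)M_d>2$). Thus whp $\Delta(G(n,p))\le C_d(\log n)^k$ with $C_d := t(d)M_d$, and combining with the previous paragraph, whp $\mlt(G(n,p))\le C_d(\log n)^k+1$. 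For a fixed $d$ this is at most $(\log n)^{k+1}$ once $n$ is large, so $\ell=k+1$ works (and is universal if $k$ is).

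There is no genuine obstacle here; the only delicate points are matching the slack in the Chernoff bound to whatever exponent $k$ comes out of Theorem~\ref{thm: rand rigid 2}, and observing that the $d$-dependent constant $C_d$ is harmlessly absorbed by the extra logarithmic factor. This absorption is exactly the reason the statement must be read as ``\emph{for each fixed $d$, whp}'' rather than uniformly over all $d$ simultaneously, since the graph and all constants depend on $d$ while $(\log n)^\ell$ does not.
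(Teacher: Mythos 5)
Your proof is correct and follows essentially the same route as the paper: the lower bound is quoted directly from Theorem~\ref{thm: rand mlt lower}, and the upper bound comes from the observation that maximum degree $\Delta$ forces generic $\Delta$-independence (hence $\gcr\le\Delta+1$ via Theorem~\ref{thm: gross sullivant} and $\mlt\le\gcr$ via Theorem~\ref{thm: uhler gcr mlt}), combined with a polylogarithmic whp bound on the maximum degree of $G(n,M_d(\log n)^k/n)$. You merely fill in the vertex-deletion induction and the Chernoff/union-bound details that the paper leaves implicit, and both are carried out correctly.
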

We have not tried to optimize the upper bound, since we believe 
that the maximum degree is not a good estimate of the 
GCR for these random graphs.

\subsection{Experimental evidence for the conjectures}
To test Conjectures \ref{conj: random rigid} and 
\ref{conj: random stressable subgraph}, we ran experiments
on Erd\"os-Rényi random graphs.
For fixed numbers of vertices $n=30,500$ and expected average 
degree $c$ from $3$ to $30$ with step size $0.1$ we generated
$20$ samples from $G(n,c/n)$ and computed: the GCR $d+1$; 
whether the $(d-1)$-redundant subgraph is generically 
globally rigid in dimension $d$; and whether the $(d-1)$-redundant 
subgraph is equal to the $d$-core.

Computing the GCR, identifying the redundant subgraph, and 
checking global rigidity are done using a randomized 
algorithm based on linear algebra over finite fields
\cite{gortler2010characterizing}.  

These computations rely on the results of this paper in an essential 
way.  In particular, we need the lower bound from 
Theorem \ref{thm: grn mlt} to certify that the MLT is 
equal to the GCR.
% and also the fact that generic global rigidity 
% can be certified using linear algebra over finite fields 
% \cite{gortler2010characterizing}, so we are certain 
% that the relevant properties hold when we find them.  
Without Theorem \ref{thm: grn mlt}, we would have to rely 
on heuristic numerical experiments instead.

In all of our runs, when the GCR is at least $4$, the 
$(d-1)$-redundant subgraph was generically globally rigid.
This is consistent with Conjectures~\ref{conj: random stressable subgraph}
and~\ref{conj: random rigid}.
(Recall that for GCR of $3$, 
Conjecture~\ref{conj: random rigid} is true 
from Theorem~\ref{thm: random mlt 3 4}, and that 
Conjecture~\ref{conj: random stressable subgraph}
is not expected to hold.)

The charts in Figure \ref{fig: charts} show the evolution 
of the GCR (which is equal to the MLT) as the expected average degree increases in our experiments for two values of $n$ 
over a common range of $c$.
The transition from $3$ to $4$ happens quickly between $c=3.4$ and 
$c=3.6$ as predicted by Theorem \ref{thm: random mlt 3 4}.

In all of our runs, when the GCR is in the range  $3-6$, the 
equality 
of the $(d-1)$-redundant subgraph and the $d$-core held 
at least $97\%$ of the time, but this fraction did not seem to 
go up as we increased the number of vertices from $30$ to $1000$.
In all of our runs, when the GCR is at least $7$, the 
equality 
of the $(d-1)$-redundant subgraph and the $d$-core held 
all of the time.

\begin{figure}[ht]
\centering 
    \begin{center} \begin{tabular}{cccc}
  {  \includegraphics[width=3in]{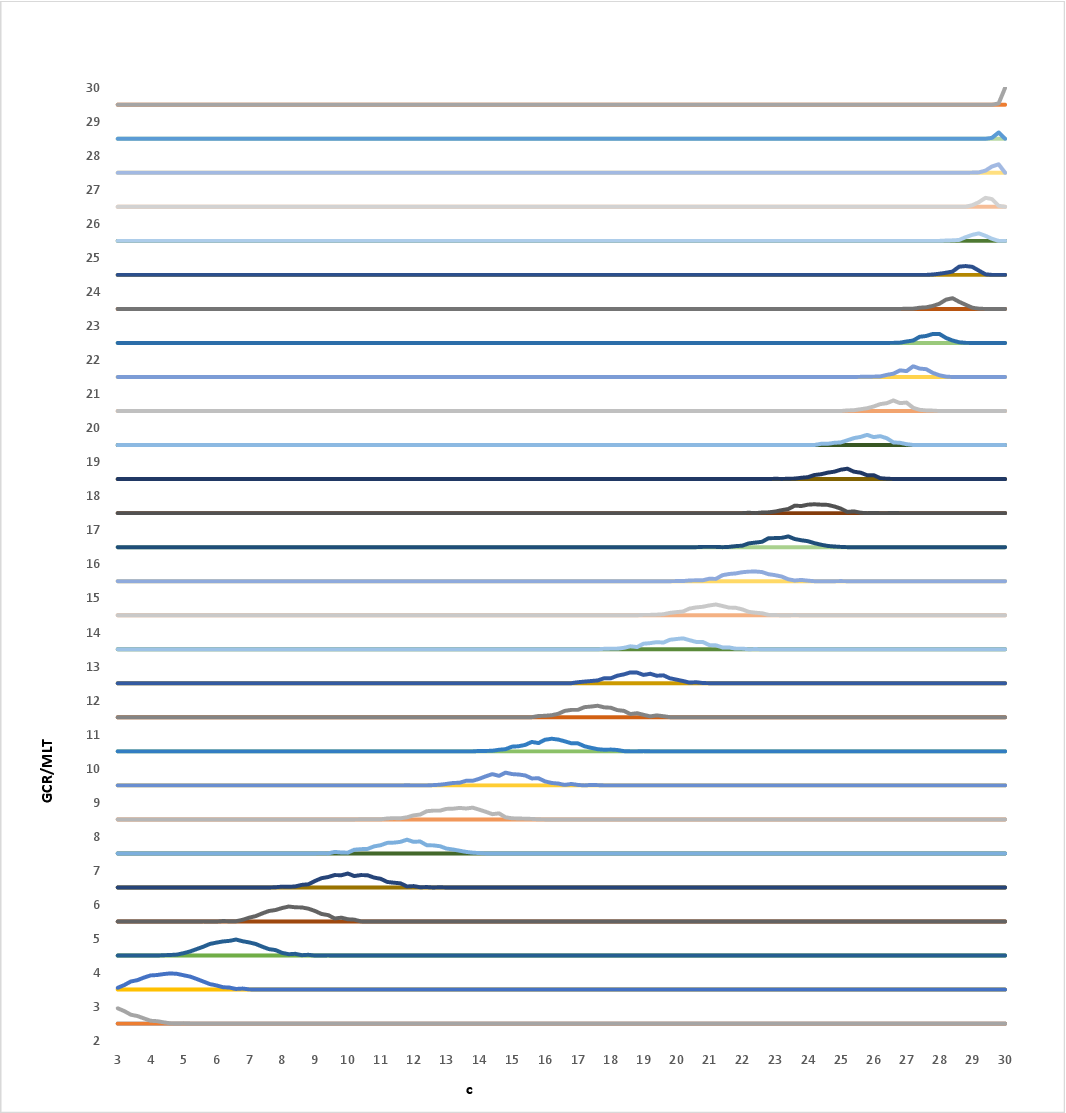}}&
  {  \includegraphics[width=3in]{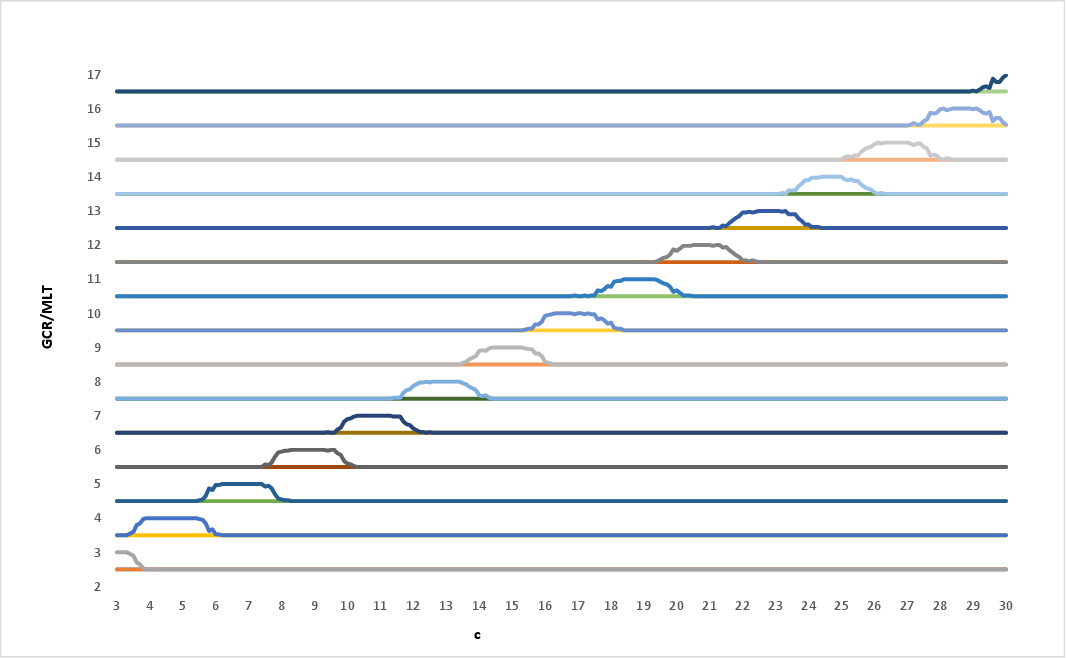}}
\end{tabular} 
\end{center}
\caption{Evolution of the GCR in $G(n,c/n)$ as the expected average
degree, $c$ increases.  The $x$-axis shows the $c$ 
values. The $y$-axis is labeled by the integer valued,
possible GCR values.
As stated in the text, all of the given GCR values were equal to the MLT.
Each curve in horizontal slice  
shows the proportion of samples that produced the associated GCR.
The left chart is $n=30$ and the
right is $n=500$.
Already, in the left chart, we see that each GCR values is only
observed for a small range of $c$ values. 
The left chart spans the
range from sparse to dense graphs on $30$ vertices.
In the right chart, we see that, for most $c$ under consideration, 
we only observe
a single GCR value, and that these phase transition regions,
especially in the lowest $c$ ranges, 
have tightened. This is consistent with our conjectures.
The right chart only looks at relatively 
sparse graphs on $500$ vertices. Also note, in this sparse
region, that the  trend of the peaks of the bumps is linear, with 
GCR approximately equal to $3/2+c/2=3/2+m/n$, where $m$ is the expected
number of edges.
}
\label{fig: charts}
\end{figure}
The code is available from the author's web site\footnote{\url{https://gist.github.com/theran/994b4d355e56529f5e6642fec4aead98}}
and the data used to generate the charts, including the random graphs, 
is available upon request.

\section{Stress geometry of the MLT}\label{sec: lifting}
In this section we develop a detailed geometric understanding
of the MLT.  Our main tool for doing this is the theory 
of PSD equilibrium stresses of frameworks. The 
importance of PSD equilibrium stresses has long 
been known in rigidity \cite{c1} 
and graph theory \cite{tutte-howtodraw,LLW}.
Uhler \cite{uhler2012geometry} has pointed out the 
semi-algebraic nature of the MLT problem.  Here we make 
the connection precise enough to exactly describe the MLT
in terms of equilibrium stresses.

\subsection{Linear equilibrium stresses}
To connect to the optimization problem underlying the 
MLT, we introduce the notion of a linear equilibrium 
stress, which is implicit in a number of works around 
rigidity in geometries with projective models (see
\cite{NW-metricchange} and the references therein).
We start with some notation relating to vector configurations.

\begin{defn}\label{def: flattenable}
Let $q$ be a configuration of $n$ vectors in $\RR^{d+1}$.
Denote by $t_i$ the last coordinate of $q(i)$ and by $Q$
the $(d+1)\times n$ matrix with the $q(i)$ as its columns.
We say that $q$ is \emph{flat} if all the $t_i$ are one, 
and that $q$ is \emph{flattenable} if all the $t_i$ 
are non-zero.
\end{defn}
Generic configurations are clearly flattenable.
There is a unique flat configuration associated 
with a flattenable configuration $q$ arising from scaling 
$q(i)$ by $1/t_i$.
Flat vector configurations in $\mathbb{R}^{d+1}$ are naturally associated with 
affine point configurations in $\mathbb{R}^d$.

\begin{defn}\label{def: std lifting}
Let $p$ be a configuration of $n$ points in $\RR^d$.
We denote by $\hat p$, the vector configuration 
in $\RR^{d+1}$ defined by the standard homogeneous 
coordinates for $p$, i.e.
\[
    \hat p(i) = \begin{pmatrix} p \\ 1\end{pmatrix}.
\]
The matrix $\hat P$ is $(d+1)\times n$ with the 
vectors $\hat p(i)$ as its columns.
\end{defn}

\begin{defn}\label{def: linear stress}
Let $d$ be a dimension. Let $G$ be a graph with $n$ vertices and 
let $q$ be a vector configuration of $n$ 
vectors in $\RR^{d+1}$.
An assignment $\omega$ of weights $\omega_{ij}$ to the 
edges $ij$ of $G$ and $\omega_{ii}$ to the vertices 
of $G$ is a \emph{linear equilibrium stress}
for $q$ if 
\begin{equation}\label{eq:linearStress}
   \sum_{j\sim i} \omega_{ij}q(j) = \omega_{ii}q(i) 
    \qquad 
    \text{(all $i\in V(G)$).}
\end{equation}
For a fixed $\omega$, we say that
\emph{$q$ satisfies $\omega$} if \eqref{eq:linearStress} holds.
A \emph{linear equilibrium stress matrix} $\Omega$ for
$q$ is a symmetric $n$-by-$n$ matrix with
$\Omega_{ij} = 0$ for non-edges of $G$ such that
\[
    \Omega  Q^T = 0,
\]
where $Q$ is the $(d+1)\times n$ matrix with the 
$q(i)$ as its columns.
Given a linear equilibrium stress $\omega$ for $q$,
we can
make a linear equilibrium stress matrix for it
by setting $\Omega_{ij}=\Omega_{ji}=-\omega_{ij}$
on the edges
and setting the diagonals $\Omega_{ii}=\omega_{ii}$.
Hence the vector configurations satisfying a given set of 
weights arise from the kernel of the associated linear 
equilibrium stress matrix.
\end{defn}

The following lemma is immediate. It gives the precise relationship between equilibrium stresses and \emph{linear} equilibrium stresses.

\begin{lemma}\label{lem: affine to linear stress}
Let $G$ be a graph with $n$ vertices and let $(G,p)$
be a $d$-dimensional framework.  Then for any 
equilibrium stress $\omega$ of $(G,p)$, the 
associated stress matrix gives a linear equilibrium 
stress of $\hat p$.  Any linear equilibrium stress 
matrix $\Omega$ for $\hat p$ is also an equilibrium 
stress matrix for $(G,p)$.
\end{lemma}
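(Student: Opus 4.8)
The plan is to verify both directions straight from the definitions; the only point of substance is keeping track of the constant last coordinate of the homogeneous lift $\hat p$. First I would write out the $i$-th row of the matrix product $\Omega \hat P^{T}$. Since $\Omega_{ij}=\Omega_{ji}=-\omega_{ij}$ on edges, $\Omega_{ii}=\sum_{j\sim i}\omega_{ij}$, and all remaining entries vanish, that row equals
\[
    \Big(\sum_{j\sim i}\omega_{ij}\Big)\hat p(i)\;-\;\sum_{j\sim i}\omega_{ij}\,\hat p(j).
\]
Now split each $\hat p(k)=\bigl(p(k),1\bigr)$ into its first $d$ coordinates and its last coordinate. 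The last-coordinate component of the displayed vector is $\sum_{j\sim i}\omega_{ij}-\sum_{j\sim i}\omega_{ij}=0$ identically — this is exactly the statement that the diagonal entries of an equilibrium stress matrix are defined so as to make all row sums zero. The first-$d$-coordinate component is $\sum_{j\sim i}\omega_{ij}\bigl(p(i)-p(j)\bigr)=-\sum_{j\sim i}\omega_{ij}\bigl(p(j)-p(i)\bigr)$, which vanishes precisely because $\omega$ is an equilibrium stress of $(G,p)$. Hence $\Omega\hat P^{T}=0$, so $\Omega$ is a linear equilibrium stress matrix for $\hat p$, and reading the weights off its off-diagonal entries gives a linear equilibrium stress satisfying \eqref{eq:linearStress} (with $\omega_{ii}=\sum_{j\sim i}\omega_{ij}$).

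For the converse I would run the same computation in reverse. Start from a symmetric $n\times n$ matrix $\Omega$ with $\Omega_{ij}=0$ on non-edges of $G$ and $\Omega\hat P^{T}=0$. The last coordinate of the $i$-th row of $\Omega\hat P^{T}$ forces $\sum_{k}\Omega_{ik}=0$, i.e.\ $\Omega_{ii}=-\sum_{j\sim i}\Omega_{ij}$, so $\Omega$ already has the shape of a stress matrix for the weights $\omega_{ij}:=-\Omega_{ij}$. The first $d$ coordinates give $\sum_{k}\Omega_{ik}\,p(k)=0$; substituting $\Omega_{ii}=-\sum_{j\sim i}\Omega_{ij}$ and rearranging yields $\sum_{j\sim i}\omega_{ij}\bigl(p(j)-p(i)\bigr)=0$ for every vertex $i$, which is the equilibrium condition for $(G,p)$.

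There is no real obstacle: the lemma is a translation between the per-vertex ``affine'' equilibrium equations and the single linear-algebraic identity $\Omega Q^{T}=0$, and both directions are the one-line check above. The only things to be careful about are the sign convention $\Omega_{ij}=-\omega_{ij}$ and the row-versus-column orientation in the product $\Omega Q^{T}=0$; once these are pinned down the argument is purely formal, which is why the statement can be asserted as immediate.
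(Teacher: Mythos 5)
Your verification is correct and is exactly the computation the paper has in mind when it declares the lemma ``immediate'' (the paper supplies no written proof): the last coordinate of $\Omega\hat P^{T}=0$ encodes the zero row sums and the first $d$ coordinates encode the per-vertex equilibrium condition, in both directions. The sign convention and orientation of $\Omega Q^{T}$ are handled consistently with Definitions~\ref{def: eq stress} and~\ref{def: linear stress}, so nothing further is needed.
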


Linear equilibrium stresses are well-behaved 
under scaling. Results similar to the following can be 
found in e.g.~\cite{cgt2,connelly2010global}.

\begin{lemma}\label{lem: linear stress scale}
Let $G$ be a graph with $n$ vertices and let $q$ be a vector configuration in $\RR^{d+1}$.  If $\Omega$
is a linear equilibrium stress matrix for $q$
and $s_1, \ldots, s_n$ are any non-zero real 
numbers, then the configuration $\tilde q$, defined
by 
\[
    \tilde q(i) = \frac{1}{s_i}q(i)
\]
has a linear equilibrium stress matrix with the 
same signature as $\Omega$.
\end{lemma}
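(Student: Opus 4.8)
The plan is to produce the new stress matrix explicitly by a diagonal congruence and then invoke Sylvester's law of inertia. Write $D = \diag(s_1, \ldots, s_n)$; since every $s_i$ is nonzero, $D$ is invertible. By Definition \ref{def: linear stress}, the matrix $\tilde Q$ whose columns are the $\tilde q(i)$ satisfies $\tilde Q = QD^{-1}$, so $\tilde Q^{T} = D^{-1}Q^{T}$ because $D$ is diagonal (hence symmetric).

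Next I would set $\tilde\Omega := D\,\Omega\, D$ and check it is a linear equilibrium stress matrix for $\tilde q$. It is symmetric, as $(D\Omega D)^{T} = D^{T}\Omega^{T}D^{T} = D\Omega D$. Entrywise $\tilde\Omega_{ij} = s_i\,\Omega_{ij}\,s_j$, so $\tilde\Omega_{ij} = 0$ exactly when $\Omega_{ij} = 0$; in particular $\tilde\Omega_{ij} = 0$ for every non-edge of $G$, as required. Finally, using $\Omega Q^{T} = 0$,
\[
    \tilde\Omega\,\tilde Q^{T} = D\Omega D\, D^{-1}Q^{T} = D\,\Omega Q^{T} = 0 ,
\]
so $\tilde\Omega$ satisfies the defining equation for $\tilde q$.

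It remains to compare signatures. Since $D$ is an invertible (indeed diagonal) matrix and $\tilde\Omega = D\Omega D = D\Omega D^{T}$, the matrices $\tilde\Omega$ and $\Omega$ are congruent, so Sylvester's law of inertia gives that they have the same signature (and rank), which is exactly the claim.

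There is essentially no hard step here: the proof is a one-line diagonal congruence together with a standard linear-algebra fact. The only point that requires the hypothesis in an essential way is that the $s_i$ are nonzero, which is used both to make $D$ invertible (so that $\tilde Q = QD^{-1}$ makes sense and the congruence preserves signature) and to guarantee that scaling preserves the zero pattern of $\Omega$, so that $\tilde\Omega$ is still a \emph{linear equilibrium stress matrix for the graph $G$} and not merely for a subgraph.
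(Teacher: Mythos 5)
Your proof is correct and follows essentially the same route as the paper's: both form the diagonal matrix $D=\diag(s_1,\dots,s_n)$, take $\tilde\Omega = D\Omega D$, verify the equilibrium condition for $\tilde q$, and conclude via congruence (Sylvester's law of inertia) that the signature is preserved. The only cosmetic difference is that you verify $\tilde\Omega\tilde Q^T=0$ in matrix form, whereas the paper checks the same identity vertex-by-vertex at the level of the stress weights $\tilde\omega_{ij}=s_is_j\omega_{ij}$.
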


\begin{proof}
Take $q$ and the $s_i$ as in the statement, and let 
$\omega$ be the linear equilibrium stress for $q$
from the statement. For each vertex $i$ and edge $ij$, define
\[
    \tilde\omega_{ij} = s_i s_j \omega_{ij}
    \qquad 
    \text{and}
    \qquad 
    \tilde\omega_{ii} = s^2_i\omega_{ii}.
\]
Then $\tilde\omega$ is a linear equilibrium stress for $\tilde{q}$ because for each vertex $i$ we have
\[
    \sum_{j\sim i} \tilde\omega_{ij}\tilde q(j) 
    = s_i  \sum_{j\sim i} \omega_{ij}q(j)
    = s_i \omega_{ii}q(i) = s_i^2 \omega_{ii}\tilde q(i)
    = \tilde\omega_{ii} q(i).
\]
Let $\tilde{\Omega}$ be the stress matrix associated to $\omega$
and let $S$ be the diagonal matrix whose diagonal entries are $s_1,\dots,s_n$.
Then $\tilde{\Omega}=S\Omega S$ and thus $\Omega$ and $\tilde{\Omega}$ have the same signature.
\end{proof}

We get an important special case when $s_i$ is the last coordinate of $q(i)$ for each $i$.

\begin{lemma}\label{lem: flattening stress}
Let $G$ be a graph with $n$ vertices, let $q$ be a
flattenable configuration of $n$ vectors in $\RR^{d+1}$, and let 
$(G,p)$ be the framework in $\RR^d$ that arises from 
flattening $q$ and deleting the all-ones coordinate.
If there is a linear equilibrium stress matrix $\Omega$
for $q$, then $p$ has an equilibrium stress matrix of the same signature as $\Omega$.
\end{lemma}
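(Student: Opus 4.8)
The plan is to reduce the statement to the two preceding lemmas by an appropriate rescaling. Since $q$ is flattenable, the last coordinate $t_i$ of each $q(i)$ is non-zero, so the numbers $s_i := t_i$ are admissible in Lemma~\ref{lem: linear stress scale}. Applying that lemma to the given linear equilibrium stress matrix $\Omega$ with these scalars produces a linear equilibrium stress matrix $\tilde\Omega$ for the configuration $\tilde q$ defined by $\tilde q(i) = q(i)/t_i$, and $\tilde\Omega$ has the same signature as $\Omega$.

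The second step is to identify $\tilde q$ with a standard homogeneous lift. By construction each $\tilde q(i)$ has last coordinate $1$, so $\tilde q$ is precisely the flat configuration obtained by flattening $q$; deleting the all-ones coordinate leaves exactly the point configuration $p$, i.e.\ $\tilde q = \hat p$ in the notation of Definition~\ref{def: std lifting}. Hence $\tilde\Omega$ is a linear equilibrium stress matrix for $\hat p$.

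Finally, Lemma~\ref{lem: affine to linear stress} tells us that any linear equilibrium stress matrix for $\hat p$ is an equilibrium stress matrix for the framework $(G,p)$. Thus $\tilde\Omega$ is an equilibrium stress matrix for $p$, and since its signature equals that of $\Omega$ by the first step, the lemma follows.

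I do not expect a genuine obstacle here: the argument is just a composition of Lemma~\ref{lem: linear stress scale} (which supplies the signature-preserving rescaling) and Lemma~\ref{lem: affine to linear stress} (which translates between linear and ordinary equilibrium stresses). The only point requiring a moment of care is verifying that the rescaled configuration $\tilde q$ is literally the standard homogeneous lift $\hat p$ of the flattened framework, so that Lemma~\ref{lem: affine to linear stress} applies verbatim; everything else is bookkeeping with the definitions of flat and flattenable configurations.
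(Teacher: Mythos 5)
Your argument is correct and is essentially identical to the paper's proof: rescale by the last coordinates via Lemma~\ref{lem: linear stress scale} to get a signature-preserving linear equilibrium stress on the flat configuration $\hat p$, then invoke Lemma~\ref{lem: affine to linear stress} to read it as an equilibrium stress of $(G,p)$. No gaps.
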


\begin{proof}
If we denote by $\hat p$ the flattening of $q$, then 
by Lemma \ref{lem: linear stress scale}
there is a linear equilibrium stress for $\hat p$ with 
the same signature as $\Omega$.  This stress is
an equilibrium stress of $(G,p)$ by Lemma 
\ref{lem: affine to linear stress}.
\end{proof}

\subsection{The optimization problem}
We now describe the MLT optimization problem.  For convenience, 
we denote the inner product $\tr(AB)$ on the set of symmetric $n\times n$
matrices by $\iprod{A}{B}$. 
\begin{defn}\label{def: opt problem}
Let $G$ be a graph with $n$ vertices.  
Let $D$ be an $n\times (d+1)$ matrix with columns representing
$(d+1)$ samples from an $n$-variate probability distribution.
Let $S = \frac{1}{d}DD^T$ be the sample covariance matrix.
\emph{The MLT optimization problem for $(G,D)$} is to find an $n\times n$ 
positive definite matrix $K$
minimizing $f(K) = \iprod{S}{K} - \log \det K$, 
subject to $K_{ij} = 0$ for all $ij\notin E(G)$.
\end{defn}

The rigidity-theoretic viewpoint requires us to transpose our view of the data matrix. In particular, instead of thinking about $S$ as the sample covariance obtained from $(d+1)$ samples of an $n$-variate distribution, we will think about $S$ as the Gram matrix of a configuration of $n$ points in $(d+1)$-dimensional space.
This allows us to recast the MLT optimization problem in the following equivalent way.

\begin{defn}\label{def: modified opt problem}
Let $G$ be a graph with $n$ vertices and let 
$q$ be a configuration of $n$ vectors in dimension $d+1$.
Let $S = Q^T Q$ be the Gram matrix of $q$.
\emph{The Gram MLT optimization problem for $(G,q)$} is to find an 
$n\times n$ positive definite matrix 
$K$, minimizing $g(K) = \iprod{S}{K} - \log \det K$, subject to 
$K_{ij} = 0$ if $ij\notin E(G)$.
\end{defn}

\begin{lemma}\label{lem: unbounded iff stress}
Let $G$ be a graph with $n$ vertices and let $q$ 
be a configuration of $n$ vectors.
Then the Gram MLT optimization problem (objective function $g$) is unbounded if and only if
there is a nonzero PSD linear equilibrium stress for $q$.
\end{lemma}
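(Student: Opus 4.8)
The plan is to analyze when the convex objective $g(K) = \iprod{S}{K} - \log\det K$, restricted to the cone of positive definite matrices with zero pattern prescribed by the non-edges of $G$, fails to attain a minimum. Since $g$ is strictly convex on the PD cone and blows up to $+\infty$ at the boundary (where $\det K \to 0$) in every direction along which $\iprod{S}{K}$ stays bounded, the only way to be unbounded below is to find a ray $K_0 + t M$ with $M \succeq 0$, $M \neq 0$, $M_{ij}=0$ for $ij \notin E(G)$, along which $g \to -\infty$. First I would compute $g(K_0 + tM)$ along such a ray: the linear term contributes $t\iprod{S}{M}$, and $-\log\det(K_0 + tM)$ behaves like $-(\rank M)\log t + O(1)$ as $t \to \infty$ provided $M \succeq 0$; more carefully, since $K_0 \succ 0$ one gets $-\log\det(K_0+tM) = -\log\det K_0 - \log\det(I + tK_0^{-1}M)$, and the eigenvalues of $K_0^{-1}M$ that are positive (there are $\rank M$ of them, as $M\succeq 0$) each contribute $\sim -\log t$. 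Hence $g(K_0+tM) \sim t\iprod{S}{M} - (\rank M)\log t$, which tends to $-\infty$ iff $\iprod{S}{M} \le 0$; combined with $\iprod{S}{M} = \iprod{Q^TQ}{M} = \tr(Q M Q^T) = \sum_{i} \iprod{M q}{\cdot}$ — written out, $\iprod{S}{M} = \sum_{i,j} M_{ij}\iprod{q(i)}{q(j)}$ — and the PSD condition $M\succeq 0$ forcing $\iprod{S}{M} \ge 0$, we conclude $g$ is unbounded along this ray iff $\iprod{S}{M} = 0$, i.e. $QMQ^T = 0$, i.e. $MQ^T = 0$ (using $M\succeq 0$, so $\tr(QMQ^T) = \|M^{1/2}Q^T\|^2 = 0$ forces $M^{1/2}Q^T = 0$ hence $MQ^T=0$).

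That last identity $MQ^T = 0$ with $M$ symmetric, PSD, supported on the edges of $G$, is exactly the definition (Definition \ref{def: linear stress}) of $M$ being a PSD linear equilibrium stress matrix for $q$. So the existence of such a ray is equivalent to the existence of a nonzero PSD linear equilibrium stress. The remaining point is the converse direction of the "if and only if": if no such ray exists, then $g$ is bounded below, and in fact attains its minimum. Here I would argue that $g$ is strictly convex, that the sublevel sets are closed, and that they are bounded — the boundedness is where one uses that there is no nonzero PSD $M$ in the coordinate subspace with $\iprod{S}{M}=0$: this says $\iprod{S}{\cdot}$ is a strictly positive linear functional on the (closed, pointed) cone of PSD matrices with the prescribed zero pattern intersected with the feasible directions, which is the standard condition guaranteeing that $\{K : g(K) \le c\}$ is compact (a recession-cone argument: any recession direction of a sublevel set must be PSD, lie in the coordinate subspace, and have $\iprod{S}{\cdot}\le 0$, hence be zero). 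Then attainment follows from Weierstrass.

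The main obstacle — or at least the step requiring the most care — is the recession-cone / boundedness analysis establishing the converse: one must check that the only directions in which a sublevel set of $g$ can be unbounded are genuinely PSD matrices with the edge support, so that ruling those out forces compactness. The subtlety is that $K$ ranges over an affine slice of the PD cone, not the whole PD cone, so one needs that the recession cone of the feasible region is $\{M \succeq 0 : M_{ij}=0 \text{ for } ij\notin E(G)\}$ and that along any nonzero such $M$ with $\iprod{S}{M}=0$ the objective is non-increasing (indeed $\to -\infty$), while along any such $M$ with $\iprod{S}{M}>0$ the objective $\to +\infty$ — this is precisely the computation from the first paragraph. Everything else (strict convexity, closedness of sublevel sets, continuity) is routine. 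I would present the forward direction via the explicit ray computation and the reverse direction via this recession-cone argument, noting that the equivalence of the "bounded" and "attains its minimum" statements is where the MLE existence question connects to the combinatorics.
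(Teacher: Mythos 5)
Your proposal is correct and follows essentially the same route as the paper: the unboundedness direction is the explicit ray $K_0+tM$ along which the linear term vanishes (because $MQ^T=0$) while $-\log\det$ drifts to $-\infty$, and the boundedness direction rules out every escape direction by noting that a non-PSD direction hits the boundary of the PD cone (so $-\log\det\to+\infty$) while a PSD direction that is not a stress has $\iprod{S}{M}>0$. The only cosmetic difference is that you package the converse as a recession-cone/compact-sublevel-set argument, whereas the paper runs the same two-case computation directly over the unit sphere of feasible directions.
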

\begin{proof}
Let $S$ be the Gram matrix of $q$.  Suppose that 
$\Omega$ is the PSD stress matrix of a non-zero 
linear equilibrium stress for $q$. 
For any $t > 0$, the matrix $I + t\Omega$ is positive definite and
\[
    g(I + t\Omega) = \iprod{S}{I + t\Omega} - \log \det (I + t\Omega).
\]
Since 
\[
    \iprod{S}{I + t\Omega} = \iprod{S}{I} + t\iprod{S}{\Omega}
    = \iprod{S}{I} + t\tr{Q^T Q\Omega} = 
    \iprod{S}{I} + t\tr{Q^T 0} = \iprod{S}{I}
\]
we conclude that 
\[
    g(I + t\Omega) = \tr{S} - \log \det(I + t\Omega) \to -\infty
    \qquad \text{(as $t\to \infty$)}.
\]
So the optimization problem is unbounded.

For the other direction we prove the contrapositive.
Suppose that there is no non-zero 
PSD linear equilibrium stress matrix for 
$q$. We show that the gram MLT optimization problem has a global minimum. Let $\SS$ be be the set of symmetric 
$n\times n$ matrices $\Omega$ with zeros on the non-edges 
of $G$ satisfying $\iprod{\Omega}{\Omega} = 1$. 
For any $\Omega\in \SS$, there is a $t_0 > 0$
so that $K = I + t_0\Omega$ is a feasible point of the 
Gram MLT optimization problem.  Define $t^* \ge t_0 > 0$ to be 
the supremum over values such that $I + t\Omega$ 
is positive definite. 
We will show that, for any 
$\Omega\in \SS$, 
\[
    g(I + t\Omega)\to \infty \qquad \text{(as $t\to t^*$).}
\]
It then follows that, outside of a compact neighborhood of $I$, 
$g(I + t\Omega) > g(I)$, which implies that $g$ has a 
global minimum.
There are two cases.  If $\Omega\in \SS$ is not
PSD, then $t^*$ is finite, and, as $t\to t^*$, 
\[
    g(I + t\Omega) =\tr S + t\iprod{S}{\Omega} - \log \det(I + t\Omega)\to \infty, 
\]
since the last term grows without bound and the linear terms 
have bounded magnitude.  
If $\Omega$ is PSD, then $I + t\Omega$ is positive 
definite for any $t > 0$, and so $t^* = \infty$.
We then have, as $t\to \infty$,
\[
    g(I + t\Omega) = \tr S + t\iprod{S}{\Omega} - 
        \log \det(I + t\Omega) = 
        \tr S + t\iprod{S}{\Omega} - O(\log t)
\]
because the determinant is a polynomial of degree $n$ in
$t$.  Finally, since $S$ and $\Omega$ are 
PSD and $\Omega$ is not a linear equilibrium stress matrix,
$\iprod{S}{\Omega} > 0$, so 
\[
    g(I + t\Omega) \to \infty \qquad 
    \text{(as $t\to \infty$)}.\qedhere
\]
\end{proof}

\subsection{Proof of Theorem \ref{thm: main mlt stress}}
We are now ready to prove Theorem~\ref{thm: main mlt stress}.
Lemmas \ref{lem: main stress forward} and \ref{lem: main stress backwards} below each give one direction. What is left is to rigorously establish the relationship between ``almost all'' and 
generic. The most technical statements are 
handled in Appendix~\ref{appendix: generic}.
Recall that two measures are \emph{mutually absolutely continuous}
if they have the same null sets.

\begin{lemma}\label{lem: main stress forward}
Let $G$ be a graph with $\mlt(G) = d+1$. Then:
\begin{enumerate}[(a)]
    \item there is a generic framework $(G,p)$ in $\mathbb{R}^{d-1}$ with a nonzero PSD equilibrium stress, and
    \item no generic framework $(G,p)$ in $\mathbb{R}^d$ has a nonzero 
    PSD equilibrium stress.
\end{enumerate}
\end{lemma}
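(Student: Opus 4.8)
The plan is to run the entire argument through the chain of equivalences already assembled in this section. First, the MLE of the covariance exists precisely when the Gram MLT optimization problem of Definition~\ref{def: modified opt problem} attains its minimum, which by Lemma~\ref{lem: unbounded iff stress} (whose proof moreover shows that boundedness of the objective forces the minimum to be attained) happens exactly when the configuration $q$ of $n$ vectors obtained by transposing the sample matrix supports no nonzero PSD linear equilibrium stress; and by Lemmas~\ref{lem: affine to linear stress}, \ref{lem: linear stress scale} and~\ref{lem: flattening stress}, a flattenable vector configuration $q$ in $\RR^{k}$ supports a nonzero PSD linear equilibrium stress exactly when the framework $(G,p)$ in $\RR^{k-1}$ obtained by flattening $q$ and deleting the all-ones coordinate supports a nonzero PSD equilibrium stress. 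Since $\mlt(G)=d+1$, taking $d+1$ samples makes $q$ a configuration in $\RR^{d+1}$, hence a framework in $\RR^{d}$ (this will give (b)), while taking $d$ samples makes $q$ a configuration in $\RR^{d}$, hence a framework in $\RR^{d-1}$ (this will give (a)). The remaining work is to pass between ``holds almost surely'' — with respect to the sampling measure, which by hypothesis is mutually absolutely continuous with Lebesgue measure, hence equivalently with respect to Lebesgue measure on the space of sample matrices — and ``holds for a generic framework'', for which I would invoke the semialgebraic facts collected in Appendix~\ref{appendix: generic}: the set of configurations supporting a nonzero PSD linear equilibrium stress is $\QQ$-semialgebraic (Tarski--Seidenberg), so a null such set lies in a proper $\QQ$-subvariety (which every generic point avoids), while a positive-measure such set has full dimension, hence contains a nonempty Euclidean-open set and therefore a generic point.

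For (b): since $\mlt(G)=d+1$, with $d+1$ samples the MLE exists almost surely, so by the above chain and mutual absolute continuity, for Lebesgue-almost-every configuration $q$ of $n$ vectors in $\RR^{d+1}$ there is no nonzero PSD linear equilibrium stress. The set of $q$ for which there is one is $\QQ$-semialgebraic and null, hence contained in a proper $\QQ$-subvariety of $\RR^{(d+1)n}$, so no generic $q\in\RR^{(d+1)n}$ supports a nonzero PSD linear equilibrium stress. Suppose toward a contradiction that some generic framework $(G,p)$ in $\RR^{d}$ had a nonzero PSD equilibrium stress $\omega$. By Lemma~\ref{lem: affine to linear stress} the associated matrix is a nonzero PSD linear equilibrium stress matrix of $\hat p$; choosing reals $s_1,\dots,s_n$ algebraically independent over the field $\QQ(p)$ generated by the coordinates of $p$ and setting $q(i)=s_i\hat p(i)$, rescaling (Lemma~\ref{lem: linear stress scale}) transfers this to a nonzero PSD linear equilibrium stress for $q$, while a transcendence-degree computation shows $q$ is generic: its coordinate field equals $\QQ(p,s_1,\dots,s_n)$, so its $(d+1)n$ coordinates are algebraically independent over $\QQ$. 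This contradicts the previous sentence, proving (b).

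For (a): since $\mlt(G)=d+1$, taking only $d$ samples does not guarantee almost-sure existence of the MLE, so the MLE fails to exist with positive probability; by mutual absolute continuity, the set $\mathcal B$ of configurations $q$ of $n$ vectors in $\RR^{d}$ supporting a nonzero PSD linear equilibrium stress has positive Lebesgue measure in $\RR^{dn}$. Being $\QQ$-semialgebraic of full dimension, $\mathcal B$ contains a nonempty Euclidean-open set, hence a generic point $q_0\in\RR^{dn}$. Such a $q_0$ is flattenable, and the framework $(G,p_0)$ in $\RR^{d-1}$ obtained from its flattening has $p_0$ generic, again by a transcendence-degree argument: the coordinates of $q_0$ are recovered from those of $p_0$ together with the $n$ last coordinates of $q_0$, which forces the transcendence degree of $\QQ(p_0)$ over $\QQ$ to be exactly $(d-1)n$. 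By Lemma~\ref{lem: flattening stress}, the nonzero PSD linear equilibrium stress of $q_0$ yields an equilibrium stress of $(G,p_0)$ of the same signature, which is therefore a nonzero PSD equilibrium stress. This proves (a).

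I expect the main obstacle to be the reconciliation of ``almost surely'' with ``generic'', not anything about stresses, which is entirely packaged in Lemmas~\ref{lem: affine to linear stress}, \ref{lem: linear stress scale}, \ref{lem: flattening stress} and~\ref{lem: unbounded iff stress}. One must ensure that the exceptional sets coming out of the optimization problem are genuinely $\QQ$-definable — so that ``null'' forces ``contained in a proper $\QQ$-subvariety'' and hence ``avoided by every generic point'' — and, dually, that a positive-measure $\QQ$-semialgebraic set really does contain a generic point; these are precisely the statements deferred to Appendix~\ref{appendix: generic}. A secondary, purely bookkeeping, difficulty is checking that genericity is preserved in both directions of the homogenize-and-rescale correspondence between vector configurations in $\RR^{k}$ and frameworks in $\RR^{k-1}$, which the two transcendence-degree computations above handle.
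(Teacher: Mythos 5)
Your proposal is correct and follows essentially the same route as the paper: the same reduction through Lemma~\ref{lem: unbounded iff stress} and the flattening/rescaling lemmas, and the same translation between ``almost surely''/``positive probability'' and ``generic'' via the $\QQ$-semialgebraicity of the exceptional sets (the paper's Lemma~\ref{lem: semi-alg generic}). The only cosmetic differences are that you prove part (b) by contradiction and replace the paper's appeal to Lemma~\ref{lem: gen slicing} with explicit transcendence-degree computations, which are valid and arguably more self-contained.
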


\begin{proof}
Let $n$ be the number of vertices of $G$.
Let $D$ be an $n\times d$ data matrix whose columns are i.i.d.~samples from a distribution whose probability measure $\mu$ is mutually absolutely continuous with respect to the Lebesgue measure.
Let $q$ denote the configuration of $n$ points in $\mathbb{R}^d$ given by the rows of $D$.
Since $\mlt(G) = d+1$, the Gram MLT optimization problem for $(G,q)$ is unbounded with positive probability.
Let $X$ denote the set of vector configurations of $n$ points in $\mathbb{R}^d$
for which the Gram MLT optimization problem is unbounded.
Then $X$ is semi-algebraic and not $\mu$-null,
so Lemma~\ref{lem: semi-alg generic} implies that $X$ contains a generic vector configuration, 
which we continue to call $q$.
By Lemma \ref{lem: unbounded iff stress},
there is a non-zero PSD linear equilibrium stress matrix $\Omega$ for $q$.
Since $q$ is generic, it is flattenable.
By Lemma \ref{lem: flattening stress}, the $(d-1)$-dimensional 
framework $(G,p)$ arising from flattening $q$ has an
equilibrium stress matrix with the same signature as $\Omega$,
so this matrix must also be PSD and non-zero.
Finally, Lemma \ref{lem: gen slicing} implies that 
$p$ is generic.
Hence we have constructed a generic $d-1$-dimensional framework $(G,p)$ with a non-zero PSD equilibrium stress.

Let $W$ denote the set of configurations $w$ of $n$ points in $\mathbb{R}^{d+1}$ for which the Gram MLT optimization problem $(G,w)$ is bounded.
Since $\mlt(G) = d+1$, the complement of $W$ is $\mu$-null.
Let $(G,p)$ be a generic framework in $\mathbb{R}^d$.
Scaling the vectors of $\hat p$ by generic weights
gives, via Lemma~\ref{lem: gen slicing}, a generic  configuration $w$ in $\RR^{d+1}$.  
Since $W$ is semi-algebraic and $w$ is generic,
Lemma~\ref{lem: semi-alg generic} implies $w\in W$.
By Lemma~\ref{lem: unbounded iff stress},
there is no non-zero PSD linear equilibrium stress
for $(G,w)$.  By Lemma~\ref{lem: flattening stress},
there is no non-zero PSD equilibrium stress for $(G,p)$.
\end{proof}

\begin{lemma}\label{lem: main stress backwards}
Let $G$ be a graph with $n$ vertices and suppose that $d$
is the smallest dimension so that no generic 
$d$-dimensional framework $(G,p)$ has a non-zero
PSD equilibrium stress.  Then the MLT of $G$ is $d+1$.
\end{lemma}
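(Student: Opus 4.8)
The plan is to prove $\mlt(G)\le d+1$ and $\mlt(G)\ge d+1$ separately, in each case moving between the probabilistic condition of Definition~\ref{def: mlt}, the Gram MLT optimization problem of Definition~\ref{def: modified opt problem}, and the existence of a non-zero PSD linear equilibrium stress, using Lemmas~\ref{lem: unbounded iff stress}, \ref{lem: flattening stress}, \ref{lem: affine to linear stress} and~\ref{lem: linear stress scale} together with the genericity results of Appendix~\ref{appendix: generic} (notably Lemmas~\ref{lem: semi-alg generic} and~\ref{lem: gen slicing}). The argument essentially runs the proof of Lemma~\ref{lem: main stress forward} in reverse.

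For the upper bound, let $D$ be an $n\times(d+1)$ data matrix whose columns are i.i.d.\ from a measure $\mu$ mutually absolutely continuous with Lebesgue measure, and let $q\in\RR^{(d+1)\times n}$ be the configuration given by its rows. As established in the proof of Lemma~\ref{lem: unbounded iff stress}, the objective $g$ attains a global minimum --- so the MLE exists --- whenever $q$ carries no non-zero PSD linear equilibrium stress; hence it suffices to show that the set $X\subseteq\RR^{(d+1)n}$ of configurations that do carry one is $\mu$-null. This set is semi-algebraic, and in fact defined over $\QQ$: it is cut out by determinantal rank conditions together with the positive-semidefinite cone. If $X$ were not $\mu$-null, Lemma~\ref{lem: semi-alg generic} would produce a generic $q\in X$; then $q$ is flattenable, so by Lemma~\ref{lem: flattening stress} the framework $(G,p)$ in $\RR^{d}$ obtained by flattening $q$ and dropping the all-ones coordinate has a non-zero PSD equilibrium stress, and by Lemma~\ref{lem: gen slicing} $p$ is generic --- contradicting that no generic $d$-dimensional framework supports such a stress. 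Thus $X$ is $\mu$-null and $\mlt(G)\le d+1$.

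For the lower bound, minimality of $d$ gives a generic $(d-1)$-dimensional framework $(G,p)$ with a non-zero PSD equilibrium stress matrix $\Omega$. By Lemma~\ref{lem: affine to linear stress}, $\Omega$ is a non-zero PSD linear equilibrium stress matrix for $\hat p\in\RR^{d}$; scaling the vectors of $\hat p$ by generic non-zero weights yields, by Lemma~\ref{lem: linear stress scale}, a configuration $w\in\RR^{d\times n}$ with a non-zero PSD linear equilibrium stress (the signature is preserved), and $w$ is generic by Lemma~\ref{lem: gen slicing}. The set of configurations in $\RR^{dn}$ carrying a non-zero PSD linear equilibrium stress is semi-algebraic, defined over $\QQ$, and contains the generic point $w$, hence is full-dimensional and has positive Lebesgue --- and therefore positive $\mu$ --- measure. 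By Lemma~\ref{lem: unbounded iff stress}, the Gram MLT optimization problem for a $d$-sample data matrix is then unbounded with positive probability, so the MLE does not exist almost surely from $d$ samples; thus $\mlt(G)\ge d+1$, and combining with the upper bound, $\mlt(G)=d+1$.

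The dictionary between the optimization problem and equilibrium stresses is routine once the cited lemmas are available; the delicate part --- and the step I expect to be the main obstacle --- is the genericity bookkeeping handled in the appendix: that a non-$\mu$-null semi-algebraic set contains a generic configuration, that a $\QQ$-definable semi-algebraic set containing a generic point is full-dimensional, and that flattening and generic rescaling preserve genericity. One must also track the dimension count carefully: a configuration of $n$ vectors in $\RR^{d+1}$ --- i.e.\ $d+1$ samples --- flattens to a framework in $\RR^{d}$, so the non-existence hypothesis at dimension $d$ is precisely what blocks unboundedness at $d+1$ samples, while the existence hypothesis at dimension $d-1$, lifted homogeneously to $\RR^{d}$, forces unboundedness at $d$ samples.
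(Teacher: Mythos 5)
Your proof is correct and follows essentially the same route as the paper: both directions translate existence of the MLE into boundedness of the Gram MLT problem via Lemma~\ref{lem: unbounded iff stress}, move between $d$-dimensional configurations and $(d-1)$-dimensional frameworks by flattening and generic rescaling, and bridge ``generic'' with ``almost surely'' using Lemmas~\ref{lem: semi-alg generic} and~\ref{lem: gen slicing}. The only differences are cosmetic --- you phrase the upper bound as a contradiction rather than directly, and cite Lemmas~\ref{lem: affine to linear stress} and~\ref{lem: linear stress scale} where the paper compresses this into Lemma~\ref{lem: flattening stress}.
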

\begin{proof}
By assumption, there must be
a generic $(d-1)$-dimensional framework 
with a non-zero PSD equilibrium stress, which we will call
$(G,p)$.
By scaling 
the vectors of $\hat p$ by generic numbers $s_i$, 
we obtain, by Lemma \ref{lem: gen slicing}, 
a generic vector configuration $q$ in 
dimension $d$.  By Lemma \ref{lem: flattening stress},
there must be a non-zero PSD linear equilibrium stress for $q$.
Hence, by Lemmas~\ref{lem: unbounded iff stress} and~\ref{lem: semi-alg generic}, the 
set of configurations for which the Gram MLT optimization problem is 
unbounded must be non-null.  We conclude that $\mlt(G) > d$.

Now we take a generic vector configuration $q$ in 
dimension $d+1$.  As noted above, by genericity, $q$
is flattenable, and the flattened $d$-dimensional
point configuration $p$ is also generic by 
Lemma \ref{lem: gen slicing}.  By Lemma \ref{lem: flattening stress}, since $(G,p)$ does not have a non-zero PSD equilibrium 
stress, there is not a non-zero PSD linear equilibrium 
stress for $q$.  Hence, for every generic vector configuration 
$q$ in dimension $d+1$, the Gram MLT optimization problem is bounded by Lemma~\ref{lem: unbounded iff stress}.
Since the set of all such vector configurations is semi-algebraic and contains all the generic points, 
it must have full measure by Lemma \ref{lem: semi-alg generic}.  
This implies that $\mlt(G)\le d+1$.
\end{proof}
The existence of a generic framework in dimension $d-1$
with a non-zero PSD equilibrium stress implies that the 
Gram MLT optimization problem is unbounded with positive probability, 
for any way of sampling data points that is continuous with respect 
to Lebesgue measure.  
However, we don't have a lower bound on
this failure probability.  Any general 
lower bound will be quite bad, since 
Buhl \cite{buhl1993existence} showed that, 
for an $n$ cycle, if the datapoints are sampled 
uniformly from the unit interval, 
the MLE exists after $2$ sample points with 
probability $1 - 2n/n!$, even though the MLT 
of a cycle is $3$.

\subsection{The geometric picture: lifting}
Theorem \ref{thm: main mlt stress}, while precise, and 
as we will see, convenient for deriving bounds on 
the MLT of a graph, is quite technical.  There is 
an underlying geometric idea, that we now 
explain.

\begin{defn}\label{def: liftable}
Let $G$ have $n$ vertices.  Let $(G,p)$ be a 
$d$-dimensional framework.  We say that 
$(G,p)$ is \emph{liftable} if there is an equivalent
$n-1$ dimensional framework $(G,\tilde{p})$
with full affine span.
\end{defn}

The following Lemma is due to Alfakih~\cite{alfakih2011bar}.
For completeness, we provide a proof in Appendix \ref{sec:sep} that uses 
convex geometry ideas from \cite{gt1}.

\begin{lemma}[\cite{alfakih2011bar}]\label{lem: psd lift}
A $d$-dimensional framework $(G,p)$ is liftable if and only if 
it does not have a non-zero PSD equilibrium stress.
\end{lemma}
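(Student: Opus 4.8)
The plan is to recast liftability as a semidefinite feasibility question and then exploit the facial structure of the positive semidefinite cone. Write $n=|V(G)|$ and $\ell_{ij}=\|p(j)-p(i)\|$ for edges $ij$. Passing to centered Gram matrices, a framework $(G,\tilde p)$ equivalent to $(G,p)$ corresponds exactly to a symmetric matrix $X$ with $X\succeq 0$, $X\mathbf 1=0$, and $X_{ii}+X_{jj}-2X_{ij}=\ell_{ij}^2$ for every edge $ij$, and the affine span of $\tilde p$ has dimension $\operatorname{rank}X$. So first I would record that $(G,p)$ is liftable if and only if the affine slice
\[
    \mathcal L=\{X : X\mathbf 1=0,\ X_{ii}+X_{jj}-2X_{ij}=\ell_{ij}^2 \text{ for all }ij\in E(G)\}
\]
meets the relative interior of the cone $\mathcal K=\{X\succeq 0 : X\mathbf 1=0\}$, i.e.\ contains a matrix of rank $n-1$. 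The centered Gram matrix $X_p$ of $p$ always lies in $\mathcal L\cap\mathcal K$, so this intersection is nonempty. I would also record the routine identity $\iprod{\Omega}{X}=\sum_{ij\in E}\omega_{ij}\ell_{ij}^2$, valid for every $X\in\mathcal L$ and every equilibrium stress matrix $\Omega$ of $(G,p)$ with weights $\omega$, and note that this common value equals $\iprod{\Omega}{X_p}=0$ because $\Omega$ annihilates the coordinates of $(G,p)$ (Lemma~\ref{lem: affine to linear stress}).

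For the easy direction ($\Rightarrow$) I would argue the contrapositive: if $(G,p)$ has a nonzero PSD equilibrium stress $\Omega$, then $\Omega\succeq 0$, $\Omega\neq 0$, and $\operatorname{range}\Omega\subseteq\mathbf 1^\perp$. Any $X\in\mathcal L$ of rank $n-1$ is positive definite on $\mathbf 1^\perp$, which forces $\iprod{\Omega}{X}>0$ and contradicts the identity above. Hence $\mathcal L$ contains no rank-$(n-1)$ matrix and $(G,p)$ is not liftable.

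The hard direction ($\Leftarrow$) is where the real work is. Assuming $(G,p)$ is not liftable, $\mathcal L$ misses $\operatorname{relint}\mathcal K$ while meeting $\mathcal K$, so I would invoke a proper separation theorem for the affine set $\mathcal L$ and the closed convex cone $\mathcal K$. Since $\mathcal K$ is a cone through the origin and $\mathcal L$ is affine, the separating functional can be taken of the form $\iprod{\Omega}{\cdot}$ with (i) $\iprod{\Omega}{X}=0$ for all $X\in\mathcal L$, (ii) $\iprod{\Omega}{X}\geq 0$ for all $X\in\mathcal K$, and (iii) $v^{T}\Omega v>0$ for some $v\perp\mathbf 1$ (properness). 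Condition (i) says $\Omega$ is orthogonal to the direction space of $\mathcal L$, which is cut out by the functionals $X\mapsto X\mathbf 1$ and $X\mapsto X_{ii}+X_{jj}-2X_{ij}$; consequently $\Omega=\Omega_{\mathrm{eq}}+\mathbf 1c^{T}+c\mathbf 1^{T}$, where $\Omega_{\mathrm{eq}}=\sum_{ij\in E}\omega_{ij}(e_i-e_j)(e_i-e_j)^{T}$ is precisely the equilibrium stress matrix of a weight vector $\omega$ in the sense of Definition~\ref{def: eq stress}, so in particular $\Omega_{\mathrm{eq}}\mathbf 1=0$. Restricting to $\mathbf 1^\perp$ kills the rank-at-most-$2$ correction term, so $\Omega$ and $\Omega_{\mathrm{eq}}$ agree there: by (ii)--(iii), $\Omega_{\mathrm{eq}}$ is PSD (nonnegative on $\mathbf 1^\perp$ and killing $\mathbf 1$) and nonzero. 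Finally, $\iprod{\Omega}{X_p}=0$ together with $X_p\mathbf 1=0$ gives $\iprod{\Omega_{\mathrm{eq}}}{X_p}=0$; since $X_p$ is the Gram matrix of the centered configuration of $p$ and $\Omega_{\mathrm{eq}}\succeq 0$, this forces $\Omega_{\mathrm{eq}}$ to annihilate that configuration, hence $\Omega_{\mathrm{eq}}$ annihilates the homogeneous coordinates of $p$ and $\omega$ is a genuine equilibrium stress of $(G,p)$ (again via Lemma~\ref{lem: affine to linear stress}). This produces the desired nonzero PSD equilibrium stress and completes the contrapositive.

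I expect the main obstacle to be the separation step in the hard direction: one must ensure the separating hyperplane both genuinely supports $\mathcal K$ (so that $\Omega_{\mathrm{eq}}$ is nonzero on $\mathbf 1^\perp$) and contains all of the affine set $\mathcal L$ (so that $\Omega$ lands in the span of the constraint gradients, which is exactly what forces the edge-supported shape of $\Omega_{\mathrm{eq}}$). Both follow from standard convex analysis once one is careful that $\mathcal K$ is a closed cone through $0$ and $\mathcal L\cap\mathcal K\neq\emptyset$ --- this is precisely where I would lean on the convex-geometry machinery of \cite{gt1} --- after which the passage to an honest combinatorial stress vanishing on non-edges is automatic from the form of the orthogonal complement of the direction space of $\mathcal L$.
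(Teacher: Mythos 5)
Your proof is correct, and it is the same convex-separation idea as the paper's, executed ``upstairs'' in matrix space rather than downstairs in edge-length space. The paper works with the full PSD cone $K$, projects it by the edge-length map $\pi$ to a cone $k\subseteq\RR^m$, and uses a projection lemma (Lemma~\ref{lem:convProj}) to identify liftability with $\pi(p)\in\intr(k)$; the separating object is then a supporting hyperplane of $k$ at $\pi(p)$, whose normal vector is \emph{immediately} the stress vector $\omega$, and PSD-ness plus the equilibrium condition drop out of the single inequality $0\le\sum_{ij}\omega_{ij}\|q_i-q_j\|^2$ with equality at $q=p$. You instead separate the affine fiber $\mathcal L$ from the centered cone $\mathcal K$ in the space of symmetric matrices, which buys you a very clean statement of what non-liftability means (an affine set missing $\operatorname{relint}$ of a cone while meeting the cone) at the cost of an extra bookkeeping step: the separating functional $\Omega$ is only determined modulo the span of the constraint gradients, so you must peel off the $\mathbf 1c^T+c\mathbf 1^T$ part coming from the centering constraints before you can recognize $\Omega_{\mathrm{eq}}$ as an edge-supported stress, and you must check separately that PSD-ness and nonvanishing survive restriction to $\mathbf 1^\perp$. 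You handle all of this correctly (in particular the key points: $X_p\in\mathcal L\cap\mathcal K$ guarantees the separation is proper with the hyperplane through the origin containing $\mathcal L$; $\operatorname{relint}\mathcal K$ is exactly the rank-$(n-1)$ stratum; and $\iprod{\Omega_{\mathrm{eq}}}{X_p}=0$ with both factors PSD forces $\Omega_{\mathrm{eq}}X_p=0$, which is the equilibrium condition). The two arguments are mathematically equivalent --- the paper's proof of Lemma~\ref{lem:convProj} itself constructs a separating hyperplane upstairs and pushes it down --- so there is no gap; yours just trades the projection lemma for the decomposition of $\Omega$ along the orthogonal complement of the direction space of $\mathcal L$.
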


Theorem \ref{thm: main mlt lift} is immediate 
from Theorem \ref{thm: main mlt stress} and 
Lemma \ref{lem: psd lift}.

\subsection{Remarks}
To close a circle of ideas, we note that much of the
literature on the MLT, including 
\cite{bendavid,blekherman2019maximum,gross2018maximum,uhler2012geometry}
does not work directly with the MLT optimization problem.
Instead, the starting point is the following matrix 
completion problem.
\begin{defn}\label{def: mlt matcomp}
Let $G$ be a graph with $n$ vertices and $S$ an 
$n\times n$ PSD matrix of rank $d+1$.
The \emph{MLT matrix completion problem for $(G,S)$} is to find an $n\times n$ 
positive definite matrix $A$ that has the same diagonal entries as 
$A$ and the same off diagonal entries corresponding to edges of $G$.
\end{defn}
Dempster \cite{dempster1972covariance}
showed that the MLT optmization problem is 
bounded if and only if the MLT matrix completion 
problem is feasible.\footnote{One can also derive Dempster's result via duality
in convex programming, see e.g.~\cite{bendavid}.}
A less direct path to our results is to relate the 
MLT matrix completion for $G$ problem to liftability 
of ``coned'' frameworks $(v_0*G,p)$ \cite{connelly2010global,W83} in one dimension 
higher that have a new vertex $v_0$ connected to 
all the others.  The proof of Theorem \ref{thm: rand rigid 2} in Appendix \ref{sec:app:rand} uses this technique.

Finally, we note that we could have allowed the vector configurations in our optimization problems 
to satisfy a condition strictly weaker than flattenability.
In particular, it would have been enough to only require that the Gram 
matrix $Q^T Q$ have \emph{some} factorization that 
is flattenable, which happens so long as none 
of the vectors in $q$ are zero.  
At the level of frameworks, changing 
factorizations corresponds to projective transformations.
We elected to use the stronger condition to keep the proofs
simpler, and in particular, to avoid
having to define and work with generic low-rank PSD matrices.

\section{MLT bounds from global rigidity}\label{sec: mlt gr}

We can use the results of the previous section along with some facts about global 
rigidity to get improved bounds for the MLT and compute it exactly for some 
interesting families. The main technical tool of this section relates 
generic global rigidity to PSD equilibrium stresses.
\begin{thm}[\cite{cgt1}]\label{thm: gen ur}
Let $G$ be a graph with $n\ge d + 2$ vertices and $d$ a dimension.  
If $G$ globally $d$-rigid, then there is a generic framework $(G,p)$ with a 
PSD equilibrium stress of rank $n - d - 1$.
\end{thm}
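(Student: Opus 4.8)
The plan is to recognize this as a statement about \emph{universal rigidity}: a framework with a PSD equilibrium stress matrix of rank $n-d-1$ is rigid as a bar framework in $\RR^D$ for \emph{every} $D\ge d$, and for generic frameworks the converse holds as well. So I would prove the theorem in two steps: first, that a generic framework of a generically globally $d$-rigid graph with $n\ge d+2$ vertices is universally rigid; and second, that generic universal rigidity (with $n\ge d+2$) yields a PSD equilibrium stress of rank exactly $n-d-1$.

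For the second step, note that by \cite{gortler2010characterizing} a generically globally $d$-rigid graph already admits a generic framework with \emph{some} equilibrium stress matrix of rank $n-d-1$, and that any framework with full affine span in $\RR^d$ has all of its stress matrices of rank at most $n-d-1$, since such a matrix annihilates the rank-$(d+1)$ matrix $\hat P$ of homogeneous coordinates (Lemma~\ref{lem: affine to linear stress}). So the work is to promote a maximum-rank stress to a maximum-rank \emph{PSD} stress, using the extra information of universal rigidity. I would do this through the convex-geometry picture underlying Lemma~\ref{lem: psd lift} and \cite{gt1}: universal rigidity says that among all Gram matrices of configurations equivalent to $(G,p)$, the only one of rank $\le d+1$ is $\hat P^T\hat P$ itself, and a separating-hyperplane argument in the PSD cone then produces a PSD stress matrix, whose rank a dimension count (using $n\ge d+2$ and genericity) pushes up to $n-d-1$. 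This is, in essence, the theorem of Gortler and Thurston characterizing universal rigidity of generic frameworks, which I would cite rather than reprove.

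For the first step — the crux — I would argue by contradiction using an \emph{averaging} trick. Suppose $(G,p)$ is generic and globally $d$-rigid but not universally rigid, so there is an equivalent framework $(G,q)$ in some $\RR^D$, $d<D\le n-1$, with $q$ not an affine image of $p$. Viewing $p$ inside $\RR^D$, set $a=\tfrac{1}{2}(p+q)$ and $b=\tfrac{1}{2}(p-q)$. Expanding $\|p(i)-p(j)\|^2=\|q(i)-q(j)\|^2$ edge by edge gives $\iprod{a(i)-a(j)}{b(i)-b(j)}=0$ for every edge $ij$ of $G$; that is, $b$ is an infinitesimal flex of $(G,a)$ in $\RR^D$, while $(G,a)$ is equivalent to a uniformly scaled copy of $(G,p)$. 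Using that $(G,p)$ is generic, hence infinitesimally $d$-rigid with a well-understood rigidity matroid, one pushes this flex back down into $\RR^d$ after collapsing the extra coordinates, producing a $d$-dimensional framework equivalent to $(G,p)$ but not congruent to it — contradicting global $d$-rigidity.

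The main obstacle is exactly this descent from dimension $D$ to dimension $d$: a naive projection can kill the flex, and one must rule out the degenerate case in which $b$ is an \emph{affine} flex, i.e.\ the edge directions of $(G,p)$ lie on a quadric at infinity. This cannot be excluded by genericity of $p$ alone, since a generic configuration of few points does lie on quadrics, so one must instead invoke structural properties of generically globally rigid graphs — they are redundantly rigid and highly connected — to show that no such conic at infinity exists, or else argue around it with a careful scaling/limiting argument. Carrying out this part rigorously is the technical heart of \cite{cgt1}, and is where I would expect to spend most of the effort.
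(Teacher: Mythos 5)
This theorem is imported into the paper directly from \cite{cgt1} with no proof given, so the only meaningful comparison is with the argument in \cite{cgt1} itself. Your second step is sound as a citation: Gortler and Thurston \cite{gt1} do show that a generic universally rigid framework on $n\ge d+2$ vertices carries a PSD equilibrium stress of rank $n-d-1$, and combining that with \cite{gortler2010characterizing} for the rank count is the right way to finish once universal rigidity is in hand.

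The genuine gap is in your first step. You set out to prove that \emph{a} (i.e.\ every, or an arbitrarily chosen) generic framework of a globally $d$-rigid graph is universally rigid, and your averaging argument is a proof by contradiction of exactly that universal statement. But that statement is false, which is precisely why the theorem is phrased existentially (``there is a generic framework'') and why the title of \cite{cgt1} is ``generically globally rigid graphs \emph{have} generic universally rigid frameworks.'' Complete bipartite graphs give concrete counterexamples: $K_{4,4}$ is globally $2$-rigid, yet by the quadric-separation criterion of \cite{CGbipartite} a generic planar framework in which one partite class is separated from the other by a conic is \emph{not} universally rigid, and quadric separability is an open condition met by some generic configurations. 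Consequently the ``degenerate case'' you flag at the end --- the flex $b$ being an affine flex, with the edge directions lying on a conic at infinity --- is not a technicality that redundant rigidity or high connectivity can rule out; it genuinely occurs for generic frameworks of globally rigid graphs, and in that case there is no contradiction with global $d$-rigidity to be had, because the equivalent realization lives irreducibly in dimension $D>d$. The proof in \cite{cgt1} therefore cannot and does not argue about an arbitrary generic framework; it constructs a particular generic framework with the desired PSD stress via a convex-geometric (facial reduction / minimal face of the spectrahedron of equivalent Gram matrices) argument, using generic global rigidity to control the affine span of the configurations arising in that construction. Your outline would need to be restructured around producing such a special witness rather than upgrading every generic framework.
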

We also need a straightforward lemma.
\begin{lemma}\label{lem: mlt monotone}
Let $G$ be a graph and $H$ a subgraph of $G$.  Then $\mlt(H)\le \mlt(G)$.
\end{lemma}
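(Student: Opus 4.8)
The plan is to deduce monotonicity from the stress-theoretic characterization of the MLT in Theorem~\ref{thm: main mlt stress}. Writing $d = \mlt(G) - 1$, that theorem says $d$ is the \emph{smallest} dimension in which no generic $d$-dimensional framework on $G$ supports a nonzero PSD equilibrium stress, so it is enough to prove the following transfer statement: if no generic $d$-dimensional framework on $G$ has a nonzero PSD equilibrium stress, then no generic $d$-dimensional framework on $H$ has one either. Theorem~\ref{thm: main mlt stress} applied to $H$ then yields $\mlt(H) - 1 \le d$, i.e.\ $\mlt(H) \le \mlt(G)$.

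To prove the transfer statement I would reduce to two elementary moves, since $H$ is obtained from $G$ by deleting a sequence of edges and vertices: (i) $H = G - e$ and (ii) $H = G - v$. In each case, start from a generic $d$-dimensional framework $(H,p)$ carrying a nonzero PSD equilibrium stress $\omega$ and derive a contradiction with the hypothesis on $G$. In case (i), $(G,p)$ is the same configuration, hence still generic, and extending $\omega$ by $\omega_e = 0$ gives an equilibrium stress of $(G,p)$ with the same stress matrix --- still nonzero and PSD. In case (ii), extend $p$ to a configuration $p'$ on $V(G)$ by picking the coordinates of $p'(v)$ algebraically independent over the field generated by the coordinates of $p$, so that $p'$ is generic (a genericity-preservation statement of the kind recorded in Lemma~\ref{lem: gen slicing}); then extend $\omega$ by setting $\omega_{vj} = 0$ on every edge $vj$ at $v$. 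The equilibrium condition at $v$ holds vacuously and at every other vertex reduces to the one for $\omega$ on $H$ because $p'$ agrees with $p$ on $V(H)$, so this is an equilibrium stress of $(G,p')$; its stress matrix is that of $\omega$ bordered by a zero row and column, hence again nonzero and PSD.

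Chaining the two moves along any sequence of deletions converting $G$ into $H$ establishes the transfer statement, and with it the lemma. I expect the only genuinely delicate point to be the vertex-deletion step --- specifically, checking that appending a sufficiently generic point keeps the whole configuration generic (so Theorem~\ref{thm: main mlt stress} applies to $(G,p')$) and that bordering a symmetric PSD matrix by a zero row and column preserves positive semidefiniteness and nonvanishing; both are routine. For the special case in which $H$ is a spanning subgraph one can sidestep rigidity entirely: the feasible region of the MLT optimization problem for $H$ is contained in that for $G$, so boundedness of the objective for $G$ forces boundedness for $H$, and boundedness is equivalent to existence of the MLE by Dempster's theorem together with the coercivity argument in the proof of Lemma~\ref{lem: unbounded iff stress}.
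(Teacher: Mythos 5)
Your argument is correct. Note, however, that the paper does not actually prove this lemma: it is stated without proof (and invoked earlier in the introduction) as a standard fact, the implicit justification being the direct one via the optimization/matrix-completion formulation. Namely, for a spanning subgraph $H$ the feasible cone $\{K \succ 0 : K_{ij}=0 \text{ for } ij \notin E(H)\}$ is contained in that of $G$, so boundedness of the objective for $G$ forces boundedness for $H$; and for vertex deletion one passes to the principal submatrix of a positive definite completion, with the almost-sure quantifier surviving because marginals of measures mutually absolutely continuous with Lebesgue measure are again such. Your route instead goes through Theorem~\ref{thm: main mlt stress}, reducing to the transfer statement that a nonzero PSD equilibrium stress on a generic framework of $H$ extends by zero to one on a generic framework of $G$; the two elementary moves are handled correctly (the bordered stress matrix remains PSD and nonzero, and appending a point whose coordinates are algebraically independent over the field generated by those of $p$ keeps the configuration generic). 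This is not circular, since Theorem~\ref{thm: main mlt stress} is proved in Section~\ref{sec: lifting} without appeal to monotonicity, and it has the virtue of staying entirely inside the rigidity-theoretic framework; the cost is that it invokes the paper's main technical theorem to establish what is really a one-line consequence of nested feasible regions. Your closing remark gives exactly that shorter argument for the spanning case, and extending it to vertex deletion via principal submatrices would make it the more economical proof overall.
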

\subsection{Lower bounds}
The main results of this section are new lower 
bounds on the MLT of a graph arising from 
global rigidity in terms of the global rigidity 
number (Def. \ref{def: grn}.)

\begin{proof}[Proof of Theorem \ref{thm: grn mlt}]
Suppose that $G$ is globally $d$-rigid. By Theorem \ref{thm: gen ur}, 
there is a generic framework $(G,p)$ with a non-zero PSD equilibrium stress.  By Theorem \ref{thm: main mlt stress}, $\mlt(G) > d + 1$.
Taking $d$ as large as possible for $G$ to remain   globally $d$-rigid we get 
$\mlt(G) >  \grn(G) + 1$.  The same argument works for any subgraph $H$ of $G$, so Lemma \ref{lem: mlt monotone}
implies that $\mlt(G)\ge \mlt(H) > \grn(H) + 1$.  Maximizing the right-hand side over $H$ we get $\mlt(G) > \grn^*(G) + 1.$
Since $G$ is a subgraph of itself, plainly $\grn^*(G)\ge \grn(G)$.
\end{proof}

We can efficiently compute $\grn(G)$ \cite{gortler2010characterizing}, 
but we do not know the complexity of computing $\grn^*(G)$.  A 
related graph parameter, which may be more computationally tractable is 
the local rigidity analogue.
\begin{defn}\label{def: rigidity number}
Let $G$ be a graph with $n$ vertices.  
The \emph{rigidity number} $\rignum(G)$ is the largest 
$d$ so that $G$ is $d$-rigid and has at least $d+1$ vertices.  
The \emph{subgraph rigidity 
number} $\rignum^*(G)$ is the largest $d$ so 
that $G$ has a subgraph $H$ on at least $d+1$
vertices that is $d$-rigid.
\end{defn}
\begin{thm}\label{thm: rnr mlt}
Let $G$ be a graph.  Then $\rignum(G) + 1\le \rignum^*(G) + 1\le \mlt(G)$.
\end{thm}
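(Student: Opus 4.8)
The plan is to mirror the proof of Theorem~\ref{thm: grn mlt}, run one rigidity-degree lower. The inequality $\rignum(G)\le\rignum^*(G)$ is immediate from Definition~\ref{def: rigidity number} (take $H=G$), so the content is $\rignum^*(G)+1\le\mlt(G)$. Put $d=\rignum^*(G)$ and fix a subgraph $H\subseteq G$ that is $d$-rigid and has at least $d+1$ vertices. By Lemma~\ref{lem: mlt monotone}, $\mlt(G)\ge\mlt(H)$, so it suffices to prove $\mlt(H)\ge d+1$; and, exactly as in the proof of Theorem~\ref{thm: grn mlt}, by Theorem~\ref{thm: main mlt stress} (equivalently, via Lemma~\ref{lem: psd lift} and Theorem~\ref{thm: main mlt lift}) this in turn reduces to producing a \emph{generic $(d-1)$-dimensional framework of $H$ carrying a non-zero PSD equilibrium stress}.

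The key rigidity-theoretic input I would use is: a $d$-rigid graph on $n\ge d+1$ vertices contains a globally $(d-1)$-rigid subgraph $H'$ on at least $d+1$ vertices. Granting this, there are two equivalent ways to finish. Directly, Theorem~\ref{thm: gen ur} applied to $H'$ (which has $\ge(d-1)+2$ vertices) gives a generic $(d-1)$-dimensional framework of $H'$ with a non-zero PSD equilibrium stress; placing the remaining vertices of $H$ generically and extending the stress matrix by zero off the $H'$-block yields the required generic framework of $H$ with a non-zero PSD equilibrium stress, whence $\mlt(H)>d$ by Theorem~\ref{thm: main mlt stress}. Alternatively, one sees immediately that $\grn^*(H)\ge d-1$, so Theorem~\ref{thm: grn mlt} gives $\mlt(H)\ge(d-1)+2=d+1$. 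Either way, $\mlt(G)\ge\mlt(H)\ge d+1=\rignum^*(G)+1$, which with the trivial inequality proves the theorem.

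The work, then, is the structural claim, and this is where the hypothesis is genuinely used (edge counting alone is not enough: $K_{m,m}$ in dimension $m-2$ has an $(n-m)$-dimensional stress space but no PSD stress, by Theorem~\ref{thm: blekherman Kmn}). For $n=d+1$ the only $d$-rigid graph is $K_{d+1}$, which is globally $(d-1)$-rigid, so assume $n\ge d+2$; pass to a spanning minimally $d$-rigid subgraph (harmless, as its MLT only goes down). For $d=2$ the claim is transparent — minimally $2$-rigid graphs are $2$-connected, hence globally $1$-rigid — and for $d=3$ it follows from $3$-rigidity $\Rightarrow$ $3$-connectivity together with the combinatorial characterisation of generic global $2$-rigidity. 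The main obstacle is uniformity in $d$: for $d\ge 4$ there is no usable combinatorial description of generic global $(d-1)$-rigidity, so I would instead construct the PSD stress directly. The plan there is to start from a generic infinitesimally rigid framework $(H,\bar p)$ in $\RR^d$ and project out the last coordinate, obtaining a generic $(d-1)$-dimensional framework $(H,p)$ with heights $h\in\RR^n$; writing the $d$-dimensional rigidity matrix as $[\,R_{d-1}(p)\mid C\,]$ with $C$ the one-dimensional rigidity matrix of $h$, the vanishing of its left kernel says that $\omega\mapsto\Omega h$ is injective on the (edge-count forced, $(n-d)$-dimensional) stress space of $(H,p)$, hence an isomorphism onto the affine-dependency space of $p$; one must then combine this with the convex-lifting meaning of PSD equilibrium stresses behind Lemma~\ref{lem: psd lift} to locate a PSD member of that stress space. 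That last upgrade — from ``some non-zero stress'' to ``a non-zero PSD stress'' — is the crux of the argument.
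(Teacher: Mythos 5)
Your reduction is the right one, and it is in fact the paper's: the trivial inequality $\rignum(G)\le\rignum^*(G)$, monotonicity of the MLT under subgraphs, and then the structural claim that a $d$-rigid graph on at least $d+1$ vertices is globally $(d-1)$-rigid, which via Theorem~\ref{thm: grn mlt} gives $\rignum^*(G)+1\le\grn^*(G)+2\le\mlt(G)$. The problem is that you leave that structural claim unproved for $d\ge 4$, and it is the entire content of the theorem. The paper imports it as Lemma~\ref{lem: inf rigid d to gr d -1}, a (nontrivial, recent) theorem of Jord\'an \cite{J17}, whose proof goes through a sufficient condition for generic global rigidity (roughly: $(d+1)$-rigidity forces $G-v$ to be $d$-rigid for every vertex $v$, and vertex-redundant rigidity implies generic global rigidity); with that citation the proof is a two-line corollary of Theorem~\ref{thm: grn mlt}, exactly as in your ``alternative'' finishing route.

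Your proposed workaround for $d\ge 4$ does not close the gap, and you say so yourself. The observation that the left kernel of $[\,R_{d-1}(p)\mid C\,]$ vanishes, so that $\omega\mapsto\Omega h$ is an isomorphism from the $(n-d)$-dimensional stress space of the projected framework onto the affine-dependency space of $p$, is correct but signature-blind: it produces a nonzero stress (which only certifies $\gcr\ge d+1$, i.e.\ Theorem~\ref{thm: gross sullivant}), and nothing in that linear-algebra step distinguishes a PSD stress from an indefinite one. Your own running example shows why this cannot be patched locally: $K_{m,m}$ in dimension $m-2$ has a large stress space with no PSD member. Locating a PSD stress in the stress space of a generic framework is essentially equivalent, by Theorem~\ref{thm: gen ur} and Lemma~\ref{lem: psd lift}, to establishing generic global/universal rigidity, so the ``crux'' you flag is not a finishing detail but the theorem of \cite{J17} (together with \cite{cgt1}) in disguise. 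As written, the argument is complete only for $d\le 3$.
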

The proof needs a result of Jordán.
\begin{lemma}[\cite{J17}]\label{lem: inf rigid d to gr d -1}
Let $G$ be a graph that is $(d+1)$-rigid.  Then $G$ is globally $d$-rigid.
\end{lemma}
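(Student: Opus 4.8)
The plan is to establish global $d$-rigidity of $G$ by exhibiting, on a generic $d$-dimensional framework, an equilibrium stress matrix of the maximum possible rank, and then invoking the stress-matrix sufficient condition for generic global rigidity. Concretely, by Connelly's sufficient condition (the companion to Theorem~\ref{thm: gen ur} established in \cite{cgt1}), a graph $G$ on $n\ge d+2$ vertices is globally $d$-rigid as soon as some generic framework $(G,p)$ in $\RR^d$ carries an equilibrium stress matrix $\Omega$ of rank $n-d-1$. Recall that $n-d-1$ is the largest rank any stress matrix of a $d$-dimensional framework can attain, since $\Omega\hat P^T=0$ and $\hat P$ has rank $d+1$ on a full-dimensional configuration; so the task is to realize this extremal rank. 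The cases $n\le d+1$ are simplices and are globally $d$-rigid for trivial reasons. Since global rigidity is preserved under adding edges, I would replace $G$ by a spanning minimally $(d+1)$-rigid subgraph $H$, so that it suffices to produce a maximal-rank stress for $H$.

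First I would record the dimension count. Because $H$ is $(d+1)$-rigid it has at least $(d+1)n-\binom{d+2}{2}$ edges, while the generic $d$-dimensional rigidity matrix has rank at most $dn-\binom{d+1}{2}$; subtracting, a generic $d$-framework $(H,p)$ has an equilibrium-stress space $\mathcal S_d$ of dimension at least $n-d-1$, with equality when $H$ is minimally $(d+1)$-rigid. Next I would set up the correspondence between these stresses and rigidity one dimension higher. Lifting $(H,p)$ to a generic framework $(H,q)$ in $\RR^{d+1}$ by choosing generic heights $z=(z_1,\dots,z_n)$, the rigidity matrix of the lift splits as the block row $[\,R_d(p)\mid R_1(z)\,]$, where $R_1(z)$ is the one-dimensional rigidity matrix of the heights. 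A direct computation then identifies the stresses of the lift with exactly those $\Omega\in\mathcal S_d$ that additionally satisfy $\Omega z=0$. Since $H$ is minimally $(d+1)$-rigid, the generic lift $(H,q)$ is independent and hence supports no nonzero stress; thus for generic $z$ there is no nonzero $\Omega\in\mathcal S_d$ with $\Omega z=0$.

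The hard part will be upgrading this to the existence of a single stress of full rank $n-d-1$: a large stress space need not contain a maximal-rank element, and controlling the rank of a stress matrix is the classical difficulty in every generic global-rigidity argument. The strategy I would pursue is to run the previous step in reverse. Suppose, for contradiction, that every $\Omega\in\mathcal S_d$ has rank at most $n-d-2$, equivalently that the kernel of each stress strictly contains the $(d+1)$-dimensional row space of $\hat P$. I would then study the incidence variety of pairs $([\Omega],[z])$ with $\Omega\in\mathcal S_d$ and $\Omega z=0$, taken modulo the row space of $\hat P$, and aim to show that its projection to the $z$-coordinate is dominant; a dominant projection would supply a generic $z$ admitting a nonzero $\Omega\in\mathcal S_d$ in its kernel, contradicting the independence of the lift. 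The obstacle is precisely that a dimension count alone shows only that the incidence variety has dimension at least that of the target, not that the projection is dominant; ruling out that the extra kernel directions of the stresses all collapse into a proper subvariety requires using the genericity of $p$ together with the rigidity-matroid structure of $H$. This is the step that carries the real content, and it is where I would follow, and if necessary reconstruct, the argument of Jordán \cite{J17}. Once a rank-$(n-d-1)$ stress is secured, Connelly's criterion \cite{cgt1} immediately yields that $H$, and therefore $G$, is globally $d$-rigid.
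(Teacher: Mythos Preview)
The paper does not supply its own proof of this lemma; it is simply quoted from Jord\'an \cite{J17}. So there is no in-paper argument to compare against.

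On its own merits, your outline is a natural attack via Connelly's stress-matrix criterion (this is \cite{Bob05}, not \cite{cgt1}), and the preliminary steps are fine: passing to a minimally $(d+1)$-rigid spanning subgraph, the count giving an $(n-d-1)$-dimensional stress space $\mathcal S_d$ at a generic $d$-framework, and the identification of stresses of the lift $(H,(p,z))$ with those $\Omega\in\mathcal S_d$ satisfying $\Omega z=0$. The gap is exactly where you say it is, and it is genuine. Having $\dim\{([\Omega],[z]):\Omega z=0\}\ge\dim\PP(W)$ does not force the projection to $z$ to be dominant; for instance, an $m$-dimensional linear space of symmetric $m\times m$ matrices can consist entirely of singular matrices sharing a common kernel vector, in which case the projection collapses to a point. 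You offer no mechanism by which the genericity of $p$ rules this out, and at precisely this step you write that you would ``follow, and if necessary reconstruct, the argument of Jord\'an \cite{J17}.'' That is circular: Jord\'an's argument \emph{is} the content of the lemma.

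A route that sidesteps the rank obstacle entirely is worth knowing. If $G$ is $(d+1)$-rigid and $v$ is any vertex, then the cone $v*(G-v)$ contains $G$ as a spanning subgraph and is therefore also $(d+1)$-rigid; Whiteley's coning theorem \cite{W83} then gives that $G-v$ is $d$-rigid. Hence $G$ is vertex-redundantly $d$-rigid, and Tanigawa's theorem (J.\ Combin.\ Theory Ser.\ B \textbf{113} (2015), 123--140) yields global $d$-rigidity directly. This packages the hard stress-matrix work inside Tanigawa's result rather than asking you to reproduce it.
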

\begin{proof}[Proof of Theorem \ref{thm: rnr mlt}]
By Lemma \ref{lem: inf rigid d to gr d -1} one has $\rignum^*(G)\le \grn^*(G)+1$.
Theorem \ref{thm: grn mlt} then implies that 
$\rignum^*(G) + 1 \le \mlt(G)$.
Plainly 
$\rignum(G)\le \rignum^*(G)$, 
giving the last inequality.
\end{proof}
Theorem \ref{thm: rnr mlt} is strictly weaker than 
Theorem \ref{thm: grn mlt}.  For example, for every $n\ge 4$
there are globally rigid graphs in dimension $2$ that have 
$2n - 2$ edges \cite{bergjordan,Bob05}, but if $n>4$ then 
$2n - 2 < 3n - 6$, so these graphs cannot be $3$-rigid. 

The rigidity number of a graph is also easy to compute 
\cite{asimow1978rigidity}.  We do not know the complexity
of computing $\rignum^*(G)$, but, since local rigidity 
is matroidal in nature, tools from submodular optimization 
may apply.

\subsection{Combined bounds and examples}
\label{sec: combined}
Combining what we know so far gives the following.
\begin{thm}\label{rigidityBounds}
For any graph $G$, the following inequalities hold
\begin{enumerate}[(a)]
    \item $\omega(G)\le \rignum^*(G)+1 \le   \grn^*(G)+2 \le \mlt(G)\le\gcr(G)\le \tau(G) + 1$, and
    \item $\rignum(G)+1 \le \grn(G)+2 \le \grn^*(G)+2$.  
\end{enumerate}

\end{thm}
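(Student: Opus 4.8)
The plan is to assemble the chain of inequalities from results already established in the paper, verifying each link in turn. I would organize the proof as a sequence of short paragraphs, one per inequality, citing the relevant earlier statement. Almost all of the work has been done; what remains is bookkeeping and a couple of small monotonicity observations.

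For part (a), the innermost inequalities $\grn^*(G)+2 \le \mlt(G)$ and $\rignum^*(G)+1 \le \mlt(G)$ are exactly Theorems \ref{thm: grn mlt} and \ref{thm: rnr mlt}; moreover the proof of Theorem \ref{thm: rnr mlt} (via Lemma \ref{lem: inf rigid d to gr d -1}) already shows $\rignum^*(G) \le \grn^*(G) + 1$, which gives the link $\rignum^*(G)+1 \le \grn^*(G)+2$. The leftmost inequality $\omega(G) \le \rignum^*(G)+1$ follows because a clique on $k = \omega(G)$ vertices is a subgraph of $G$ and $K_k$ is $(k-1)$-rigid with $k$ vertices, so $\rignum^*(G) \ge k-1$; I should note $\omega(G)\ge 1$ trivially handles degenerate cases. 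The upper bounds $\mlt(G) \le \gcr(G) \le \tau(G)+1$ are Theorems \ref{thm: uhler gcr mlt} and (the upper half of) \ref{thm: buhl}; the $\gcr(G)\le\tau(G)+1$ step can be read off by combining Theorem \ref{thm: gross sullivant} with the standard fact that a graph of treewidth $\tau$ is $\tau$-independent (which is essentially how Buhl's upper bound is proved), or simply cited directly since $\mlt \le \gcr$ and $\mlt \le \tau+1$ already pin $\gcr \le \tau+1$ is \emph{not} immediate --- so I would instead justify $\gcr(G)\le\tau(G)+1$ via the sparsity/elimination-ordering argument, or cite \cite{uhler2012geometry,gross2018maximum} where this is observed.

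For part (b), the inequality $\grn^*(G)+2 \le \grn^*(G)+2$ is trivial and $\grn(G) \le \grn^*(G)$ holds because $G$ is a subgraph of itself (as already noted at the end of the proof of Theorem \ref{thm: grn mlt}). The remaining link $\rignum(G)+1 \le \grn(G)+2$, i.e.\ $\rignum(G) \le \grn(G)+1$, is Lemma \ref{lem: inf rigid d to gr d -1} applied to $G$ itself: if $G$ is $(\rignum(G))$-rigid and $\rignum(G) = e+1$ for the relevant $e$, then $G$ is globally $e$-rigid, so $\grn(G) \ge e = \rignum(G)-1$, provided $G$ has at least $e+2$ vertices --- a point I would check, noting that $d$-rigidity of a graph on exactly $d+1$ vertices forces the graph to be complete, and then the claim degenerates gracefully.

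I do not anticipate a serious obstacle: every inequality is either a cited theorem or a one-line monotonicity/subgraph argument. The one place that requires genuine (if minor) care is the $\gcr(G)\le\tau(G)+1$ bound, since the paper states Buhl's theorem as a bound on $\mlt$, not $\gcr$; I would make sure to invoke the correct source (the treewidth-to-independence implication used in \cite{uhler2012geometry,gross2018maximum}) rather than trying to derive it from the $\mlt$ statement, which would be circular. The other small care point is handling graphs with too few vertices for $\grn$ or $\rignum^*$ to be defined in the naive way, which I would dispatch with a sentence about the degenerate complete-graph cases.
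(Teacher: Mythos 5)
Your proposal is correct and matches the paper's (implicit) argument: the theorem is presented as a direct combination of Theorems \ref{thm: buhl}, \ref{thm: uhler gcr mlt}, \ref{thm: grn mlt} and \ref{thm: rnr mlt} together with the subgraph-monotonicity and clique observations you supply, and each link in your chain is valid. Your extra care on the link $\gcr(G)\le\tau(G)+1$ --- which the paper asserts without an explicit derivation and which, as you note, does not follow from Buhl's bound on $\mlt$ --- is well placed, and your justification via the fact that a graph of treewidth $\tau$ is $\tau$-independent (being a subgraph of a $\tau$-tree) combined with Theorem \ref{thm: gross sullivant} is the standard and correct one.
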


\begin{cor}\label{cor:gr+ind}
If $G$ is both globally $d$-rigid and $(d+1)$-independent, then $\mlt(G) = d+2$.
\end{cor}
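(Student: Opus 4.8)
The plan is to deduce Corollary~\ref{cor:gr+ind} directly from the chain of inequalities assembled in Theorem~\ref{rigidityBounds}. Suppose $G$ is both globally $d$-rigid and $(d+1)$-independent. For the lower bound, since $G$ is globally $d$-rigid (and, implicitly, has at least $d+2$ vertices, which I would note follows because a $(d+1)$-independent graph on fewer vertices would be complete and hence $(d+1)$-dependent), we have $\grn(G) \ge d$, so by part~(a) of Theorem~\ref{rigidityBounds} (or directly by Theorem~\ref{thm: grn mlt}) we get $\mlt(G) \ge \grn(G) + 2 \ge d+2$.

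For the matching upper bound, I would invoke Theorem~\ref{thm: gross sullivant}: the generic completion rank of $G$ equals $e+1$ where $e$ is the smallest dimension in which $G$ is $e$-independent. Since $G$ is $(d+1)$-independent, this smallest dimension is at most $d+1$, so $\gcr(G) \le d+2$. Then part~(a) of Theorem~\ref{rigidityBounds} gives $\mlt(G) \le \gcr(G) \le d+2$. Combining the two inequalities yields $\mlt(G) = d+2$, as claimed.

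The only subtlety — and the step I would be most careful about — is the vertex-count hypothesis hidden in Definition~\ref{def: grn}: $\grn(G) \ge d$ requires $G$ to be globally $d$-rigid \emph{and} to have at least $d+2$ vertices. So the real content is checking that $(d+1)$-independence forces $n \ge d+2$. If $n \le d+1$, then every pair of vertices can be joined and the generic $(d+1)$-dimensional framework on $n \le d+1$ points spans an affine subspace of dimension $n-1 \le d$; such a graph has the maximum possible number of edges $\binom{n}{2}$ relative to its rigidity matroid rank, so it cannot be $(d+1)$-independent unless it is edgeless or very small, and in any case the simplex on $\le d+1$ vertices is $(d+1)$-dependent once it has enough edges. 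I would phrase this cleanly by observing that a graph on $n \le d+1$ vertices is $(d+1)$-independent only in degenerate cases that are not globally $d$-rigid, so the hypotheses of the corollary already preclude $n \le d+1$; alternatively one notes that global $d$-rigidity of a graph with at least one edge is usually stated for $n \ge d+2$ and graphs on $\le d+1$ vertices are never $(d+1)$-independent once they carry the edges needed to be rigid. This is a routine check rather than a genuine obstacle; the mathematical heart of the corollary is simply that the lower bound from global rigidity and the upper bound from generic completion rank pinch together at $d+2$.
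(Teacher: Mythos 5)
Your main argument is exactly the paper's intended route: the lower bound $\mlt(G)\ge \grn(G)+2\ge d+2$ from Theorem~\ref{thm: grn mlt} and the upper bound $\mlt(G)\le\gcr(G)\le d+2$ from Theorems~\ref{thm: gross sullivant} and~\ref{thm: uhler gcr mlt}, i.e.\ the two ends of the chain in Theorem~\ref{rigidityBounds}. That part is correct.

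The step you flagged as the ``only subtlety'' is, however, where the proposal goes wrong: the claim that $(d+1)$-independence forces $n\ge d+2$ is false. For $n\le D+1$ generic points in $\RR^{D}$ the rigidity matrix of $K_n$ has rank $\binom{n}{2}$, so $K_n$ is $D$-independent; consequently \emph{every} graph on at most $d+2$ vertices is $(d+1)$-independent, and in particular the simplex $K_{d+1}$ is $(d+1)$-independent, contrary to your assertion that it becomes ``$(d+1)$-dependent once it has enough edges.'' Since $K_{d+1}$ is also globally $d$-rigid and $\mlt(K_{d+1})=d+1$, the hypotheses you are working with do not by themselves rule out $n\le d+1$, and the conclusion $\mlt(G)=d+2$ actually fails there. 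The vertex-count condition $n\ge d+2$ is a genuine extra hypothesis --- it is what makes $\grn(G)\ge d$ in Definition~\ref{def: grn} and what Theorem~\ref{thm: gen ur} requires --- and it must be assumed (as the paper implicitly does), not derived from global $d$-rigidity plus $(d+1)$-independence. You were right to worry about this point; the fix is to add the assumption, not to argue it comes for free.
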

We now exhibit two new infinite families of 
graphs $G$ for which the inequalities $\grn(G)+2 \le \grn^*(G)+2 \le \mlt(G) \le \gcr(G)$ are tight.
By applying Lemma \ref{lem: inf rigid d to gr d -1} and Corollary \ref{cor:gr+ind},
we obtain our first infinite family of graphs,
which are the higher dimensional analogue of trees. 

\begin{cor}\label{cor:min rigid implies gcr=mlt}
If $G$ is minimally $d$-rigid, then $\mlt(G) = \gcr(G) = d+1$.
\end{cor}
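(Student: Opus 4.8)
The plan is to obtain this as essentially a one-dimension shift of Corollary~\ref{cor:gr+ind}, using Jord\'an's Lemma~\ref{lem: inf rigid d to gr d -1} to turn rigidity in dimension $d$ into \emph{global} rigidity in dimension $d-1$. Recall that a minimally $d$-rigid graph $G$ is $d$-rigid with the property that deleting any edge destroys $d$-rigidity; equivalently, $G$ is both $d$-rigid and $d$-independent (and, by the usual convention, has at least $d+1$ vertices). These are the only properties of $G$ I will use.

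First I would note that, since $G$ is $d$-rigid, Lemma~\ref{lem: inf rigid d to gr d -1} read with $d$ replaced by $d-1$ shows that $G$ is globally $(d-1)$-rigid. Combining this with the $d$-independence of $G$ and the fact that $G$ has at least $d+1 = (d-1)+2$ vertices, Corollary~\ref{cor:gr+ind} read with $d$ replaced by $d-1$ gives $\mlt(G) = (d-1)+2 = d+1$. For the generic completion rank I would then sandwich: $d$-independence together with Theorem~\ref{thm: gross sullivant} gives $\gcr(G) \le d+1$, since the smallest dimension in which $G$ is independent is at most $d$; and Theorem~\ref{thm: uhler gcr mlt} gives $\gcr(G) \ge \mlt(G) = d+1$. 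Hence $\gcr(G) = d+1 = \mlt(G)$.

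I do not expect a real obstacle here: the corollary is bookkeeping on top of results already established. The only point needing attention is the dimension shift itself, together with verifying that the vertex-count hypothesis hidden inside Corollary~\ref{cor:gr+ind} (it invokes the global rigidity number and Theorem~\ref{thm: gen ur}, both of which require at least $(d-1)+2$ vertices) is satisfied. This is exactly why one wants $G$ to have at least $d+1$ vertices; the boundary case $n = d+1$ forces $G = K_{d+1}$, where $\mlt(K_{d+1}) = \gcr(K_{d+1}) = d+1$ anyway, so nothing is lost.
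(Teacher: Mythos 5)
Your argument is correct and is exactly the route the paper intends: it derives the corollary "by applying Lemma~\ref{lem: inf rigid d to gr d -1} and Corollary~\ref{cor:gr+ind}" (with the dimension shifted down by one), plus the same sandwich $\mlt(G)\le\gcr(G)\le d+1$ from $d$-independence via Theorems~\ref{thm: uhler gcr mlt} and~\ref{thm: gross sullivant}. Your attention to the vertex-count hypothesis is a nice touch but raises no real issue, as you note.
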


Our next example is, in essence, an extension of the cycle graphs to higher dimensions.

\begin{defn}\label{defn: circuit}
    $G$ is a \emph{$d$-circuit} if it is not $d$-independent, but every proper subgraph is.
\end{defn}

\begin{cor}\label{cor:gr+cir}
    Let $G$ be a $d$-circuit.  Then $\gcr(G)=d+2$.
    If, furthermore, $G$ is globally $d$-rigid, 
    then $\mlt(G) = d+2$ also.
\end{cor}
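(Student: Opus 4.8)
The plan is to handle the two claims separately. For the statement $\gcr(G) = d+2$ when $G$ is a $d$-circuit, I would use Theorem \ref{thm: gross sullivant}, which says $\gcr(G) = e+1$ where $e$ is the smallest dimension in which $G$ is $e$-independent. Since $G$ is a $d$-circuit, by definition $G$ is not $d$-independent, so $\gcr(G) \ge d+2$. For the reverse inequality, I need that $G$ \emph{is} $(d+1)$-independent. This should follow from the matroidal structure of $d$-independence together with the circuit hypothesis: every proper subgraph of $G$ is $d$-independent, and a standard ``coning'' or ``generic rigidity matroid'' argument shows that a $d$-circuit is $(d+1)$-independent. Concretely, since $G$ is a circuit in the $d$-dimensional rigidity matroid, it has exactly one edge more than a maximal $d$-independent subgraph, i.e.\ $|E(G)| = dn - \binom{d+1}{2} + 1$ (for $n \ge d+1$ vertices) by the dimension count, which is at most $(d+1)n - \binom{d+2}{2}$ for $n$ sufficiently large; more robustly, one deletes an edge $ij$ to get a $d$-independent graph $G - ij$, then argues that adding back $ij$ keeps $(d+1)$-independence because a generic lift to dimension $d+1$ gives the new edge a length constraint that is differentially independent of the others. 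I expect this to be the routine part, likely citing the standard fact that a $d$-circuit is $(d+1)$-independent.

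For the second claim, suppose additionally that $G$ is globally $d$-rigid. I would apply Corollary \ref{cor:gr+ind}, which states that if $G$ is both globally $d$-rigid and $(d+1)$-independent, then $\mlt(G) = d+2$. The global $d$-rigidity is given by hypothesis, and the $(d+1)$-independence was just established in the first part of the proof. So $\mlt(G) = d+2 = \gcr(G)$, and the inequalities from Theorem \ref{rigidityBounds}(a) are tight on this family.

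The main obstacle is establishing that a $d$-circuit is $(d+1)$-independent. The cleanest route is through the known fact (analogous to the $d=2$ case where circuits in the generic rigidity matroid have $2n-2$ edges and are $3$-independent by a result attributable to e.g.\ the rigidity literature cited in the paper) that a $d$-circuit has $|E(G)| \le (d+1)|V(G)| - \binom{d+2}{2}$ and, more importantly, that it satisfies the hereditary count certifying $(d+1)$-independence; alternatively, one can deduce it from the fact that deleting any edge yields an $\mathbb{R}^d$-independent graph which is a fortiori $\mathbb{R}^{d+1}$-independent, and adding one edge to a $(d+1)$-independent graph can create at most one $(d+1)$-circuit — then showing no such circuit is created requires the observation that a $(d+1)$-circuit restricted appropriately would force a $d$-dependency in a proper subgraph, contradicting the $d$-circuit hypothesis. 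I would state this as a lemma, cite the relevant rigidity-matroid source, and keep the argument short.
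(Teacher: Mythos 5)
Your overall architecture is the same as the paper's: everything reduces to showing that a $d$-circuit is $(d+1)$-independent, after which Theorem~\ref{thm: gross sullivant} gives $\gcr(G)=d+2$ and Corollary~\ref{cor:gr+ind} gives $\mlt(G)=d+2$. That reduction is correct, and the second half of your argument is fine. The problem is that none of the routes you offer for the key step actually closes it. The edge-count route does not work: a $d$-circuit need not be spanning in the $d$-dimensional rigidity matroid, so $|E(G)| = dn - \binom{d+1}{2}+1$ is unjustified, and even if the graph did satisfy the Maxwell count in dimension $d+1$, such counts are necessary but not sufficient for independence once the dimension is at least $3$, so they cannot certify $(d+1)$-independence. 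The ``unique circuit'' route is closer but has a genuine gap: from $G-ij$ being $d$-independent (hence $(d+1)$-independent) you correctly get that a $(d+1)$-dependency in $G$ forces a $(d+1)$-circuit $C\subseteq G$ containing $ij$, and if $C$ is a \emph{proper} subgraph it is $d$-dependent, contradicting the circuit hypothesis. But you never exclude the case $C=G$, i.e.\ that $G$ is simultaneously a $d$-circuit and a $(d+1)$-circuit; your phrase ``restricted appropriately'' does not produce a proper subgraph in that case, and the contradiction evaporates.

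The paper closes exactly this gap with Whiteley's coning theorem, which you mention in passing but do not execute: since $G$ is a $d$-circuit, $G-v$ is $d$-independent for any vertex $v$, so the cone $v_0*(G-v)$ is $(d+1)$-independent by \cite{W83}; and $G$ is isomorphic to a subgraph of $v_0*(G-v)$ (send $v$ to $v_0$, which is joined to \emph{all} of $V(G-v)$), so $G$ inherits $(d+1)$-independence. If you want to keep your deletion-based framing, you would still need this (or an equivalent) argument to rule out $C=G$; as written, the proposal is incomplete at its central step.
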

\begin{proof}
    Whiteley \cite{W83} proved that $G$ is $d$-independent if and only if the coned graph $v_0*G$, that adds a new vertex $v_0$ connected to 
    every other vertex, is $(d+1)$-independent.
    Since $G$ is a $d$-circuit,
    for any vertex $v$, $G - v$ must be $d$-independent.
    Hence, $v_0*(G - v)$ is $(d+1)$-independent by Whiteley's result.  
    Since $G$ is isomorphic to a subgraph of  $v_0*(G - v)$, 
    it is also $(d+1)$-independent.
    The claim now follows from Corollary \ref{cor:gr+ind}.
\end{proof}

\section{Complete bipartite graphs}\label{sec: complete bipartite}
To test the upper bound from Theorem \ref{thm: uhler gcr mlt} and the 
lower bound from Theorem \ref{thm: buhl}, 
Blekherman and Sinn \cite{blekherman2019maximum}
considered the case of complete bipartite graphs.
They were able to compute the MLT and generic completion ranks exactly, 
obtaining a number of strong results, including 
the first examples of graphs $G$ with $\mlt(G) < \gcr(G)$.

Since, equilibrium stresses of complete bipartite graphs are 
very well understood \cite{bolker1980bipartite,CGbipartite}, 
we have an alternative path to the results from 
\cite{blekherman2019maximum}.
We require the two following results on the 
rigidity theory of bipartite graphs.

\begin{lemma}[\cite{CGTunpublished}]\label{lem: Kmn ggr}
    Fix a $d\in \NN$ and let $m, n\ge d + 1$.  
    If $m + n \ge \binom{d+2}{2} + 1$
    then $K_{m,n}$ is   globally rigid
    in dimension $d$.
\end{lemma}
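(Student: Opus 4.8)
The plan is to deduce global $d$-rigidity of $K_{m,n}$ from the stress-matrix criterion for generic global rigidity of Connelly and Gortler--Healy--Thurston (\cite{gortler2010characterizing}): a graph on $N\ge d+2$ vertices is generically globally $d$-rigid as soon as some generic framework of it in $\mathbb{R}^{d}$ carries an equilibrium stress whose stress matrix has rank $N-d-1$. Here $N=m+n\ge\binom{d+2}{2}+1\ge d+2$, and every stress matrix $\Omega$ of a framework $(K_{m,n},(p,q))$ satisfies $\Omega[\hat P\mid\hat Q]^{\top}=0$, where $[\hat P\mid\hat Q]$ is the $(d+1)\times N$ matrix of homogeneous coordinates of the configuration; since these $N$ points generically affinely span $\mathbb{R}^{d}$, that matrix has rank $d+1$ and so $\rank\Omega\le N-d-1$ always. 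Thus it suffices to exhibit, for a generic $(p,q)$, an equilibrium stress attaining this bound.

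To analyse the stresses I would pass, via Lemma~\ref{lem: affine to linear stress}, to linear equilibrium stresses of $\hat p,\hat q$ and write the stress matrix in bipartite block form $\Omega=\bigl(\begin{smallmatrix}D_{p}&B\\ B^{\top}&D_{q}\end{smallmatrix}\bigr)$, with $D_{p},D_{q}$ diagonal since there are no edges inside a colour class. Unwinding $\Omega[\hat P\mid\hat Q]^{\top}=0$ gives $\hat QB^{\top}=-\hat PD_{p}$ and $\hat PB=-\hat QD_{q}$, and composing these shows $\hat PD_{p}\hat P^{\top}=\hat QD_{q}\hat Q^{\top}=:M$. This is the Bolker--Roth description of bipartite stresses (\cite{bolker1980bipartite}; see also \cite{CGbipartite}): the diagonal data are governed by a symmetric $(d+1)\times(d+1)$ matrix $M$ lying in $\operatorname{span}\{\hat p_{i}\hat p_{i}^{\top}\}\cap\operatorname{span}\{\hat q_{j}\hat q_{j}^{\top}\}$, a space that for generic $(p,q)$ has dimension $\max\{0,\,m+n-\binom{d+2}{2}\}$ and hence is nonzero under our hypothesis, while $B$ is a particular solution plus a free summand in $\ker\hat P\otimes\ker\hat Q$; all told the stress space has the expected dimension $mn-d(m+n)+\binom{d+1}{2}$.

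The technical heart is to find inside this stress space a matrix of the maximal rank $N-d-1$, equivalently one with $\ker\Omega$ exactly $(d+1)$-dimensional. The model case is $\min(m,n)=d+1$, so that (say) $\hat P$ is square and invertible and $\ker\hat P=0$: then $D_{p}=\hat P^{-1}M\hat P^{-\top}$ and $B=-\hat P^{-1}\hat QD_{q}$ are determined by $M$, and for any $M$ in the intersection whose induced multipliers $D_{p},D_{q}$ have no zero diagonal entry (a Zariski-open, nonempty condition there) solving $\Omega(v_{p},v_{q})^{\top}=0$ gives $v_{q}=\hat Q^{\top}\hat P^{-\top}v_{p}$ from the lower block and then an identity from the upper block, so $\ker\Omega$ equals the $(d+1)$-dimensional row space of $[\hat P\mid\hat Q]$ and $\rank\Omega=N-d-1$, as wanted. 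For general $m,n$ the same computation applies in outline, but now $B$ genuinely carries the $\ker\hat P\otimes\ker\hat Q$ freedom and the naive particular-solution formula for $B$ breaks down; choosing $B$ so that a generic stress still has $(d+1)$-dimensional kernel is, I expect, the main obstacle. (One can alternatively dispose of all parameters with $\max(m,n)\ge\binom{d+1}{2}+1$ by settling $\min(m,n)=d+1$ as above and then building $K_{m,n}$ from such a subgraph through repeated additions of a vertex adjacent to $\ge d+1$ existing vertices, an operation that preserves generic global $d$-rigidity; only finitely many ``balanced'' pairs $(m,n)$ remain for each $d$, and for those the general argument is still needed.)
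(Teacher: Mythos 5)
The paper does not actually prove Lemma~\ref{lem: Kmn ggr}; it imports it from \cite{CGTunpublished}, so there is no in-paper argument to compare against. Your overall strategy --- certify generic global $d$-rigidity via the stress-matrix criterion of \cite{gortler2010characterizing} by exhibiting a generic equilibrium stress of the maximal rank $m+n-d-1$, and analyse the bipartite stress space through the Bolker--Roth data $M=\hat P D_p\hat P^{\top}=\hat Q D_q\hat Q^{\top}$ --- is the right one and is in the spirit of the cited source. The block computations, the a priori bound $\rank\Omega\le N-d-1$, and the reduction by repeatedly adding a vertex of degree $\ge d+1$ (which does preserve generic global $d$-rigidity) are all correct.

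The proof is nonetheless incomplete, at two points that you yourself flag. First, even in the model case $\min(m,n)=d+1$ the kernel computation requires an $M$ in $\operatorname{span}\{\hat p_i\hat p_i^{\top}\}\cap\operatorname{span}\{\hat q_j\hat q_j^{\top}\}$ whose induced multipliers on the large side are all nonzero; you call this ``a Zariski-open, nonempty condition,'' but nonemptiness is exactly the delicate issue. By the second part of Theorem~\ref{thm: bolker roth}, at $m+n=\binom{d+2}{2}$ \emph{every} generic stress has identically zero diagonal, so you are working one unit past a genuine degeneration and must rule out that some linear functional $M\mapsto (D_q)_{jj}$ vanishes identically on the (possibly one-dimensional) intersection; this needs an argument, not an assertion. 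Second, and more seriously, the vertex-addition route only reaches pairs with $\max(m,n)\ge\binom{d+1}{2}+1$, leaving untreated the balanced pairs with $d+2\le m,n\le\binom{d+1}{2}$ and $m+n\ge\binom{d+2}{2}+1$ (for $d=3$ these are $K_{5,6}$ and $K_{6,6}$, with more such pairs for every larger $d$). These are precisely the cases where both $\ker\hat P$ and $\ker\hat Q$ are nontrivial, $B$ carries genuine freedom beyond $M$, and your rank computation breaks down; you name this ``the main obstacle'' and leave it unresolved. As it stands the lemma is therefore not established in the stated generality.
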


Given a finite subset $S$ of a vector space, let $D(S)$ denote the linear space of affine dependencies among $S$
and let $S^2$ be the image of $S$ under the \emph{Veronese map} $x\mapsto xx^T$.
The following theorem collects what we need from Bolker and Roth's classic paper~\cite{bolker1980bipartite}.

\begin{thm}\label{thm: bolker roth}
    Let $m,n,d \in \NN$ and let $(K_{m,n},p)$ be a $d$-dimensional framework.
    Let $A,B\subseteq \mathbb{R}^d$ denote the images under $p$ of the partite sets of $K_{m,n}$.
    Then the linear space of equilibrium stresses of $(K_{m,n},p)$ has dimension
    \[
        \dim(D(A))\dim(D(B)) + \dim(D((A\cup B)^2)).
    \]
    Moreover, if $p$ is generic and $m + n \le \binom{d+2}{2}$, then every equilibrium stress matrix has zeros along its diagonal.
\end{thm}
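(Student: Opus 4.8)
The plan is to recast the equilibrium stresses of $(K_{m,n},p)$ as a linear-algebra problem over the homogeneous lift $\hat p$, following the strategy of Bolker and Roth~\cite{bolker1980bipartite} but in stress-matrix language. By Lemma~\ref{lem: affine to linear stress}, the equilibrium stress matrices of $(K_{m,n},p)$ are exactly the symmetric $(m+n)\times(m+n)$ matrices $\Omega$ that vanish on the non-edges of $K_{m,n}$ — that is, on every off-diagonal entry indexed by two vertices of the same partite set — and satisfy $\Omega\hat P^\top=0$, where $\hat P=[\hat A \mid \hat B]$ has the homogeneous coordinates of the two partite sets as its columns. Since $\Omega$ is symmetric and $\Omega\hat P^\top=0$, its row space lies in $\ker\hat P=D(A\cup B)$, the affine dependencies of the combined point set. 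So I would parametrize stresses as $\Omega=N^\top\Sigma N$, with $\Sigma$ symmetric and the rows of $N$ a basis of $D(A\cup B)$, subject to the linear condition that the within-part off-diagonal entries of $\Omega$ vanish.

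Next I would identify the two families in the statement. Writing $D(A\cup B)=D(A)\oplus D(B)\oplus M$ — with $D(A)$ and $D(B)$ embedded by zero-padding on the complementary partite set and $\dim M=\rank\hat A+\rank\hat B-\rank\hat P$ — and blocking $\Sigma$ accordingly, the $D(A)$–$D(B)$ cross block gives the \emph{product stresses}: for $\lambda\in D(A)$, $\mu\in D(B)$, the rank-one assignment $\omega_{ab}=\lambda_a\mu_b$ is an equilibrium stress, since $\sum_{b}\omega_{ab}(b-a)=\lambda_a\big(\sum_b\mu_b\,b-a\sum_b\mu_b\big)=0$, and its stress matrix has zero within-part off-diagonal entries automatically, so this block is unconstrained and contributes $\dim D(A)\,\dim D(B)$. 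The surviving freedom in the $M$–$M$ block is cut out by the within-part off-diagonal constraint; because those entries are bilinear in the homogeneous coordinates, the surviving subspace is identified with $D((A\cup B)^2)$, the dependencies among the Veronese images. The remaining blocks — $D(A)$–$D(A)$, $D(B)$–$D(B)$, $D(A)$–$M$, $D(B)$–$M$ — are annihilated entirely by the within-part off-diagonal constraint. I expect proving this last point to be the main obstacle: it requires ruling out accidental cancellations among products of coordinates of affine-dependency vectors of a generic configuration, which is where genericity of $p$ and the matroidal structure of affine dependence enter. The degenerate cases $m\le d$ or $n\le d$ are handled the same way, with $D(A)$ or $D(B)$ trivial, and summing the three surviving blocks gives the claimed dimension $\dim D(A)\dim D(B)+\dim D((A\cup B)^2)$.

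For the \textbf{moreover} statement, I would argue as follows. If $p$ is generic and $m+n\le\binom{d+2}{2}$, then $A\cup B$ imposes independent conditions on quadratic polynomials, so the Veronese images have no dependency, i.e.\ $D((A\cup B)^2)=0$, and by the previous step every equilibrium stress of $(K_{m,n},p)$ is a sum of product stresses. Each product stress $\omega_{ab}=\lambda_a\mu_b$ has stress-matrix diagonal $\Omega_{aa}=\sum_b\lambda_a\mu_b=\lambda_a\sum_b\mu_b=0$, since $\mu\in D(B)$ has coordinate sum $0$ (and symmetrically for vertices in the other part). Hence every equilibrium stress matrix of $(K_{m,n},p)$ has zero diagonal, as claimed.
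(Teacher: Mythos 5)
The paper does not actually prove this theorem: its ``proof'' is a citation, deferring the dimension formula to Theorem~1 of Bolker--Roth~\cite{bolker1980bipartite} and the zero-diagonal claim to their Lemma~5 together with the fact that any $\binom{d+2}{2}$ generic rank-one symmetric $(d+1)\times(d+1)$ matrices are linearly independent. You have instead undertaken to prove Bolker--Roth's theorem from scratch, and your setup is sound: a stress matrix $\Omega$ is symmetric with $\Omega\hat P^{\top}=0$, hence has row space in $\ker\hat P=D(A\cup B)$ and can be written $N^{\top}\Sigma N$; the $D(A)$--$D(B)$ cross block of $\Sigma$ yields the product stresses, which satisfy the non-edge constraints automatically and contribute $\dim D(A)\dim D(B)$; and your ``moreover'' paragraph (product stresses have zero diagonal because $\sum_b\mu_b=0$) is exactly the paper's cited argument and is correct \emph{granted the first claim}.

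The genuine gap is the one you flag yourself: the assertion that the within-part off-diagonal constraints annihilate the $D(A)$--$D(A)$, $D(B)$--$D(B)$, $D(A)$--$M$, $D(B)$--$M$ blocks and cut the $M$--$M$ block down to a space isomorphic to $D((A\cup B)^2)$ \emph{is} the content of Bolker--Roth's Theorem~1, and your sketch does not establish it. (The pure $D(A)$--$D(A)$ block is easy --- a matrix supported on $A\times A$ with zero off-diagonal entries is diagonal, and $c_a e_a\in\ker\hat P$ forces $c_a=0$ since $\hat p(a)\neq 0$ --- but the mixed blocks and the identification of the residual space with Veronese dependencies require a genuine argument; the $\binom{m}{2}+\binom{n}{2}$ constraints are very far from independent, so no naive count closes this.) Two further warning signs. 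First, you anticipate using genericity of $p$ at this step, but the dimension formula is asserted for \emph{every} framework (and is checkable in fully degenerate cases, e.g.\ all points coincident, where it reads $mn=(m-1)(n-1)+(m+n-1)$), so an argument that needs genericity there is proving a different statement. Second, for the formula to be correct the Veronese must be taken on homogeneous coordinates, $\hat x\mapsto\hat x\hat x^{\top}$, so that $D((A\cup B)^2)$ encodes $\sum_i\lambda_i=\sum_i\lambda_ix_i=\sum_i\lambda_ix_ix_i^{\top}=0$; with the affine Veronese taken literally the formula already fails for $K_{2,2}$ in $\mathbb{R}^1$. Your ``moreover'' paragraph implicitly uses the homogeneous version (independence of conditions on all polynomials of degree at most two, matching the count $\binom{d+2}{2}$), which is the right one, but the ambiguity should be resolved explicitly if you intend to carry out the block analysis.
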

\begin{proof}
    The first claim follows from \cite[Theorem~1]{bolker1980bipartite}.
    The second claim follows from~\cite[Lemma~5]{bolker1980bipartite} and the fact that any set of $\binom{d+2}{2}$ generic symmetric matrices of rank~$1$ is a basis of the space of symmetric $(d+1)\times (d+1)$ matrices.
\end{proof}

\begin{thm}[\cite{blekherman2019maximum}]\label{thm: bipartite 1}
Let $d,m,n\in \NN$ with $m, n\ge d + 2$.
If $m + n \le \binom{d+2}{2}$, then $\mlt(K_{m,n}) \le d + 1$ and $\gcr(K_{m,n}) \ge d + 2$.
\end{thm}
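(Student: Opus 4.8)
The plan is to prove the two inequalities separately, using the stress-geometry characterization of the MLT (Theorem~\ref{thm: main mlt stress}) for the bound $\mlt(K_{m,n}) \le d+1$, and a dimension count via the generic completion rank's rigidity-matroid formulation (Theorem~\ref{thm: gross sullivant}) for $\gcr(K_{m,n}) \ge d+2$. Both will rest on Bolker and Roth's formula for the space of equilibrium stresses of a complete bipartite framework (Theorem~\ref{thm: bolker roth}), together with the ``diagonal-free'' addendum there, which is exactly where the hypothesis $m+n \le \binom{d+2}{2}$ enters.

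For the upper bound on the MLT, by Theorem~\ref{thm: main mlt stress} it suffices to show that no generic $d$-dimensional framework $(K_{m,n},p)$ supports a nonzero \emph{PSD} equilibrium stress. Suppose $\omega$ is such a stress, with stress matrix $\Omega$. Since $p$ is generic and $m+n \le \binom{d+2}{2}$, the second part of Theorem~\ref{thm: bolker roth} forces every diagonal entry $\Omega_{ii}$ to vanish. But $\Omega$ is PSD, and a PSD matrix with a zero diagonal entry has the corresponding row and column entirely zero; applying this to every index shows $\Omega = 0$, a contradiction. Hence no generic $d$-dimensional framework has a nonzero PSD equilibrium stress, and by the characterization $\mlt(K_{m,n}) \le d+1$.

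For the lower bound on the GCR, I would invoke Theorem~\ref{thm: gross sullivant}: $\gcr(K_{m,n}) \ge d+2$ is equivalent to saying $K_{m,n}$ is \emph{not} $d$-independent, i.e.\ some (equivalently, every) generic $d$-dimensional framework $(K_{m,n},p)$ supports a \emph{nonzero} equilibrium stress. By the first part of Theorem~\ref{thm: bolker roth}, the dimension of the stress space of a generic framework is $\dim(D(A))\dim(D(B)) + \dim(D((A\cup B)^2))$. For $p$ generic with $|A| = m \ge d+2$ and $|B| = n \ge d+2$, the points in each partite set are in general position, so $\dim(D(A)) = m - d - 1 \ge 1$ and $\dim(D(B)) = n - d - 1 \ge 1$; already the product term $\dim(D(A))\dim(D(B)) \ge 1$ is positive (and the Veronese term only adds to it), so the stress space is nonzero. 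Therefore $K_{m,n}$ is $d$-dependent and $\gcr(K_{m,n}) \ge d+2$.

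The main obstacle is really just marshalling the rigidity-theoretic inputs correctly: one must be careful that Theorem~\ref{thm: bolker roth}'s ``zeros along the diagonal'' conclusion applies to \emph{every} equilibrium stress matrix of a \emph{generic} framework precisely under $m+n \le \binom{d+2}{2}$, and that the genericity of $p$ is enough to read off $\dim(D(A)) = m-d-1$ and $\dim(D(B)) = n-d-1$ from general position. The linear-algebra step---that a PSD matrix with $\Omega_{ii}=0$ has its $i$th row and column zero---is elementary (consider the $2\times 2$ principal minor $\Omega_{ii}\Omega_{jj} - \Omega_{ij}^2 \ge 0$). Everything else is bookkeeping: the upper bound uses the PSD-stress characterization, the lower bound uses only the existence of \emph{some} nonzero stress, and the two halves do not interact.
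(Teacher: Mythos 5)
Your proposal is correct and follows essentially the same route as the paper: the MLT bound via Theorem~\ref{thm: main mlt stress} combined with the zero-diagonal conclusion of Theorem~\ref{thm: bolker roth} (the paper phrases the final step as ``zero diagonal implies indefinite'' rather than your equivalent $2\times 2$-minor argument, but the content is identical), and the GCR bound via Theorem~\ref{thm: gross sullivant} and the positivity of the product term $\dim(D(A))\dim(D(B))$ in the Bolker--Roth stress-dimension formula. No gaps.
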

\begin{proof}
Let $(K_{m,n},p)$ be a generic $d$-dimensional framework.
Since $m + n \le \binom{d+2}{2}$, Theorem~\ref{thm: bolker roth} implies that every stress matrix has zeros along its diagonal and is therefore indefinite. Theorem \ref{thm: main mlt stress} 
then implies that $\mlt(K_{m,n}) \le d + 1$.
On the other hand, Theorem~\ref{thm: bolker roth} implies that the space of stresses has dimension at least
$\dim(D(A))\dim(D(B))$, which is positive as $m,n \ge d+2$.
The existence of an equilibrium stress implies that $\gcr(K_{m,n}) \ge d + 2$.
\end{proof}
At this point Theorem \ref{thm: blekherman Kmn} follows quickly.
\begin{proof}[Proof of Theorem~\ref{thm: blekherman Kmn}]
    Theorem \ref{thm: bolker roth} implies that $K_{m,m}$, for $m > 2$,
    is $(m-1)$-independent but not $(m-2)$-independent.  Hence 
    $\gcr(K_{m,m}) = m$.
    By Lemma \ref{lem: Kmn ggr}, for $n > 2$, 
    the global rigidity number of $K_{m,m}$ is the maximum $d$ so that $2m\ge \binom{d+2}{2}+1$.
    For this $d$, Theorem \ref{thm: grn mlt} implies 
    that $\mlt(K_{m,m}) \ge d + 2$.  For any 
    larger $d'$, we have $2m \le \binom{d'+2}{2}$.
    Theorem \ref{thm: bipartite 1} then tells us 
    that $\mlt(K_{m,m})\le (d + 1) + 1 = d + 2$.
    Combining both bounds, we conclude that the MLT of $K_{m,m}$ is the largest 
    $D$ so that $2m > \binom{D+1}{2}$
    as desired.
\end{proof}

\section{A gluing construction}\label{sec: gluing}

In this section we prove some specialized results about giving lower bounds on MLT of graphs. We do this by constructing PSD equilibrium stresses on generic frameworks of a graph obtained by
gluing together smaller frameworks that each have a PSD equilibrium stress.
We will need the following construction from rigidity theory.

\begin{defn}
    Let $G$ be a graph with $n$ vertices and $m$ edges.
    The \emph{rigidity matrix} $R(G,p)$ of a $d$-dimensional framework $(G,p)$ is the $m\times dn$ matrix
    whose rows are indexed by the edges of $G$, columns indexed by the coordinates of $p(1),\dots,p(n)$,
    where the entry corresponding to edge $e$ and $p(v)_i$
    is $p(v)_i-p(u)_i$ if $e = vu$, and $0$ if $v$ is not incident to $e$.
\end{defn}

Given a $d$-dimensional framework $(G,p)$ on a graph $G$ with $n$ vertices, $R(G,p)$ is the Jacobian of the map sending $n$ points in $\mathbb{R}^d$ to the pairwise squared distances corresponding to the edges of $G$, evaluated at $p$.
Equilibrium stresses of $R(G,p)$ are the elements of the left kernel of $R(G,p)$.

\begin{defn}\label{def: clique sum}
A graph $G$ is a \emph{$k$-sum} of two induced 
subgraphs $G_1$ and $G_2$ each with at least $k+1$ vertices 
if $G$ is the union of $G_1$ and $G_2$ and  $G_1\cap G_2$ is isomorphic to $K_k$.
\end{defn}
The following result on equilibrium stresses of frameworks on $k$-sums is standard.
\begin{lemma}\label{lem: d-sum stress}
Let $1\le k\le d+1$ be integers and $G$ a $k$-sum of subgraphs $G_1$ and $G_2$.
Let $(G,p)$
be a $d$-dimensional framework with the vertices of $G_1\cap G_2$ affinely 
independent.
Let $S$ be the space of equilibrium stresses of $(G,p)$ and 
$S_i$ the space of equilibrium stresses of $(G,p)$ supported on the edges of $G_i$.  
Then $S = S_1\oplus S_2$.
\end{lemma}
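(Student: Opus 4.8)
The plan is to argue in terms of the rigidity matrix and its left kernel. First I would set up coordinates: let $G$ have vertex set $V = V_1 \cup V_2$ with $V_1 \cap V_2$ the vertex set of the shared clique $K_k$, and note that the edge set $E(G) = E(G_1) \cup E(G_2)$ with $E(G_1) \cap E(G_2) = E(K_k)$. Equilibrium stresses of $(G,p)$ are exactly elements of the left kernel of $R(G,p)$, as recalled just before the statement. An equilibrium stress $\omega$ "supported on $G_i$" means $\omega_e = 0$ for every edge $e \notin E(G_i)$; such an $\omega$ restricted to $E(G_i)$ is an equilibrium stress of $(G_i, p|_{V_i})$, since the equilibrium condition at a vertex of $G$ not in $V_i$ is vacuous and the condition at a vertex of $G_i$ only involves edges of $G_i$. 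So $S_i$ is naturally identified with the space of equilibrium stresses of the subframework $(G_i, p|_{V_i})$.

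The containment $S_1 + S_2 \subseteq S$ is immediate: summing two stresses gives a stress. For $S_1 \cap S_2 = \{0\}$: a stress in the intersection is supported on $E(G_1) \cap E(G_2) = E(K_k)$, hence it is an equilibrium stress of the complete graph $K_k$ placed at the vertices of $G_1 \cap G_2$, which are affinely independent by hypothesis. An affinely independent placement of $K_k$ (that is, $k \le d+1$ points in general position) is an independent framework — it has no nonzero equilibrium stress — so the intersection is trivial, giving that the sum $S_1 + S_2$ is direct. The main content is therefore the reverse containment $S \subseteq S_1 \oplus S_2$: given an arbitrary equilibrium stress $\omega$ of $(G,p)$, I must write it as $\omega = \omega^{(1)} + \omega^{(2)}$ with $\omega^{(i)}$ supported on $G_i$.

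Here is the key step, which I expect to be the main obstacle. Take any stress $\omega$ of $(G,p)$. For each vertex $v \in V_1 \setminus V_2$ the equilibrium condition $\sum_{u \sim v}\omega_{vu}(p(u)-p(v)) = 0$ involves only edges of $G_1$; similarly for $v \in V_2 \setminus V_1$ only edges of $G_2$. The difficulty is concentrated at the clique vertices $v \in V_1 \cap V_2$, where the condition mixes edges from both sides. I would split $\omega$ across the clique edges: write $\omega_e = \omega_e^{(1)} + \omega_e^{(2)}$ for $e \in E(K_k)$ (and $\omega_e^{(i)} = \omega_e$ for the edges private to $G_i$), and show the split can be chosen so that $\omega^{(i)}$ is an equilibrium stress on $G_i$. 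For a clique vertex $v$, let $r_i(v) = \sum_{u \sim v,\ vu \in E(G_i)\setminus E(K_k)} \omega_{vu}(p(u)-p(v))$ be the "private" part of the equilibrium sum from side $i$; the total equilibrium condition at $v$ reads $r_1(v) + r_2(v) + (\text{clique contribution}) = 0$. One needs to choose stress values $\omega_e^{(1)}$ on the $\binom{k}{2}$ clique edges so that, for every clique vertex $v$, $\sum_{u \in V_1\cap V_2,\ u\ne v} \omega_{vu}^{(1)}(p(u)-p(v)) = -r_1(v)$, and the complementary values $\omega_e^{(2)} = \omega_e - \omega_e^{(1)}$ automatically satisfy the analogous equations on side $2$ (by subtracting from the total). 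This last system is solvable because the vectors $\{p(u)-p(v) : u \in V_1 \cap V_2, u \ne v\}$ at an affinely independent clique span the relevant space and, more precisely, the map sending clique stresses to the tuple of vertex-sums $(\sum_u \omega_{vu}(p(u)-p(v)))_{v \in V_1\cap V_2}$ is a bijection from $\RR^{\binom{k}{2}}$ onto the space of "self-stress-free" tuples — its kernel is exactly the equilibrium stresses of the affinely independent $K_k$, which is zero, and the image is cut out by the linear relations that $(r_1(v))_v$ is already known to satisfy (namely that the full tuple, once the private parts are added, lies in the left-kernel consistency space). Once $\omega^{(1)}$ is produced this way it is by construction a stress of $(G_1,p|_{V_1})$, and $\omega^{(2)} = \omega - \omega^{(1)}$ is then a stress of $(G,p)$ supported on $G_2$, hence lies in $S_2$. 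Together with $S_1 \cap S_2 = \{0\}$ this gives $S = S_1 \oplus S_2$. (Alternatively, one can phrase the whole argument via a block/Schur-complement decomposition of $R(G,p)$ reflecting the $k$-sum structure, with the invertibility of the clique block replaced by the independence of $K_k$ at an affinely independent placement; this packages the same linear algebra more slickly but the crux is identical.)
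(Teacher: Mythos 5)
Your first half — the identification of $S_i$ with the stresses of $(G_i,p|_{V_i})$, the containment $S_1+S_2\subseteq S$, and the triviality of $S_1\cap S_2$ via independence of $K_k$ at affinely independent points — is correct and matches the paper. Your second half takes a genuinely different, constructive route (explicitly splitting a stress across the clique edges), and this route can be made to work, but as written it has a gap at exactly the step you flag as the crux. You need $(-r_1(v))_{v\in V(K)}$ to lie in the image of the map $\Phi:\RR^{\binom{k}{2}}\to(\RR^d)^k$ sending clique weights to their vertex sums. Injectivity of $\Phi$ only tells you the image has dimension $\binom{k}{2}$, a proper subspace of $(\RR^d)^k$ whenever $k\le d$; it does not tell you your particular right-hand side is in it. Your stated justification — that ``the full tuple, once the private parts are added, lies in the left-kernel consistency space'' — is about $r_1+r_2$ (the full equilibrium condition at clique vertices gives $(r_1+r_2)\in\operatorname{Im}\Phi$), which is not what you need: you need $r_1\in\operatorname{Im}\Phi$ \emph{on its own}, and nothing you have written establishes that.

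The missing argument is this: $\operatorname{Im}\Phi$ is the orthogonal complement in $(\RR^d)^k$ of $\ker R(K_k,p|_K)$, the infinitesimal flexes of the placed simplex. Because the clique vertices are affinely independent and the clique is complete, every such flex is the restriction of an infinitesimal isometry $T$ of $\RR^d$ (a dimension count: $dk-\binom{k}{2}=\binom{d+1}{2}-\binom{d+1-k}{2}$). Now let $F(v)$ be the net force at $v\in V_1$ of the weights $\omega$ restricted to the private edges of $G_1$; then $F(v)=0$ for $v\in V_1\setminus V(K)$ and $F(v)=r_1(v)$ on $V(K)$, and for any infinitesimal isometry $T$ one computes $\sum_{v\in V_1}\langle F(v),T(p(v))\rangle=-\sum_{uv}\omega_{uv}\langle p(u)-p(v),T(p(u))-T(p(v))\rangle=0$. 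Hence $r_1$ is orthogonal to every flex of the simplex and the splitting system is solvable. With this inserted your proof is complete. For comparison, the paper avoids the explicit splitting entirely: after the directness step it computes $\dim S=m-\rank R(G,p)$ and $\rank R(G,p)=\rank R_1+\rank R_2-\binom{k}{2}$, so $\dim S=\dim S_1+\dim S_2$ and $S=S_1\oplus S_2$ follows from containment plus equality of dimensions. That route trades your pointwise force-balancing for a single rank computation; your route, once repaired, has the advantage of exhibiting the decomposition explicitly.
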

\begin{proof}
Let $K = G_1\cap G_2$.  
First observe that any equilibrium stress $\omega\in S_1\cap S_2$ 
must be supported by the edges of $K$ and so is an 
equilibrium stress of $(K,p|_K)$.
Since $K$ has at most $d+1$ vertices and is in general affine position, 
$(K,p|_K)$ supports only the zero equilibrium 
stress.  Hence $S_1 + S_2 = S_1\oplus S_2$.

Denote $R_i = R(G_i,p|_{G_i})$.  
The row spans of $R_1$ and $R_2$ are naturally included in the row span 
of $R(G,p)$.  Both of these 
spans include $R(K,p|_K)$.  By general position of the vertices 
corresponding to $K$, this latter space has dimension $\binom{k}{2}$.
So by the interpretation of $S$ and the $S_i$ as cokernels 
of the rigidity matrix and rank-nullity, 
we have
\begin{align*}
    \dim(S) &= m - \rank R(p)\\
    &= m_1 + m_2 - \binom{k}{2} - \rank R(p)\\
    &= m_1 + m_2 - \binom{k}{2} - \rank R_1 - \rank R_2 + \binom{k}{2}\\
    &= m_1 - \rank R_1 + m_2 - \rank R_2\\
    &= \dim(S_1 \oplus S_2)
\end{align*}
and so we can conclude $S = S_1 + S_2 = S_1\oplus S_2$.
\end{proof}

A framework $(G,p)$ is \emph{regular} if its rigidity matrix 
has maximum rank over all frameworks $(G,q)$.
Regularity is preserved 
under non-singular projective transforms applied to $p$.
The converse of the following corollary also true, but we do not need it.

\begin{cor}\label{cor: d-sum rank}
Let $1\le k\le d+1$ and $G$ be a $k$-sum of $G_1$ and $G_2$.  Let $(G,p)$ 
be a $d$-dimensional framework.  If $(G_1,p|_{V(G_1)})$ and 
$(G_2,p|_{V(G_2)})$ are regular then $(G,p)$ is regular.
\end{cor}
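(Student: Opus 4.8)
The plan is to deduce regularity of $(G,p)$ from the regularity of the two pieces by a rank count, running the argument of Lemma~\ref{lem: d-sum stress} in the other direction. First I would recall that regularity of a framework $(H,q)$ is equivalent to $\rank R(H,q)$ achieving its generic value $r(H) := \max_q \rank R(H,q)$, and that this generic rank is related to the number of independent edges, so $\dim S(H,q) = m(H) - \rank R(H,q) \ge m(H) - r(H)$ with equality exactly when $(H,q)$ is regular. The key structural input is the rigidity-matrix computation already carried out inside the proof of Lemma~\ref{lem: d-sum stress}: if $K = G_1 \cap G_2 \cong K_k$ and the vertices of $K$ are in general affine position under $p$ (which one gets for free here since $k \le d+1$, so any $k$ points in $\mathbb{R}^d$ spanning what they span is fine — more precisely one may first move to a regular, hence projectively generic representative, or simply note the $\binom{k}{2} \le \binom{d+1}{2}$ edges of $K$ are always independent in dimension $d$), then the row span of $R(G,p)$ is the sum of the row spans of $R_1 := R(G_1, p|_{V(G_1)})$ and $R_2 := R(G_2, p|_{V(G_2)})$, and these two row spans intersect exactly in the row span of $R(K, p|_K)$, which has dimension $\binom{k}{2}$.

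Granting that, the computation is a short inclusion–exclusion on ranks:
\[
    \rank R(G,p) = \rank R_1 + \rank R_2 - \binom{k}{2}.
\]
Now apply the hypothesis: $(G_1, p|_{V(G_1)})$ and $(G_2, p|_{V(G_2)})$ are regular, so $\rank R_1 = r(G_1)$ and $\rank R_2 = r(G_2)$ are the respective generic ranks. The same inclusion–exclusion identity, applied to a \emph{generic} placement $q$ of $V(G)$ extending generic placements of the two pieces (and noting that a generic framework realizes the maximum rank on each piece simultaneously, so the intersection dimension is still exactly $\binom{k}{2}$ there), gives $r(G) = r(G_1) + r(G_2) - \binom{k}{2}$. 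Comparing the two displays yields $\rank R(G,p) = r(G)$, which is the definition of $(G,p)$ being regular.

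The one point that needs care — and the main obstacle — is justifying that the intersection of the two row spans is \emph{exactly} $\binom{k}{2}$-dimensional for the given $p$, not merely at least that; this is where the general-position hypothesis on $V(G_1) \cap V(G_2)$ in Lemma~\ref{lem: d-sum stress} gets used, and in the corollary we must observe it holds automatically because $k \le d+1$ forces the $k$ shared vertices to span an affine subspace in which $K_k$'s edge set is independent, so $R(K, p|_K)$ already has full rank $\binom{k}{2}$ regardless of where exactly those points sit. A clean way to package the whole argument is simply to invoke Lemma~\ref{lem: d-sum stress} directly: it gives $\dim S(G,p) = \dim S_1 + \dim S_2$, where $S_i$ is the stress space supported on $E(G_i)$; since $(G_i, p|_{V(G_i)})$ is regular, $\dim S_i = m(G_i) - r(G_i)$, and $S_i$ is canonically the full stress space of $(G_i, p|_{V(G_i)})$ because stresses supported on $E(G_i)$ are exactly stresses of the induced subframework. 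Hence $\dim S(G,p) = (m_1 - r(G_1)) + (m_2 - r(G_2)) = m(G) - \binom{k}{2} - r(G_1) - r(G_2) + \binom{k}{2} \cdot 0 \ldots$ — unwinding, $\dim S(G,p) = m(G) - r(G)$, i.e. $(G,p)$ attains maximum rank and is regular. I would present it in this second form, citing Lemma~\ref{lem: d-sum stress} as a black box, since it avoids re-deriving the row-span intersection statement and keeps the proof to a few lines.
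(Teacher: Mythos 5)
Your second packaging is, almost line for line, the paper's own proof: apply Lemma~\ref{lem: d-sum stress} to get $\dim S(G,p) = s_1 + s_2$, use regularity of the pieces to write $s_i = m_i - r_i$, and compare $\rank R(G,p) = m_1 + m_2 - \binom{k}{2} - s_1 - s_2 = r_1 + r_2 - \binom{k}{2}$ with the maximum possible rank, which the union-of-row-spans count bounds above by $r_1 + r_2 - \binom{k}{2}$. So the strategy is the right one. However, the step you yourself single out as ``the main obstacle'' --- verifying the affine-independence hypothesis of Lemma~\ref{lem: d-sum stress} --- is resolved incorrectly. It is false that $k \le d+1$ forces $R(K,p|_K)$ to have rank $\binom{k}{2}$ ``regardless of where exactly those points sit'': three collinear points in $\mathbb{R}^2$ give a copy of $K_3$ whose rigidity matrix has rank $2$, not $3$, and which carries a non-zero equilibrium stress (two coincident points already defeat the claim for $k=2$). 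The parenthetical ``move to a regular, hence projectively generic representative'' is also not available: $p$ is given, and regular does not imply projectively generic. As written, the hypothesis of Lemma~\ref{lem: d-sum stress} is unverified and the argument does not close.

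The paper closes this gap differently, and this is the one place where regularity of the pieces is used for something other than the final rank count: if the vertices of $K$ were affinely dependent, then $(K,p|_K)$ would support a non-zero equilibrium stress concentrated on $E(K)$ (take $\omega_{ij}=\lambda_i\lambda_j$ for an affine dependency $\lambda$), something a generic framework of $G_1$ does not have since $K_k$ with $k\le d+1$ is generically independent; the paper takes this to contradict regularity of $(G_1,p|_{V(G_1)})$. That is the ingredient missing from your write-up. I will add that even this repair is delicate --- a framework can attain the maximum rank while its rigidity matroid differs from the generic one ($K_4$ in the plane with exactly three collinear vertices is regular, yet its shared triangle is degenerate) --- but in the only place the corollary is invoked, Lemma~\ref{lem: psd clique sum}, the pieces are generic, so the shared vertices are affinely independent outright. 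Your first packaging has the same gap, plus the additional unproved claim that the two row spans meet in exactly the row span of $R(K,p|_K)$; I would discard it and repair the second packaging by supplying the affine-independence argument.
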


\begin{proof}
Let $G_i$ have $n_i$ vertices and $m_i$ edges.  Assume that $p$ is 
such that $(G_1,p|_{V(G_1)})$ and 
$(G_2,p|_{V(G_2)})$ are both regular.  Let $r_i$ be the rank of the 
rigidity matrix of each of these frameworks and $s_i$ the dimension 
of the space of equilibrium stresses.
Observing that $G$ has $n_1 + n_2 - k$
vertices and $m_1 + m_2 - \binom{k}{2}$ edges, we can see that the maximum 
possible rank of the rigidity matrix for $(G,p)$ is $r_1 + r_2 - \binom{k}{2}$.
Since $K = G_1\cap G_2$ is complete and has at most $d+1$ vertices in dimension $d$, 
regularity of $(G_1,p|_{V(G_1)})$ and 
$(G_2,p|_{V(G_2)})$ implies that the vertices of $K$ are affinely independent.  Otherwise there is 
an equilibrium stress supported only on $K$ that is not present in all frameworks.
Hence, we may apply Lemma \ref{lem: d-sum stress} to $(G,p)$ to conclude that its space of 
equilibrium stresses is the direct sum of equilibrium stresses supported on $G_1$ and 
$G_2$ respectively.  The dimension of the space of equilibrium stresses of 
$(G,p)$ is then $s_1 + s_2$.  Then $(G,p)$ is regular since the rank of its rigidity matrix is
\[
    m_1 + m_2 - \binom{k}{2} - s_1 - s_2
    = 
    m_1 + m_2 - \binom{k}{2} - (m_1 - r_1) - (m_2 - r_2)
    = r_1 + r_2 - \binom{k}{2}.\qedhere
\]
\end{proof}
We also have some control of the signs of stress coefficients in PSD 
equilibrium stresses.  The following is from 
 \cite[Lemma 4.9]{cgt2} and the 
discussion around it.
\begin{lemma}\label{lem: psd stress sign}
Let $(G,p)$ be a $d$-dimensional framework and $\omega$ a PSD 
equilibrium stress of $(G,p)$ and $ij$ and edge of $G$ so that 
$\omega_{ij} > 0$.  Then there is a non-singular 
projective transformation $T$ on $\RR^d$ so that $(G,T(p))$
has a PSD equilibrium stress $\psi$ so that $\psi_{ij} < 0$.
\end{lemma}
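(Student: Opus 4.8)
The plan is to use the projective invariance of equilibrium stresses together with the explicit way a projective transformation rescales stress coefficients. Recall from Lemma \ref{lem: affine to linear stress} that an equilibrium stress of $(G,p)$ corresponds to a linear equilibrium stress matrix $\Omega$ of the lifted configuration $\hat p$, i.e.\ a matrix $\Omega$ supported on $E(G)$ with $\Omega \hat P^T = 0$. A non-singular projective transformation of $\RR^d$ lifts to a linear map $M \in GL_{d+1}(\RR)$ acting on $\hat P$ (so that the new lifted matrix is $M\hat P$, up to rescaling columns to restore the flat normalization), and $\Omega$ is still a linear equilibrium stress for $M \hat P$ since $\Omega (M\hat P)^T = \Omega \hat P^T M^T = 0$. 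The only thing that changes when we re-flatten: column $i$ of $M\hat P$ gets rescaled by $1/t_i$ where $t_i$ is its last coordinate, and by Lemma \ref{lem: linear stress scale} this rescales $\Omega_{ij}$ to $t_i t_j \Omega_{ij}$ while preserving the signature (and hence PSD-ness). So the sign of the coefficient on edge $ij$ flips precisely when $t_i t_j < 0$, i.e.\ when the projective transformation sends exactly one of $p(i), p(j)$ across the hyperplane at infinity.

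The key steps, in order, are: (1) fix the PSD equilibrium stress $\omega$ with $\omega_{ij} > 0$ and pass to its linear stress matrix $\Omega$ on $\hat p$; (2) choose a hyperplane $H$ in $\RR^d$ (equivalently, a linear functional on $\RR^{d+1}$) that strictly separates $p(i)$ from $p(j)$ and contains no vertex of $p$ — this is possible since $p(i) \ne p(j)$, and a generic small perturbation of a separating hyperplane avoids the finitely many points $p(k)$; (3) let $T$ be a projective transformation that moves $H$ to infinity (concretely, pick $M \in GL_{d+1}(\RR)$ whose last row is the linear functional defining $H$), so that the new last coordinates satisfy $t_i t_j < 0$ while $t_k \ne 0$ for all $k$ (guaranteeing flattenability is preserved); (4) apply Lemma \ref{lem: linear stress scale} with $s_k = t_k$ to get a linear stress matrix $\tilde\Omega = S\Omega S$ for the re-flattened configuration, with the same signature as $\Omega$ (so still PSD) and with $\tilde\Omega_{ij} = t_i t_j\, \Omega_{ij} < 0$; (5) translate $\tilde\Omega$ back to an equilibrium stress $\psi$ of $(G, T(p))$ via Lemma \ref{lem: affine to linear stress}, noting $\psi_{ij} = -\tilde\Omega_{ij} $ has the opposite sign to $\omega_{ij}$, hence $\psi_{ij} < 0$.

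The main obstacle — really the only subtlety — is step (3): arranging that the chosen separating hyperplane, after being sent to infinity, leaves the configuration \emph{flattenable} (no vertex on the hyperplane) while also keeping $T$ non-singular and ensuring we can still invoke Lemma \ref{lem: flattening stress}-style bookkeeping to pass between frameworks and lifted configurations cleanly. This is handled by the genericity argument in step (2): the set of hyperplanes separating $p(i)$ from $p(j)$ is open and non-empty, and avoiding the finite point set $\{p(k) : k \in V(G)\}$ removes only a measure-zero (closed, nowhere dense) subset, so a valid hyperplane exists. Everything else is a direct application of the already-established lemmas on how linear equilibrium stresses transform under rescaling of the configuration vectors.
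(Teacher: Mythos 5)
Your argument is correct, and it is the standard one: the paper does not prove this lemma itself (it cites \cite[Lemma 4.9]{cgt2}), but the mechanism you describe --- lift $\omega$ to a linear equilibrium stress of $\hat p$, send a hyperplane strictly separating $p(i)$ from $p(j)$ and avoiding all vertices to infinity, and observe via Lemma~\ref{lem: linear stress scale} that re-flattening replaces $\omega_{ij}$ by $t_i t_j\,\omega_{ij}$ while the congruence $S\Omega S$ preserves the signature --- is precisely the argument behind that citation. The only caveat worth recording is that step (2) tacitly assumes $p(i)\ne p(j)$ (if the endpoints coincide, no hyperplane separates them and indeed $t_it_j>0$ for every projective image); this is harmless here since the lemma is only invoked on generic frameworks.
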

We have things in place for the main result of this section.
Given a graph $G$ with edge $ij$, we let $G-ij$ denote the graph obtained from $G$ by 
deleting the edge $ij$.

\begin{lemma}\label{lem: psd clique sum}
Let $1\le k\le d$ and $G$ be a $k$-sum of subgraphs $G_1$ and $G_2$
and $ij$ and edge of $G_1\cap G_2$.
Suppose that there are generic $d$-dimensional frameworks
$(G_1,p^1)$ and $(G_2,p^2)$ that, respectively, support 
non-zero PSD equilibrium stresses $\omega^1$ and $\omega^2$,
such that $\omega^k_{ij} \neq 0$ for $k = 1,2$.
Let $G' = G- ij$.
Then there is a generic $d$-dimensional framework $(G',p)$
that supports a non-zero PSD equilibrium stress.
\end{lemma}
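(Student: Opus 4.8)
\emph{Plan.} The idea is to build a non-zero PSD equilibrium stress on a generic framework of $G'$ by taking a positive linear combination of PSD equilibrium stresses coming from $G_1$ and $G_2$ that cancels on the shared edge $ij$. Deleting $ij$ is exactly what makes this work: a combination with zero coefficient on $ij$ is a legitimate equilibrium stress of $G-ij$, and a positive combination of PSD stress matrices is again PSD. For the cancellation we need the coefficients on $ij$ to have opposite signs, which we arrange using Lemma~\ref{lem: psd stress sign}. The one real subtlety is that the obvious construction produces a very special (non-generic) framework, so most of the work goes into a semialgebraic argument showing that a generic framework also works. (The constructions in Lemma~\ref{lem: d-sum stress} and Corollary~\ref{cor: d-sum rank} give an alternative route, via a single affinely aligned glued framework; but one is then still left with the genericity problem, so I prefer to work with semialgebraic sets from the start.)

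First I set up the relevant semialgebraic sets. For a graph $H$ with an edge $ij$ and a sign $\sigma\in\{+,-\}$, let $\mathcal{A}^{\sigma}_{H}$ be the set of configurations $q$ of $V(H)$ in $\mathbb{R}^{d}$ such that $(H,q)$ supports a non-zero PSD equilibrium stress $\omega$ with $\operatorname{sign}(\omega_{ij})=\sigma$; fixing $\omega_{ij}=\sigma$ and applying quantifier elimination to the remaining stress coordinates shows $\mathcal{A}^{\sigma}_{H}$ is semialgebraic over $\mathbb{Q}$. Writing $\sigma_{1}=\operatorname{sign}(\omega^{1}_{ij})$, the generic configuration $p^{1}$ lies in $\mathcal{A}^{\sigma_{1}}_{G_{1}}$; since a semialgebraic subset of a real affine space defined over $\mathbb{Q}$ that contains a generic point is full-dimensional, $\mathcal{A}^{\sigma_{1}}_{G_{1}}$ is full-dimensional. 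For $G_{2}$ I want $\mathcal{A}^{-\sigma_{1}}_{G_{2}}$ to be full-dimensional as well: if $\operatorname{sign}(\omega^{2}_{ij})=-\sigma_{1}$ this is immediate, since then $p^{2}\in\mathcal{A}^{-\sigma_{1}}_{G_{2}}$; otherwise I apply Lemma~\ref{lem: psd stress sign} to $(G_{2},p^{2})$, noting that the admissible projective transformations range over a full-dimensional family and may therefore be taken generic over $\mathbb{Q}(p^{2})$, so that the transformed configuration is again generic and lies in $\mathcal{A}^{-\sigma_{1}}_{G_{2}}$. (I expect checking that these transformations form a full-dimensional family to be a small technical point, extracted from the proof of Lemma~\ref{lem: psd stress sign}.) In all cases $\mathcal{A}^{\sigma_{1}}_{G_{1}}$ and $\mathcal{A}^{-\sigma_{1}}_{G_{2}}$ are full-dimensional.

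Next I carry out the gluing over an arbitrary configuration lying over both pieces. Let $K=G_{1}\cap G_{2}\cong K_{k}$, set $n=|V(G')|$, and fix any configuration $p'$ of $V(G')$ with $p'|_{V(G_{1})}\in\mathcal{A}^{\sigma_{1}}_{G_{1}}$, $p'|_{V(G_{2})}\in\mathcal{A}^{-\sigma_{1}}_{G_{2}}$, and the $k\le d$ vertices of $K$ affinely independent in $p'$. Let $\Omega^{1},\Omega^{2}$ be the (PSD, non-zero) stress matrices witnessing membership in those sets; then $\Omega^{1}_{ij}$ and $\Omega^{2}_{ij}$ have opposite signs. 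Zero-pad them to $n\times n$ matrices $\widehat\Omega^{1},\widehat\Omega^{2}$; these remain PSD, remain supported on $E(G_{1})$ resp. $E(G_{2})$, and each satisfies $\widehat\Omega^{k}\widehat{P'}^{\,T}=0$ by the equilibrium condition on its piece. Choosing $\alpha,\beta>0$ with $\alpha\Omega^{1}_{ij}+\beta\Omega^{2}_{ij}=0$, the matrix $\Omega=\alpha\widehat\Omega^{1}+\beta\widehat\Omega^{2}$ is PSD, satisfies $\Omega\widehat{P'}^{\,T}=0$, and is supported on $E(G_{1})\cup E(G_{2})=E(G)$ with $\Omega_{ij}=0$; since the non-edges of $G'=G-ij$ are exactly the non-edges of $G$ together with $ij$, $\Omega$ is an equilibrium stress matrix of $(G',p')$. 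It is non-zero: otherwise, restricting to the indices in $V(G_{1})\setminus V(K)$ (non-empty because $|V(G_{1})|\ge k+1$) forces $\Omega^{1}$ to vanish on every entry touching those vertices, so $\Omega^{1}$ is supported on $V(K)\times V(K)$ and is thus a stress matrix of the framework on $K$ with affinely independent vertices, which must vanish — contradicting $\Omega^{1}\ne 0$.

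Finally, the genericity step. Let $\mathcal{P}$ be the set of configurations $p'$ of $V(G')$ in $\mathbb{R}^{d}$ for which $(G',p')$ supports a non-zero PSD equilibrium stress; it is semialgebraic over $\mathbb{Q}$, and by the previous paragraph it contains every configuration lying over $\mathcal{A}^{\sigma_{1}}_{G_{1}}$ and $\mathcal{A}^{-\sigma_{1}}_{G_{2}}$ with $V(K)$ affinely independent. Since $\mathcal{A}^{\sigma_{1}}_{G_{1}}$ and $\mathcal{A}^{-\sigma_{1}}_{G_{2}}$ are full-dimensional, their restrictions to configurations of $V(K)$ have non-empty interior; choosing a common interior point and counting fiber dimensions, the set of such glued configurations has dimension $d|V(G_{1})|+d|V(G_{2})|-dk=dn$ and is thus full-dimensional, whence $\mathcal{P}$ is full-dimensional. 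By the semialgebraic genericity lemma (Lemma~\ref{lem: semi-alg generic}), $\mathcal{P}$ contains a generic configuration $p$, and $(G',p)$ is the required framework. The main obstacle throughout is precisely this passage from a special glued configuration to a generic one: one cannot simply perturb a single glued framework, because its PSD stress matrix lies on the boundary of the PSD cone and need not survive perturbation; and one cannot simply restrict a generic configuration of $G'$ to the two pieces, because supporting a PSD stress with a prescribed sign on $ij$ is not a generic property — the sign of a stress coefficient can change with the configuration — which is exactly why Lemma~\ref{lem: psd stress sign} is needed.
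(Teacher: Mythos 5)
Your construction of the cancelling positive combination is the same as the paper's, and your route to genericity is genuinely different: the paper glues two explicit frameworks using an affine alignment map, observes via Corollary~\ref{cor: d-sum rank} that the result is regular, and then perturbs to a generic configuration, whereas you show that the set $\mathcal{P}$ of configurations of $G'$ supporting a non-zero PSD stress is a full-dimensional $\mathbb{Q}$-semialgebraic set and invoke Lemma~\ref{lem: semi-alg generic}. Your approach has the merit of never having to argue that PSD-ness survives a perturbation (a point the paper's proof passes over quickly), and your worry about stress matrices sitting on the boundary of the PSD cone is legitimate.

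There is, however, one unjustified step, and it is exactly the step that the paper's affine map $T$ exists to handle. You say that the restrictions of $\mathcal{A}^{\sigma_1}_{G_1}$ and $\mathcal{A}^{-\sigma_1}_{G_2}$ to configurations of $V(K)$ ``have non-empty interior'' and then ``choose a common interior point.'' Each image does contain an open subset of $(\mathbb{R}^d)^k$, but two open subsets of $(\mathbb{R}^d)^k$ need not meet, so a priori there may be no configuration of $V(G')$ lying over both sets and your glued family could be empty. The missing ingredient is affine invariance: equilibrium stresses and their stress matrices (hence PSD-ness and the sign of the coefficient on $ij$) are unchanged when a single invertible affine map is applied to the whole configuration, so each set $\mathcal{A}$ is invariant under the diagonal action of the affine group of $\mathbb{R}^d$; since that group acts transitively on ordered affinely independent $k$-tuples for $k\le d$, and each $\mathcal{A}$ contains a configuration whose restriction to $V(K)$ is such a tuple, each image in fact contains \emph{all} affinely independent $k$-tuples, an open dense set. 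With that observation your fiber-dimension count is correct and the rest of the argument goes through. (Your other flagged point---that the projective transformation of Lemma~\ref{lem: psd stress sign} can be drawn from a full-dimensional family, so that the transformed configuration may be taken generic---is indeed fine: by the scaling formula behind Lemma~\ref{lem: linear stress scale} the new coefficient on $ij$ is $\omega^2_{ij}z_iz_j$, where $z_v$ is the last homogeneous coordinate of the transformed $\hat p(v)$, and the condition $z_iz_j<0$ is open in the projective group.)
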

\begin{proof}
First assume that $\omega^1_{ij} < 0$ and $\omega^2_{ij} > 0$.  
Since $G_1\cap G_2$ has at most $d$ vertices, any 
affinely independent framework on $G_1\cap G_2$ cannot support 
an equilibrium stress.  Hence, both $\omega^1$ and $\omega^2$ 
have some support outside of $G_1\cap G_2$.
We create a framework $(G,p^0)$ from the frameworks 
$(G_1,p^1)$ and $(G_2,p^2)$ as follows.  Pick a 
non-singular affine map $T$ sending the vertices of 
 $G_1\cap G_2$ in $(G_1,p^1)$ to the corresponding vertices 
in $(G_2,p^2)$ and apply it to $p^1$.  The defines a 
framework $(G,p^0)$.

By Corollary \ref{cor: d-sum rank} and the  
genericity of $(G_i,p^i)$, the framework $(G,p^0)$ 
is regular.  
Since equilibrium stresses are preserved under affine maps, 
$\omega^1$ and $\omega^2$ are both equilibrium stresses of $(G,p^0)$. 
Our assumptions about the 
signs imply that some positive linear combination $\omega$ of 
$\omega^1$ and $\omega^2$
has vanishing coefficient on the edge $ij$.  Because the $\omega^i$ 
have some necessarily disjoint support, $\omega$ is non-zero.
Since a positive 
combination of PSD equilibrium stresses is PSD, we conclude that $\omega$ is.  
Since $\omega$ is not supported on $ij$, it 
is also an equilibrium stress of $(G',p^0)$.
Potentially, $(G',p^0)$ is not generic, but since it is regular, a  
small perturbation $(G,p)$ 
that is generic will have an equilibrium stress 
close to $\omega$ that is also PSD.

If $\omega^1_{ij} > 0$, we reduce to the previous case by 
applying a projective transformation, as in Lemma \ref{lem: psd stress sign}.
The argument is then the same as before, since we only used that the $(G_i,p^i)$
are generic to make them regular.  Regularity is preserved by 
projective transformations.
\end{proof}

\subsection{Remarks}
A natural question is whether the lower bound in Theorem \ref{thm: grn mlt}
is tight.  The results of this section show that it is not.  By Lemma 
\ref{lem: psd clique sum}, if we let $G$ be the $2$-sum of two copies
of $K_{d+2}$ over an edge $ij$, and $G'$ the graph $G-ij$,
there is a generic framework $(G',p)$ in dimension $d$ with a non-zero 
PSD equilibrium stress.  Theorem \ref{thm: main mlt stress}, 
then implies that $\mlt(G')\ge d + 2$.  On the other hand,
since every induced subgraph of $G'$ is independent in dimension 
$d$, $\grn^*(G')\le d-1$.  Hence, $\grn^*(G') + 2 < \mlt(G')$.

If we ask, in addition that $G$ is $(d+1)$-connected, we do 
not know an example where the lower bound in Theorem 
\ref{thm: grn mlt} is not tight.  

\section{Equality of small MLT and GCR}\label{sec: mlt 3}
In this section, we prove Theorem \ref{thm: equality of gcr and mlt}, which rests on the 
rich combinatorial theory of  2-rigidity of graphs (see e.g.~\cite{lee-streinu} for an overview).
The cornerstone of this theory is Theorem~\ref{thm: LPG}, the Laman--Pollaczek-Geiringer theorem.
We begin with the necessary definitions.

\begin{defn}\label{def: laman}
A graph $G$ with $n$ vertices is $(2,3)$-sparse if, for all subgraphs 
with $n'$ vertices and $m'>0$ edges, $m' \le 2n' - 3$.
If $G$ is $(2,3)$-sparse and, in addition has $2n - 3$ edges, it is called a 
\emph{Laman graph}.
A graph that is not $(2,3)$-sparse, but becomes so after removing any 
edge is called a \emph{Laman circuit}.
\end{defn}

\begin{thm}[\cite{LAMAN,HPG}]\label{thm: LPG}
A graph $G$ is $2$-independent if and only if $G$ is $(2,3)$-sparse.
\end{thm}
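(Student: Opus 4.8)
The statement is the classical Laman--Pollaczek-Geiringer theorem, and the plan is to prove the two implications separately, disposing of the elementary ``Maxwell'' direction first.

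\emph{$2$-independence implies $(2,3)$-sparsity.} Let $H$ be a subgraph of $G$ with $n'\ge 2$ vertices and $m'>0$ edges, and fix any framework $(H,q)$ in $\RR^2$. The two infinitesimal translations and one infinitesimal rotation are linearly independent vectors in the kernel of the rigidity matrix $R(H,q)$, so $\rank R(H,q)\le 2n'-3$. Consequently the $m'$ rows of $R(G,p)$ indexed by $E(H)$ are linearly dependent whenever $m'>2n'-3$, so $G$ would be $2$-dependent. Contrapositively, $2$-independence of $G$ forces $m'\le 2n'-3$ for every such $H$.

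\emph{$(2,3)$-sparsity implies $2$-independence.} This is where the content lies, and I would argue by a Henneberg-type induction, in the following steps. (1) Reduce to the extremal case: since $2$-independence passes to subgraphs, it is enough to show every Laman graph is $2$-independent, after checking the elementary fact that a $(2,3)$-sparse graph on $n\ge 2$ vertices can always be completed, by adding edges, to a Laman graph on the same vertex set (this rests on the observation that two ``tight'' subgraphs, those meeting the count with equality, have a tight union whenever they share at least two vertices, a short submodular-counting argument). (2) Prove the reduction lemma: every Laman graph on $n\ge 3$ vertices has a vertex $v$ of degree $2$ or $3$ such that deleting $v$ (if $\deg v=2$), or deleting $v$ and inserting a suitable non-edge among its three neighbours (if $\deg v=3$), produces a Laman graph on $n-1$ vertices; the existence of a good neighbour pair in the degree-$3$ case is again a (fiddly but routine) counting argument using the submodularity from step~(1). (3) Show that the two inverse operations --- the $0$-extension, which adds a new degree-$2$ vertex, and the $1$-extension, which deletes an edge $uv$ and adds a new vertex $w$ joined to $u$, $v$ and one further vertex $x$ --- preserve generic $2$-independence. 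Then induct down to the base case $K_2$, whose single rigidity-matrix row is nonzero. (A different route through matroid union and the Nash--Williams forest-decomposition theorem, in the style of Lov\'asz--Yemini, is also available, with a comparable amount of work.)

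\emph{Main obstacle.} Step~(3) for the $0$-extension is immediate: the two new rows of the rigidity matrix are supported on two fresh coordinate columns and are independent of everything else. The genuine crux, where the geometry enters, is that the $1$-extension preserves independence. One cannot simply place $w$ generically and read off the rank; the standard device is to place $w$ at a special, \emph{non-generic} position, namely on the line through $u$ and $v$, and to observe that at this placement the two bars $wu$ and $wv$ together impose exactly the constraint of the deleted bar $uv$. This yields a bijection between the infinitesimal flexes of the extended framework and those of $(G,p)$, so by rank-nullity the rank of the rigidity matrix goes up by exactly $2$, hence from $2(n-1)-3$ to $2n-3$, which (matching the edge count) means the extended Laman graph is independent at this special placement. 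Since the set of configurations at which the rigidity matrix has rank at least $2n-3$ is Zariski-open and now known to be nonempty, it contains every generic configuration, completing the induction. Making this special-position rank computation airtight --- and tracking the genericity transfer --- is the step where essentially all the work of the hard direction sits.
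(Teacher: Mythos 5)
The paper does not prove this statement: Theorem~\ref{thm: LPG} is imported as a classical black box, cited to Laman and Pollaczek-Geiringer, so there is no in-paper argument to compare yours against. Your outline is the standard Henneberg-induction proof and is correct in structure. The easy direction via the three trivial infinitesimal motions is fine (for $n'\ge 2$ the rank of $R(H,q)$ is indeed at most $2n'-3$), and you correctly isolate the real crux of the hard direction: that a $1$-extension preserves generic independence, proved by placing the new vertex $w$ on the line through $p(u)$ and $p(v)$, checking that the bars $wu$, $wv$, $wx$ then impose exactly the constraint of the deleted bar $uv$ plus two independent conditions on $\dot p(w)$ (using that $p(x)$ is off that line, which genericity of $p$ supplies), and transferring the rank bound back to generic configurations by Zariski-openness of the maximal-rank locus. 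Be aware that what you have is a proof \emph{sketch} rather than a proof: steps (1) and (2) --- that $(2,3)$-sparse graphs extend to Laman graphs, and that every Laman graph on at least four vertices admits a degree-$3$ reduction to a smaller Laman graph with a well-chosen new edge among the neighbours --- are asserted with only a pointer to the tight-set union/submodularity argument, and these are precisely the places where a written-out proof of Laman's theorem spends most of its pages. Also, in the $0$-extension step you should record that the new $2\times 2$ block is nonsingular only because $w$ and its two neighbours are not collinear, which again is a genericity condition rather than an automatic fact.
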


Via Theorem~\ref{thm: gross sullivant}, Theorem~\ref{thm: LPG} immediately gives us a 
combinatorial characterization of the graphs with $\gcr(G) = 3$; these are the $(2,3)$-sparse 
graphs that contain a cycle.
As we will see in Proposition~\ref{prop: mlt 3 laman}, this also characterizes graphs 
with $\mlt(G) = 3$.
In order to prove this, we need the following lemma which makes crucial use of Berg and 
Jordan's~\cite{bergjordan}
combinatorial characterization of global rigidity in two dimensions.

\begin{lemma}\label{lem: 2d PSD circuit stress}
Let $G$ be a Laman circuit.  Then there are generic $2$-dimensional
frameworks $(G,p)$ satisfying a non-zero PSD equilibrium stress.
\end{lemma}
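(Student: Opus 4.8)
The plan is to induct on the number of vertices $n$, according to whether or not $G$ is $3$-connected. Recall that a Laman circuit spans $k \ge 4$ vertices, has exactly $2k-2$ edges, and has minimum degree at least $3$ (all consequences of Theorem~\ref{thm: LPG}); in the base of the induction $G$ will be $3$-connected. So suppose first that $G$ is $3$-connected. For every edge $e$, the graph $G-e$ has $2n-3$ edges and is $(2,3)$-sparse, since a proper subgraph of the circuit $G$ is $2$-independent; hence $G-e$ is a Laman graph and in particular $2$-rigid, so $G$ is redundantly rigid. By Berg and Jordan's characterization of generic global rigidity in the plane \cite{bergjordan}, a $3$-connected, redundantly rigid graph on at least $4$ vertices is globally $2$-rigid, so $G$ is globally $2$-rigid. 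Theorem~\ref{thm: gen ur} (with $d=2$) then produces a generic framework $(G,p)$ carrying a PSD equilibrium stress of rank $n-3 \ge 1$; this stress is in particular nonzero, as desired.

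Now suppose $G$ is not $3$-connected. A short edge count shows $G$ is $2$-connected: splitting $G$ along a cut of size at most $1$ into proper parts $H_1, H_2$ and applying $|E(H_i)| \le 2|V(H_i)|-3$ to each gives $|E(G)| \le 2(|V(H_1)|+|V(H_2)|)-6 < 2n-2$, impossible. So $G$ has a $2$-vertex cut $\{u,v\}$, and we may write $G = H_1 \cup H_2$ with $V(H_1) \cap V(H_2) = \{u,v\}$ and both parts proper. Since $H_1$ and $H_2$ are proper subgraphs of the circuit $G$, they are $2$-independent, hence $(2,3)$-sparse, so $|E(H_i)| \le 2|V(H_i)|-3$. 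Combined with $|E(H_1)| + |E(H_2)| \ge |E(G)| = 2n-2$ and $|V(H_1)|+|V(H_2)| = n+2$, this forces every inequality to be an equality: $uv \notin E(G)$ (otherwise $|E(H_1)|+|E(H_2)| = 2n-1$) and each $H_i$ is a Laman graph on at most $n-1$ vertices. Set $G_i = H_i + uv$; then $G_1 \cup G_2$ is the $2$-sum of $G_1$ and $G_2$ along the edge $uv$ in the sense of Definition~\ref{def: clique sum}, and $G = (G_1 \cup G_2) - uv$.

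I claim each $G_i$ is again a Laman circuit, necessarily on fewer than $n$ vertices. Since $H_i$ is spanning $2$-rigid, $G_i$ has nullity $1$ and hence contains a unique circuit $C_i$, and $uv \in C_i$ because $H_i$ is independent. Suppose, for contradiction, that $C_i \subsetneq G_i$, so that $C_i - uv$ is a proper subgraph of $H_i$. The minimum degree of the circuit $C_i$ is at least $3$, so $C_i - uv$ spans the same $|V(C_i)|$ vertices, all but $u,v$ of which lie outside $V(H_{3-i})$; thus $(C_i - uv) \cup H_{3-i}$ is a proper subgraph of $G$ on $|V(C_i)| + |V(H_{3-i})| - 2$ vertices with $(2|V(C_i)|-2-1) + (2|V(H_{3-i})|-3) = 2|V(C_i)| + 2|V(H_{3-i})| - 6$ edges, which exceeds the $(2,3)$-sparsity bound $2\bigl(|V(C_i)| + |V(H_{3-i})| - 2\bigr) - 3$ by exactly one. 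This contradicts the $2$-independence of proper subgraphs of the circuit $G$, so $C_i = G_i$, proving the claim.

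By the inductive hypothesis applied to the smaller Laman circuits $G_1$ and $G_2$, there are generic $2$-dimensional frameworks $(G_1,p^1)$ and $(G_2,p^2)$ supporting nonzero PSD equilibrium stresses $\omega^1, \omega^2$. Since $G_i$ is a circuit, $\omega^i$ has full support -- vanishing on an edge $e$ would make it a nonzero stress of the independent graph $G_i - e$ -- so in particular $\omega^i_{uv} \neq 0$. Lemma~\ref{lem: psd clique sum}, applied with $k = 2 \le d = 2$ and distinguished edge $uv$, then produces a generic $2$-dimensional framework on $(G_1 \cup G_2) - uv = G$ supporting a nonzero PSD equilibrium stress, which closes the induction. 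The main obstacle is the $3$-connected base case: that is where the combinatorics genuinely enters, via Berg and Jordan's theorem upgrading ``redundantly rigid and $3$-connected'' to generic global rigidity so that Theorem~\ref{thm: gen ur} applies. The recursive step, by contrast, is bookkeeping with $(2,3)$-sparsity together with the gluing Lemma~\ref{lem: psd clique sum} from the previous section.
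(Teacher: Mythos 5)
Your proof is correct and follows essentially the same route as the paper's: Berg--Jord\'an plus Theorem~\ref{thm: gen ur} in the $3$-connected case, and otherwise a $2$-separation yielding $G+uv$ as a $2$-sum of smaller Laman circuits, handled by induction and Lemma~\ref{lem: psd clique sum}. The only difference is that you spell out the $(2,3)$-sparsity counting (which the paper delegates to \cite[Lemma 2.4]{bergjordan}) and the full-support observation for circuit stresses; both of these details check out.
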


\begin{proof}
If $G$ is $3$-connected, a result of Berg and Jordán \cite{bergjordan}  implies that $G$ is 
  globally rigid.  The desired statement then follows from Theorem \ref{thm: gen ur}.
If $G$ is not $3$-connected, we can find a $2$-separation $\{x,y\}\subseteq V(G)$ in $G$.
A counting argument \cite[Lemma 2.4, inter alia]{bergjordan} 
implies that
$xy$ is not an edge of $G$ and that $G\cup \{xy\}$ 
is a $2$-sum of smaller 
Laman circuits $G_1$ and $G_2$.  
By induction, we may assume that there are generic $2$-dimensional frameworks 
$(G_1,p^1)$ and $(G_2,p^2)$ that each support a PSD equilibrium stress $\omega^1$ and 
$\omega^2$. Since $G_1$ and $G_2$ are circuits, the supports of $\omega^1$ 
and $\omega^2$ include the edge $xy$.  By Lemma \ref{lem: psd clique sum}, there is then a 
generic framework $(G,p)$ with a non-zero 
PSD equilibrium stress.
\end{proof}

\begin{prop}\label{prop: mlt 3 laman}
Given a graph $G$, the following are equivalent:
\begin{enumerate}[(a)]
    \item\label{item: combinatorial} $G$ is $(2,3)$-sparse and contains a cycle,
    \item\label{item: gcr 3} $\gcr(G) = 3$, and
    \item\label{item: mlt 3} $\mlt(G) = 3$.
\end{enumerate}
\end{prop}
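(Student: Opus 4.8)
The plan is to prove the cycle of implications (a)$\Rightarrow$(b)$\Rightarrow$(c)$\Rightarrow$(a), reducing each step to tools already in hand. First I would dispatch the equivalence of (a) and (b), which is purely matroidal: by Theorem~\ref{thm: gross sullivant}, $\gcr(G)=3$ means exactly that $2$ is the least dimension in which $G$ is independent, i.e.\ that $G$ is $2$-independent but not $1$-independent. Since the $1$-dimensional generic rigidity matrix of $G$ is, row by row, a nonzero rescaling of a signed incidence matrix, its rank is $|V(G)|$ minus the number of components, so $G$ is $1$-independent precisely when $G$ is a forest; and Theorem~\ref{thm: LPG} identifies $2$-independence with $(2,3)$-sparsity. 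Hence $\gcr(G)=3$ iff $G$ is $(2,3)$-sparse and not a forest, which is (a).

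Next I would prove (b)$\Rightarrow$(c). The inequality $\mlt(G)\le\gcr(G)=3$ is immediate from Uhler's bound (Theorem~\ref{thm: uhler gcr mlt}). For $\mlt(G)\ge 3$, I would use that by (a) the graph $G$ contains a cycle $C_k$; since $C_k$ is $2$-connected it is globally $1$-rigid and has $k\ge 3=1+2$ vertices, so $\grn^*(G)\ge 1$ and Theorem~\ref{thm: grn mlt} gives $\mlt(G)\ge\grn^*(G)+2\ge 3$. (One could instead cite Uhler's computation $\mlt(C_k)=3$ together with monotonicity, Lemma~\ref{lem: mlt monotone}.) Thus $\mlt(G)=3$.

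For (c)$\Rightarrow$(a), assume $\mlt(G)=3$. If $G$ were a forest it would be $1$-independent, so $\gcr(G)\le 2$ and hence $\mlt(G)\le 2$ by Theorem~\ref{thm: uhler gcr mlt}, a contradiction; thus $G$ contains a cycle. If $G$ were not $(2,3)$-sparse it would be $2$-dependent by Theorem~\ref{thm: LPG} and hence would contain a Laman circuit $H$ as a subgraph; Lemma~\ref{lem: 2d PSD circuit stress} then yields a generic $2$-dimensional framework on $H$ carrying a nonzero PSD equilibrium stress, so by Theorem~\ref{thm: main mlt stress} the smallest dimension in which no generic framework on $H$ has a nonzero PSD equilibrium stress is at least $3$, i.e.\ $\mlt(H)\ge 4$, whence $\mlt(G)\ge\mlt(H)\ge 4$ by Lemma~\ref{lem: mlt monotone} --- contradicting $\mlt(G)=3$. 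So $G$ is $(2,3)$-sparse and contains a cycle, i.e.\ (a) holds.

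The only bookkeeping requiring care is the index shift in Theorem~\ref{thm: main mlt stress} (``$\mlt=d+1$'' refers to dimension $d$, not $d+1$, being the first dimension with no nonzero PSD equilibrium stress on a generic framework). The genuinely substantive input is Lemma~\ref{lem: 2d PSD circuit stress}: ruling out $\mlt(G)=3$ when $G$ contains a Laman circuit is exactly the point where the combinatorial theory of $2$-dimensional global rigidity (Berg--Jord\'an, funneled through the gluing Lemma~\ref{lem: psd clique sum}) is needed, and I expect that to be the main obstacle --- though here it is already available, so the remaining work is assembly.
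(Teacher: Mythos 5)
Your proposal is correct and follows essentially the same route as the paper: the (a)$\Leftrightarrow$(b) equivalence via Theorems~\ref{thm: LPG} and~\ref{thm: gross sullivant}, the lower bound $\mlt(G)\ge 3$ from global $1$-rigidity of cycles and Theorem~\ref{thm: grn mlt}, the upper bound from Theorem~\ref{thm: uhler gcr mlt}, and the exclusion of $\mlt(G)=3$ for non-$(2,3)$-sparse graphs via a Laman circuit, Lemma~\ref{lem: 2d PSD circuit stress}, Theorem~\ref{thm: main mlt stress}, and Lemma~\ref{lem: mlt monotone}. The only cosmetic differences are that you organize the argument as a cycle of implications and spell out the $1$-independence-equals-forest step, which the paper leaves implicit.
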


\begin{proof}
Theorems~\ref{thm: LPG} and~\ref{thm: gross sullivant} imply that $\gcr(G) = 3$ if and only if
$G$ is $(2,3)$-sparse and contains a cycle.
Now assume $\gcr(G) = 3$.
Since cycles are globally $1$-rigid, 
any graph $G$ with a cycle has $\grn^*(G)\ge 1$, so 
$\mlt(G)\ge 3$ by Theorem \ref{thm: grn mlt}.
On other other hand, if a graph $G$ is  
$(2,3)$-sparse then $\gcr(G)\le 3$ and so $\mlt(G)\le 3$ follows from Theorem~\ref{thm: uhler gcr mlt}.

If $\gcr(G) \le 2$ or $\mlt(G) \le 2$, then $G$ cannot have a cycle.
So assume $\gcr(G) \ge 4$.
Theorems~\ref{thm: LPG} and~\ref{thm: gross sullivant} now imply that $G$ contains a Laman circuit $H$ as a subgraph.
By Lemma \ref{lem: 2d PSD circuit stress},
$H$ has a generic $2$-dimensional framework $(H,p)$ with non-zero PSD equilibrium stress.
Theorem \ref{thm: main mlt stress} 
implies $\mlt(H) \ge 4$ and therefore Lemma~\ref{lem: mlt monotone} implies $\mlt(G) \ge 4$.
\end{proof}

We are now ready to prove the main result of this section.

\begin{proof}[Proof of Theorem~\ref{thm: equality of gcr and mlt}]
    As noted in~\cite{gross2018maximum}, $\mlt(G) = 1$ if and only if $G$ has no edges and $\mlt(G) = 2$ if and only if $G$ has no cycles.
    In both cases, it is easy to see that $\gcr(G) = \mlt(G)$.
    If $\mlt(G) = 3$ or $\gcr(G) = 3$, then $\mlt(G) = \gcr(G)$ follows from Proposition~\ref{prop: mlt 3 laman}.
    If $\gcr(G) = 4$, then
    Theorem~\ref{thm: LPG} and Lemma~\ref{lem: 2d PSD circuit stress} together imply that $\mlt(G) \ge 4$
    and equality follows from Theorem~\ref{thm: uhler gcr mlt}.
\end{proof}

\begin{rmk}\label{rmk: best possible}
Theorem~\ref{thm: equality of gcr and mlt} is best possible in the sense that if $a \ge 4$ and $b \ge 5$,
then there exist graphs $G,H$ such that $\mlt(G) = a < \gcr(G)$ and $\mlt(H) < b = \gcr(H)$.
In particular, let $n = \left\lfloor\frac{1}{2}\binom{a+1}{2}\right\rfloor$,
and let $D$ be the smallest $k$ such that $\binom{k+1}{2} \ge 2b$.
Then, Theorem~\ref{thm: blekherman Kmn} implies $\mlt(K_{n,n}) = a < n = \gcr(K_{n,n})$ and that $\gcr(K_{b,b}) = b > D =\mlt(K_{b,b})$.
\end{rmk}

\section{Weak maximum likelihood threshold}\label{sec: weak MLT}
This section includes connections between the \emph{weak} maximum likelihood threshold of a graph, and two areas of classical combinatorics: partially ordered sets, and graph dimension (i.e.~the minimum dimension in which a graph can be realized as a unit-distance graph).

\begin{defn}\label{def: wmlt}
The \emph{weak maximum likelihood threshold} of a graph $G$, denoted 
$\wmlt(G)$ is the 
smallest number of samples\footnote{Again, we are assuming that the samples are i.i.d.~from a distribution whose probability measure is mutually absolutely continuous with respect to Lebesgue measure.} required for 
the MLE of the Gaussian graphical model associated with $G$ to exist
with positive probability.
\end{defn}
The definition of $\wmlt(G)$ is the same as that of $\mlt(G)$, but with the phrase ``almost surely'' swapped out for ``with positive probability.'' 
Arguments along the lines of Section \ref{sec: lifting} then yield
the analogue of Theorem \ref{thm: main mlt lift}.
Since the proof is very similar, we skip it.

\begin{prop}\label{prop:wmlt liftable}
Let $G$ be a graph with $n$ vertices.  The WMLT of $G$ is $d+1$
if and only if $d$ is the smallest 
dimension such that some generic $d$-dimensional framework $(G,p)$
is liftable.  
\end{prop}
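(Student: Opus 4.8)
The plan is to first establish the stress-theoretic version of the statement and then translate it into the language of liftability using Lemma~\ref{lem: psd lift}, exactly as Theorem~\ref{thm: main mlt lift} is deduced from Theorem~\ref{thm: main mlt stress}. Concretely, I would prove the threshold equivalence
\[
    \wmlt(G) \le d+1 \quad\Longleftrightarrow\quad \text{some generic $d$-dimensional framework $(G,p)$ has no nonzero PSD equilibrium stress,}
\]
since by Lemma~\ref{lem: psd lift} the right-hand condition says precisely that some generic $d$-dimensional framework is liftable, and the proposition is the restatement of this equivalence at the smallest dimension $d$ for which the right-hand side holds. Throughout, I would use that $d+1$ samples from an $n$-variate distribution correspond, after transposing as in Definition~\ref{def: modified opt problem}, to a configuration $q$ of $n$ vectors in $\RR^{d+1}$ whose sampling measure $\mu$ is mutually absolutely continuous with Lebesgue measure, and that the MLE exists exactly when the Gram MLT optimization problem for $(G,q)$ is bounded.

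For the direction obtaining $\wmlt(G)\le d+1$ from a generic liftable framework, I would start with a generic $(G,p)$ in $\RR^d$ that, by Lemma~\ref{lem: psd lift}, carries no nonzero PSD equilibrium stress. Homogenizing to $\hat p$ and scaling its vectors by generic numbers $s_i$ produces, via Lemma~\ref{lem: gen slicing}, a generic configuration $q$ in $\RR^{d+1}$; by Lemmas~\ref{lem: affine to linear stress} and~\ref{lem: linear stress scale} the linear equilibrium stresses of $q$ are in signature-preserving correspondence with the equilibrium stresses of $(G,p)$, so $q$ has no nonzero PSD linear equilibrium stress. Lemma~\ref{lem: unbounded iff stress} then shows the Gram MLT optimization problem for $(G,q)$ is bounded, so $q$ lies in the semi-algebraic set $U$ of configurations in $\RR^{d+1}$ on which the MLE exists. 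Since $q\in U$ is generic, Lemma~\ref{lem: semi-alg generic} gives that $U$ is not $\mu$-null, i.e.\ the MLE exists with positive probability at $d+1$ samples, which is exactly $\wmlt(G)\le d+1$.

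For the converse, $\wmlt(G)\le d+1$ means the semi-algebraic set $U$ is not $\mu$-null; by Lemma~\ref{lem: semi-alg generic} it therefore contains a generic configuration $q$. Genericity makes $q$ flattenable, and flattening it and deleting the all-ones coordinate yields, by Lemma~\ref{lem: gen slicing}, a generic framework $(G,p)$ in $\RR^d$; Lemma~\ref{lem: flattening stress} transfers the absence of a nonzero PSD linear stress for $q$ (guaranteed by Lemma~\ref{lem: unbounded iff stress}) to the absence of a nonzero PSD equilibrium stress for $(G,p)$, so $(G,p)$ is liftable by Lemma~\ref{lem: psd lift}. The step I expect to be the crux is the correct use of the \emph{existential} form of Lemma~\ref{lem: semi-alg generic}: the property of \emph{having} a nonzero PSD equilibrium stress is not invariant across generic frameworks (unlike $d$-independence or $d$-rigidity), so ``some generic framework is liftable'' and ``every generic framework is liftable'' are genuinely different conditions. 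What distinguishes this argument from the proof of Theorem~\ref{thm: main mlt stress} is only the measure-theoretic quantifier: ``positive probability'' corresponds, via the statement that a semi-algebraic set is non-null iff it contains a generic point, to the existence of \emph{one} liftable generic framework, whereas the ``almost surely'' of the strong MLT corresponds to the dual statement that a full-measure semi-algebraic set contains \emph{every} generic point, and hence to \emph{all} generic frameworks being liftable.
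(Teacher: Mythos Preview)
Your proposal is correct and follows exactly the approach the paper indicates: the paper omits the proof, stating only that ``arguments along the lines of Section~\ref{sec: lifting} then yield the analogue of Theorem~\ref{thm: main mlt lift},'' and you have carried out precisely those arguments with the appropriate quantifier swap from ``almost surely''/``every generic'' to ``positive probability''/``some generic.'' Your closing paragraph identifying this quantifier change as the sole structural difference from the proof of Theorem~\ref{thm: main mlt stress} is exactly the point.

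One small remark: in your converse direction you invoke Lemma~\ref{lem: flattening stress} to pass from ``$q$ has no nonzero PSD linear stress'' to ``$(G,p)$ has no nonzero PSD equilibrium stress,'' but as stated that lemma only gives the implication in one direction (linear stress for $q$ $\Rightarrow$ equilibrium stress for $p$). What you actually need is the converse, which follows immediately from Lemmas~\ref{lem: affine to linear stress} and~\ref{lem: linear stress scale} (an equilibrium stress of $(G,p)$ gives a linear stress of $\hat p$, and scaling recovers a linear stress of $q$ with the same signature). The paper itself is equally casual about this in the proof of Lemma~\ref{lem: main stress forward}(b), so you are in good company, but it is worth citing the two underlying lemmas directly rather than Lemma~\ref{lem: flattening stress} for that step.
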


The following implies that we can ignore genericity of our witness (cf.~\cite[Definition~5.1]{gross2018maximum}).
\begin{prop}
\label{prop: wmlt matrix completion}
Let $d\in \NN$ be a dimension and $G$ be a graph with $n\ge d+1$ vertices.  
If there is any liftable $d$-dimensional framework $(G,p)$ then there 
is a generic liftable $d$-dimensional framework.  In particular, 
$\wmlt(G) \le d+1$.
\end{prop}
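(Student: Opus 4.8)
The plan is to upgrade an arbitrary liftable framework to a generic one by a semi-algebraic argument of the same flavor as the proofs of Lemmas~\ref{lem: main stress forward} and~\ref{lem: main stress backwards}. First I would recall from Lemma~\ref{lem: psd lift} that a $d$-dimensional framework $(G,p)$ is liftable if and only if it carries \emph{no} non-zero PSD equilibrium stress, and from Lemma~\ref{lem: affine to linear stress} that this is equivalent to the homogeneous lift $\hat p$ carrying no non-zero PSD linear equilibrium stress. So the hypothesis gives us one configuration $q_0 = \hat p$ in $\RR^{d+1}$ (with all last coordinates equal to $1$, hence flattenable) such that the Gram MLT optimization problem for $(G,q_0)$ is bounded, by Lemma~\ref{lem: unbounded iff stress}.

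Next I would consider the set $W$ of all vector configurations $q$ of $n$ vectors in $\RR^{d+1}$ for which the Gram MLT optimization problem for $(G,q)$ is bounded. This set is semi-algebraic (being the boundedness locus of a parametrized convex program, or equivalently the set where no non-zero PSD linear equilibrium stress exists, which is a first-order condition over the reals), and by the previous paragraph it is non-empty; since boundedness is an open condition, $W$ has non-empty interior and therefore positive Lebesgue measure. By Lemma~\ref{lem: semi-alg generic} (applied with $\mu$ the Lebesgue measure), a non-$\mu$-null semi-algebraic set contains a generic point, so $W$ contains a generic vector configuration $q$. A generic $q$ is automatically flattenable, and flattening it and deleting the all-ones coordinate produces, via Lemma~\ref{lem: gen slicing}, a generic $d$-dimensional framework $(G,\tilde p)$; by Lemma~\ref{lem: flattening stress}, $(G,\tilde p)$ has no non-zero PSD equilibrium stress, hence is liftable by Lemma~\ref{lem: psd lift}. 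This establishes the first assertion, and then $\wmlt(G) \le d+1$ follows from Proposition~\ref{prop:wmlt liftable}.

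The only genuine subtlety — the step I would be most careful about — is verifying that $W$ is semi-algebraic and that the liftability hypothesis translates into $W$ having positive measure rather than merely being non-empty. The positive-measure point is handled by openness: if $(G,p)$ is liftable, a sufficiently small perturbation of $\hat p$ remains free of non-zero PSD equilibrium stresses (the condition ``the only PSD matrix in a fixed linear slice with $\Omega Q^T = 0$ is zero'' is stable under small changes of $Q$, since otherwise a compactness argument on the unit sphere of PSD stress matrices would produce a non-zero limit stress for $\hat p$). Semi-algebraicity of $W$ follows because ``bounded below'' for the family $g_S$ of strictly convex objectives is equivalent to the first-order-expressible condition of Lemma~\ref{lem: unbounded iff stress}, and the quantifier ``$\forall$ PSD $\Omega$ supported on $E(G)$ with $\Omega Q^T = 0$, $\Omega = 0$'' is a first-order formula over $\RR$; Tarski--Seidenberg then gives a semi-algebraic description of $W$ in the entries of $q$. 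With those two technical points in hand, everything else is a direct assembly of results already proved in the excerpt.
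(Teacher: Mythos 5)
Your proof is correct, but it takes a more roundabout route than the paper's. The paper stays entirely in dimension $d$: after invoking Lemma~\ref{lem: psd lift} (as you do), it argues that the set of frameworks with no non-zero PSD equilibrium stress is \emph{open} --- via lower semi-continuity of the rank of the rigidity matrix, so that every stress of a nearby framework is a small perturbation of a stress of $(G,p)$ and signatures are preserved --- and then simply observes that any non-empty open set contains a generic point, with no need for the semi-algebraic machinery of Appendix~A. You instead lift to $\RR^{d+1}$, rephrase everything through the Gram MLT optimization problem and Lemma~\ref{lem: unbounded iff stress}, establish that the boundedness locus $W$ is semi-algebraic (Tarski--Seidenberg) and has non-empty interior, extract a generic configuration via Lemma~\ref{lem: semi-alg generic}, and flatten back down with Lemmas~\ref{lem: gen slicing} and~\ref{lem: flattening stress}. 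The two proofs share the essential idea --- openness of the ``no non-zero PSD stress'' condition --- and your compactness argument on the unit sphere of PSD stress matrices is actually a clean, self-contained way to get that openness (arguably tidier than the paper's rank-semicontinuity step, which requires the extra observation that signatures survive small perturbations even when the stress space dimension drops). What your route costs is the unnecessary detour through dimension $d+1$ and the optimization problem: you could run the identical semi-algebraic-plus-openness argument directly on the set of $d$-dimensional configurations carrying no non-zero PSD equilibrium stress, or, more simply still, note as the paper does that an open neighborhood in $\RR^{dn}$ already contains a generic point because the non-generic locus is a countable union of proper hypersurfaces.
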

\begin{proof}
Let $(G,p)$ be a liftable $d$-dimensional framework.  By Lemma 
\ref{lem: psd lift}, $(G,p)$ does not have a non-zero PSD equilibrium 
stress.  By lower semi-coniuity of the rank of the rigidity 
matrix, there is a nbd $U$ of $p$ so that if $q\in U$, the space of 
equilibrium stresses of $(G,q)$ has dimension at most that of $(G,p)$.
Hence any equilibrium stress of $(G,q)$ is a small perturbation of a 
stress of $(G,p)$.  For sufficiently small perturbations, signature is
preserved, so some neighborhood of $p$ consists of only frameworks
without non-zero PSD equilibrium stresses.  This neighborhood 
contains a generic framework.
The second statement follows from Proposition \ref{prop:wmlt liftable}.
\end{proof}

\subsection{Existing bounds on the WMLT}
The weak maximum likelihood threshold of a graph is one 
if and only if it has no edges. Examples of graphs for which MLT = WMLT = $d+1$ are the \emph{$d$-laterations}; i.e.,
graphs formed from $K_{d+1}$ by a sequence of $d$-dimensional 0-extensions.
Other than this, very little is known.
Gross and Sullivant~\cite{gross2018maximum} showed that $\wmlt(G)$ is at most the chromatic 
number of $G$.
Buhl~\cite{buhl1993existence} characterized the weak maximum likelihood thresholds of cycles, 
showing that $\wmlt(G) = 3$ if $G$ is a three-cycle, and $\wmlt(G) = 2$ when $G$ is a cycle 
of length four or greater.
% Gross and Sullivant~\cite{gross2018maximum} use Buhl's results on cycles to show 
% that if $\wmlt(G) = 2$,
% then $G$ satisfies a property they call \emph{Buhl's cycle condition}.  To 
% describe it, we need a definition.
\begin{defn}
    Given a directed graph, a cycle in the underlying undirected graph is \emph{stretched} 
    if it is of the form $v_1\rightarrow v_2 \rightarrow \dots \rightarrow v_k \leftarrow v_1$.
    Given $(G,p)$ is a framework in $\mathbb{R}^1$ with no edges of length zero, 
    a cycle in $G$ is \emph{stretched} if the corresponding cycle is stretched in the 
    orientation of $G$ obtained by directing each edge $i$ 
    $j$ towards $j$ if $p(j) > p(i)$ and otherwise toward $i$.
\end{defn}
The following proposition can be seen as the rigidity-theoretic version 
of~\cite[Theorem 4.3]{buhl1993existence}. 
\begin{prop}\label{prop:stretchedCycleStress}
Let $G$ be a cycle and let $(G,p)$ be a generic framework in $\mathbb{R}^1$.
Then $(G,p)$ has a non-zero PSD equilibrium stress if and only if it is a stretched cycle.
\end{prop}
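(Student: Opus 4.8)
The plan is to analyze equilibrium stresses of a cycle $G$ on vertices $1,\dots,k$ (cyclically ordered) directly, exploiting the fact that a cycle has a one-dimensional space of equilibrium stresses for any generic $1$-dimensional framework. First I would compute this space explicitly: writing $p(1),\dots,p(k)$ for the positions, the equilibrium condition at vertex $i$ reads $\omega_{i-1,i}(p(i-1)-p(i)) + \omega_{i,i+1}(p(i+1)-p(i)) = 0$, which forces $\omega_{i-1,i}(p(i)-p(i-1)) = \omega_{i,i+1}(p(i+1)-p(i))$. Hence there is a constant $\lambda$ so that for every edge $e=\{i,i+1\}$ we have $\omega_{i,i+1}(p(i+1)-p(i)) = \lambda$, i.e.\ $\omega_{e} = \lambda/(p(i+1)-p(i))$. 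So up to scaling, the unique equilibrium stress is $\omega_{e} = 1/(p(j)-p(i))$ on edge $e=\{i,j\}$ with $j$ the successor of $i$ in the cyclic order; genericity guarantees no edge has length zero, so this is well-defined and non-zero.

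Next I would translate "$\omega$ is PSD" into a sign condition. The stress matrix $\Omega$ is the weighted Laplacian of $G$ with edge weights $\omega_e$; for a connected graph a weighted Laplacian is PSD if and only if (a suitable scalar multiple of) all edge weights are non-negative — this follows because if all $\omega_e \ge 0$ then $x^T\Omega x = \sum_{e=\{i,j\}} \omega_e (x_i-x_j)^2 \ge 0$, while if some $\omega_e < 0$ one can pick $x$ supported so as to make the quadratic form negative (e.g.\ using that $G-e$ is a path, hence connected, one can make $x_i-x_j$ large on $e$ while keeping the other differences controlled — here a short explicit computation on the path is the cleanest route). Since the stress is only defined up to an overall scalar, the genuine invariant is whether all the $\omega_e$ have the \emph{same sign}. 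By the formula above, $\omega_e$ for $e$ from $i$ to its successor $j$ has the sign of $p(j)-p(i)$; orienting each edge toward the larger endpoint, as in the definition of a stretched cycle, the stress coefficients all become positive precisely when every edge of the cycle, traversed in the cyclic order $1\to 2\to\cdots\to k\to 1$, points "forward" except possibly some that must all point "backward" consistently — which is exactly the combinatorial condition that the induced orientation has the stretched form $v_1\to v_2\to\cdots\to v_k\leftarrow v_1$ (after relabeling the start of the cycle). So I would check carefully that "all $\omega_e$ of one sign" $\iff$ "the cyclic orientation is stretched": going around the cycle, the sign of $\omega_e$ flips exactly at the vertices that are local maxima or minima of $p$ along the cycle, and there are at least two such (one max, one min); the stress coefficients share a common sign iff there is exactly one local max and one local min, which is the stretched condition.

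The main obstacle I expect is the bookkeeping in the last step: relating the sign pattern of the explicit stress to the "stretched" orientation without sign errors, and handling the fact that PSD-ness is a property of $\Omega$ only up to positive scaling while the naive formula for $\omega_e$ can produce coefficients of either sign depending on arbitrary labeling choices. The cleanest framing is: $\omega$ (or $-\omega$) is PSD $\iff$ all $\omega_e$ share a sign $\iff$ the number of "descents" of $p$ around the cycle is $1$ $\iff$ the orientation is stretched. Each of the outer equivalences is short once the explicit stress formula is in hand, so the real content is the middle combinatorial identity, which is a direct consequence of the stress sign flipping exactly at the local extrema of $p$ restricted to the cycle.
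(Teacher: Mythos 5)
Your explicit computation of the stress is correct and matches the paper's Lemma~\ref{lem: cycle stress signs}: the equilibrium condition forces $\omega_{ij}(p(j)-p(i))$ to be constant around the cycle, so up to scale $\omega_{ij} = 1/(p(j)-p(i))$ with $j$ the cyclic successor of $i$. But the step translating PSD-ness into ``all $\omega_e$ share a sign'' is wrong, and fatally so. First, the general claim that a weighted Laplacian of a connected graph is PSD iff all weights are non-negative is false: the triangle with weights $(-1,10,10)$ has quadratic form $9a^2+20ab+20b^2$ in the independent differences, which is positive definite. Second, and more to the point, your criterion can \emph{never} be satisfied here: the signed lengths $p(j)-p(i)$ sum to zero around the cycle, so they always take both signs, hence so do the $\omega_e = \lambda/(p(j)-p(i))$. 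Your criterion would therefore say no $1$-dimensional cycle framework ever has a PSD stress, contradicting the proposition. Concretely, the stretched triangle $p=(0,1,2)$ has stress $(\omega_{12},\omega_{23},\omega_{31})=(1,1,-\tfrac12)$ with mixed signs, yet its stress matrix has eigenvalues $0,0,3$ and is PSD. Your sketch of the ``only if'' direction also breaks for a cycle: the edge differences $x_j-x_i$ satisfy the linear relation $\sum_e (x_j-x_i)=0$, so you cannot make the difference on one edge large while keeping all the others small.

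The paper's route repairs exactly this. After scaling so that forward edges carry positive coefficients and backward edges negative ones (which is what your formula gives --- note this is a \emph{mixed}-sign pattern), one uses cut vectors: removing two edges of the cycle determines a cut, and the characteristic vector of one side has Rayleigh quotient equal to the sum of the two cut coefficients (Lemma~\ref{lem: stress energy cut}). Pairs of backward edges give $b-1$ independent negative directions, pairs of forward edges give $f-1$ positive ones, and with the $2$-dimensional kernel this forces the signature to be $(f-1,2,b-1)$ up to global sign (Theorem~\ref{thm: cycle stress}). PSD-ness is then equivalent to $b=1$ or $f=1$, i.e.\ to the cycle being stretched; the positive-semidefinite direction in the stretched case can alternatively be seen by a Cauchy--Schwarz argument on $\sum_e y_e^2/\ell_e$ subject to $\sum_e y_e = 0$. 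The fix to your write-up is to replace the ``common sign'' criterion by this signature count.
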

The proposition is a special case of a more general 
statement due Kapovich and Millson \cite{kapovich}
which we discuss in Appendix~\ref{appendix: cycle stress signature}.

Proposition~\ref{prop:stretchedCycleStress} immediately 
implies the following result of Gross and Sullivant.
\begin{cor}[{\cite[Corollary 5.4]{gross2018maximum}}]\label{cor: no stretched cycles}
 If $\wmlt(G) = 2$, then $G$ has an acyclic orientation with no stretched cycles.
\end{cor}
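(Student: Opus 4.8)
The plan is to deduce the corollary from Proposition~\ref{prop:stretchedCycleStress} by producing a single geometric witness and reading the orientation off of it. First I would invoke Proposition~\ref{prop:wmlt liftable}: since $\wmlt(G) = 2$, there is a \emph{generic} one-dimensional framework $(G,p)$ that is liftable, so by Lemma~\ref{lem: psd lift} it supports no non-zero PSD equilibrium stress. (The case $\wmlt(G) = 1$, where $G$ is edgeless, is trivial, so assume $G$ has an edge, hence $n \ge 2$.) Genericity forces the reals $p(1), \dots, p(n)$ to be distinct, so orienting each edge $ij$ towards its larger endpoint is well defined; call this orientation $\vec G$. Because $\vec G$ is induced by the total order of the $p(i)$, it is automatically acyclic. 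It therefore remains only to show that no cycle of $G$ is stretched in $\vec G$.

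I would argue this by contradiction. Suppose $C$ is a cycle subgraph of $G$ whose inherited orientation is stretched. The restriction $p|_{V(C)}$ is a sub-configuration of a generic configuration, hence itself generic, and the orientation of an edge depends only on the order of its two endpoints, so the orientation $C$ inherits from $\vec G$ coincides with the one induced directly by $p|_{V(C)}$; in particular $(C, p|_{V(C)})$ is a generic stretched cycle. Proposition~\ref{prop:stretchedCycleStress} then yields a non-zero PSD equilibrium stress $\omega$ of $(C, p|_{V(C)})$. Extending $\omega$ by zero on the remaining edges of $G$ produces an equilibrium stress of $(G,p)$: at a vertex of $C$ the equilibrium condition is the one for $C$ with extra vanishing terms, and at a vertex outside $C$ every coefficient is zero. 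After reindexing so that $V(C)$ comes first, the associated stress matrix is block diagonal with the PSD stress matrix of $\omega$ in the top-left block and zeros elsewhere, so it is PSD and non-zero. This contradicts liftability of $(G,p)$, finishing the proof.

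I do not expect a serious obstacle here; the points worth stating carefully are that a sub-configuration of a generic configuration is generic and that zero-padding a cycle stress really does give an equilibrium stress of the ambient graph whose stress matrix remains PSD — both routine. Conceptually, the only subtlety is recognizing that the \emph{same} witness framework $(G,p)$ simultaneously forbids every cycle from being stretched, since any stretched cycle would, on its own, manufacture a non-zero PSD equilibrium stress of $(G,p)$.
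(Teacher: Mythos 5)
Your proof is correct and is precisely the argument the paper leaves implicit when it says Proposition~\ref{prop:stretchedCycleStress} ``immediately implies'' the corollary: take a generic liftable one-dimensional witness from Proposition~\ref{prop:wmlt liftable} and Lemma~\ref{lem: psd lift}, orient by $p$, and note that a stretched cycle would zero-pad to a non-zero PSD equilibrium stress of the whole framework. The supporting details you flag (sub-configurations of generic configurations are generic; extension by zero preserves the equilibrium condition and positive semi-definiteness) are exactly the right ones and are handled correctly.
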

In \cite{gross2018maximum}, the property of having an acyclic orientation with 
no stretched cycles is called \emph{Buhl's cycle condition}.

\subsection{A conjecture and a connection}
Based on experimental evidence, we believe that the converse to 
Corollary~\ref{cor: no stretched cycles} is true.
\begin{conj}\label{conj: wmlt 2}
    If $G$ has at least one edge and an acyclic orientation with no stretched cycles, then $\wmlt(G) = 2$.
\end{conj}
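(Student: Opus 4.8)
The plan is to establish $\wmlt(G)\le 2$; since $G$ has an edge we already know $\wmlt(G)\ge 2$, so equality follows. By Propositions~\ref{prop:wmlt liftable} and~\ref{prop: wmlt matrix completion} together with Lemma~\ref{lem: psd lift}, it suffices to produce a single $1$-dimensional framework $(G,p)$ with no non-zero PSD equilibrium stress. I would take $p$ generic among configurations satisfying $p(u)<p(v)$ whenever $u\to v$ in the hypothesized acyclic orientation $\vec G$ (such configurations are generic in the sense of Definition~\ref{def: generic}, since they fill an open set), so that the orientation of $(G,p)$ obtained by directing each edge toward its larger endpoint is exactly $\vec G$. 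It is then enough to prove the following graph-level converse of Proposition~\ref{prop:stretchedCycleStress}, call it $(\star)$: \emph{if a generic $1$-dimensional framework $(H,q)$ has a non-zero PSD equilibrium stress $\omega$, then the induced orientation of $H$ contains a stretched cycle inside $\operatorname{supp}(\omega)$.} Indeed, $(\star)$ applied to $(G,p)$ would force a stretched cycle in $\vec G$, contradicting the hypothesis; note that $(\star)$ is precisely the structural converse of Corollary~\ref{cor: no stretched cycles}.

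To prove $(\star)$ I would induct on the number of edges of $H$, first reducing to $H=\operatorname{supp}(\omega)$. Vertex equilibrium forces $\min\deg(H)\ge 2$ (a degree-one support vertex $i$ would give $\omega_{ij}(q(j)-q(i))=0$, impossible), so $H$ contains a cycle; since $\vec G$ is acyclic, every cycle of $H$ has at least one source and one sink within it, and if none is stretched then at least two of each. \textbf{Base case:} if $H$ is $2$-regular it is a disjoint union of cycles, its stress matrix $\Omega$ is block diagonal, so $\Omega\succeq 0$ and $\Omega\neq 0$ force a single component cycle to carry a non-zero PSD stress, which is stretched by Proposition~\ref{prop:stretchedCycleStress} (all of its edges lie in $\operatorname{supp}(\omega)$). \textbf{Inductive step:} if $H$ is disconnected or has a cut vertex $v$, write $H=H_1\cup H_2$ with $H_1\cap H_2=\{v\}$; by Lemma~\ref{lem: d-sum stress} with $k=1$, the stress decomposes uniquely as $\omega=\omega_1+\omega_2$ with $\omega_i$ an equilibrium stress of $(H_i,q|_{H_i})$. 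If $\omega_1=0$ then $\Omega$ itself is the PSD stress matrix of $(H_2,q|_{H_2})$ and we induct; similarly if $\omega_2=0$. If both are non-zero, the principal submatrix $\Omega|_{V(H_1)}$ equals $\Omega_1^{\mathrm{loc}}+(\Omega_2^{\mathrm{loc}})_{vv}\,e_ve_v^{\top}$ and is PSD, so when the correction term vanishes we again induct on $H_1$. A parallel reduction handles $2$-separations via Lemma~\ref{lem: d-sum stress} with $k=2$, running in reverse the separation analysis of \cite{bergjordan} used in the proof of Lemma~\ref{lem: 2d PSD circuit stress}.

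The main obstacle is the residual case: both summands non-zero with both diagonal corrections strictly positive, and more broadly the genuinely $3$-connected case, where there is no separation to recurse on. Here the PSD condition does not localize --- unlike the \emph{building} direction of Lemma~\ref{lem: psd clique sum}, one cannot simply read off a PSD stress on a piece. The elementary tools at hand --- the identity $x^{\top}\Omega x=\sum_{ij\in E(H)}\omega_{ij}(x_i-x_j)^2\ge 0$, whose specialization to $x=\mathbf 1_T$ says every cut carries non-negative total stress, together with $\Omega\mathbf 1=\Omega q=0$ and the sign pattern forced by $\vec G$ (which already gives, via the no-stretched-triangle instance of the hypothesis, that $N^{+}(v)$ and $N^{-}(v)$ are independent sets for every $v$) --- do not by themselves seem to close the argument. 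I expect progress will require either a new handle on PSD equilibrium stresses of $3$-connected graphs on the line --- plausibly a direct ``folding'' construction that lifts the vertices one at a time along a topological order of $\vec G$, using the independence of $N^{\pm}(v)$ to keep each sphere intersection feasible and to push each new vertex into a fresh coordinate --- or a Berg--Jordán-style structural description of the graphs admitting an acyclic orientation with no stretched cycle, sharp enough to control how stresses interact across $3$-connected components. This last point is exactly where the conjecture currently stands open.
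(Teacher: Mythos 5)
The statement you are attempting is stated in the paper as Conjecture~\ref{conj: wmlt 2} and is left \emph{open} there: the authors offer only experimental evidence and the observation that it would be the converse of Corollary~\ref{cor: no stretched cycles}. Your reduction is the right one and matches the paper's framing: since $G$ has an edge, $\wmlt(G)\ge 2$, and by Propositions~\ref{prop:wmlt liftable} and~\ref{prop: wmlt matrix completion} together with Lemma~\ref{lem: psd lift} it suffices to exhibit a single $1$-dimensional framework, compatible with the given acyclic orientation, carrying no non-zero PSD equilibrium stress; your statement $(\star)$ is precisely the missing structural converse of Proposition~\ref{prop:stretchedCycleStress} for general graphs. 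But the proposal is not a proof, and you say so yourself: it collapses exactly where it must do real work. The cut-vertex recursion only closes when at least one of the diagonal corrections $(\Omega_1)_{vv}$, $(\Omega_2)_{vv}$ is nonpositive (so that the other summand is forced to be PSD); when both are strictly positive neither $\Omega_1$ nor $\Omega_2$ need be PSD and the induction has nothing to recurse on. The $2$-separation step requires the separating pair to span an edge before Lemma~\ref{lem: d-sum stress} with $k=2$ applies, and the counting argument that supplies this edge in the Laman-circuit setting of Lemma~\ref{lem: 2d PSD circuit stress} has no analogue here. And nothing is said about $3$-connected supports, which is the generic situation.

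The pieces that do hold are worth keeping: configurations realizing a prescribed acyclic orientation fill an open set and hence contain generic points; a degree-one vertex cannot lie in the support of an equilibrium stress on the line; the $2$-regular base case follows from Proposition~\ref{prop:stretchedCycleStress} by block-diagonality; and forbidding stretched triangles does force $N^{+}(v)$ and $N^{-}(v)$ to be independent sets. None of this touches the non-separable case, and your closing paragraph concedes that a genuinely new idea --- a folding construction along a topological order of the orientation, or a structural theory of PSD equilibrium stresses of $3$-connected graphs on the line --- is still required. That concession is accurate: the conjecture is open in the paper and remains open after this attempt, so what you have is a correct reduction plus partial progress, not a proof.
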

Directed acyclic graphs with no stretched cycles are well-studied objects in combinatorics: 
they are  diagrams of partially ordered sets. It is NP-hard to determine whether a 
given undirected graph has an acyclic orientation with no stretched 
cycles~\cite{brightwell1993complexity}. Thus Conjecture~\ref{conj: wmlt 2} would imply that 
the decision problem of whether a given graph has $\wmlt(G) = 2$ is NP-hard. 
Via the coning construction \cite{W83}, 
this would imply that determining weak MLT is NP-hard in general.

The following definition is due to Erd\"os, Harary, and Tutte.
\begin{defn}[\cite{erdos1965dimension}]
The \emph{dimension} of a graph $G$, denoted $\dim(G)$, is the minimum 
$d$ such that there exists a framework $(G,p)$ in $\mathbb{R}^d$ such 
that $\|p(i)-p(j)\| = 1$ for all edges $ij \in E(G)$.
\end{defn}
The \emph{Hadwiger-Nelson problem} is a longstanding open problem in
combinatorics which asks for the maximum chromatic number of a graph 
$G$ with $\dim(G) = 2$. See~\cite{de2018chromatic} for the most 
recent progress and a brief account of the history. The connection 
to weak maximum likelihood thresholds is given by the following.
\begin{prop}
    Let $G$ be a graph.
    Then $\wmlt(G) \le \dim(G)+1$.
\end{prop}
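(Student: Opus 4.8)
The plan is to reduce the statement to Proposition~\ref{prop: wmlt matrix completion}, which only requires producing a \emph{single} (not necessarily generic) liftable framework of the appropriate dimension. Write $d = \dim(G)$ and let $n$ be the number of vertices of $G$. By the definition of $\dim(G)$ there is a framework $(G,p)$ in $\RR^d$ with $\|p(i)-p(j)\| = 1$ for every edge $ij$ of $G$. The key claim is then that this unit-distance framework is liftable in the sense of Definition~\ref{def: liftable}.

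\textbf{Main step: the claim.} To prove the claim I would exhibit the required equivalent full-affine-span framework explicitly. Let $(G,\tilde p)$ be the framework whose configuration $\tilde p$ places the $n$ vertices at the vertices of a regular $(n-1)$-simplex in $\RR^{n-1}$, scaled so that every pairwise distance equals $1$. Since \emph{all} pairwise distances of $\tilde p$ are $1$, in particular the edge lengths of $(G,\tilde p)$ agree with those of $(G,p)$, so the two frameworks are equivalent; and the vertices of a regular $(n-1)$-simplex affinely span $\RR^{n-1}$, so $(G,\tilde p)$ has full affine span. Hence $(G,p)$ is liftable. (Alternatively, one can deduce the claim from Lemma~\ref{lem: psd lift} together with the elementary observation that a PSD equilibrium stress $\Omega$ of a unit-distance framework has vanishing trace: each coordinate vector of $p$ lies in $\ker\Omega$, so $\sum_{ij\in E}\omega_{ij}\|p(i)-p(j)\|^2 = \sum_k p_k^{T}\Omega p_k = 0$, and since every edge has length one this forces $\sum_{ij\in E}\omega_{ij}=0$, i.e.\ $\tr(\Omega)=0$; a PSD matrix of trace zero is zero.)

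\textbf{Conclusion.} Finally, note that $d = \dim(G) \le n-1$, because the same regular-simplex configuration already realizes $G$ as a unit-distance graph in $\RR^{n-1}$; hence $n \ge d+1$, and Proposition~\ref{prop: wmlt matrix completion} applies to the liftable $d$-dimensional framework $(G,p)$ and yields $\wmlt(G) \le d+1 = \dim(G)+1$.

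\textbf{Expected obstacle.} I do not anticipate any substantial difficulty: all the machinery (Proposition~\ref{prop: wmlt matrix completion} and Lemma~\ref{lem: psd lift}) is already in place, and the only hand computation is the trivial fact that a unit-distance framework embeds, edge-isometrically, into a full-dimensional regular simplex. The one point meriting a moment of care is the bookkeeping for degenerate cases (e.g.\ $G$ edgeless, or with very few vertices), which is precisely what the inequality $\dim(G)\le n-1$ takes care of so that the hypothesis $n\ge d+1$ of Proposition~\ref{prop: wmlt matrix completion} is met.
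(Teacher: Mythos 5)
Your proof is correct and follows essentially the same route as the paper: take a unit-distance realization of $G$ in $\RR^{\dim(G)}$ and witness its liftability by the scaled regular simplex in $\RR^{n-1}$. If anything, your explicit appeal to Proposition~\ref{prop: wmlt matrix completion} is the more careful citation than the paper's direct use of Proposition~\ref{prop:wmlt liftable}, since the unit-distance framework need not be generic; your alternative trace argument via Lemma~\ref{lem: psd lift} is also valid.
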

\begin{proof}
Let $(G,p)$ be a framework in $\mathbb{R}^{\dim(G)}$ so that every 
edge of $G$ has length $1$. Then $(G,p)$ is liftable. 
A suitable witness is the framework $(G,q)$ in 
$\mathbb{R}^{n-1}$ where the $q(i)$s are the vertices of a 
suitably scaled unit simplex. The result now follows 
from Proposition~\ref{prop:wmlt liftable}.
\end{proof}
It is well-known that $\dim(G) + 1 \le \chi(G)$.  Indeed, if $G$ 
has chromatic number $d+1$, then there is a unit-distance embedding 
of $G$ in dimension $d$ by putting each of the $d+1$ 
color classes on a distinct vertex of a regular simplex in 
dimension $d$.  Hence, this result improves the 
inequality $\wmlt(G) \le \chi(G)$ from \cite{gross2018maximum}.

%%%%%%%%%%%%%%%%%%%%%%%%%%%%%%%%%%%%%%%%%%%%%%
%% Example with multiple Appendixes:        %%
%%%%%%%%%%%%%%%%%%%%%%%%%%%%%%%%%%%%%%%%%%%%%%
\begin{appendix}

\section{``almost all'' vs ``generic''}\label{appendix: generic}
In this appendix, we prove some technical 
results needed for Theorem \ref{thm: main mlt stress}.
We begin with a precise definition of genericity.

\begin{defn}
    A point $x \in \mathbb{R}^n$ is \emph{generic} if its coordinates are algebraically independent over $\mathbb{Q}$.
    If $S \subseteq \mathbb{R}^n$ is an irreducible semi-algebraic set, then a point $x \in S$ is \emph{generic in $S$}
    if whenever a polynomial $f$ with rational coefficients satisfies $f(x) = 0$, then $f(y) = 0$ for all $y \in S$.
\end{defn}

We record some facts about semi-algebraic sets (see,
e.g.~\cite{bochnak,stasica}). Recall that a \emph{finite boolean combination} of sets $\{S_\alpha\}_{\alpha \in J}$
is a set obtained using finitely many unions and intersections of sets in $\{S_\alpha\}_{\alpha \in J}$.

\begin{lemma}\label{lem: semi-alg facts}
Let $S$ be an irreducible semi-algebraic set and $X\subseteq S$
%irreducible and 
semi-algebraic. Then:
\begin{enumerate}[(a)]
    \item $X$ is a finite boolean combination of open and closed (standard topology) subsets of $S$,
    \item $X$ contains an open subset of $S$ if and only if it has the same dimension as $S$,
    \item if $X$ is of lower dimension than $S$, then each $x \in X$ satisfies some polynomial with coefficients in the field of definition for $X$ that is not satisfied by some points in $S$, and
    \item $X$ has finitely many irreducible components.
\end{enumerate}
\end{lemma}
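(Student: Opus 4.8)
The plan is to derive all four items from the classical structure theory of semi-algebraic sets (e.g.~\cite{bochnak,stasica}); the arguments are short assemblies of standard facts. Throughout let $k$ be the field of definition of $X$, and recall the normal form: every semi-algebraic set is a finite union of \emph{basic} sets $\{x : h_1(x) = \cdots = h_r(x) = 0,\ g_1(x) > 0, \ldots, g_s(x) > 0\}$, with the $h_i$ and $g_j$ polynomials, taken with coefficients in $k$ when the set is defined over $k$.

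For (a) I would observe that each basic set is a finite intersection of the closed sets $\{h_i = 0\}$ with the open sets $\{g_j > 0\}$, so intersecting with $S$ presents $X = X \cap S$ as a finite union of finite intersections of relatively closed and relatively open subsets of $S$. For (b), the forward direction: a nonempty relatively open $U \subseteq S$ meets the relatively open dense locus where $S$ is a manifold of dimension $\dim S$, hence $\dim U = \dim S$, and so $\dim X \ge \dim U = \dim S$ (while $\dim X \le \dim S$ is automatic); the converse — that a semi-algebraic subset of $S$ attaining dimension $\dim S$ contains a nonempty relatively open subset of $S$ — follows from a cell decomposition of $S$ adapted to $X$. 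Both halves of (b), and the ``top-dimensional manifold locus is open and dense'' fact they use, I would cite from~\cite{bochnak}.

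For (c) and (d) I would work with the Zariski closure over $k$. Let $I_k(Y) = \{f \in k[x_1,\dots,x_n] : f|_Y = 0\}$; this ideal is radical, and it cuts out an affine $k$-variety whose Krull dimension equals the real dimension of $Y$ for semi-algebraic $Y$. For (c): irreducibility of $S$ means $I_k(S)$ is prime, and $\dim X < \dim S$ forces $I_k(X) \supsetneq I_k(S)$, so any $f \in I_k(X) \setminus I_k(S)$ has coefficients in $k$, vanishes on all of $X$, and fails to vanish at some point of $S$. For (d): write $I_k(X) = \mathfrak{p}_1 \cap \cdots \cap \mathfrak{p}_t$ as the finite intersection of its minimal primes, with $W_i$ the corresponding varieties; then $X = \bigcup_i (X \cap W_i)$, and for $f$ vanishing on $X \cap W_i$ one multiplies by an element of $\bigcap_{j \ne i}\mathfrak{p}_j \setminus \mathfrak{p}_i$, observes the product lies in $I_k(X) \subseteq \mathfrak{p}_i$, and invokes primality to conclude $f \in \mathfrak{p}_i$, so $X \cap W_i$ has defining ideal exactly $\mathfrak{p}_i$ and is irreducible; these are the finitely many irreducible components of $X$.

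The step I expect to be the main obstacle is the dimension bookkeeping rather than any single deduction: namely, the facts that the real dimension of a semi-algebraic set defined over $k$ equals the Krull dimension of $k[x_1,\dots,x_n]/I_k(X)$ (used in (c) and (d)) and that a full-dimensional semi-algebraic subset of $S$ contains a relatively open subset of $S$ (used for the converse of (b)). Both are classical, but care is needed in stating them and in pinning down the precise references; once they are available, the four items follow as above.
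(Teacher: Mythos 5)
The paper offers no proof of this lemma at all---it is recorded as a collection of standard facts with a pointer to \cite{bochnak,stasica}---and your derivation is precisely the standard one from those references: the normal form for (a), cell decomposition together with the density of the top-dimensional smooth locus for (b), and the vanishing-ideal/$k$-Zariski-closure dimension theory for (c) and (d). Your argument is correct, and the two inputs you flag as needing care (that the dimension of a $k$-definable semi-algebraic set equals that of its $k$-Zariski closure, and that a full-dimensional semi-algebraic subset of $S$ contains a nonempty relatively open subset of $S$) are indeed the classical facts in Bochnak--Coste--Roy, so there is no gap.
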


\begin{lemma}\label{lem: semi-alg measure}
Let $S\subseteq \RR^N$ be an irreducible semi-algebraic 
set, $X\subseteq S$ semi-algebraic, and suppose that $\mu$
is a Borel measure on $S$ that is mutually absolutely 
continuous with respect to Lebesgue measure on $S$.
Then $X$ is $\mu$-null if and only if every irreducible component
of $X$ is of lower dimension than $S$.
\end{lemma}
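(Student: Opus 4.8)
The plan is to derive the statement directly from Lemma~\ref{lem: semi-alg facts}, together with two standard facts about semi-algebraic sets and ``Lebesgue measure on $S$'' (which I would first pin down precisely as the $d$-dimensional measure carried by the smooth locus of $S$, $d := \dim S$, via local parametrizations): (i) a semi-algebraic subset of $S$ of dimension $< d$ is Lebesgue-null in $S$, and (ii) a nonempty relatively open subset of $S$ has positive Lebesgue measure. Both follow from the cell/stratification decomposition of semi-algebraic sets and I would simply cite \cite{bochnak,stasica} for them rather than reprove them.

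First I would handle the ``if'' direction. Assume every irreducible component of $X$ has dimension strictly less than $d$. By Lemma~\ref{lem: semi-alg facts}(d) there are finitely many such components $X_1,\dots,X_r$ with $X = X_1\cup\dots\cup X_r$, so it suffices to show each $X_i$ is $\mu$-null; and since $\mu$ is mutually absolutely continuous with respect to Lebesgue measure on $S$, it is enough that each $X_i$ be Lebesgue-null in $S$. Stratifying $X_i$ into finitely many smooth semi-algebraic manifolds of dimension at most $\dim X_i < d$, each stratum is locally a graph over a lower-dimensional coordinate subspace and hence has $d$-dimensional measure zero; a finite union of null sets is null. This is fact (i) in action.

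For the ``only if'' direction I would argue the contrapositive. Suppose some irreducible component $X_0$ of $X$ has $\dim X_0 = d$. By Lemma~\ref{lem: semi-alg facts}(b), $X_0$ contains a nonempty subset $U$ that is open in $S$. Since $S$ is irreducible and semi-algebraic of dimension $d$, its smooth locus is a dense $d$-dimensional manifold, so $U$ meets it in a nonempty open subset of a $d$-manifold, which has positive $d$-dimensional volume; thus $U$ has positive Lebesgue measure on $S$ (fact (ii)). By mutual absolute continuity $\mu(U) > 0$, whence $\mu(X) \ge \mu(X_0) \ge \mu(U) > 0$, so $X$ is not $\mu$-null.

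The only real subtlety — and the step I would be most careful about — is the measure-theoretic bookkeeping around ``Lebesgue measure on $S$'': making sure facts (i) and (ii) are stated and invoked for the correct notion of measure on a possibly singular set $S$ (equivalently, the $d$-dimensional Hausdorff measure restricted to $S$), and that the decomposition into smooth strata used in the ``if'' direction and the density of the smooth locus used in the ``only if'' direction are both legitimate. Everything beyond that is an immediate application of Lemma~\ref{lem: semi-alg facts}.
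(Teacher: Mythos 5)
Your proof is correct and follows essentially the same route as the paper: full-dimensional components contain a set open in $S$ (Lemma~\ref{lem: semi-alg facts}(b)) and hence have positive $\mu$-measure, while lower-dimensional components are Lebesgue-null and hence $\mu$-null, with mutual absolute continuity transferring between the two measures. If anything, your handling of the null direction via stratification into smooth strata of dimension $<\dim S$ is more careful than the paper's appeal to the closure being nowhere dense (which by itself does not force Lebesgue measure zero), so no changes are needed.
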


\begin{proof}
From Lemma \ref{lem: semi-alg facts} and the fact that $\mu$
is a Borel measure, we know that each irreducible component $Y$
of $X$ is measurable.  Since $\mu$ is a  
Borel measure and Lebesgue measure on $S$ is absolutely 
continuous with respect to $\mu$, if $Y$ contains an open subset of 
$S$, $\mu(Y) > 0$.  Hence, if $Y$ has the same dimension as
$S$, we must have $\mu(Y) > 0$.  On the other hand, if 
$Y$ is of lower dimension then the 
(standard topology)
closure 
$\overline{Y}$ is closed and nowhere dense.  Absolute continuity of $\mu$
with respect to Lebesgue measure
then implies that $\mu(Y) \le \mu(\overline{Y}) = 0$.
Repeating this argument for each irreducible component
of $X$ completes the proof.
\end{proof}

To translate between generic statements 
and measure theoretic ones, we use the following.

\begin{lemma}\label{lem: semi-alg generic}
Let $S$ be an irreducible semi-algebraic subset of $\RR^N$ and let $X$ be 
a semi-algebraic subset of $S$, with both $S$ and $X$ defined over $\QQ$.
Let $\mu$ be a Borel measure on $S$ mutually absolutely
continuous with respect to Lebesgue measure.
Then:
\begin{enumerate}[(a)]
    \item if $X$ is $\mu$-null, then no generic points of $S$ are in $X$,
    \item if $X$ has full $\mu$-measure, then every generic point of $S$ is in $X$, and
    \item if neither $X$ nor its complement are $\mu$-null, then some generic points of $S$ are in $X$ and some are not.
\end{enumerate}
\end{lemma}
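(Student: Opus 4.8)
The plan is to derive all three parts from Lemmas~\ref{lem: semi-alg facts} and~\ref{lem: semi-alg measure} once we record one convenient restatement: a semi-algebraic subset $X\subseteq S$ is $\mu$-null if and only if $\dim X < \dim S$. Indeed, Lemma~\ref{lem: semi-alg measure} says $X$ is $\mu$-null iff every irreducible component of $X$ has dimension strictly less than $\dim S$, and by Lemma~\ref{lem: semi-alg facts}(d) there are only finitely many components, so this is the same as $\dim X < \dim S$. This restatement is the only point at which mutual absolute continuity of $\mu$ with respect to Lebesgue measure is used.

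For part~(a), assume $X$ is $\mu$-null, so $\dim X < \dim S$, and suppose toward a contradiction that some point $x_0$ generic in $S$ lies in $X$. Because $X$ is defined over $\QQ$, Lemma~\ref{lem: semi-alg facts}(c) applied at $x_0\in X$ produces a polynomial $f\in\QQ[x_1,\dots,x_N]$ with $f(x_0)=0$ that does not vanish at some point of $S$; but genericity of $x_0$ forces any rational polynomial vanishing at $x_0$ to vanish on all of $S$, a contradiction. Hence no generic point of $S$ lies in $X$. Part~(b) is then immediate: if $X$ has full $\mu$-measure then $S\setminus X$ is semi-algebraic, defined over $\QQ$, and $\mu$-null, so part~(a) applied to it shows no generic point of $S$ lies in $S\setminus X$, i.e.\ every generic point of $S$ lies in $X$.

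Part~(c) needs one further ingredient: the set $G$ of points generic in $S$ has full $\mu$-measure. To see this, write $S\setminus G=\bigcup_f Z_f$, where $Z_f:=\{x\in S : f(x)=0\}$ and $f$ runs over the countably many polynomials in $\QQ[x_1,\dots,x_N]$ not vanishing identically on $S$. Each $Z_f$ is a closed semi-algebraic subset of $S$ containing no Euclidean-open subset of $S$ --- otherwise $f$ would vanish on a nonempty open subset of the irreducible set $S$ and hence on $S$ --- so $\dim Z_f<\dim S$ by Lemma~\ref{lem: semi-alg facts}(b), and $Z_f$ is therefore $\mu$-null by the restatement above; a countable union of $\mu$-null sets is $\mu$-null. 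Now assume neither $X$ nor $S\setminus X$ is $\mu$-null. Since $X\setminus G\subseteq S\setminus G$ is $\mu$-null, $X\cap G$ has the same positive $\mu$-measure as $X$, so in particular $X\cap G\neq\emptyset$, exhibiting a generic point of $S$ lying in $X$. Running the same argument with $S\setminus X$ in place of $X$ yields a generic point of $S$ not in $X$.

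The genuinely nonformal step is inside part~(c): actually exhibiting generic points. It rests on the fact that each hypersurface section $Z_f$ of the \emph{irreducible} set $S$ is strictly lower-dimensional, so that countably many of them cannot cover a positive-measure subset; keeping all auxiliary sets defined over $\QQ$ so that Lemma~\ref{lem: semi-alg facts}(c) stays applicable is the other thing to watch. Parts~(a) and~(b) are then routine bookkeeping on top of the cited lemmas.
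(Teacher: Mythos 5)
Your proof is correct and follows essentially the same route as the paper: parts (a) and (b) are handled identically via Lemmas~\ref{lem: semi-alg measure} and~\ref{lem: semi-alg facts}(c) plus complementation. The only (minor) divergence is in part (c), where the paper argues that a non-null set contains an open subset of $S$ and that the non-generic locus is a countable union of nowhere dense algebraic subsets, whereas you show directly that the generic points have full $\mu$-measure via countable additivity; both are valid, and yours spells out the countability step the paper leaves implicit.
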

\begin{proof}
Suppose, for the moment, that $X$ is 
irreducible.  By Lemma \ref{lem: semi-alg measure} if $X$ is $\mu$-null 
it is of lower dimension than $S$.  By Lemma \ref{lem: semi-alg facts},
no point of $X$ can be generic.
In general, we repeat the argument for each irreducible component, 
which gives (a).  Part (b) follows from (a) via complementation.

For (c), Lemma \ref{lem: semi-alg measure} implies that a $\mu$-non-null
semi-algebraic set contains an open set.  Any non-generic point 
must lie in a nowhere dense algebraic subset of $S$, so if both $X$
and its complement are $\mu$-non-null both contain a generic point.
\end{proof}

\begin{lemma}\label{lem: gen slicing}
Let $v$ be a generic configuration of $n$ vectors in
$\RR^{d+1}$.  Then $v$ is flattenable, and the flattened
configuration $p$ in $\RR^d$ is also generic.
Conversely, if $p$ is a generic configuration of $n$ 
points in $\RR^d$, then there is a generic vector configuration 
$v$ in $\RR^{d+1}$ so that $p$ is the flattening of $v$.
\end{lemma}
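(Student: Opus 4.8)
The plan is to verify the two directions separately, in both cases exploiting that the maps involved are rational and defined over $\mathbb{Q}$, so that genericity is preserved under the relevant projection or section. For the first direction, let $v$ be a generic configuration of $n$ vectors in $\mathbb{R}^{d+1}$, with last coordinates $t_1,\dots,t_n$. Since a generic point cannot lie on the coordinate hyperplane $t_i = 0$ (that is a proper algebraic condition over $\mathbb{Q}$), each $t_i \neq 0$, so $v$ is flattenable. The flattening is $p(i) = \frac{1}{t_i}\bar v(i)$, where $\bar v(i)$ denotes the first $d$ coordinates of $v(i)$; this is a rational map $\Phi$ defined over $\mathbb{Q}$ whose image is the set of flat (hence, after deleting the all-ones row, arbitrary) configurations in $\mathbb{R}^d$. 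The argument is that if $p = \Phi(v)$ satisfied a nontrivial polynomial $f$ over $\mathbb{Q}$, then clearing denominators would give a nontrivial polynomial $g(v) = 0$ over $\mathbb{Q}$ (nontrivial because $\Phi$ is dominant onto $\mathbb{R}^{dn}$, so $f \circ \Phi$ is not identically zero), contradicting genericity of $v$. Hence $p$ is generic.

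For the converse, we are given a generic configuration $p$ of $n$ points in $\mathbb{R}^d$ and must produce a generic $v$ in $\mathbb{R}^{d+1}$ flattening to $p$. The natural candidate is to take $v(i) = \left(\begin{smallmatrix} s_i\, p(i) \\ s_i \end{smallmatrix}\right)$ for suitable nonzero scalars $s_1,\dots,s_n$; for any such choice $v$ flattens back to $p$. The claim is that we can pick the $s_i$ so that $v$ is generic. One clean way is to choose $s_1,\dots,s_n$ to be algebraically independent over the field $\mathbb{Q}(p)$ generated by the coordinates of $p$; since $p$ is generic, $\mathbb{Q}(p)$ has transcendence degree $dn$ over $\mathbb{Q}$, and adjoining $n$ further algebraically independent elements gives transcendence degree $dn + n = (d+1)n$, which is exactly the number of coordinates of $v$. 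The coordinates of $v$ are $s_i p(i)_k$ (for $k = 1,\dots,d$) and $s_i$; these $(d+1)n$ elements of $\mathbb{Q}(p, s_1,\dots,s_n)$ generate a field of transcendence degree $(d+1)n$, hence are algebraically independent over $\mathbb{Q}$, i.e.\ $v$ is generic. Alternatively, appeal to Lemma~\ref{lem: semi-alg generic}: the set of $(p,s)$ for which $v$ is non-generic is contained in a countable union of proper subvarieties, so a Baire/measure argument over the $s$-coordinates gives an admissible choice; but the transcendence-degree argument is more direct and avoids countable unions.

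The main obstacle, such as it is, is purely bookkeeping: making precise that "flattening" is a rational map defined over $\mathbb{Q}$ that is dominant onto $\mathbb{R}^{dn}$, so that pulling back a nonzero polynomial yields a nonzero polynomial, and dually that the section $p \mapsto v$ adds exactly the right number of independent transcendentals. Both are routine once the maps are written out, and neither requires any of the semi-algebraic machinery beyond what is recorded in Lemmas~\ref{lem: semi-alg facts}--\ref{lem: semi-alg generic}. I would present the forward direction via the pullback-of-polynomials argument and the converse via the transcendence-degree count, keeping the proof to a few lines.
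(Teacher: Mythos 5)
Your proposal is correct and follows essentially the same route as the paper: the paper also observes that $t_i=0$ is a non-trivial rational polynomial condition, notes that flattening is a dominant rational map defined over $\QQ$, and then delegates both directions to Lemmas 2.7 and 2.8 of Gortler--Healy--Thurston. Your pullback-of-polynomials argument for the forward direction and the transcendence-degree count for the converse are exactly the content of those cited lemmas, written out self-containedly, and both are sound.
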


\begin{proof}
First suppose that $v$ is a generic configuration of $n$
vectors in $\RR^{d+1}$.  Letting $t_i$ be the last coordinate 
of $v(i)$, we notice that if $t_i = 0$ for any $i$, 
then $v$ satisfies a non-trivial polynomial equation 
and so is non-generic.  Hence, $v$ is flattenable.  
The map sending a flattenable vector configuration 
$v$ to its flattening $p$ is rational and surjective
onto configurations of $n$ points in $\RR^d$.
The result now follows from \cite[Lemmas 2.7 and 2.8]{gortler2010characterizing}.
\end{proof}

\section{Equilibrium stresses and convexity}\label{sec:sep}

The goal of this appendix is to give a self-contained proof of Lemma~\ref{lem: psd lift},
which originally appeared in~\cite{alfakih2011bar}.
We will denote the interior of a set $S$ by $\intr(S)$.

\begin{lemma}
\label{lem:convProj}
Let $K$ be a convex $n$-dimensional set in $\RR^n$,
let $\pi$ be a rank-$m$ linear projection from $\RR^n$ to $\RR^m$, and
let $k:=\pi(K)$ be the $m$-dimensional image.
The following are equivalent:
\begin{enumerate}[(a)]
    \item 
$\pi^{-1}(x) \cap \intr(K)$ is nonempty,
\item $x  \in \intr(k)$, and
\item  $x$ does not lie on a supporting hyperplane for $k$.
\end{enumerate}
\end{lemma}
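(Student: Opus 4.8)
The plan is to prove the three-way equivalence by establishing the cycle \textit{(a)} $\Rightarrow$ \textit{(b)} $\Rightarrow$ \textit{(a)} together with \textit{(b)} $\Leftrightarrow$ \textit{(c)}, using only elementary convex geometry; no substantial input is needed. Throughout I would first observe that we may assume $x\in k$: this is forced by \textit{(a)}, since $\pi^{-1}(x)\cap\intr(K)$ nonempty gives $x\in\pi(K)=k$, it is part of \textit{(b)}, and under $x\in k$ condition \textit{(c)} is read as ``there is a hyperplane through $x$ having $k$ in one of its closed half-spaces.'' For \textit{(a)} $\Rightarrow$ \textit{(b)}, the point is that a linear map of rank $m$ from $\RR^n$ onto $\RR^m$ is an open map; so if $y\in\pi^{-1}(x)\cap\intr(K)$ and $B$ is an open ball about $y$ with $B\subseteq K$, then $\pi(B)$ is open in $\RR^m$, contains $x$, and satisfies $\pi(B)\subseteq\pi(K)=k$, whence $x\in\intr(k)$.

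The main argument is \textit{(b)} $\Rightarrow$ \textit{(a)}. Since $K$ is $n$-dimensional in $\RR^n$, $\intr(K)\neq\emptyset$; fix $z\in\intr(K)$ and set $x_0:=\pi(z)\in k$. If $x_0=x$ we are done. Otherwise, because $x\in\intr(k)$ the ray from $x_0$ through $x$ can be prolonged a little past $x$ inside $k$: there are $x'\in k$ and $t\in(0,1)$ with $x=(1-t)x_0+tx'$. Choose any $z'\in K$ with $\pi(z')=x'$ (possible as $x'\in k=\pi(K)$) and put $z'':=(1-t)z+tz'$. Then $z''\in K$ by convexity, $\pi(z'')=(1-t)x_0+tx'=x$ by linearity, and $z''\in\intr(K)$ by the line-segment principle: a point lying strictly between an interior point of a convex set and an arbitrary point of the set is again interior. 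Hence $z''\in\pi^{-1}(x)\cap\intr(K)$.

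For \textit{(b)} $\Leftrightarrow$ \textit{(c)} I would invoke the supporting hyperplane theorem for the convex set $k$, which is full-dimensional in $\RR^m$. If $x\in\intr(k)$, no supporting hyperplane can pass through $x$, since a full neighbourhood of $x$ lies in $k$ and so cannot be contained in a proper closed half-space with $x$ on its boundary. Conversely, if $x\in k\setminus\intr(k)$, then $x\in\partial\overline{k}$, and the supporting hyperplane theorem applied to $\overline{k}$ produces a hyperplane through $x$ with $\overline{k}$, hence $k$, on one side; note a closed half-space contains $k$ iff it contains $\overline{k}$, so this is a supporting hyperplane for $k$ in the sense above.

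The only point requiring care is the line-segment principle in \textit{(b)} $\Rightarrow$ \textit{(a)}, together with checking that the parameter can be taken strictly inside $(0,1)$; both are short and standard (take an $\varepsilon$-ball inside $K$ about $z$ and track its image under the affine contraction toward $z'$, and read off $t\in(0,1)$ from the prolongation of the ray in $k$). I would record this as a one-line sub-claim. Everything else is immediate from openness of surjective linear maps and the basic behaviour of interiors of convex sets, so I expect no genuine obstacle.
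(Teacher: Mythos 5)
Your proof is correct, and it differs from the paper's in the one implication that requires real work. You both prove \textit{(a)} $\Rightarrow$ \textit{(b)} identically (openness of a surjective linear map) and both defer \textit{(b)} $\Leftrightarrow$ \textit{(c)} to the supporting hyperplane theorem for the full-dimensional convex set $k$ (your extra care with $\intr(\overline{k})=\intr(k)$ is fine and standard). The difference is in closing the cycle: the paper proves the contrapositive $\neg$\textit{(a)} $\Rightarrow$ $\neg$\textit{(c)} by separating the affine subspace $\pi^{-1}(x)$ from $\intr(K)$ in the ambient space $\RR^n$, observing that the separating hyperplane must contain the whole fiber and hence that its functional $\ell$ kills $\ker\pi$, and then descending $\ell$ to a functional $\overline{\ell}$ on $\RR^m$ whose level set supports $k$ at $x$. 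You instead prove \textit{(b)} $\Rightarrow$ \textit{(a)} directly: prolong the segment from $\pi(z)$ ($z$ an interior point of $K$) through $x$ slightly inside $k$, lift the far endpoint to $K$, and apply the line-segment principle to land in $\intr(K)$ over $x$. Your route is more elementary for that implication (one fewer invocation of a separation theorem) and is constructive at the level of points; the paper's route is constructive at the level of functionals, which fits the way the lemma is used downstream in Lemma~\ref{lem: psd lift}, where the supporting functional is precisely the PSD stress being built. Both arguments are complete; the only hypotheses you silently use — that an $n$-dimensional convex set in $\RR^n$ has nonempty interior, and that $x\in k$ may be assumed throughout — are legitimate and are used just as implicitly in the paper.
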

\begin{proof}
Equivalence of the latter two conditions follows from the supporting hyperplane theorem \cite[Ch. 8]{simon-convexity}.

If $\pi^{-1}(x) \cap \intr(K)$ is nonempty, then there is a point $X \in \pi^{-1}(x)$ with open neighborhood $N$ satisfying
$N \subseteq \intr(K)$. Since $\pi$ is a linear 
map, it is open onto its image, which is $\RR^m$,
so $\pi(N)$ is open.  Since $x\in \pi(N) \subseteq k$, 
it is interior in $k$ (here we used that $K$ is full-dimensional, 
so that $\intr(K)$ is open and nonempty in $\RR^n$).

Now assume $\pi^{-1}(x) \cap \intr(K) = \emptyset$.
Since $\pi^{-1}(x)$ is convex,  
there must be an affine
hyperplane $H$ in $\RR^n$
weakly separating $\pi^{-1}(x)$ from $\intr(K)$.  
Since $\pi^{-1}(x)$ is an affine subspace, we have 
$\pi^{-1}(x) \subseteq H$.
Let $\ell$ be the linear functional and $\alpha$ the real number so that
\[
    H = \{y\in \RR^N : \ell(y) = \alpha\}.
\]
Since $\pi^{-1}(x)$ is parallel to the kernel of 
$\pi$, we have that $\ker \ell \supseteq \ker \pi$.
Hence, we have a well-defined linear map 
$\overline{\ell} : \RR^m\cong \RR^n/\ker \pi \to \RR$ given 
by 
\[
    \overline{\ell}(x') = \ell(y') \qquad \text{(any $y'\in \pi^{-1}(x')$)}.
\]
Hence, since $H$ supports $K$, the hyperplane $\{y \in \mathbb{R}^m: \overline{\ell}(y) = \alpha\}$ supports $k$ at $x$.
\end{proof}

\begin{proof}[Proof of Lemma~\ref{lem: psd lift}]
Let $K$ be the PSD cone.
Points in $K$ are Gram matrices of
$n$-point configurations in $\RR^n$; points in 
the interior correspond to configurations with 
$n$-dimensional linear span. Such configurations 
will have an $n-1$-dimensional affine span.
Fixing a graph $G$ with $m$ edges,
$\pi$ will be the map to $\RR^m$, which measures the 
squared lengths of the corresponding framework; i.e.~indexing $\mathbb{R}^m$ by the edges of $G$,
for each edge $ij$ of $G$, we have
\[
    \pi(X)_{ij} = X_{ii} + X_{jj} - 2X_{ij}.
\]
The image $\pi(K)$ is an $m$-dimensional convex cone $k \subseteq \mathbb{R}^{m}$.
Using Lemma~\ref{lem:convProj}, it now suffices to show that $\pi(p)$ lies on the boundary of $k$ if and only if $(G,p)$ has a PSD equilibrium stress.

Given a configuration $q$ of $n$ points in $\mathbb{R}^n$,
let $q^{l}$ denote the vector $q(1)_l,\dots,q(n)_l$ consisting of the $l^{\rm th}$ coordinate of each point in $q$.
Now assume $\pi(p)$ lies on the boundary of $k$,
let $\omega$ be the normal vector of the hyperplane tangent to $k$ at $\pi(p)$, and let $\Omega$ be the matrix obtained by setting $\Omega_{ij} = \Omega_{ji} = -\omega_{ij}$ for all edges $ij$ of $G$, $\Omega_{ii} = \sum_{j} \omega_{ij}$ for $i = 1,\dots, n$, and all other entries zero.
This means that for any configuration $q$ of $n$ points in $\mathbb{R}^n$,
the following inequality holds, and is moreover an equality when $q = p$:
\begin{align}\label{eq: supporting hyperplane configuration}
0 \le  
 \sum_{ij {\rm \ edge \ of \ } G} \omega_{ij} ||(q_i-q_j)||^2 =
\sum_{l} (q^l)^T\Omega q^l.
\end{align}
This implies that $\Omega$ is PSD and that $(p^l)^T\Omega p^l = 0$ for all $l$.
Together, these imply that $\Omega p^l = 0$ for each $l$, which is exactly the condition for $\omega$ to be an equilibrium stress of $p$.
Thus $\omega$ is a PSD equilibrium stress for $p$.

Finally, note that if $\Omega$ is a PSD equilibrium stress of $p$, then the above arguments can be reversed to show that $\Omega$ defines a supporting hyperplane of $k$ at $p$.
\end{proof}

\section{the signature of a cycle stress}\label{appendix: cycle stress signature}
\begin{defn}
    Let $C_n$ denote the directed cycle on vertex set $\{1,\dots,n\}$ with
    edges $1\rightarrow 2, 2\rightarrow 3,\dots,(n-1)\rightarrow n, n\rightarrow 1$.
    A \emph{framework} $(C_n,p)$ on $C_n$ refers to a framework on the undirected graph underlying $C_n$.
    In a general position framework $(C_n,p)$ in $\mathbb{R}^1$, an edge $i \rightarrow (i+1)$ is \emph{forwards} if $p(i) < p(i+1)$ and \emph{backwards} otherwise.
\end{defn}

Note that every general position framework $(C_n,p)$ in $\mathbb{R}^1$ has at least one forwards edge and at least one backwards edge.
Proposition~\ref{prop:stretchedCycleStress} is a corollary of the following, which classifies the signatures of the stresses of cycles in $\RR^1$.

\begin{thm}[\cite{kapovich}]\label{thm: cycle stress}
Let $(C_n,p)$ be a generic framework in $\mathbb{R}^1$ and let $f$ be the number of forwards edges and $b$ the number of backwards edges.
Then, for every nonzero equilibrium stress matrix $\Omega$ of $(C_n,p)$, the signature of $\Omega$ is either $(f-1,2,b-1)$ or $(b-1,2,f-1)$.
\end{thm}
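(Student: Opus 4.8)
The plan is to leverage the fact that a generic framework $(C_n,p)$ in $\mathbb{R}^1$ supports an essentially unique equilibrium stress, to write that stress down explicitly, and then to compute the inertia of its stress matrix by a linear change of variables.

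First I would observe that, since $C_n$ is connected with $n$ vertices and $n$ edges, the rigidity matrix of a generic $(C_n,p)$ in $\mathbb{R}^1$ has rank $n-1$ (a spanning path already contributes $n-1$ independent rows, while the infinitesimal translation $\mathbf{1}$ lies in the kernel), so the space of equilibrium stresses is one-dimensional. Writing $d_i:=p(i+1)-p(i)$ with indices taken mod $n$, the edge $i\rightarrow i+1$ is forwards exactly when $d_i>0$. If $\omega_i$ denotes the stress weight on that edge, the equilibrium condition at vertex $i$ is $\omega_id_i=\omega_{i-1}d_{i-1}$, so every nonzero equilibrium stress has the form $\omega_i=c/d_i$ for a nonzero constant $c$, and moreover $\sum_i\omega_i^{-1}=c^{-1}\sum_id_i=0$ since the cycle closes up. Replacing $\omega$ by $-\omega$ interchanges $f$ and $b$, so it suffices to handle the case $c>0$, where $\omega_i>0$ on precisely the $f$ forwards edges and $\omega_i<0$ on the $b$ backwards edges.

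The heart of the argument is an inertia computation for the stress matrix $\Omega$, which I would carry out through the associated quadratic form $\Phi(x)=x^T\Omega x=\sum_i\omega_i(x_{i+1}-x_i)^2$ on $\mathbb{R}^n$. This form factors as $\Phi=\Psi\circ\phi$, where $\phi\colon\mathbb{R}^n\to\mathbb{R}^n$ sends $x$ to its vector of ``discrete derivatives'' $(x_{i+1}-x_i)_i$ and $\Psi(y)=\sum_i\omega_iy_i^2$ is diagonal. Since $\phi$ has kernel $\mathbb{R}\mathbf{1}$ and image $W:=\{y:\sum_iy_i=0\}$, and $\mathbf{1}$ lies in the radical of the bilinear form attached to $\Phi$, the inertia of $\Omega$ equals that of the restriction $\Psi|_W$ with its nullity increased by one. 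Now $\Psi$ has inertia $(f,0,b)$ on $\mathbb{R}^n$, and the key point is that $v:=(d_i)_i$ lies in $W$ (again because $\sum_id_i=0$), spans $W^{\perp_\Psi}$ (the polarization of $\Psi$ satisfies $\Psi(v,y)=c\sum_iy_i$, which vanishes on $W$), and is $\Psi$-isotropic ($\Psi(v,v)=c\sum_id_i=0$). This places us in the degenerate case of restricting a nondegenerate form to a hyperplane; splitting off a hyperbolic plane through $v$ shows $\Psi|_W$ has inertia $(f-1,1,b-1)$. Adding back the extra zero yields signature $(f-1,2,b-1)$ for $\Omega$ when $c>0$, hence $(b-1,2,f-1)$ when $c<0$, and since every nonzero stress is a scalar multiple of one of these two, the theorem follows.

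The main obstacle is the genericity bookkeeping: one must verify that no $d_i$ vanishes and that the rigidity matrix attains rank $n-1$ (so the stress space is genuinely one-dimensional), and that the radical of $\Psi|_W$ is exactly $\mathbb{R}v$ (so its nullity is $1$, making the nullity of $\Omega$ equal to $2$ and not larger) --- the latter follows since $W^{\perp_\Psi}=\mathbb{R}D^{-1}\mathbf{1}=\mathbb{R}v$ is one-dimensional whenever $D=\diag(\omega_i)$ is invertible. Everything else is routine linear algebra, together with the standard lemma on restricting a nondegenerate symmetric form to a hyperplane. As an immediate consequence one recovers Proposition~\ref{prop:stretchedCycleStress}: a stretched cycle has $\{f,b\}=\{n-1,1\}$, so one scalar multiple of its unique stress is PSD, whereas a non-stretched cycle has $f,b\ge 2$, forcing its stress matrix to be indefinite of rank $n-2$.
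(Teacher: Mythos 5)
Your proof is correct, and it takes a genuinely different route from the paper's. Both arguments begin the same way, by solving the equilibrium conditions around the cycle to obtain the essentially unique stress $\omega_i = c/d_i$ with $\sum_i d_i = 0$ (this is the content of the paper's Lemma~\ref{lem: cycle stress signs}). From there the paper proceeds variationally: its Lemma~\ref{lem: stress energy cut} shows that the indicator vector of either side of the cut determined by deleting two edges has Rayleigh quotient equal to the sum of the two corresponding stress coefficients, so pairs of like-signed edges produce $f-1$ independent vectors with positive quotient and $b-1$ with negative quotient; combined with the nullity bound of $2$ for a stress matrix of an affinely spanning framework in $\mathbb{R}^1$ and the identity $(f-1)+(b-1)+2=n$, the signature is forced by counting. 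You instead compute the inertia exactly: writing $x^T\Omega x=\sum_i\omega_i(x_{i+1}-x_i)^2$ and factoring through the cyclic difference operator reduces everything to restricting the diagonal form $\diag(\omega_i)$, of inertia $(f,0,b)$, to the hyperplane $\sum_i y_i=0$, whose $\Psi$-orthogonal complement is spanned by the isotropic vector $(d_i)_i$; the hyperbolic-plane splitting then yields $(f-1,1,b-1)$ on the hyperplane and $(f-1,2,b-1)$ overall. Your version is more self-contained: it derives the nullity $2$ rather than importing it, and it sidesteps a subtlety in the cut-vector count (independent vectors with negative Rayleigh quotient do not by themselves span a negative-definite subspace, so the step ``hence $\Omega$ has at least $b-1$ negative eigenvalues'' requires one to check definiteness on the span or pass to a suitable quotient). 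The paper's route, in exchange, isolates Lemma~\ref{lem: stress energy cut} as a reusable statement about equilibrium stresses of arbitrary graphs, and its counting template adapts more readily to other circuits where an explicit diagonalizing change of variables is not available.
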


It is easy to see from Theorem~\ref{thm: cycle stress} that any cycle framework 
$(C_n,p)$ with exactly one backwards edge, or exactly one forwards edge, must be stretched and vice versa. Since the proof in \cite{kapovich} uses Hodge theory, we provide a linear-algebraic argument.

\begin{lemma}\label{lem: cycle stress signs}
Let $(G,p)$ be a general position framework in $\RR^1$ with the edge $\{1,n\}$, 
wlog (after cyclic relabeling) backwards and $p(n)$ the rightmost vertex.
Then $(G,p)$ has a unique, up to nonzero scaling, equilibrium stress $\omega$, and this scaling can be chosen such that every forward edge has positive coefficient and every negative edge has 
negative coefficient.
\end{lemma}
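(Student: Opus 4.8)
The plan is to solve the linear system defining equilibrium stresses on a one-dimensional cycle framework explicitly; the entire lemma then reduces to a telescoping computation. Write the cyclically ordered edges of the cycle as $e_1 = \{1,2\}, e_2 = \{2,3\}, \dots, e_{n-1} = \{n-1,n\}, e_n = \{n,1\}$, and orient $e_k$ from $k$ to $k+1$ (indices read mod $n$). For each $k$ let $d_k$ be the coordinate of the head of $e_k$ minus the coordinate of the tail of $e_k$ under $p$; since $(G,p)$ is in general position every $d_k$ is nonzero, and since the cycle closes up, $\sum_{k=1}^n d_k = 0$. By definition $e_k$ is a forwards edge precisely when $d_k > 0$ and a backwards edge precisely when $d_k < 0$.

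First I would expand the equilibrium condition $\sum_{j\sim i}\omega_{ij}(p(j)-p(i)) = 0$ at each vertex $i$. Vertex $i$ meets exactly two edges, one oriented into $i$ and one oriented out of $i$; tracking signs, the edge oriented out of $i$ contributes $+\omega_e d_e$ and the edge oriented into $i$ contributes $-\omega_e d_e$, so the equilibrium equation at $i$ becomes $\omega_{e_{i-1}} d_{i-1} = \omega_{e_i} d_i$, and the same identity comes out at the vertices $1$ and $n$ once the orientation of $e_n$ is accounted for. Hence all the products $\omega_{e_k} d_k$ equal a common constant $c \in \RR$. Conversely, for any $c$ the assignment $\omega_{e_k} := c/d_k$ satisfies every vertex equation, because consecutive products telescope. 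Therefore the space of equilibrium stresses of $(G,p)$ is exactly $\{(c/d_1, \dots, c/d_n) : c \in \RR\}$, which is one-dimensional and nonzero; this gives existence and uniqueness up to nonzero scaling.

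For the sign statement, note that a general position one-dimensional cycle framework always has at least one forwards and at least one backwards edge, so the $d_k$ are not all of one sign. Choosing the scaling with $c > 0$ makes $\omega_{e_k} = c/d_k$ positive exactly on the edges with $d_k > 0$, i.e.\ the forwards edges, and negative exactly on the edges with $d_k < 0$, i.e.\ the backwards edges, which is the claim. The hypotheses that $\{1,n\}$ is backwards and that $p(n)$ is the rightmost vertex are used only to normalize the cyclic labeling and play no further role. The computation is routine; the only place requiring care is the bookkeeping that matches the orientation of each $e_k$ with the signed difference $d_k$, so that the telescoping identity and the forwards/backwards dictionary are consistent, and this is the step I would check most carefully.
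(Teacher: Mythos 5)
Your proof is correct and takes essentially the same approach as the paper: both solve the local equilibrium equations around the cycle, which force all products $\omega_{e_k}d_k$ to share a common value $c$, and then read off the sign pattern by taking $c>0$. Your formulation via the common constant is a slightly cleaner packaging of the paper's ``walk around the cycle'' argument, since it makes the closing equation at the last vertex automatic rather than requiring a separate magnitude check.
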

\begin{proof}
Without loss of generality, set the coefficient $\omega_{1,n}$ on the edge $\{1,n\}$ to 
$-1$.  Now walk, in cyclic order, starting from vertex $1$, 
solving the equilibrium condition locally, by setting 
$\omega_{i,1+1}$ to solve (indices taken cyclically):
\[
    \omega_{i,i+1}(p(i+1) - p(i)) = \omega_{i-1,i}(p(i) - p(i-1)).
\]
Notice that the sign changes whenever we switch from forwards to 
backwards edges, so we have the desired sign pattern.  General 
position implies that we do not get any zero coefficients.
We have, automatically, equilibrium at every vertex except, 
possibly $p(n)$.  To check that we have equilibrium, notice that, 
by induction, all the vectors $\omega_{i,i+1}(p(i+1) - p(i))$
have magnitude $|p(n) - p(1)|$ and that $\omega_{n-1,n}$ is 
positive.
\end{proof}
Our next lemma is a general fact that can be verified by 
direct computation.
\begin{lemma}\label{lem: stress energy cut}
Let $H$ be any graph and $\omega$ an equilibrium stress with associated matrix $\Omega$.
For any subset $S$ of vertices of $H$, let $x(S)$ be the characteristic vector of $S$.
Then 
\[
    x(S)^T\Omega x(S) = \sum_{\text{edges $ij$: $i\in S$, $j\notin S$}} \omega_{ij}.
\]
In particular, if $S$ is the set of 
vertices on one side of a cut consisting of edges with positive (resp negative) stress coefficients, then $x(S)$ has positive (resp negative) Rayleigh quotient.
\end{lemma}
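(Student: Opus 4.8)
The plan is to expand the quadratic form $x(S)^{T}\Omega x(S)$ directly from the definition of the stress matrix in Definition~\ref{def: eq stress}, keeping track of how each edge of $H$ contributes; the equilibrium condition on $\omega$ itself is not needed for the identity. Writing $\chi = x(S)$ for brevity, we have $\chi^{T}\Omega\chi = \sum_{i,j\in S}\Omega_{ij}$, and I would split this as $\sum_{i\in S}\Omega_{ii} + \sum_{i,j\in S,\, i\neq j}\Omega_{ij}$, treating the two sums separately.

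For the off-diagonal sum, an edge $ij$ with both endpoints in $S$ contributes $\Omega_{ij} + \Omega_{ji} = -2\omega_{ij}$, while an edge with at most one endpoint in $S$ contributes nothing. For the diagonal sum, using $\Omega_{ii} = \sum_{j\sim i}\omega_{ij}$, an edge $ij$ with both endpoints in $S$ contributes $\omega_{ij}$ to $\Omega_{ii}$ and $\omega_{ij}$ to $\Omega_{jj}$, hence $2\omega_{ij}$ altogether; an edge with exactly one endpoint in $S$ contributes $\omega_{ij}$; an edge with neither endpoint in $S$ contributes nothing. Adding the two sums, the $2\omega_{ij}$ from each edge internal to $S$ cancels the $-2\omega_{ij}$ from the off-diagonal part, leaving precisely $\sum_{ij:\, |\{i,j\}\cap S| = 1}\omega_{ij}$, the sum over the cut edges determined by $S$; this is the stated identity.

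For the ``in particular'' clause, observe that $x(S)^{T}x(S) = |S| > 0$, so the Rayleigh quotient of $x(S)$ equals $\tfrac{1}{|S|}\sum_{ij \text{ crossing } S}\omega_{ij}$; if $S$ is one side of a nonempty cut all of whose edges carry a positive (resp.\ negative) stress coefficient, this number is positive (resp.\ negative), as claimed. There is no genuine obstacle here: the statement is an elementary identity, and the only point requiring care is the factor-of-$2$ bookkeeping, which arises from the symmetry of $\Omega$ together with the fact that each edge contributes to two diagonal entries.
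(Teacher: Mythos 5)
Your computation is correct and is exactly the ``direct computation'' the paper alludes to without writing out: the paper offers no proof of this lemma beyond that remark, and your edge-by-edge bookkeeping of the diagonal and off-diagonal contributions (with the factor-of-$2$ cancellation on edges internal to $S$) is the intended argument. Your added observation that the equilibrium condition is not actually needed for the identity, and the explicit note that the cut must be nonempty for the sign conclusion, are both accurate.
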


\begin{proof}[Proof of Theorem \ref{thm: cycle stress}]
Let $(G,p)$ be as in the statement and $\Omega$ 
scaled as in Lemma \ref{lem: cycle stress signs}.  Uniqueness, up to nonzero scale,
of the equilibrium stress on $(G,p)$ implies this is 
possible.
Now recall that removing any two edges from a cycle determines a 
cut.  If we have $b$ backwards edges, $e_1, \ldots, e_b$, 
each of the cuts $\{e_1, e_j\}$ for $2\le j\le b$ gives 
rise to a collection of $b-1$ independent incidence vectors with negative
Rayleigh quotient, from Lemma \ref{lem: stress energy cut}.  
Hence $\Omega$ has at least $b-1$ negative eigenvalues.
Similarly, the $f$ edges $e'_1, \ldots, e'_f$ with positive stress
coefficients give $f-1$ independent incidence vectors with positive 
Rayleigh quotient.
Since $\Omega$ has a nullity of at least $2$, as an equilibrium 
stress matrix, the proof is complete.
\end{proof}

\section{Random graphs}
\label{sec:app:rand}
In this section, we prove 
Theorem \ref{thm: rand rigid 2}.  
We will use results of Candès and Tao \cite{candes-tao} as a 
``black box'' along with some ideas from Saliola and 
Whiteley \cite{whiteley-pseudo}. 

The results in this appendix are based on those from 
\cite{KT-random}, but slightly weaker.  We include the 
proofs here to keep this paper self-contained and because 
they are somewhat simpler. 

\subsection{Low-rank matrix completion}
Matrix completion is the problem of imputing an 
unknown $n\times n$ matrix $M$ from a 
subset of its entries.  A fundamental 
result of Candès and Tao 
(specialized to the case where $r$ is a
fixed constant is):
\begin{thm}[{\cite{candes-tao}}]\label{thm: ct-matcomp}
If $M$ is an 
$n\times n$ symmetric rank $r$ matrix 
with the strong incoherence property with constant $\mu > 0$,
there is a constant $C_{r,\mu} > 0$ such that, if 
\[
    m\ge C_r \mu^2 n(\log n)^2
\]
entries of $M$ are sampled uniformly at random, then 
with probability at least $1-n^{-3}$, a nuclear 
norm minimization algorithm recovers all of $M$
from the observed entries.
\end{thm}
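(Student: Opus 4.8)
The plan is to establish exactness of the natural convex relaxation and read the conclusion off from it. Writing $\Omega$ for the random set of sampled index pairs, the relaxation is: among symmetric matrices $X$ with $X_{ij}=M_{ij}$ for $(i,j)\in\Omega$, minimize the nuclear norm $\|X\|_*$; one shows that under the hypotheses the unique minimizer is $X=M$, which forces any nuclear-norm minimizer (hence the algorithm) to return $M$. Let $M=U\Sigma U^T$ be a spectral decomposition (symmetry lets us take the left and right factors equal up to signs), let $T=\{UA^T+AU^T : A\in\RR^{n\times r}\}$ be the tangent space at $M$ to the symmetric rank-$r$ locus, and let $\mathcal{P}_T,\mathcal{P}_{T^\perp}$ be the associated orthogonal projections, with $\mathcal{P}_\Omega$ the (suitably rescaled) sampling operator. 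Standard subgradient calculus for $\|\cdot\|_*$ reduces the claim to two ingredients: (i) a \emph{restricted-isometry-type} bound $\|\mathcal{P}_T\mathcal{P}_\Omega\mathcal{P}_T-\mathcal{P}_T\|\le\tfrac12$, giving injectivity of $\mathcal{P}_\Omega$ on $T$ and hence uniqueness on $T$; and (ii) a \emph{dual certificate}, i.e.\ a symmetric $Y$ with $\mathcal{P}_\Omega(Y)=Y$, $\mathcal{P}_T(Y)=UU^T$, and $\|\mathcal{P}_{T^\perp}(Y)\|<1$.

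First I would translate the strong incoherence property into the only two geometric facts used downstream: a bound $\max_i\|U^Te_i\|^2\lesssim \mu r/n$ on the maximal leverage score and a bound $\|UU^T\|_\infty\lesssim \mu\sqrt r/n$ on the entrywise size, which together control the $\ell_{2,\infty}$ and $\ell_\infty$ norms of arbitrary elements of $T$. With these in hand, ingredient (i) follows by writing $\mathcal{P}_T\mathcal{P}_\Omega\mathcal{P}_T-\mathcal{P}_T$ as a sum of $m=|\Omega|$ independent, mean-zero, bounded self-adjoint operators on the space of symmetric matrices and invoking a matrix Bernstein / non-commutative concentration inequality; the per-term operator-norm and variance bounds come straight from the incoherence estimates, and the inequality yields failure probability $\le n^{-4}$ once $m\gtrsim \mu^2 r\,n\log n$.

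Next I would construct the certificate $Y$ by a golfing-type scheme: split $\Omega$ into $L=\Theta(\log n)$ independent batches $\Omega_1,\dots,\Omega_L$, set $W_0=UU^T$, and iterate $Y_k=Y_{k-1}+\mathcal{P}_{\Omega_k}(W_{k-1})$, $W_k=UU^T-\mathcal{P}_T(Y_k)$. The near-isometry bound on each batch makes $\|W_k\|_F$ contract geometrically, so $\mathcal{P}_T(Y_L)=UU^T$ up to a polynomially small error that is then absorbed; simultaneously one tracks $\|W_k\|_\infty$, which also stays small by incoherence, so that each increment contributes only a geometrically decreasing amount to $\|\mathcal{P}_{T^\perp}(Y)\|$, bounded again by matrix Bernstein. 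Summing the geometric series gives $\|\mathcal{P}_{T^\perp}(Y)\|<1$; the extra $\log n$ factor in $m\gtrsim \mu^2 n(\log n)^2$ is exactly the number of batches, and the probability $1-n^{-3}$ comes from a union bound over the $O(\log n)$ concentration events. Finally, I would handle the symmetry bookkeeping either by carrying everything out in the inner-product space of symmetric matrices with a symmetrized sampling operator, or by applying the general (non-symmetric) statement and noting that averaging a minimizer with its transpose keeps it feasible and does not increase the nuclear norm.

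The hard part is the dual-certificate construction together with the matrix-concentration estimates that power both it and the restricted-isometry bound: controlling $\|\mathcal{P}_{T^\perp}(Y)\|$ requires simultaneously tracking Frobenius and sup-norm decay of the residuals $W_k$, and it is the interplay of these two scales that forces the $(\log n)^2$ sampling rate. By contrast, the convex-analysis reduction and the passage from strong incoherence to the leverage-score and entrywise bounds are essentially bookkeeping.
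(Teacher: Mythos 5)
First, a point of calibration: the paper does not prove this theorem at all --- it is imported verbatim from Cand\`es and Tao and used as a declared ``black box'' in Appendix~\ref{sec:app:rand}, so there is no internal proof to compare against. What you have written is a from-scratch sketch, and it is recognizably the ``golfing scheme'' argument of Gross and Recht rather than Cand\`es and Tao's original proof, which constructs the dual certificate by truncating a Neumann series and controls it with a delicate moment computation. The golfing route is genuinely simpler and does yield a threshold of the form $m\gtrsim \mu^2 r\, n(\log n)^2$, so as a strategy it is legitimate; the trade-off is that it proves a close cousin of the cited theorem under a mild variant of the incoherence hypotheses rather than the literal statement, which is harmless for the way the paper uses it (Theorem~\ref{thm: gen matcomp random} only needs the existence of some such threshold with a polylog factor).

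Two concrete points in the sketch need repair. First, the dual certificate must satisfy $\mathcal{P}_T(Y)=E$ where $E=U\,\mathrm{sgn}(\Sigma)\,U^T$ is the sign matrix appearing in the subdifferential of the nuclear norm at $M=U\Sigma U^T$; you initialize the golfing iteration at $W_0=UU^T$, which is only correct when $M$ is positive semidefinite. For a general symmetric rank-$r$ matrix the eigenvalues have mixed signs, the correct target is $U\,\mathrm{sgn}(\Sigma)U^T$, and the strong incoherence hypothesis is in part precisely an entrywise bound on this sign matrix --- so the fix is cosmetic but necessary, and it is this matrix (not $UU^T$) whose $\ell_\infty$ norm you must track through the iteration. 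Second, if $\Omega$ is a uniformly random subset of fixed size $m$, the batches $\Omega_1,\dots,\Omega_L$ obtained by partitioning it are \emph{not} independent, so the matrix Bernstein inequality cannot be applied batchwise as stated; one must first pass to the Bernoulli or sampling-with-replacement model and then invoke the standard equivalence-of-models reduction. Both repairs are routine, but without them the concentration steps --- which you correctly identify as the heart of the matter --- do not literally go through. Given that the paper only ever cites this result, re-proving it is more than the context demands; if you do want a self-contained argument, Recht's ``simpler approach'' paper is the template your sketch is implicitly following.
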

For our purposes we do not need to know the exact 
definition of strong incoherence.  What we 
do need is that Candès and Tao also 
prove:
\begin{thm}[{\cite[Sec. 1.5.1]{candes-tao}}]\label{thm: ct-incoherent}
For any rank $r$ and some $\mu = O(\sqrt{\log n})$, there are open sets of 
matrices satisfying the strong incoherence property.
\end{thm}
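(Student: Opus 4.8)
The plan is to combine the quantitative random-matrix estimates of Candès and Tao with a soft openness argument. Recall that a symmetric rank-$r$ matrix $M$ with spectral decomposition $M = \sum_{k=1}^{r}\sigma_k u_k u_k^{T}$ obeys the strong incoherence property with parameter $\mu$ exactly when a fixed, finite list of \emph{strict} inequalities holds: the entries of the spectral projector $P = \sum_{k} u_k u_k^{T}$ onto the range of $M$ satisfy $|P_{ij} - \tfrac{r}{n}\delta_{ij}| \le \mu\sqrt{r}/n$, and the entries of the ``sign matrix'' (which in the symmetric case is again $P$) satisfy $|P_{ij}|\le \mu\sqrt{r}/n$. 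Crucially, $P$ is determined by $M$ alone: on the smooth manifold $\mathcal{M}_{r}$ of symmetric rank-$r$ matrices it is cut out by the relations $PM = M$, $P^{2}=P$, $P^{T}=P$, $\rank P = r$, and is therefore a \emph{continuous} (indeed real-analytic) function of $M$ on all of $\mathcal{M}_{r}$. Hence, for any fixed parameter, the set of $M\in\mathcal{M}_{r}$ satisfying the (strict) strong incoherence inequalities with that parameter is relatively open in $\mathcal{M}_{r}$.

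So it suffices to exhibit, for each $n$, a single symmetric rank-$r$ matrix satisfying strong incoherence with some $\mu = O(\sqrt{\log n})$. This is precisely what Candès and Tao \cite{candes-tao} supply: if $u_{1},\dots,u_{r}$ is a uniformly random orthonormal $r$-frame in $\RR^{n}$ and $\sigma_{1},\dots,\sigma_{r}$ are fixed positive reals, then $M = \sum_{k}\sigma_{k}u_{k}u_{k}^{T}$ obeys the strong incoherence property with $\mu = O(\sqrt{\log n})$ with probability $1-o(1)$; in particular, at least one such $M$ exists. Taking $\mu'=O(\sqrt{\log n})$ slightly larger than $\mu$, the relatively open neighborhood of this $M$ in $\mathcal{M}_r$ provided by the previous paragraph consists of matrices satisfying strong incoherence with parameter $\mu'$, which is the assertion.

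The only point that needs care — and the step I expect to be the main obstacle — is the continuous (hence analytic) dependence of the spectral projector $P$ on $M$. Phrasing $P$ through eigenvectors is dangerous, since eigenvectors need not be continuous across coincidences of eigenvalues; the remedy used above is to pin $P$ down uniquely on $\mathcal{M}_{r}$ by the algebraic conditions $PM=M$, $P=P^{T}=P^{2}$, $\rank P = r$, which exhibit it as a rational, hence continuous, function of $M$ there. (Alternatively, one may restrict attention to the open dense subset of $\mathcal{M}_{r}$ on which $M$ has $r$ distinct nonzero eigenvalues and invoke standard analytic perturbation theory for the top-$r$ eigenspace; since we only need one witness matrix and a neighborhood of it, this loses nothing.) With continuity established, the remainder of the argument is entirely formal and I expect no further obstacles.
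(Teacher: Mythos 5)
The paper does not prove this statement at all: it is quoted verbatim as a black box from Section~1.5.1 of Cand\`es--Tao, and the only gloss the authors add is the subsequent remark that an open subset of the rank-$r$ stratum must contain a generic point. Your proposal is a correct reconstruction of why the citation delivers what is claimed: Cand\`es--Tao supply one witness (a random orthonormal $r$-frame gives strong incoherence with $\mu=O(\sqrt{\log n})$ with high probability), and relative openness in $\mathcal{M}_r$ follows from continuity of the spectral data on the fixed-rank stratum together with a slight enlargement $\mu\mapsto\mu'$ to absorb the non-strictness of the defining inequalities. Two small points of care: ``open'' here must be read as \emph{relatively} open in the rank-$r$ variety (as you do), since rank-deficient matrices contain no open subset of the ambient space; and the sign matrix $E=\sum_k u_kv_k^T$ equals the range projector $P$ only for PSD symmetric matrices --- in general it is $P_+-P_-$. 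Since your witness is PSD with positive $\sigma_k$ and the inertia is locally constant on $\mathcal{M}_r$, every matrix in a small enough relative neighborhood is again PSD and your identification holds there, so this does not create a gap.
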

In particular, there is a generic symmetric rank $r$ 
matrix satisfying the strong incoherence property.
\begin{thm}\label{thm: gen matcomp random}
Let $r$ be a fixed rank.  
There is a constant $C'_r > 0$ so that, if
\[
  m\ge C'_r n(\log n)^3  
\]
entries of a generic symmetric matrix $M$ of rank $r$ 
are sampled uniformly at random, there are finitely many 
symmetric matrices $M'$ of rank $r$ agreeing with $M$
on the observed entries with probability 
at least $1-n^{-3}$.
\end{thm}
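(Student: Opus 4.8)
The plan is to turn the statement into a property of the sampling pattern $E$ (the set of observed positions) alone, and then to certify that property for a random $E$ by pushing the Candès--Tao theorems through the geometry of the determinantal variety. Let $\mathcal V_r\subseteq\mathrm{Sym}_n(\mathbb R)$ denote the variety of symmetric $n\times n$ matrices of rank at most $r$; it is irreducible, defined over $\mathbb Q$, and a generic point of it has rank exactly $r$ and is a smooth point. For a set $E$ of positions let $\pi_E\colon\mathrm{Sym}_n(\mathbb R)\to\mathbb R^E$ be the coordinate projection (a linear map), and call $E$ \emph{completing} if $\dim\overline{\pi_E(\mathcal V_r)}=\dim\mathcal V_r$.

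First I would record the elementary fact (matroidal in spirit, in the style of the rigidity reformulation of matrix completion) that for a fixed $E$ the following are equivalent: (i) for some, equivalently every, generic $M\in\mathcal V_r$ only finitely many matrices in $\mathcal V_r$ agree with $M$ on $E$; (ii) $E$ is completing; (iii) $\pi_E$ restricted to the tangent space $T_M\mathcal V_r$ is injective for some, equivalently any, smooth point $M\in\mathcal V_r$. Here (i)$\Leftrightarrow$(ii) is the fibre-dimension theorem for a projection of an irreducible variety, and (ii)$\Leftrightarrow$(iii) holds because the rank of $\pi_E|_{T_M\mathcal V_r}$ is lower semicontinuous in $M$ and attains $\dim\mathcal V_r$ at a single point exactly when it does generically. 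Since ``finitely many of rank exactly $r$'' is implied by ``finitely many in $\mathcal V_r$'', it suffices to prove that a uniformly random $E$ of size $m\ge C'_r\,n(\log n)^3$ is completing with probability at least $1-n^{-3}$.

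To obtain this I would invoke Candès--Tao. By Theorem~\ref{thm: ct-incoherent} fix one symmetric rank-$r$ matrix $M_0$ with the strong incoherence property with constant $\mu=O(\sqrt{\log n})$; since $M_0$ has rank exactly $r$ it is a smooth point of $\mathcal V_r$. Choose $C'_r$ with $C'_r\,n(\log n)^3\ge C_r\,\mu^2 n(\log n)^2$ (possible because $\mu^2=O(\log n)$), so that Theorem~\ref{thm: ct-matcomp} applies: if $m\ge C'_r\,n(\log n)^3$ entries are sampled uniformly at random into a set $E$, then with probability at least $1-n^{-3}$ the nuclear-norm minimiser subject to agreeing with $M_0$ on $E$ is unique and equal to $M_0$. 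The crux is then the implication: \emph{unique nuclear-norm recovery of the rank-$r$ matrix $M_0$ from the positions $E$ forces $\pi_E$ to be injective on $T:=T_{M_0}\mathcal V_r$.} This is the standard necessary condition in exact-recovery theory, phrased via descent cones: $M_0$ can be the unique minimiser only if the (closed) descent cone of $\|\cdot\|_*$ at $M_0$ meets $\ker\pi_E$ only at $0$; and for $H\in T$ the directional derivative $\frac{d}{dt}\big|_{0^+}\|M_0+tH\|_*$ equals $\langle S,H\rangle$ with $S$ the sign matrix of $M_0$ (there is no $\|\cdot\|_*$-kink term, because the $T^\perp$-component of $H$ vanishes), so the descent cone meets $T$ in a closed half-space and hence contains $H$ or $-H$ for every $H\in T$. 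Thus a nonzero $H\in T\cap\ker\pi_E$ would yield a nonzero element of (descent cone)$\cap\ker\pi_E$, contradicting uniqueness. Combining with the previous paragraph: with probability at least $1-n^{-3}$ the random $E$ satisfies (iii) at the smooth point $M_0$, hence is completing, hence by (i) every generic symmetric matrix $M$ of rank $r$ — in particular the one in the statement — admits only finitely many rank-$r$ completions consistent with the observed entries.

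The main obstacle, and essentially the only nontrivial step, is this crux implication connecting the analytic success of the convex program to the algebraic ``completing'' condition; I would either present the short descent-cone/first-order argument above in full (the one point needing care is that $\|M_0+tH\|_*$ is differentiable at $t=0$ along tangent directions $H$, with linear derivative, so that the descent cone restricted to $T$ is a half-space rather than a proper sub-cone), or cite it as the classical necessary condition for exact nuclear-norm recovery. The remaining points are routine: checking $\mu^2(\log n)^2=O((\log n)^3)$ so that the sample size matches; noting that $M_0$ need only be a smooth point, i.e.\ of rank exactly $r$, and not generic, for (iii) to upgrade to (ii); and recording that the probability bound $1-n^{-3}$ is exactly the one delivered by Theorem~\ref{thm: ct-matcomp}, so no union bound is needed.
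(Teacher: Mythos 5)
Your reduction of the statement to a property of the sampling pattern, and the use of lower semicontinuity to upgrade a certificate at one smooth point $M_0$ to all generic $M$, are sound and broadly parallel to the paper's appeal (via \cite{kiraly-matcomp,singer-matcomp}) to finite completability being a generic property of the pattern. The gap is in your crux implication: \emph{unique nuclear-norm recovery of $M_0$ from the positions $E$ does not force $\pi_E$ to be injective on $T=T_{M_0}\mathcal{V}_r$.} The one-sided derivative $\frac{d}{dt}\big|_{0^+}\|M_0+tH\|_*=\langle S,H\rangle$ for $H\in T$ is correct, but the inference ``so the descent cone meets $T$ in a closed half-space'' is not: unlike the $\ell_1$ norm on its model subspace, the nuclear norm is \emph{not} locally linear along $M_0+tH$ for $H\in T$. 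The $n-r$ zero eigenvalues of $M_0$ move at second order and contribute a nonnegative $O(t^2)$ term, so a direction $H$ with $\langle S,H\rangle=0$ can have $\|M_0+tH\|_*>\|M_0\|_*$ for all $t\ne 0$ and thus fail to lie in the descent cone. Consequently the uniqueness criterion only yields $T\cap\ker\pi_E\subseteq S^{\perp}$, not $T\cap\ker\pi_E=\{0\}$. A concrete counterexample to the claimed implication: $n=2$, $r=1$, $M_0=e_1e_1^T$, $E=\{(1,1),(2,2)\}$. The feasible set is $\left(\begin{smallmatrix}1&x\\x&0\end{smallmatrix}\right)$ with nuclear norm $\sqrt{1+4x^2}$, uniquely minimized at $x=0$, i.e.\ recovery is exact and unique; yet $T\cap\ker\pi_E$ is the one-dimensional space of off-diagonal symmetric matrices. (This $M_0$ is of course neither incoherent nor is $E$ random, so it does not refute the theorem --- but it refutes the general lemma your proof leans on, and nothing in your argument uses incoherence or randomness at that step.)

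The paper routes around tangent spaces entirely: it extracts from Theorem~\ref{thm: ct-matcomp} that, with probability $1-n^{-3}$, the pair $(M_0,E)$ is \emph{uniquely} completable within rank $r$ (a statement about the actual fibre of $\pi_E|_{\mathcal{V}_r}$, not its differential), notes that a finite fibre over one point forces the generic fibre to be finite, and then invokes genericity of finite completability. If you replace ``$\pi_E$ injective on $T$'' by ``the fibre of $\pi_E|_{\mathcal{V}_r}$ through $M_0$ is finite'', your step (i)$\Leftrightarrow$(ii) already gives you the rest; but you would then need an argument that exact nuclear-norm recovery excludes \emph{other} rank-$r$ matrices agreeing with $M_0$ on $E$ --- a global statement about the feasible set rather than a first-order descent-cone computation --- and that is where the remaining work lies. (A small secondary point: your ``for some, equivalently every, smooth point'' in (iii) is overstated --- the same $2\times 2$ example is a completing pattern at which tangent injectivity fails at the smooth point $e_1e_1^T$ --- though you only use the valid direction ``some smooth point $\Rightarrow$ generic''.)
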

For a set of observed entries, if, for a generic rank 
$r$ symmetric matrix $M$, there are finitely many 
rank $r$ matrices agreeing with $M$ on the observed entries, 
the pattern is said to be \emph{finitely completable}.
\begin{proof}
Let $M$ be a generic, rank $r$ symmetric matrix with the strong 
incoherence property where $\mu = O(\sqrt{\log n})$ from 
Theorem \ref{thm: ct-incoherent}.  Take $C'_r$ to be $C_r$ from 
the statement of Theorem \ref{thm: ct-matcomp}.
Such an $M$ exists by Theorem \ref{thm: ct-incoherent}.
By Theorem \ref{thm: ct-matcomp}, if $m$ (from the statement) 
entries are sampled uniformly at random, the nuclear norm
minimization algorithm finds $M$ with probability $1 - n^{-3}$.

If there were any other way to complete the observed entries 
to get a different rank $r$ matrix, this would be impossible
as soon as the success probability rises above $1/2$.  Hence, 
for $n\ge r + 1$, all but a $n^{-3}$-fraction of observation 
patterns with $m$ observed entries yield a matrix completion 
problem that is generic and uniquely completable, which is 
the matrix completion analogue of global rigidity 
(see \cite{kiraly-matcomp, singer-matcomp}).

We do not know that unique completability is a generic property,
even for symmetric matrices, but the existence of a generic 
rank $r$ matrix $M$ with a uniquely completable matrix completion 
problem for some pattern does imply that the observation 
pattern is finitely completable, which is a generic property 
\cite{kiraly-matcomp, singer-matcomp}.
\end{proof}
We notice that, although we are not dealing 
with symmetrically chosen observation patterns, 
since we assume the underlying matrix is 
symmetric, the probability of finite 
completability is not changed when making this 
assumption.  Similarly, the probability of a 
pattern being generically finitely completable 
can only go up if we deterministically observe 
the diagonal and then uniformly sample $m$ other 
entries.

\subsection{Rigidity in pseduo-Euclidean spaces}
A $d$-dimensional \emph{pseudo-Euclidean}  
space  $\MM^d_s$ is $\RR^d$ equipped with a bilinear form 
\[
    \beta_s(x,y) =  -\gamma_1\delta_1 - \cdots - \gamma_s\delta_s 
        + \gamma_{s+1}\delta_{s+1} + \cdots + \gamma_d\delta_d
\]
where $x = (\gamma_i)$ and $y = (\delta_i)$.  We can use 
$\beta_s$ to measure length and define local and global 
rigidity similarly to the Euclidean case (see 
\cite{GT-pseudo,whiteley-pseudo} for details).
\begin{thm}[\cite{whiteley-pseudo}]\label{thm: rigidity eqv}
For every dimension $d\ge 1$ and $d$-dimensional pseudo-Euclidean space $\MM_s^d$, 
local $d$-rigidity is a generic property.  Moreover,
a graph $G$ is locally $d$-rigid in $\MM_s^d$ 
if and only if $G$ is locally $d$-rigid in Euclidean
space.
\end{thm}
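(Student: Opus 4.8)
The plan is to reduce the whole statement to the Euclidean case by means of a single linear-algebra identity together with an Asimow--Roth-type argument carried out directly in $\MM_s^d$. Write $J_s = \diag(\epsilon_1,\dots,\epsilon_d)$, where $\epsilon_1 = \cdots = \epsilon_s = -1$ and $\epsilon_{s+1} = \cdots = \epsilon_d = 1$, so that $\beta_s(x,y) = x^{T} J_s y$. For a graph $G$ with $n$ vertices and $m$ edges and a $d$-dimensional framework $(G,p)$, define the \emph{pseudo-rigidity matrix} $R_s(G,p)$ to be the Jacobian at $p$ of the map $g_s\colon (\RR^d)^n \to \RR^m$ sending a configuration to the tuple $\bigl(\beta_s(p(i)-p(j),\,p(i)-p(j))\bigr)_{ij\in E(G)}$. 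Computing partial derivatives, the row of $R_s(G,p)$ indexed by an edge $ij$ is obtained from the corresponding row of the ordinary rigidity matrix $R(G,p)$ by rescaling, in every vertex block, the entry in the column for the $k$th coordinate by $\epsilon_k$; equivalently
\[
   R_s(G,p) = R(G,p)\,\bigl(I_n \otimes J_s\bigr).
\]
Since $I_n \otimes J_s$ is invertible, I would record the key consequence that $\rank R_s(G,p) = \rank R(G,p)$ for \emph{every} configuration $p$, not only the generic ones.

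The next and main step is to set up the Asimow--Roth dichotomy inside $\MM_s^d$. The isometry group $\mathrm{Isom}(\MM_s^d) = O(s,d-s)\ltimes \RR^d$ is a Lie group of dimension $\binom{d}{2} + d = \binom{d+1}{2}$ --- the same as the Euclidean isometry group --- and for $n \ge d+1$ it acts with trivial stabilizer on any configuration affinely spanning $\RR^d$. The crucial sub-step is the pseudo-Euclidean congruence theorem for $K_n$: if $p$ affinely spans $\RR^d$, then every infinitesimal flex of $(K_n,p)$ in $\MM_s^d$ is the restriction of an infinitesimal isometry, so $\dim\ker R_s(K_n,p) = \binom{d+1}{2}$. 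I would prove this by the usual argument --- solving $\beta_s(p(i)-p(j),\,\dot p(i)-\dot p(j)) = 0$ to force $\dot p(i) = A p(i) + b$ with $A^{T} J_s + J_s A = 0$ --- which is insensitive to the signature of $\beta_s$. With this in hand, a standard regular-value and implicit-function-theorem argument (the fibers of $g_s$ through a generic $p$ are smooth of codimension $\rank R_s(G,p)$, and the fibers of $g_{s,K_n}$ coincide near a generic configuration with its $\mathrm{Isom}(\MM_s^d)$-orbit) shows that a generic framework $(G,p)$ is locally rigid in $\MM_s^d$ if and only if it is infinitesimally rigid, i.e.\ iff $\rank R_s(G,p) = dn - \binom{d+1}{2}$. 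Because $\rank R_s(G,\cdot)$ is lower semicontinuous and attains its maximum on a Zariski-open dense set, this is a property of $G$ and $d$ alone, which is exactly the assertion that local $d$-rigidity in $\MM_s^d$ is a generic property.

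Finally I would combine the two steps to obtain the equivalence with Euclidean rigidity: $G$ is locally $d$-rigid in $\MM_s^d$ iff $\rank R_s(G,p) = dn - \binom{d+1}{2}$ for generic $p$, iff $\rank R(G,p) = dn - \binom{d+1}{2}$ for generic $p$ by the rank identity of the first step, iff $G$ is locally $d$-rigid in Euclidean $\RR^d$ by the classical Asimow--Roth theorem quoted earlier. The cases $n \le d$ are handled, just as in the Euclidean setting, by restricting to the affine span of $p$. I expect essentially all of the difficulty to live in the second step: reproving the $K_n$ congruence statement and the fiber/regular-value analysis intrinsically in $\MM_s^d$. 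The non-compactness and possible disconnectedness of $O(s,d-s)$ are not real obstacles, since local rigidity is detected entirely at the level of the Lie algebra $\mathfrak{o}(s,d-s)$, whose dimension equals that of $\mathfrak{o}(d)$.
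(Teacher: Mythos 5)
Your argument is correct, and it is essentially the standard proof of this result: the paper itself does not prove Theorem~\ref{thm: rigidity eqv} but quotes it from Saliola--Whiteley, whose approach is exactly the one you take, namely that the pseudo-Euclidean rigidity matrix is the Euclidean one post-multiplied by the invertible block-diagonal matrix built from $J_s$, so the two matrices have equal rank at every configuration, after which the Asimow--Roth regular-point analysis (with the isometry group $O(s,d-s)\ltimes\RR^d$ of dimension $\binom{d+1}{2}$) transfers verbatim. The only point worth tightening is the $n\le d$ case, where the restriction of $\beta_s$ to the affine span of $p$ could a priori be degenerate; for generic $p$ it is not, so your reduction goes through, but that deserves a sentence.
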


Meanwhile, as observed by Gortler and Thurston in 
\cite{GT-pseudo}, 
any symmetric $n\times n$ 
matrix of rank $d+1$ and signature 
$(d + 1 - s,n-d-1,s)$ arises as the 
Gram matrix of a configuration of 
$n$ vectors in a pseudo-Euclidean space with 
$\beta_s$ as the bilinear form.
Translating language slightly, we have:
\begin{thm}[\cite{GT-pseudo}]\label{thm: pseudo to matcomp}
Let $G$ be a graph on $n$ vertices.  Then 
$G$ is locally $d$-rigid in a pseudo-Euclidean 
space $\MM_s^d$ if and only if the associated 
symmetric set of observed entries, plus the diagonal, 
is generically finitely completable for rank $d+1$.
\end{thm}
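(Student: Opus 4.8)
The plan is to make the Gram-matrix dictionary between symmetric matrix completion and pseudo-Euclidean rigidity explicit, and then collapse the result onto Whiteley's coning theorem \cite{W83} and the Saliola--Whiteley equivalence (Theorem~\ref{thm: rigidity eqv}); essentially all of the content is in making the relevant differentials match. Fix a diagonal matrix $J$ on $\RR^{d+1}$ realising $\beta_s$. A generic symmetric $n\times n$ matrix $M$ of rank $d+1$ factors as $M=Q^TJQ$ with $Q$ a generic $(d+1)\times n$ matrix, so its columns form a generic configuration $q$ of $n$ vectors in $\MM_s^{d+1}$ with $M_{ij}=\beta_s(q(i),q(j))$, and conversely every generic such configuration produces a generic rank-$(d+1)$ matrix. (Since the variety of symmetric matrices of rank $\le d+1$ is irreducible, finite completability of a given pattern does not depend on the signature of a generic real completion, matching the fact that, by Theorem~\ref{thm: rigidity eqv}, rigidity in $\MM_s^d$ is also $s$-independent.)

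The next step is to observe that prescribing the diagonal of $M$ together with the entries $M_{ij}$ for $ij\in E(G)$ is the same data as prescribing all edge $\beta_s$-lengths of the coned framework $(v_0*G,\,q^+)$ in $\MM_s^{d+1}$ in which the cone vertex $v_0$ sits at the origin and $q^+(i)=q(i)$: the cone edge $v_0i$ carries $\beta_s(q(i),q(i))=M_{ii}$, and, given the diagonal, the edge $ij$ carries $\beta_s(q(i)-q(j),q(i)-q(j))=M_{ii}+M_{jj}-2M_{ij}$, which recovers $M_{ij}$. A $\QQ$-linear surjection sends generic points to generic points (cf.\ Lemma~\ref{lem: gen slicing}), so after pinning $v_0$ at the origin the remaining vertices form a generic configuration up to translation; hence $(v_0*G,q^+)$ is regular, and its pinned rigidity matrix realises the generic rank of $v_0*G$ in dimension $d+1$.

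Now I would count dimensions. The variety $V$ of symmetric $n\times n$ matrices of rank at most $d+1$ is irreducible of dimension $(d+1)n-\binom{d+1}{2}$, and the pattern is generically finitely completable for rank $d+1$ exactly when the restriction to $V$ of the coordinate projection onto the observed entries is generically finite, i.e.\ its differential at a generic point of $V$ has rank $(d+1)n-\binom{d+1}{2}$. Pulling this differential back through $Q\mapsto Q^TJQ$ and applying the invertible change of target coordinates above identifies it with the pinned rigidity matrix of $(v_0*G,q^+)$, whose kernel always contains the $\binom{d+1}{2}$-dimensional space of infinitesimal $\beta_s$-rotations fixing the origin, and contains nothing else precisely when $(v_0*G,q^+)$ is infinitesimally rigid in $\MM_s^{d+1}$ (the identity $(d+1)(n+1)-\binom{d+2}{2}-(d+1)=(d+1)n-\binom{d+1}{2}$ confirms that pinning $v_0$ costs exactly the $d+1$ translations). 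Since generically local and infinitesimal rigidity agree (Theorem~\ref{thm: rigidity eqv}, \cite{whiteley-pseudo}), the pattern is generically finitely completable for rank $d+1$ iff $v_0*G$ is locally $(d+1)$-rigid in $\MM_s^{d+1}$; by Theorem~\ref{thm: rigidity eqv} this is equivalent to $v_0*G$ being locally $(d+1)$-rigid in Euclidean $\RR^{d+1}$, by Whiteley's coning theorem \cite{W83} to $G$ being locally $d$-rigid in $\RR^d$, and again by Theorem~\ref{thm: rigidity eqv} to $G$ being locally $d$-rigid in $\MM_s^d$, which is the claim.

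I expect the main obstacle to be the bookkeeping in the third paragraph: verifying that ``generically finitely completable'' (a statement about fibre dimensions of a polynomial map, which is where genericity of $M$ enters) really is the rank-of-Jacobian condition stated, that pinning the cone vertex at the non-generic point $0$ loses exactly the $d+1$ translational directions and no more, and that the residual infinitesimal-isometry group of a point-pinned framework in $\MM_s^{d+1}$ is precisely the $\binom{d+1}{2}$-dimensional orthogonal group of $J$. With those in hand the argument is a chain of previously established equivalences.
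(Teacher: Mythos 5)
The paper itself does not prove this statement: it is imported from \cite{GT-pseudo}, with the remark that once one knows a rank-$(d+1)$ symmetric matrix of signature $(d+1-s,\,n-d-1,\,s)$ is exactly a $\beta_s$-Gram matrix, one need only ``translate language slightly.'' Your argument supplies precisely that translation --- the factorization $M=Q^TJQ$, the identification of the observed entries with the $\beta_s$-edge-lengths of the origin-pinned coned framework $(v_0*G,q^+)$ in $\MM_s^{d+1}$, the Jacobian/rigidity-matrix matching, and the closing chain through Theorem~\ref{thm: rigidity eqv} and Whiteley's coning theorem --- and it is correct; it is also the mechanism the paper gestures at when it invokes coned frameworks $(v_0*G,p)$ in the remarks of Section~\ref{sec: lifting}. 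One small correction to your parenthetical sanity check: the identity should read $(d+1)(n+1)-\binom{d+2}{2}=(d+1)n-\binom{d+1}{2}$; the extra $-(d+1)$ you subtract on the left makes it false as written. Pinning $v_0$ removes $d+1$ coordinates \emph{and} kills exactly the $d+1$ translations, so the two effects cancel and the maximal rank of the rigidity matrix is unchanged by pinning --- which is what your argument actually needs.
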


\subsection{Putting things together}
We are now in a position to prove 
Theorem \ref{thm: rand rigid 2}.
From Theorem \ref{thm: gen matcomp random}, for 
each rank $d+1$, there is a constant $C_{d+1}$
so that, if the diagonals and $m\ge C_{d+1}n\operatorname{polylog}(n)$ uniformly selected 
entries of a generic symmetric matrix are observed, the 
resulting pattern is, whp, finitely completable.
Theorem \ref{thm: pseudo to matcomp} then implies that
the graph arising from symmetrising the observed entries 
is generically $d$-rigid in a pseudo-Euclidean space, and
then, by Theorem \ref{thm: rigidity eqv} generically $d$-rigid.
\end{appendix}

%%%%%%%%%%%%%%%%%%%%%%%%%%%%%%%%%%%%%%%%%%%%%%
%% Support information, if any,             %%
%% should be provided in the                %%
%% Acknowledgements section.                %%
%%%%%%%%%%%%%%%%%%%%%%%%%%%%%%%%%%%%%%%%%%%%%%
\begin{acks}[Acknowledgments]
This paper arose as part of the Fields Institute 
Thematic Program on Geometric constraint systems, 
framework rigidity, and distance geometry. LT thanks 
Bill Jackson for discussions about globally rigid 
$d$-circuits.
\end{acks}

%%%%%%%%%%%%%%%%%%%%%%%%%%%%%%%%%%%%%%%%%%%%%%
%% Funding information, if any,             %%
%% should be provided in the                %%
%% funding section.                         %%
%%%%%%%%%%%%%%%%%%%%%%%%%%%%%%%%%%%%%%%%%%%%%%
\begin{funding}
DIB was partially supported by a Mathematical Sciences
Postdoctoral Research Fellowship from the US NSF, grant DMS-1802902.
SD was partially supported by the Austrian Science Fund (FWF): P31888.
AN was partially supported by the Heilbronn Institute for
Mathematical Research. SJG was partially supported by US NSF grant 
DMS-1564473. MS was partially supported by US
NSF grant DMS-1564480 and US NSF grant DMS-1563234. 

\end{funding}

%%%%%%%%%%%%%%%%%%%%%%%%%%%%%%%%%%%%%%%%%%%%%%%%%%%%%%%%%%%%%
%%                  The Bibliography                       %%
%%                                                         %%
%%  imsart-???.bst  will be used to                        %%
%%  create a .BBL file for submission.                     %%
%%                                                         %%
%%  Note that the displayed Bibliography will not          %%
%%  necessarily be rendered by Latex exactly as specified  %%
%%  in the online Instructions for Authors.                %%
%%                                                         %%
%%  MR numbers will be added by VTeX.                      %%
%%                                                         %%
%%  Use \cite{...} to cite references in text.             %%
%%                                                         %%
%%%%%%%%%%%%%%%%%%%%%%%%%%%%%%%%%%%%%%%%%%%%%%%%%%%%%%%%%%%%%

%% if your bibliography is in bibtex format, uncomment commands:
\bibliographystyle{imsart-number} % Style BST file (imsart-number.bst or imsart-nameyear.bst)
\bibliography{mltRigid}       % Bibliography file (usually '*.bib')
\end{document}